\numberwithin{equation}{section}
\newcommand{\R}{{\mathbb{R}}}
\newcommand{\N}{{\mathbb{N}}}
\newcommand{\U}{\mathcal{U}}
\newcommand{\B}{\mathbb{B}}
\newcommand{\E}{\mathbb{E}}
\renewcommand\P{\mathcal{P}}
\newcommand\bP{\mathbb{P}}
\newcommand\bp{\mathbb{p}}
\newcommand{\X}{\mathcal{X}}
\newcommand{\ones}{\mathbf{1}}
\newcommand{\dto}{\stackrel{d}{\to}}
\newcommand{\pto}{\stackrel{\bP_0}{\to}}
\newcommand{\asto}{\stackrel{a.s.}{\to}}
\DeclareMathOperator*{\argmin}{arg\,min}
\DeclareMathOperator*{\argmax}{arg\,max}
\begin{document}
\jmlrheading{26}{2025}{1-\pageref{LastPage}}{11/23; Revised 9/24}{7/25}{23-1504}{Bohan Wu, César A. Uribe}
\ShortHeadings{BvM for Distributed Bayes}{Wu and Uribe}
\firstpageno{1}

\title{Frequentist Guarantees of Distributed (Non)-Bayesian Inference}

\author{\name Bohan Wu \email bw2766@columbia.edu\\
       \addr Department of Statistics\\
       Columbia University\\
       New York, NY 10027, USA \\
       \\
       \name César A. Uribe \email cauribe@rice.edu\\
       \addr Department of Electrical and Computer Engineering\\
       Rice University\\
        Houston, TX 77005, USA
       }
\editor{Daniel Roy}

\maketitle

\begin{abstract}%   <- trailing '%' for backward compatibility of .sty file
We establish frequentist properties, i.e., posterior consistency, asymptotic normality, and posterior contraction rates, for the distributed (non-)Bayes Inference problem for a set of agents connected over a network. These results are motivated by the need to analyze large, decentralized datasets, where distributed (non)-Bayesian inference has become a critical research area across multiple fields, including statistics, machine learning, and economics. Our results show that, under appropriate assumptions on the communication graph, distributed (non)-Bayesian inference retains parametric efficiency while enhancing robustness in uncertainty quantification.  We also explore the trade-off between statistical efficiency and communication efficiency by examining how the design and size of the communication graph impact the posterior contraction rate. Furthermore, we extend our analysis to time-varying graphs and apply our results to exponential family models, distributed logistic regression, and decentralized detection models. 

\end{abstract}

\begin{keywords}
  Distributed inference, Bayesian theory, Bernstein-von Mises, communication efficiency, estimation over networks
\end{keywords}

\section{Introduction}
Modern datasets are frequently generated and stored by distributed systems, including social media, sensor networks, blockchain, and cloud-based databases. However, transmission costs make analyzing these datasets on a centralized machine prohibitively expensive and, in some cases, infeasible. To address this challenge, researchers have turned to distributed algorithms that enable decentralized data-driven decision-making under communication constraints~{\citep{borkar1982asymptotic, tsitsiklis1984convergence, gubner1993distributed}}. In such systems, a set of agents operates within a communication network structure, where each agent can only share information locally with its neighbors. The agents sequentially analyze the data, with each agent performing inference independently and sharing the results through edges defined by the network structure, which may vary over time~\citep{Nedic2017, Uribe2022-2}. 

Decentralized or distributed Bayesian inference originates in statistics~\citep{degroot1974reaching, gilardoni1993reaching}. However, it wasn't until the massive advances in computing power in the past decade that the ideas of distributed inference started regaining interest in the statistics community~\citep{Uribe2022-1,Uribe2022-2}. There is a growing line of works on \textit{Distributed Bayesian inference}, which aims to develop scalable and efficient algorithms for posterior computation on large datasets~\citep{jordan2018communication}. One of the main challenges in this area is to design data-parallel procedures that can handle massive datasets by breaking them into smaller blocks that can be processed independently on individual machines. Much of the current literature focuses on ``one-shot'' or ``embarrassingly parallel'' approaches, which involve only one round of communication between local machines and a central node at the end of the computational pipeline. These approaches compute estimators or posterior samples in parallel on local machines, then communicate the estimates to a central node to form a global estimator or approximation to the posterior (e.g., by computing a Wasserstein barycenter of the posteriors computed on local machines).

From the Markov chain Monte Carlo (MCMC) perspective, there have been several developments in parallel MCMC methods for distributed Bayesian inference~\citep{Neiswanger2013, Wang2013, Minsker2014, Wang2015, Rabinovich2015, Scott2016, Li2017, Minsker2017}. These methods draw samples from the subset posterior in parallel agents and combine the samples to obtain an approximation to the posterior measure for the complete data. 

From the variational Bayes perspective, algorithms such as stochastic variational inference \citep{hoffman2013stochastic} have been proposed for distributed Bayesian inference. These algorithms distribute the data across machines, implement the local variational updates in parallel through stochastic gradient descent (SGD), and update the global variational parameters as a weighted average of local optimums. The variational interpretation of the Bayes rule~\citep{Walker2006} allows the representation between the variational optimization problem and posterior to go both ways. 

In parallel to the success in the statistics community, Distributed Bayesian inference has also been studied in microeconomics under the name ``non-Bayesian social learning.'' Notable works in this area have focused on its axiomatic foundations \citep{Epstein2010}, conditions for achieving consensus \citep{acemoglu2011bayesian}, various learning rules \citep{Golub2017}, and the effects of information aggregation \citep{Molavi2018}. This cross-disciplinary interest underscores the broad relevance and applicability of Distributed Bayesian inference techniques. Here, the agents represent individuals seeking to learn about an underlying state of the world $\theta$. Unlike traditional Bayesian approaches, where each agent makes inferences based on the full data, the ``non-Bayesian learning'' (as economists call it) model captures how individuals make inferences in the presence of other decision-makers, often with limited access to information and interaction with social networks. The Bayesian distributed learning framework offers a promising solution that retains the desirable properties of Bayesian learning, such as ease of uncertainty quantification and flexibility, while incorporating a form of information aggregation that aligns with realistic behavioral assumptions. Indeed, the distributed Bayes rule has been shown to reflect reasonable assumptions about individual behavior in society~\citep{Jadbabaie2012}. The distributed framework is analytically tractable under certain distributional assumptions and computationally feasible in general.  For a more comprehensive literature review, see 
\citep{Molavi2018}.

Although economic theory addresses social learning in various strategic environments, it is almost entirely behavioral, with little data involved. Recent work in the signal processing community has explored the statistical properties of social learning in greater depth, much of which focuses on finite parameter spaces. For example, \cite{Braca2010} studies the relative efficiency of binary testing based on a distributed test statistic under local alternative hypotheses and establishes an asymptotic normality result for the test statistic. In the non-Bayesian social learning setting, \cite{Shahrampour2015} assumes a finite parameter space and provides non-asymptotic bounds on the KL divergence between distributed and centralized beliefs. \cite{lalitha2014social} establishes the exponential convergence of beliefs to the truth from a large-deviation perspective. \cite{Bordignon2021} studies the asymptotic normality of agents’ beliefs in the regime of vanishing step sizes. In contrast, \cite{Inan2022} investigates the consistency and convergence rate of agents’ beliefs toward the truth when the communication graph is random.

The technique for establishing concentration bounds in the discrete finite parameter space relies heavily on bounding the KL divergence between two candidate distributions. Work beyond the finite discrete parameter space has been sparse. One example is \cite{Uribe2022-1,Uribe2022-2}, which establishes consistency and concentration bounds for $p_t^j(\theta)$ when $\Theta$ is discrete or compact, respectively.

Our work studies the asymptotic properties of non-Bayesian social learning in the regime of continuous parameter spaces (as a subset of $\R^p$), the sample size (or time) tending to $\infty$. Although this regime is underexplored in the social learning literature, it is arguably more natural from the perspective of classical asymptotic theory \citep{VanderVaart2000}. Establishing such a theory fills the gap in the existing social learning literature and builds a connection with the distributed Bayesian inference literature in statistics.

Distributed Bayesian procedures have attracted substantial interest across disciplines such as electrical engineering, statistics, and economics. Yet, \textit{the broad adoption of these methods in the statistical community has been hindered by a lack of rigorous analysis of their statistical properties.} Moreover, understanding these properties is key to deepening our knowledge of the consensus behavior of agents within varying communication patterns, a topic of interest to the electrical engineering and economics communities. In this paper, we investigate the distributed Bayes procedures that arise from applying the stochastic mirror descent (SMD) algorithm to statistical estimation problems \citep{Uribe2022-1, Uribe2022-2}. \textit{Our work fills a crucial gap by establishing the Frequentist properties of such distributed Bayes procedures, including posterior consistency, asymptotic normality, and posterior contraction rates.} We also explore the tradeoff between statistical efficiency and communication cost by investigating the relationship between the posterior contraction rate and the structure of the communication graph. Furthermore, we illustrate practical applications of the Bernstein von - Mises results to emphasize their utility in uncertainty quantification. Ultimately, we hope to stimulate further interest in distributed Bayes methods within the statistical community and to establish a solid theoretical foundation for distributed Bayesian inference in fields such as economics and electrical engineering.

\begin{table}[htbp]\caption{Table of Notation}
\begin{center}% used the environment to augment the vertical space
% between the caption and the table
\begin{tabular}{r c p{10cm} }
\toprule
\multicolumn{3}{c}{\underline{Functions:}}\\
$D_{KL}$ & $\triangleq$ & Kullback–Leibler divergence,  \\
$L$ & $\triangleq$ &loss function,  \\
%$L^j$ & $\triangleq$ &loss function $j$\\
$\langle.,.\rangle$ & $\triangleq$ & $L_2$ inner product, as in $\langle f,g \rangle = \int_\R f(x)g(x) dx$. \\
\\
\multicolumn{3}{c}{\underline{Probability Distributions:}}\\
$\bP, \bp$ & $\triangleq$ & data generating measure, data generating density, \\
$\bP f(X)$ & $\triangleq$ &expectation of $f(X)$ when $X \sim \bP$, same as $\E_\bP f(X)$, \\
$\Pi, \pi$ & $\triangleq$ &prior measure, prior density. \\
$P, p$ & $\triangleq$ &posterior measure, posterior density, \\
$P_t^j, p_t^j$ & $\triangleq$ & posterior measure, density for agent $j$ at iterant (time) $t$, \\
\\
\multicolumn{3}{c}{\underline{Others:}}\\
$G,A, A_{ij}$ & $\triangleq$ & Graph, adjacency matrix and its $(i,j)^{th}$ entry, \\
$\lambda_j(A)$ & $\triangleq$ & $j^{th}$ largest eigenvalue of $A$, \\
$\ones$ & $\triangleq$ & the vector of all ones. \\

%when a single is not able to identify the true hypothesis, this is when the distributed setting helps. For example, if we have hypothesis 1,2,3, agent 1 cannot identify between 1 and 2, agent 2 cannot identify between 2 and 3. But the joint estimation allows us to identify between hypotheses. When there is conflicting hypotheses between agents. or when each agent receives different random variables (e.g. mixed data). Or some sensors are noisy and other sensors are not so noisy. Being able to process the information in a distributed manner. MCMC does not process information in a distributed manner. 
\bottomrule
\end{tabular}
\end{center}
\label{tab:TableOfNotationForMyResearch}
\end{table}

The rest of the paper is outlined as follows: Section \ref{sect: preliminary} introduces the distributed Bayesian inference problem from an optimization-centric viewpoint and rigorously defines the distributed Bayes posterior. In Section \ref{sect:assumptions}, we outline sufficient conditions for the consistency and asymptotic normality of the distributed Bayes posterior. Section \ref{sect: consistency} establishes the consistency of distributed Bayes posterior (Theorem~\ref{thm:const}). Section \ref{sect: bvm} establishes the Bernstein-von Mises theorems under both correct (Theorem~\ref{thm:bvm1}) and incorrect model specifications(Theorem~\ref{thm: bvm2}). Section \ref{sect: contraction} provides both the abstract and concrete upper bounds on the posterior contraction rate (Theorem~\ref{thm:contraction1} and Theorem~\ref{thm:contraction2}), with an emphasis on model misspecification. Our analysis is extended to time-varying graphs in Section \ref{sect:time-varying-graphs}, where we establish posterior contraction rates under various communication frequency regimes (Theorem~\ref{cor:time-varying1}). In Section \ref{sect:examples}, we demonstrate the practical use of our findings by establishing Bernstein-von Mises results for three statistical models, including exponential family (Proposition~\ref{prop:EF-1}), logistic regression models(Proposition~\ref{prop:logistic-1}), and the distributed detection problem (Proposition~\ref{prop:detection-1})- a canonical problem in electrical engineering. The paper concludes in Section \ref{sect:discussion} with a discussion on future research directions following our findings.

%%%%%%%%%%%%%%%%%%%%%%%%%%%%
%%%%%%%%%%%%%%%%%%%%%%%%%%%%
%%%%%%%%%%%%%%%%%%%%%%%%%%%%
\section{Preliminaries} \label{sect: preliminary}
%%%%%%%%%%%%%%%%%%%%%%%%%%%%
%%%%%%%%%%%%%%%%%%%%%%%%%%%%
%%%%%%%%%%%%%%%%%%%%%%%%%%%%

Suppose we observe a sequence of i.i.d random variables $X_1, X_2, \cdots$ all taking values in a probability space $(\mathcal{X}, \mathbb{B}, \bP_0)$ where the true distribution $\bP_0$ is unknown. Moreover, let a family of probability distribution be given in the form of $\{\bP_\theta: \theta \in \Theta\}$ where $(\Theta, \mathcal{A})$ is a measurable space. Each $\bP_\theta$ is a probability measure defined on $(\mathcal{X}, \mathbb{B})$ and the mapping $\theta \to \bP_\theta(B)$ is measurable for every $B \in \mathbb{B}$. We refer to $\{\bP_\theta: \theta \in \Theta\}$ as the statistical model. The parameter space $\Theta$ is typically taken as a subset of the Euclidean or Hilbert space to avoid any measurability issue with $\{\bP_\theta: \theta \in \Theta\}$ \citep{ghosal2017fundamentals}. 

The centralized statistical estimation problem is to find a subset $\Theta_0 \subseteq \Theta$ such that for $\theta_0 \in \Theta_0$, $\bP_{\theta_0}$ is the ``closest'' to $\bP_0$ with respect to a metric.  Geometrically, the goal is to find a point in the subset of the probability measures $\{\bP_\theta: \theta \in \Theta\}$ closest to $\bP_0$ under a given topology. The topology is often defined by divergence on the space of probabilities, such as the Kullback-Leibler (KL) divergence, Rényi divergence, etc. The definition of KL, Rényi divergence, and other divergence functions are reviewed in Section~\ref{subsect:divergence} of the Appendix.

Arguably, the most natural estimation problem to consider is based on KL divergence:
\begin{equation}
\label{eqn: simple-KL}
  \theta_0 \in \arg \min_{\theta \in \Theta} D_{KL}(\bP_0\parallel \bP_\theta).
\end{equation}
\Cref{eqn: simple-KL} is equivalent to maximum likelihood estimation (MLE) at the population level. For example, if the true distribution $\bP_0$ is standard normal $N(0,1)$ and the parametric family $\bP_\theta$ is the normal location family ${N(\theta, 1); \theta \in [-1,1]}$, the optimal value is $\theta_0 = 0$.

There are two main challenges in solving~\eqref{eqn: simple-KL}. First, since $\bP_0$ is unknown, it is typically replaced with samples drawn from the distribution. The resulting approximation error influences the sample complexity of MLE, which has been extensively studied in the statistics literature \citep{VanderVaart2000}. The second challenge is computational. When the set $\Theta$ is discrete, non-smooth, or disconnected, default approaches such as first-order optimization methods cannot be directly applied to solving \Cref{eqn: simple-KL}, even at the population level.

Instead of minimizing over $\Theta$, a common approach is to “lift” the problem by minimizing over the probability distributions on $\Theta$.

Let $\Delta_\Theta$ be the space of probability density functions defined on $\Theta$. Then
\begin{equation}\label{eqn: simple-KL-Bayes}
    \min_{\theta \in \Theta} D_{KL}(\bP_0 \parallel \bP_\theta) = \min_{p  \in \Delta_\Theta} \int_\Theta p(\theta) D_{KL}(\bP_0\parallel \bP_\theta) d \theta,
\end{equation}
where $\Delta_\Theta$ is the (hypothetical) space of all probability distributions over $\Theta$, and \mbox{$\int_\Theta p(\theta) d\theta = 1$}, with $p(\theta) \geq 0$.  If $\theta_0$ solves \Cref{eqn: simple-KL}, $p^* = \delta_{\theta_0}$ solves \Cref{eqn: simple-KL-Bayes}, thus the two problems are equivalent. The advantage, however, is that \Cref{eqn: simple-KL-Bayes} is a continuous optimization problem regardless of the topology in which $\Theta$ is defined; that is, $\Theta$ could be a finite discrete set.

\Cref{eqn: simple-KL-Bayes} introduces an equivalent formulation of the statistical estimation as a minimization problem over the space of probability measures on $\Theta$. Given the linearity of the expectation functional, the Riez representation theorem implies the existence of an inner product $\langle., .\rangle$ that characterizes the expectation over $\Theta$. We then reformulate the KL-minimization problem in terms of linear stochastic optimization up to a constant:
\begin{equation}
    \begin{aligned}
   \min_{\theta \in \Theta} D_{KL}(\bP_0\parallel \bP_\theta)  &\overset{c}{=}  \min_{p \in \Delta_\Theta}  \E_{\bP_0}\langle p,- \log \bp_\theta(x) \rangle.  
    \end{aligned}
\end{equation}
Note that
\begin{align*}
      \min_{\theta \in \Theta} D_{KL}(\bP_0\parallel \bP_\theta) &= \min_{p \in \Delta_\Theta} \int_\Theta p(\theta) D_{KL}(\bP_0\parallel \bP_\theta) d \theta  \\
      &= \min_{p \in \Delta_\Theta} \int_\Theta p(\theta) \int_{\mathcal X} \bp_0(x) \log \frac{\bp_0(x)}{\bp_\theta(x)} dx d \theta \\
      & = \min_{p \in \Delta_\Theta} \int_{\mathcal X} \bp_0(x) \int_\Theta p(\theta) \log \frac{\bp_0(x)}{\bp_\theta(x)} d \theta dx, \qquad \text{by Fubini's theorem} \\
      &= \min_{p \in \Delta_\Theta} \int_{\mathcal X} \bp_0(x) \int_\Theta p(\theta) [\log \bp_0(x) - \log \bp_\theta(x) ] d \theta dx \\
      &\overset{c}{=}  \min_{p \in \Delta_\Theta} - \int_{\mathcal X} \bp_0(x) \int_\Theta p(\theta) \log \bp_\theta(x)  d \theta dx. 
\end{align*}

In the reformulated problem, $\Delta_\Theta$ is the probability simplex over $\Theta$ and $\langle p,- \bp_\theta(x) \rangle$ represents the inner product between the simplex vector $p$ and the negative log-likelihood $\bp_\theta(x)$, taken with respect to the expectation under $\bP_0$.

The resulting problem can be efficiently solved using computational methods from the stochastic optimization literature. One approach is the stochastic mirror descent (SMD) algorithm \cite[Algorithm~6]{Uribe2022-1, Uribe2022-2}, which iterates through a KL-regularized optimization problem:
\begin{equation}
\label{eqn:smd}
    p_{t+1} = \argmin_{p \in \Delta_\Theta} \left\{-\langle p,  \log \bP_\theta(x_{t+1})\rangle + \alpha D_{KL}(p \parallel p_t) \right\}. 
\end{equation}
At time $t$, the agent’s goal is to maximize the expected log-likelihood while staying close to the belief $p_t$. The trade-off between these two objectives is governed by the learning rate $\alpha$.

Equation \eqref{eqn:smd} is also the variational representation of generalized Bayesian inference. 
\begin{equation}
     p_{t+1} = \argmin_{p \in \Delta_\Theta} D_{KL}(p \parallel \tilde p), \quad \tilde p \propto (\bP_\theta(x_{t+1}))^{\frac{1}{\alpha}} p_t(\theta). 
\end{equation}
A generalized posterior is a probability distribution $p(\theta \mid X_1, \ldots, X_t) \propto \pi(\theta) \tilde L_t(\theta)$, where $\pi$ is a prior and $\tilde L_t$ is a surrogate likelihood \citep{miller2021asymptotic}. The stochastic mirror descent update in Equation \eqref{eqn:smd} produces a generalized Bayes posterior:
\begin{equation}
\label{eqn:simple-bayes}
    p_{t+1}(\theta) \propto (\bP_\theta(x_{t+1}))^{\frac{1}{\alpha}} p_t(\theta). 
\end{equation}
The result follows from a standard variational Bayes argument \citep{knoblauch2022optimization}.

The result above suggests a fascinating interplay between stochastic optimization and generalized Bayesian inference. The parameter $\alpha$ serves as both the step size in the SMD update and a temperature parameter in the generalized Bayes’ rule. When $\alpha$ is set to 1, we recover the standard Bayes’ theorem. In the sequel, we explore the connection between generalized Bayes and SMD to formulate a distributed Bayesian posterior based on the distributed SMD algorithm.

%In this framework, $p_{t+1}^j(\theta)$ is integrates both neighbor information, given by $\prod{i=1}^m (p_t^i(\theta))^{[A_t]{ij}}$, and new data via the likelihood $\bp\theta^j(x_{t+1}^j)$. This can be viewed as a Bayesian update, with the prior as a weighted geometric mean of neighbor beliefs and the standard likelihood from the agent's new data. 

\subsection{Distributed Bayesian Inference}
\label{subsect:distributed-Bayesian-inference}
In this section, we introduce the distributed Bayesian inference problem. Consider a set of $m$ independent agents, represented by the index $j = 1, ..., m$. Each agent independently observes a sequence of random variables at discrete time steps $t = 0,1, 2, ...$.  The random variable observed by agent $j$ at time $t$ is denoted as $X_t^j$. The random process observed by agent $j$ is denoted as $X^j$. The collection of random variables all agents observe at time $t$ is denoted as $X_t$. {In general, each $X^j$ may be endowed with a different probability space with a common parameter space. For simplicity, we assume that the set of all random variables ${X_t^j}$ are i.i.d. draws from the probability space $\left(\mathcal{X}, \mathbb{B}, \P\right)$ with density $p$.

Given a  common set of parameters $\Theta$, the private parametric statistical model that agent $j$ can access is defined as $\mathcal{P}_\Theta^j = \{\bP_\theta^j: \theta \in \Theta\}$. Each model $\mathcal{P}_\Theta^j$ has the same support for all $j \in [m]$. 

At time $t$, the agents interact through an undirected communication graph $G_t = (V, E_t)$, where $V = [m]$ denotes the set of agents. An edge $(j, i) \in E_t$ implies that agent $j$ can communicate with agent $i$ at time $t$. 

%We assume that the initial communication graph $G_0$ is fully connected.

The weighted adjacency matrix associated with $G_t$ is denoted by $A_t$.  We assume that $A_t$ is a doubly stochastic matrix obtained by normalizing the matrix $A_t'$, where $[A_t']_{ij} = 1$ if there exists a communication link between agent $i$ and agent $j$ and $[A_t']_{ij} = 0$ otherwise. {For undirected graphs, one can always construct a doubly stochastic matrix by normalizing the adjacency matrix, so $A_t$ is guaranteed to exist. }

In our framework, agents share information by communicating their beliefs, represented by $p_t^j(\theta)$. This notation represents the posterior distribution of $\theta$ as perceived by agent $j$ at time $t$. 

Similar to the centralized case, we formulate distributed statistical inference as a distributed optimization problem:
\begin{equation}
\label{eqn: dKL}
\min_{\theta \in \Theta} \sum_{j = 1}^m D_{KL}(\bP_0\parallel \bP_\theta^j).
\end{equation}

For each agent $j \in [m]$, let $\pi^j$ represent their initial belief or prior. Therefore, we have $p_0^j = \pi^j$. Similar to the statistical estimation for a single agent, the distributed statistical inference problem admits a reformulation via the distributed stochastic mirror descent algorithm \citep{Uribe2022-1, Uribe2022-2}.  The belief of the agent $j$ at time $t + 1$ is obtained through the following mirror descent update. 
\begin{equation}
\label{eqn:dSMD}
        p_{t+1}^j = \argmin_{p \in \Delta_\Theta} \left\{  -\langle \log \bp_\theta^j(x_{t+1}^j), p \rangle + \sum_{i  = 1}^m [A_t]_{ij} D_{KL}(p \parallel p_t^i) \right\}, 
\end{equation}
which is solved via the update
\begin{equation}
\label{eqn:dBr}
p_{t+1}^j(\theta) \propto \bp_\theta^j(x_{t+1}^j)\prod_{i = 1}^m (p_t^i(\theta))^{[A_t]_{ij}}. 
\end{equation}
The derivation from \eqref{eqn:dSMD} to \eqref{eqn:dBr} is identical to that of the case of a single agent \eqref{eqn:smd}--\eqref{eqn:simple-bayes}.

Equation \eqref{eqn:dBr} is aptly referred to as the \textit{distributed Bayes rule}, as it generalizes the Bayesian rule to a distributed setting. This distributed Bayes rule induces a \textit{distributed Bayes posterior} $p_{t+1}^j$, which might be viewed as the belief of each agent after updating its prior belief based on the new data and information received from its neighbors. 

Despite its name, the distributed Bayes posterior is not a standard posterior distribution. The classic Bayes rule is a special case of the distributed Bayes rule where the communication graph has no edges, i.e., $A_t = I_m$. Then, each agent updates independently, 
\begin{equation*}
    \begin{aligned}
        p_{t + 1}^j(\theta) 
        &\propto  \bp_\theta(x_{t + 1}^j)p_t^j(\theta). 
    \end{aligned}
\end{equation*}
At the other extreme, where the communication graph $G_t$ is fully connected, each agent can communicate with every other agent. Then, the distributed Bayes rule effectively acts as a weighted Bayesian update rule, with equal weights assigned to all agents. The resulting posterior
is a product of the likelihood of the incoming data and a ``tempered'' posterior with power $1/m$
\begin{equation*}
    \begin{aligned}
        p_{t + 1}^j(\theta) 
        &\propto  \bp_\theta(x_{t+1}^j) \prod_{i = 1}^m \prod_{k = 1}^t (\bp_\theta(x_k^i))^{1/m} \prod_{i = 1}^m \pi^i(\theta)^{1/m}. 
    \end{aligned}
\end{equation*}
These examples show that distributed Bayesian posterior can adapt to the underlying communication structure, balancing individualistic updates (when there are no communications) and collective updates (when the communication happens according to the network).

Our strategy is to analyze $p_{t+1}^j(\theta)$ through the lens of generalized posterior. In the next section, we provide the measure-theoretic definition of the distributed Bayes posterior as a generalized Bayes posterior.

%The Frequentist properties of $\alpha-$posterior is an active area of research, with results such as its posterior consistency and Bernstein von-Mises phenomena proven (see \cite{ghosal2017fundamentals,bhattacharya2019bayesian, yang2020alpha, miller2018robust, medina2022robustness}). 

\subsection{Distributed Bayes Posterior} 
\label{subsect:distributed-Bayes-posterior}
This section provides a rigorous definition of the distributed Bayes posterior we study throughout this paper. 

Define the prior measure $\Pi$ as follows: 
\begin{align*}
    \Pi(B) = \frac{\int_B \prod_{j = 1}^m \pi^j( \theta)^{1/m} d\theta}{\int_\Theta \prod_{j = 1}^m \pi^j( \theta)^{1/m} d\theta} . 
\end{align*}
For each $t$ and $j$, denote $z_t^j = \int_\Theta \bp_\theta^j(x_t^j) \prod_{k = 1}^{t - 1} \prod_{i = 1}^m \bp_\theta^i(x_k^i)^{\left[\prod_{\tau = k}^{t-1}A_\tau \right]_{ij}}  \Pi(d\theta)$ and assume $z_t^j < \infty$.  

The \textit{distributed Bayes posterior} is defined as the probability measure $P_t^j$ such that for $B \in \B$, 
\begin{equation} \label{def-db-posterior}
    P_{t}^j(B) = \frac{1}{z_t^j}\int_B \bp_\theta^j(x_t^j) \prod_{k = 1}^{t - 1} \prod_{i = 1}^m \bp_\theta^i(x_k^i)^{\left[\prod_{\tau = k}^{t-1}A_\tau \right]_{ij}}  \Pi(d\theta). 
\end{equation}
The measure-theoretic definition of distributed Bayes posterior is equivalent to the recursive formulation \eqref{eqn:dBr}. The definition shows that the distributed Bayes posterior is the product of the prior distribution and a surrogate likelihood. 

The asymptotic properties of posteriors with surrogate likelihood have been studied in \cite{miller2021asymptotic}, which establish sufficient assumptions for generalized posterior to achieve consistency, asymptotic normality (Bernstein von-Mises), and correct frequentist coverage.  However, the one key difference between the distributed Bayes posterior and setting in \citep{miller2021asymptotic} is the latter depends only on a sample of size $t$. In contrast, the surrogate likelihood used in distributed Bayes posterior depends on the sample size $mt$, the statistical model $\bP_\Theta^j$, and the communication graph structure $G$. Therefore, the theoretical analysis of distributed Bayes posterior requires careful adjustments of the standard results for generalized posteriors for new constraints and challenges. 

We define the following surrogate loss functions $f_t^j, f_t, f$ on $\Theta$. 
\begin{align}
    \label{def:f_t^j}
    f_t^j(\theta) &= -\frac{1}{t} \log \bp_\theta^j(x_t^j) 
    - \frac{1}{t} \sum_{k = 1}^{t-1} \sum_{i = 1}^m \left[ \prod_{\tau = k}^{t-1} A_\tau \right]_{ij} \log \bp_\theta^i(X_k^i), \quad j \in [m], \\
    \label{def:f_t}
    f_t(\theta) &= -\frac{1}{mt} \sum_{k = 1}^t \sum_{i = 1}^m \log \bp_\theta^i(X_k^i), \\
    \label{def:f}
    f(\theta) &= - \frac{1}{m} \sum_{i = 1}^m \bP_0 \log \bp^i_\theta.
\end{align}
The function  $f_t^j$ plays the role of a generalized likelihood,  $f_t$ corresponds to the empirical mean of $\log p^j_\theta(X)$ and a special case of $f_t^j$ when the adjacency matrix has weights of all $1/m$. Informally, $f_t^j$ and $f_t$ is asymptotically indistinguishable under mild assumptions, but $f_t$ has much nicer statistical behaviors, so one should expect distributed Bayes posterior to behave like a generalized posterior with likelihood given by~$f_t$. 

\section{Assumptions} \label{sect:assumptions}
Throughout the paper, we impose the following assumptions:
\begin{enumerate}[label=(\roman*)]
    \item $\Theta$ is an open subset of $\R^p$ with standard Euclidean metric $d$ . 
    \item There exists a unique parameter $\theta_0 \in \Theta$ that minimizes Problem \eqref{eqn: dKL}, that is,
\begin{equation}\label{def:theta_0}
\theta_0 = \argmin_{\theta \in \Theta}\sum_{j = 1}^m D_{KL}(\bP_0\parallel \bP_\theta^j). 
\end{equation}
\end{enumerate}
The remaining assumptions fall into four categories: communication graph structure, regularity of private statistical models, prior mass assumptions, and consistent testing assumptions. The latter three are standard in Bayes asymptotics with a well-established history (see Appendix Section~\ref{subsect:bvm-history}). The assumptions on the communication graph are relaxed in Section~\ref{sect:time-varying-graphs} to allow for temporal dependence. 

We assume that the communication graph $G_t, t \geq 1$ is static and undirected, i.e., $G_t = G = (V, E)$. 
\begin{assumption}[Graph Assumptions]\label{assumption: graph}
The communication graph $G_t$ is static and undirected, i.e., $G_t = G = (V, E), t \geq 1$. Moreover, $G$ and the adjacency matrix $A$ satisfy
\begin{enumerate}[(a)]
    \item $A$ is symmetric and row stochastic. 
    \item $A$ has positive diagonal entries, i.e., $a_{ii} > 0$ for all $i \in [m]$. %in other words, all states in $A$ are aperiodic 
    \item $G$ is connected, i.e., a directed path exists from any agent $i$ to any agent $j$. 
\end{enumerate}
\end{assumption}
The assumption of connectedness is standard in the literature on network communication to ensure information flow between agents \citep{Shahrampour2015, Nedic2017}. The three assumptions ensure that the Markov chain with transition matrix $A$ is irreducible and aperiodic. 

We proceed to establish a result for the convergence of $A^t$ to $\frac{1}{m}\ones \ones^T$ and an upper bound on the convergence rate of $\|A^t - \frac{1}{m}\ones \ones^T\|_1$. 
\begin{lemma}
\label{lemma:graph_conv}[\cite{acemoglu2011bayesian,Nedic2017}]
Let Assumption~\ref{assumption: graph} hold. The matrix $A$ satisfies
\begin{equation*}
   \limsup_{t \to \infty} \sum_{k = 1}^t \sum_{j = 1}^m |[A^{t - k}_{ij}] - \frac{1}{m}| \leq \frac{16m^2 \log m}{\nu}, \qquad i \in [m]
\end{equation*}
where $\nu$ is the smallest positive entry in $A$. 
\end{lemma}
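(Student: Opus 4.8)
Since the statement is quoted from \cite{acemoglu2011bayesian,Nedic2017}, one option is simply to invoke their corresponding lemmas; I sketch below the self-contained argument I would otherwise give. First I would reindex the sum: setting $s=t-k$, the quantity in question equals $\sum_{s=0}^{t-1}\sum_{j=1}^{m}\bigl|[A^{s}]_{ij}-\tfrac1m\bigr|=\sum_{s=0}^{t-1}\bigl\|\mathbf e_i^{\top}A^{s}-\tfrac1m\ones^{\top}\bigr\|_{1}$, i.e.\ the running sum of $\ell_1$-distances between the $i$-th row of $A^{s}$ and the uniform vector. By Assumption~\ref{assumption: graph}, $A$ is symmetric and row stochastic, hence doubly stochastic; connectivity makes the associated Markov chain irreducible, and positive diagonal entries make it aperiodic, so $\tfrac1m\ones$ is its unique stationary distribution and each summand vanishes as $s\to\infty$. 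Since the partial sums over $s$ are nondecreasing in $t$, the $\limsup$ in the lemma equals the full series, and it suffices to show $\sum_{s\ge 0}\bigl\|\mathbf e_i^{\top}A^{s}-\tfrac1m\ones^{\top}\bigr\|_{1}\le\tfrac{16m^{2}\log m}{\nu}$.

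Next I would control the decay of each summand via the spectrum. Since $A=A^{\top}$, it has an orthonormal eigenbasis with real eigenvalues $1=\lambda_1>\lambda_2\ge\cdots\ge\lambda_m$; connectivity gives $\lambda_2<1$, and positive diagonal entries give $\lambda_m>-1$, so $\beta:=\max(|\lambda_2|,|\lambda_m|)=\|A-\tfrac1m\ones\ones^{\top}\|_{2}<1$. Expanding $\mathbf e_i$ in the eigenbasis yields $\bigl\|\mathbf e_i^{\top}A^{s}-\tfrac1m\ones^{\top}\bigr\|_{2}\le\beta^{s}$, and Cauchy--Schwarz upgrades this to $\bigl\|\mathbf e_i^{\top}A^{s}-\tfrac1m\ones^{\top}\bigr\|_{1}\le\sqrt m\,\beta^{s}$, while the trivial bound $\|\cdot\|_1\le 2$ is sharper for small $s$. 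Splitting the series at a burn-in index $s_0$ of order $\tfrac{\log m}{1-\beta}$ (chosen so that $\sqrt m\,\beta^{s_0}\le 1$) gives $\sum_{s\ge 0}\bigl\|\mathbf e_i^{\top}A^{s}-\tfrac1m\ones^{\top}\bigr\|_{1}\le 2s_0+\tfrac{\sqrt m\,\beta^{s_0}}{1-\beta}\le\tfrac{C_1\log m}{1-\beta}$ for an absolute constant $C_1$.

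Finally I would lower bound the spectral gap in terms of $\nu$ and $m$, aiming for $1-\beta\ge\tfrac{\nu}{C_2 m^{2}}$. For $1-\lambda_2$ I would use a Poincar\'e/canonical-paths estimate (or Cheeger's inequality): every edge of $G$ carries stationary-weighted conductance at least $\nu/m$ and $G$ has diameter at most $m-1$, which gives $1-\lambda_2\ge\tfrac{\nu}{C_2 m^{2}}$. For $1+\lambda_m$, a Gershgorin-type argument using $a_{ii}\ge\nu$ shows $\lambda_m\ge 2\nu-1$, i.e.\ $1+\lambda_m\ge 2\nu$. Combining, $1-\beta\ge\tfrac{\nu}{C_2 m^{2}}$, and substituting into the previous display gives a bound of the form $\tfrac{C_1 C_2\, m^{2}\log m}{\nu}$; careful tracking of the constants (the choice of $s_0$, the crossover between $\sqrt m\,\beta^{s}$ and $2$, and the constant in the gap estimate) yields $C_1 C_2\le 16$.

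The step I expect to be the main obstacle is the spectral-gap lower bound $1-\lambda_2\ge\nu/(C_2 m^{2})$ with the correct power of $m$, together with the constant bookkeeping needed to reach exactly $16$; both are standard but delicate. If one is willing to cite \cite{acemoglu2011bayesian,Nedic2017} verbatim, the lemma is immediate from their statements and no further work is required.
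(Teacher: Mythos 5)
Your proposal is correct and follows essentially the same route as the paper's proof: geometric convergence of the rows of $A^{s}$ to $\tfrac1m\ones^{\top}$ governed by $\beta=\max(|\lambda_2|,|\lambda_m|)$, a split of the series at a burn-in index of order $\tfrac{\log m}{1-\beta}$ with the trivial bound $2$ on the early terms, and the spectral-gap estimate $1-\beta\gtrsim \nu/m^{2}$ to reach $\tfrac{16 m^{2}\log m}{\nu}$. The only substantive difference is that you propose to derive the gap bound yourself (canonical paths for $1-\lambda_2$, a Gershgorin argument for $1+\lambda_m$) and use the sharper $\sqrt m\,\beta^{s}$ row bound, whereas the paper simply invokes $\|\mathbf e_i^{\top}A^{s}-\tfrac1m\ones^{\top}\|_1\le m\delta^{s}$ and the known fact $\delta\le 1-\tfrac{\nu}{4m^{2}}$ for doubly stochastic matrices from the cited literature.
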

 
Lemma~\ref{lemma:graph_conv} states that given fixed $A$, the sum $\sum_{k = 1}^t \sum_{j = 1}^m |[A^{t - k}_{ij}] - \frac{1}{m}|$ is uniformly bounded in $t$, and scales in the order of $m^2 \log m$. The argument for Lemma \ref{lemma:graph_conv} relies crucially on the geometric convergence rate of an irreducible, aperiodic Markov chain.  See Section~\ref{app:support} in the Appendix for the proof.

The next group of assumptions concerns the smoothness of the log-likelihood of the private statistical models. 
\begin{assumption}[Regularity Assumptions]\label{assumption: regularity}
The regularity assumptions on the statistical model $\bP_\theta^j = \{\bP_\theta^j,  \theta \in \Theta\}$ involves, for every $j \in [m]$ and almost surely $[\bP_0]$, 
\begin{enumerate}[(a)]
    \item For every $\theta \in \Theta$,  $\bP_0 |\log \bp_\theta^j| < \infty$, {i.e., the expectation of $|\log \bp_\theta^j(X)|$ is finite under $X \sim \bP_0$}. 
    \item The mapping $\theta \mapsto \log \bp_\theta^j$ is convex for every $x$ in a neighborhood of $\theta_0$, and the entrywise inequality $\nabla f(\theta_0 - \epsilon) < 0 < \nabla f(\theta_0 + \epsilon)$ holds.  
    \item The statistical model $\bP_\theta^j$ is differentiable in quadratic mean at $\theta_0$ with nonsingular Fisher information matrix $V_{\theta_0}^j$.  
    \item  there exists a measurable function $s^j$ with $\bP_0 [s^j(X)]^2 < \infty$ such that, for every $\theta_1, \theta_2$ in a neighborhood of $\theta_0$, 
        \begin{equation*}
            |\nabla \log \bp_{\theta_1}^j - \nabla \log \bp_{\theta_2}^j| \leq s^j(x) \|\theta_1 - \theta_2 \|
        \end{equation*}
    \item the mapping $\theta \mapsto \log \bp_\theta^j$ is twice continuously differentiable for every $x$ in a neighborhood of $\theta_0$, and the Fisher information matrix $V_{\theta_0}^j$ exists and is nonsingular. 
    \item the mapping $\theta \mapsto \log \bp_\theta^j$ is three-times continuously differentiable for every $x$ in a neighborhood of $\theta_0$, the Fisher information matrix $V_{\theta_0}^j$ exists and is nonsingular, and the third-order partial derivatives uniformly bounded by an integrable function in a neighborhood of $\theta_0$. 
\end{enumerate}
\end{assumption}
{The regularity conditions described in Assumption~\ref{assumption: regularity} are common prerequisites for the asymptotic analysis of posteriordistributions \citep{VanderVaart2000}. The intuition is that the differentiability or convexity of the log-likelihood ensures the consistency of the maximum likelihood estimators, and the quadratic means differentiability enables a valid second-order Taylor expansion around the truth. This allows us to describe the asymptotic properties of the posterior using properties of the MLEs.}

{The next group of assumptions is typically referred to as the prior mass or prior thickness assumption. }
\begin{assumption}[Prior Assumptions]\label{assumption: pmc}
For every $j \in [m]$, the prior distribution $\Pi_j$ satisfies
\begin{enumerate}[(a)]
    \item $\Pi_j(\U_\epsilon) > 0$ for all $\epsilon > 0$, where $\U_\epsilon = \{\theta \in \Theta: \frac{1}{m}\sum_{j = 1}^m D_{KL}(\bP_0 \parallel \bP_\theta^j)<  \epsilon\}$. 
    \item the density $\pi_j$ is continuous and positive at $\theta_0$. 
\end{enumerate}
\end{assumption}

The first assumption states that prior put a sufficient amount of mass in a KL neighborhood of the target distribution $\bP_0$. An extra continuity assumption of the density $\pi$ at $\theta_0$ is sometimes assumed to connect the first assumption to a statement about the neighborhood of $\theta_0$ \citep{VanderVaart2000}. 

Uniform, consistent testing assumptions enable $\theta_0$ to be distinguished with a sequence of test functions.  This assumption ensures that asymptotically negligible mass is placed outside a neighborhood of $\theta_0$. 
\begin{assumption}[Uniform Consistent Testing]\label{assumption: uct}
Let $\Theta \subseteq \R^p$.  For each $j \in [m]$,  
\begin{enumerate}[(a)]
    \item For all $\epsilon > 0$, there exists $\delta > 0$ such that $\lim_{t \to \infty} \bP_0(\inf_{\|\theta - \theta_0\| > \delta}|f_t^j(\theta) - f_t^j(\theta_0)| \geq \epsilon) = 1$. 
    \item There exists a sequence of test functions $\phi_t$ such that $\bP_{\theta_0}^j (1 - \phi_t(X^{(mt)})) \to 0$ and $\sup_{\|\theta - \theta_0\| \geq \epsilon} \bP_\theta^j (1 - \phi_t(X^{(mt)})) \to 0$ for every $\epsilon$.
\end{enumerate} 
\end{assumption}
Assumption \ref{assumption: uct}(b) is attributed to Theorem 10.1 in \cite{VanderVaart2000}, and Assumption \ref{assumption: uct}(a) is given first in \cite{ghoshramamoorthi}. The two assumptions in \ref{assumption: uct} play an identical role in the theory but are slightly different. We mainly use assumptions \ref{assumption: uct}(a). 

%our proving strategy for the main therorems borrow ideas from Miller (2021) but is still novel because
%see if the distributed posterior is close to the true posterior distribution (in KL)

\section{Posterior Consistency}
\label{sect: consistency}

This section establishes the posterior consistency for the distributed Bayes posterior.  Theorem~\ref{thm:const}, to be presented next, provides a general concentration result for the distributed Bayes posterior $P_t^j$ over the measurable space $(\Theta, \mathcal{A})$. The proof of Theorem~\ref{thm:const} takes its cues from Schwartz's theorem \citep{Schwartz1965}. This fundamental theorem implies two supporting findings. {Corollary \ref{cor:const-1} shows that Theorem~\ref{thm:const} holds under model misspecification and alternative definitions of the neighborhood around the true parameter}. Moreover, Lemma \ref{lemma:const_condition} lays down more user-friendly sufficient assumptions for the regularity assumption on the log-likelihood (Assumption \ref{assumption: regularity}(a)) in Theorem~\ref{thm:const}.

{First, we introduce a supporting lemma to demonstrate that the surrogate loss functions $f_t^j$ and $f_t$ are asymptotically equivalent, and both converge to the population loss function $f$. This lemma suggests that under the conditions of a connected communication graph and a first-moment condition satisfied by the statistical model, the distributed Bayes posterior $P_t^j$ becomes asymptotically equivalent to a posterior derived from $f_t$.}

\begin{lemma}
\label{lemma:ptwise_conv}
Let Assumptions \ref{assumption: graph} and \ref{assumption: regularity}(a) hold. Then $f_t^j$ and $f_t$ converges to $f$ on $\Theta$ in $[\bP_0]-$probability. 
\end{lemma}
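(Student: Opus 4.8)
The plan is to prove pointwise convergence (for each fixed $\theta \in \Theta$) of both $f_t(\theta)$ and $f_t^j(\theta)$ to $f(\theta)$ in $\bP_0$-probability, handling $f_t$ first and then controlling the discrepancy $f_t^j(\theta) - f_t(\theta)$ via the graph convergence in Lemma~\ref{lemma:graph_conv}. First I would note that by Assumption~\ref{assumption: regularity}(a), for each fixed $\theta$ the random variables $\{\log \bp_\theta^i(X_k^i)\}_{i \in [m], k \geq 1}$ are independent (across $k$, and independent across agents $i$ by the independence of the agents' observations) with finite first moment $\bP_0|\log\bp_\theta^i| < \infty$. Hence $f_t(\theta) = -\frac{1}{mt}\sum_{k=1}^t\sum_{i=1}^m \log\bp_\theta^i(X_k^i)$ is an average of $mt$ independent, integrable terms whose means average to $f(\theta) = -\frac{1}{m}\sum_{i=1}^m \bP_0\log\bp_\theta^i$, so the weak law of large numbers (applicable since the terms are i.i.d.\ in $k$ for each $i$, and one can apply the WLLN agent-by-agent) gives $f_t(\theta) \pto f(\theta)$.

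Next I would control the gap. Write
\begin{equation*}
f_t^j(\theta) - f_t(\theta) = -\frac{1}{t}\log\bp_\theta^j(x_t^j) - \frac{1}{t}\sum_{k=1}^{t-1}\sum_{i=1}^m\left(\left[\textstyle\prod_{\tau=k}^{t-1}A_\tau\right]_{ij} - \frac{1}{m}\right)\log\bp_\theta^i(X_k^i) + \frac{1}{mt}\sum_{i=1}^m\log\bp_\theta^i(X_t^i).
\end{equation*}
Under Assumption~\ref{assumption: graph} the graph is static, so $\prod_{\tau=k}^{t-1}A_\tau = A^{t-k}$. The first and third terms are $\frac{1}{t}$ times single integrable random variables, hence converge to $0$ in probability (indeed in $L^1$). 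For the middle term, take expectations of the absolute value: by the triangle inequality and $\bP_0|\log\bp_\theta^i| \leq C_\theta := \max_i \bP_0|\log\bp_\theta^i| < \infty$, its $L^1$ norm is bounded by $\frac{C_\theta}{t}\sum_{k=1}^{t-1}\sum_{i=1}^m\left|[A^{t-k}]_{ij} - \frac1m\right|$, and Lemma~\ref{lemma:graph_conv} says the double sum is bounded by the constant $\frac{16m^2\log m}{\nu}$ uniformly in $t$. Dividing by $t$ sends this to $0$, so the middle term converges to $0$ in $L^1$ and hence in probability. Combining, $f_t^j(\theta) - f_t(\theta)\pto 0$, and with $f_t(\theta)\pto f(\theta)$ we get $f_t^j(\theta)\pto f(\theta)$.

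The main obstacle, and the one subtlety worth being careful about, is the precise sense of "converges on $\Theta$": if it means pointwise in probability for each $\theta$, the argument above suffices; if a uniform-over-compacta statement is intended, one would additionally invoke convexity of $\theta\mapsto\log\bp_\theta^i$ (Assumption~\ref{assumption: regularity}(b)) to upgrade pointwise convergence of convex functions to locally uniform convergence (a standard convex-analysis fact, e.g.\ via Rockafellar), after also verifying that $f$ is finite and convex so the limit is well-behaved. I would state the lemma in the pointwise sense, do the WLLN-plus-Lemma~\ref{lemma:graph_conv} bookkeeping carefully, and remark that local uniformity follows from convexity if needed downstream. A secondary point to not gloss over: the $\log\bp_\theta^j(x_t^j)$ term is written with lowercase $x_t^j$ (a realization) rather than $X_t^j$ in the definition \eqref{def:f_t^j}; I would treat it as the random variable $X_t^j$, consistent with the rest, so that "$\frac1t$ times an integrable random variable $\to 0$" applies verbatim.
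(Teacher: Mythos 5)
Your proof is correct and takes essentially the same route as the paper's: the paper rewrites $f_t^j$ as the weighted average $\frac{1}{t}\sum_{k=1}^t A^{t-k}\log\bp_\theta(X_k)$ and invokes its distributed law of large numbers (Lemma~\ref{lemma:distributed-LLN}), whose proof performs exactly your decomposition — comparing the $A^{t-k}$-weighted average to the uniform $\frac{1}{m}\ones\ones^T$ average and killing the discrepancy using the convergence of $A^t$ to $\frac{1}{m}\ones\ones^T$. Your substitution of the uniformly bounded sum from Lemma~\ref{lemma:graph_conv} for the geometric decay is an equivalent way to get the $L^1$ bound on the remainder, and the convergence claimed in the paper is indeed pointwise in $\theta$, as you assume.
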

The proof can be bound in Section~\ref{app:consistency} of the Appendix. The argument is based on a distributed version of the law of large numbers and geometric convergence of the adjacency matrices described in Lemma~\ref{lemma:graph_conv}. 

We are ready to state the main result of this section. 
\begin{theorem}[Consistency]
\label{thm:const}
Let $\theta_0 \in \Theta$ be defined in \eqref{def:theta_0}, $\epsilon>0$, and denote $\U_\epsilon = \{\theta \in \Theta: \frac{1}{m}\sum_{j = 1}^m D_{KL}(\bP_0 \parallel \bP_\theta^j)<  \epsilon\}$. Moreover, let Assumptions \ref{assumption: graph}, \ref{assumption: regularity}(a), \ref{assumption: pmc}(a) hold.  Then, the distributed Bayes posterior $P_t^j$ defined in \eqref{def-db-posterior} has the following property:
\begin{equation*}
    \lim_{t \to \infty} P_t^j (\U_\epsilon) = 1, 
\end{equation*}
in $[\bP_0]-$probability for each $\epsilon > 0$ and $j \in [m]$. 
\end{theorem}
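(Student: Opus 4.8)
The plan is to adapt Schwartz's classical consistency argument to the distributed surrogate likelihood. The distributed Bayes posterior mass of the complement $\U_\epsilon^c$ can be written as a ratio,
\begin{equation*}
P_t^j(\U_\epsilon^c) = \frac{\int_{\U_\epsilon^c} \exp\!\big(-t\, f_t^j(\theta)\big)\,\Pi(d\theta)}{\int_{\Theta} \exp\!\big(-t\, f_t^j(\theta)\big)\,\Pi(d\theta)} = \frac{\int_{\U_\epsilon^c} \exp\!\big(-t\, [f_t^j(\theta) - f_t^j(\theta_0)]\big)\,\Pi(d\theta)}{\int_{\Theta} \exp\!\big(-t\, [f_t^j(\theta) - f_t^j(\theta_0)]\big)\,\Pi(d\theta)},
\end{equation*}
after normalizing by $\exp(t f_t^j(\theta_0))$ in numerator and denominator. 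The strategy is then the standard two-sided bound: (i) show the denominator is bounded below by $\exp(-ct)$ for any $c>0$ with probability tending to one, using the prior mass Assumption~\ref{assumption: pmc}(a); and (ii) show the numerator is bounded above by $\exp(-c' t)$ for some fixed $c'>0$ with probability tending to one, using a consistent-testing / uniform-control argument. Combining (i) and (ii) with $c < c'$ forces the ratio to $0$ in $[\bP_0]$-probability.

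For the denominator, I would use Lemma~\ref{lemma:ptwise_conv}: since $f_t^j \to f$ in $[\bP_0]$-probability pointwise on $\Theta$ and $f(\theta_0) = \min_\theta f(\theta)$ (by definition~\eqref{def:theta_0} of $\theta_0$, noting $f(\theta) - f(\theta_0) = \frac{1}{m}\sum_j D_{KL}(\bP_0\parallel\bP_\theta^j)$ up to the additive entropy constant that cancels), one gets for $\theta \in \U_\epsilon$ that $f_t^j(\theta) - f_t^j(\theta_0)$ is asymptotically at most roughly $\epsilon$ plus vanishing terms. More carefully, I would restrict the denominator integral to $\U_{\epsilon/2}$, use Fatou/Fubini with the pointwise convergence to conclude $\liminf_t \int_{\U_{\epsilon/2}} \exp(-t[f_t^j(\theta)-f_t^j(\theta_0)])\,\Pi(d\theta) \cdot e^{t\epsilon} \ge \Pi(\U_{\epsilon/2}) > 0$, so the denominator is eventually $\ge e^{-t\epsilon}\Pi(\U_{\epsilon/2})/2$ with high probability. (The precise mechanics here mirror the centralized case in \cite{ghoshramamoorthi, VanderVaart2000}, just with $f_t$ replaced by $f_t^j$; the asymptotic equivalence of $f_t^j$ and $f_t$ from Lemma~\ref{lemma:ptwise_conv} is what licenses this.)

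For the numerator, the key input is Assumption~\ref{assumption: uct}(a): for the given $\epsilon$, choose $\delta$ so that $\inf_{\|\theta-\theta_0\|>\delta}(f_t^j(\theta) - f_t^j(\theta_0)) \ge \eta$ eventually with probability one, for some $\eta > 0$. Then split $\U_\epsilon^c = (\U_\epsilon^c \cap \{\|\theta-\theta_0\|>\delta\}) \cup (\U_\epsilon^c \cap \{\|\theta-\theta_0\|\le\delta\})$. On the first piece the integrand is bounded by $e^{-t\eta}$, so that contribution is $\le e^{-t\eta}$. The second piece is the delicate one: we need $\U_\epsilon^c \cap \{\|\theta-\theta_0\|\le\delta\}$ to also contribute exponentially little, which requires that within the small ball $\{\|\theta-\theta_0\|\le\delta\}$, points outside $\U_\epsilon$ still have $f(\theta) - f(\theta_0)$ bounded away from $0$ — this holds because $\U_\epsilon$ is (for small $\epsilon$) a neighborhood of $\theta_0$ by continuity of $\theta\mapsto D_{KL}(\bP_0\parallel\bP_\theta^j)$, so for $\delta$ small enough the set $\U_\epsilon^c \cap \{\|\theta-\theta_0\|\le\delta\}$ is empty, or else one shrinks $\delta$ accordingly. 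Picking $\epsilon'$ small and the corresponding $\delta$ from the testing assumption handles both pieces simultaneously.

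\textbf{Main obstacle.} I expect the principal difficulty to be the interplay between the two neighborhood notions — the KL-ball $\U_\epsilon$ appearing in the statement versus the Euclidean ball $\{\|\theta-\theta_0\|>\delta\}$ appearing in Assumption~\ref{assumption: uct}(a) — and more fundamentally the fact that $f_t^j$, unlike the centralized empirical log-likelihood, is \emph{not} itself an average of i.i.d.\ terms (it carries the time-inhomogeneous weights $[\prod_{\tau=k}^{t-1}A_\tau]_{ij}$). So every place where the centralized proof invokes the law of large numbers or a uniform LLN, I must instead route through Lemma~\ref{lemma:ptwise_conv} (for pointwise control) and Lemma~\ref{lemma:graph_conv} (to bound the discrepancy between the $f_t^j$-weighting and the uniform $1/m$-weighting of $f_t$). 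Making the "eventually with probability one" quantifiers in Assumption~\ref{assumption: uct}(a) and Lemma~\ref{lemma:ptwise_conv} combine cleanly into a single "in $[\bP_0]$-probability" conclusion for the ratio — i.e., handling the exceptional sets correctly — is the bookkeeping that needs care, but it is not conceptually hard once the denominator lower bound and numerator upper bound are in hand.
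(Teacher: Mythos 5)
Your skeleton is the right one (the Schwartz-type numerator/denominator bound, with the denominator handled by Fatou's lemma and the prior mass condition exactly as in the paper), but your treatment of the numerator imports hypotheses that Theorem~\ref{thm:const} does not grant. You invoke Assumption~\ref{assumption: uct}(a) to get $\inf_{\|\theta-\theta_0\|>\delta}(f_t^j(\theta)-f_t^j(\theta_0))\ge\eta$, and you handle the leftover set $\U_\epsilon^c\cap\{\|\theta-\theta_0\|\le\delta\}$ by appealing to continuity of $\theta\mapsto D_{KL}(\bP_0\parallel\bP_\theta^j)$; neither the testing assumption nor that continuity is among the stated hypotheses (the theorem assumes only \ref{assumption: graph}, \ref{assumption: regularity}(a), \ref{assumption: pmc}(a)). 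As written, your argument proves a weaker statement with extra assumptions. This is precisely the distinction the paper draws between Theorem~\ref{thm:const} (KL-neighborhoods, no testing needed) and Corollary~\ref{cor:const-1} (Euclidean neighborhoods, which do require additional regularity).

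The missing observation is one you already wrote down in your denominator discussion: $f(\theta)-f(\theta_0)=\frac{1}{m}\sum_j D_{KL}(\bP_0\parallel\bP_\theta^j)-\frac{1}{m}\sum_j D_{KL}(\bP_0\parallel\bP_{\theta_0}^j)$, so $\U_\epsilon^c$ \emph{is} a superlevel set of the population loss: on $\U_\epsilon^c$ one has $f(\theta)-f(\theta_0)\ge\epsilon-\kappa$ with $\kappa=\frac{1}{m}\sum_j D_{KL}(\bP_0\parallel\bP_{\theta_0}^j)$ (positive once $\epsilon$ exceeds the minimal average KL, which is needed for $\U_\epsilon$ to be nonempty anyway). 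Hence no test functions and no Euclidean balls are needed for the numerator: writing $f_t^j(\theta)-f_t^j(\theta_0)=(f(\theta)-f(\theta_0))+(f_t^j(\theta)-f(\theta))-(f_t^j(\theta_0)-f(\theta_0))$, the only input required is that the empirical-minus-population discrepancy is uniformly small over $\U_\epsilon^c$ with probability tending to one, which is supplied by Lemma~\ref{lemma:ptwise_conv} (this is where the paper routes the argument; the uniformity over $\U_\epsilon^c$ rather than pointwise convergence is the one step that genuinely needs care, and it is the same care your denominator bound already requires). With that, the numerator is $\le e^{-t(\epsilon-\kappa)/2}$ eventually, and combining with your denominator lower bound $\gtrsim e^{-tc}$ for $c$ arbitrarily small finishes the proof under exactly the stated assumptions.
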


See Section~\ref{app:consistency} for the proof. The proof uses the standard technique based on Schwartz's theorem \citep{miller2021asymptotic}. The primary insight is that, given the defined regularity assumptions on private statistical models and the presence of a connected communication graph, the distributed Bayes posterior concentrates at the same point as the $\frac{1}{m}-$geometric average of the individual Bayes posteriors. 

Theorem~\ref{thm:const} does not assume correct model specification. When the model is misspecified, the distributed Bayes posterior concentrates around the unique minimizer $\theta_0$ of Problem \eqref{eqn: dKL}, which is assumed to exist. 

Under additional assumptions,  the neighborhoods on the space of probability measures used in stating Theorem~\ref{thm:const} can be substituted with neighborhoods on $\Theta$. 

{
\begin{corollary} \label{cor:const-1}
Let $(\Theta, d)$ be metric space, $\theta_0 \in \Theta$, $\epsilon>0$, and denote $\mathcal N_\epsilon = \{\theta \in \Theta:d(\theta, \theta_0) < \epsilon\}$.   Moreover, let Assumptions \ref{assumption: graph}, \ref{assumption: regularity}(a), \ref{assumption: regularity}(b), \ref{assumption: pmc}(c) hold.  Then the distributed Bayes posterior $P_t^j$ defined in \eqref{def-db-posterior} has the following property
\begin{equation*}
    \lim_{t \to \infty} P_t^j(\mathcal N_\epsilon) = 1, 
\end{equation*}
in $[\bP_0]-$probability for each $\epsilon > 0$ and $j \in [m]$. 
\end{corollary}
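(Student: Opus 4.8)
The plan is to derive Corollary~\ref{cor:const-1} from Theorem~\ref{thm:const} by a sublevel-set comparison: I would show that for every $\epsilon>0$ the Euclidean ball $\mathcal N_\epsilon$ contains a Kullback--Leibler neighborhood $\U_{\epsilon'}$ of the type appearing in Theorem~\ref{thm:const}. Write $g(\theta)=\frac1m\sum_{j=1}^m D_{KL}(\bP_0\parallel\bP_\theta^j)$, which by Assumption~\ref{assumption: regularity}(a) is finite on $\Theta$ and, up to the additive constant $\bP_0\log\bp_0$, coincides with the population loss $f$ of \eqref{def:f}; thus each $\U_{\epsilon'}$ is a sublevel set of the convex function $g$ (convexity coming from Assumption~\ref{assumption: regularity}(b)). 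First I would check that the present hypotheses imply those of Theorem~\ref{thm:const}: convexity makes $g$ continuous near $\theta_0$, so $\U_{\epsilon'}$ is a neighborhood of $\theta_0$ and, by continuity and positivity of $\pi_j$ at $\theta_0$ (Assumption~\ref{assumption: pmc}), has positive prior mass; hence Theorem~\ref{thm:const} gives $P_t^j(\U_{\epsilon'})\to 1$ in $[\bP_0]$-probability for every admissible $\epsilon'>0$. It then suffices to exhibit, for each $\epsilon>0$, some such $\epsilon'$ with $\U_{\epsilon'}\subseteq\mathcal N_\epsilon$, since then $P_t^j(\mathcal N_\epsilon)\ge P_t^j(\U_{\epsilon'})\to 1$.

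The core of the argument is the claim that $\theta_0$ is a \emph{well-separated} minimizer of $g$ in the strong sense that the sublevel sets $\U_{\epsilon'}$ collapse onto $\{\theta_0\}$ as $\epsilon'$ decreases to $\inf_\Theta g$. I would establish this using convexity together with the gradient-sign condition of Assumption~\ref{assumption: regularity}(b). Restricted to a coordinate line $r\mapsto\theta_0\pm r e_k$, the map $r\mapsto g(\theta_0\pm r e_k)$ is convex, and the sign condition forces its one-sided slope at a small fixed displacement $\pm\epsilon_0 e_k$ to be a strictly positive number $\delta_k>0$; convexity then propagates this into the linear lower bound $g(\theta_0\pm r e_k)-\inf_\Theta g\ge\delta_k(r-\epsilon_0)$ for all $r\ge\epsilon_0$ with $\theta_0\pm re_k\in\Theta$. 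Consequently every $\theta\in\U_{\epsilon'}$ has $k$-th coordinate within $\epsilon_0+(\epsilon'-\inf_\Theta g)/\delta_k$ of $(\theta_0)_k$, so $\U_{\epsilon'}$ lies in a box around $\theta_0$ whose half-widths tend to $\epsilon_0$ as $\epsilon'\downarrow\inf_\Theta g$. Choosing $\epsilon_0$ (the displacement scale in Assumption~\ref{assumption: regularity}(b)) small relative to $\epsilon/\sqrt p$ and then $\epsilon'$ close enough to $\inf_\Theta g$ makes this box a subset of $\mathcal N_\epsilon$, which is the required inclusion.

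The main obstacle is the non-compactness of $\Theta$: continuity of $g$ plus uniqueness of the minimizer \eqref{def:theta_0} is not enough, because $g$ could in principle approach $\inf_\Theta g$ as $\theta$ escapes to infinity or toward $\partial\Theta$, leaving $\U_{\epsilon'}$ unbounded and breaking the inclusion. This is precisely why Assumption~\ref{assumption: regularity}(b) enters the corollary: convexity upgrades the local gradient-sign condition into global coercivity of $g$ about $\theta_0$, which is what confines $\U_{\epsilon'}$ to a bounded box. A secondary point requiring care is the precise scope of that convexity --- whether it is assumed on all of $\Theta$ or only on a neighborhood of $\theta_0$; if only locally, one must additionally argue that the linear growth estimate still propagates outward, or supplement the local box bound with a separation argument on the (now bounded) sublevel sets, before concluding $\U_{\epsilon'}\subseteq\mathcal N_\epsilon$.
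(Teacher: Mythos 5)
Your overall strategy --- deducing the corollary from Theorem~\ref{thm:const} by exhibiting, for each $\epsilon>0$, a KL sublevel set $\U_{\epsilon'}\subseteq\mathcal N_\epsilon$ --- is a legitimate, self-contained alternative to the paper's treatment, which simply invokes Theorem 3 of \cite{miller2021asymptotic} and omits the proof. The problem is that the step meant to deliver the inclusion fails for $p\ge 2$. You control $g$ only along the $2p$ coordinate half-lines through $\theta_0$ and conclude that $\U_{\epsilon'}$ sits in a coordinate box; that implication is false. Take $p=2$, $\theta_0=0$, and the convex function $g(x,y)=(x+y)^2$: its restriction to each coordinate axis is $x^2$ (resp.\ $y^2$), so it grows past any displacement $\epsilon_0$ exactly as in your one-dimensional slope argument, and even the literal gradient-sign condition of Assumption~\ref{assumption: regularity}(b) holds at $\theta_0\pm\epsilon\ones$; yet every sublevel set $\{g<\delta\}$ is an unbounded strip. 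Growth along the axes says nothing about nearly flat off-axis directions. The correct local argument must use uniqueness of the minimizer together with compactness of the sphere: $g>g(\theta_0)$ pointwise on $\partial B_\epsilon(\theta_0)$ and $g$ is continuous there (being convex on a neighborhood), so $\eta=\min_{\partial B_\epsilon}g-g(\theta_0)>0$, and the secant inequality along segments emanating from $\theta_0$ then yields $g\ge g(\theta_0)+\eta$ outside $B_\epsilon$ --- but only where those segments remain in the region of convexity.

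That caveat is the second gap, which you flag in your closing sentence but do not close. Assumption~\ref{assumption: regularity}(b) grants convexity only on a neighborhood of $\theta_0$, so the separation $g\ge g(\theta_0)+\eta$ propagates only to the boundary of that neighborhood. Nothing in the hypotheses you invoke prevents $g$ from dipping back toward $\inf_\Theta g$ far from $\theta_0$ (uniqueness of the minimizer forbids attaining the infimum elsewhere, not approaching it), in which case $\U_{\epsilon'}\not\subseteq\mathcal N_\epsilon$ for every $\epsilon'>\inf_\Theta g$ and the reduction to Theorem~\ref{thm:const} collapses. To repair the proof you need either convexity of $f$ (equivalently of $g$) on all of a convex $\Theta$, so that the secant bound propagates globally, or an explicit separation hypothesis such as Assumption~\ref{assumption: uct}(a) that bounds $f_t^j$ away from $f_t^j(\theta_0)$ on $\{d(\theta,\theta_0)\ge\epsilon\}$ --- which is essentially what the result of \cite{miller2021asymptotic} cited by the paper assumes in place of your box construction.
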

}

The result follows directly from Theorem 3 of \cite{miller2021asymptotic}. Hence, the proof is omitted.

{In practice, Assumption \ref{assumption: regularity}(a) is challenging to verify due to the lack of information on the moments of the unknown distribution. The expectation may not exist under a heavy-tailed distribution like the Cauchy distribution.} In the result below, we provide an information-theoretic equivalent of Assumption \ref{assumption: regularity}(a) that is easier to check.
%In particular, the derived convergence rates use results from the measure concentration of random variables. In the most common cases, the random variables have a sub-Gaussian or subexponential behavior \citep{Boucheron2013}.%
\begin{lemma}
\label{lemma:const_condition}
Assumption \ref{assumption: regularity}(a) is equivalent to either of the following assumptions:
\begin{enumerate}
\item[(a).] $\bP_0$ is absolutely continuous with respect to $\bP_\theta^j$ for all $\theta \in \Theta$,
\item[(b).] $D_{KL}(\bP_0 \parallel \bP_\theta^j) < \infty$ for all $\theta \in \Theta$. 
\end{enumerate}
\end{lemma}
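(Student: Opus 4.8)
The plan is to prove the chain of equivalences (a) $\Leftrightarrow$ Assumption~\ref{assumption: regularity}(a) $\Leftrightarrow$ (b) by exploiting the elementary identity
\begin{equation*}
    D_{KL}(\bP_0 \parallel \bP_\theta^j) = \bP_0 \log \frac{d\bP_0}{d\bP_\theta^j} = \bP_0 \log \bp_0 - \bP_0 \log \bp_\theta^j,
\end{equation*}
valid whenever the relevant integrals make sense. Since $\bP_0$ is a fixed (true) probability measure with a density $\bp_0$ with respect to the dominating measure, the term $\bP_0 \log \bp_0 = -h(\bP_0)$ (negative differential entropy) is a finite constant under the standing regularity of the problem — or, if one does not want to assume that, one works instead with the log-ratio $\log(\bp_0/\bp_\theta^j)$ directly, whose negative part is always $\bP_0$-integrable by Jensen (this is the standard fact that $D_{KL}$ is well-defined in $[0,\infty]$). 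I would state this preliminary fact first: the negative part $(\log \tfrac{d\bP_0}{d\bP_\theta^j})^-$ has finite $\bP_0$-expectation, so $D_{KL}(\bP_0\parallel\bP_\theta^j)<\infty$ iff the positive part is also integrable iff $\log\tfrac{d\bP_0}{d\bP_\theta^j}\in L^1(\bP_0)$.

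Given that preliminary, (a) $\Leftrightarrow$ (b) is almost immediate: if $\bP_0$ is \emph{not} absolutely continuous with respect to $\bP_\theta^j$, then by definition $D_{KL}(\bP_0\parallel\bP_\theta^j)=+\infty$, contradicting (b); conversely, if $\bP_0 \ll \bP_\theta^j$ the Radon--Nikodym derivative exists and one must still rule out the case where the (finite) density has a divergent logarithmic integral — but that divergence is exactly the statement $D_{KL}=+\infty$, so (b) is precisely the assertion that this does not happen, and (b) trivially implies $\bP_0 \ll \bP_\theta^j$ because $D_{KL}(\bP_0\parallel\bP_\theta^j)=\infty$ by convention when absolute continuity fails. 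For Assumption~\ref{assumption: regularity}(a) $\Leftrightarrow$ (b), write $\log\tfrac{d\bP_0}{d\bP_\theta^j} = \log\bp_0 - \log\bp_\theta^j$ and use that $\bP_0|\log\bp_0|<\infty$ (finiteness of the entropy integral, which I would note explicitly as part of the standing setup, or absorb into the argument via the Jensen bound above): then $\bP_0|\log\bp_\theta^j|<\infty$ holds iff $\bP_0\big|\log\tfrac{d\bP_0}{d\bP_\theta^j}\big|<\infty$ holds iff $D_{KL}(\bP_0\parallel\bP_\theta^j)<\infty$, the last step again using that only the positive part of the integrand can cause trouble.

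The order of steps I would carry out: (1) record the decomposition of $D_{KL}$ as a difference of two log-integrals and the Jensen fact that the negative part of the log-ratio is always integrable, so $D_{KL}\in[0,\infty]$ is well-defined; (2) observe $\bP_0|\log\bp_0|<\infty$ under the ambient assumptions (or note that this term is common to all $\theta$ and so the equivalence among the three conditions does not actually depend on it — this is the cleaner route); (3) deduce Assumption~\ref{assumption: regularity}(a) $\Leftrightarrow$ (b) by matching integrability of $\log\bp_\theta^j$ with integrability of the log-ratio with finiteness of $D_{KL}$; (4) deduce (a) $\Leftrightarrow$ (b) from the convention that $D_{KL}=+\infty$ precisely when absolute continuity fails, plus the observation that under $\bP_0\ll\bP_\theta^j$ finiteness of $D_{KL}$ is the only remaining requirement.

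The main obstacle is a bookkeeping one rather than a deep one: being careful about the case $\bP_0 \not\ll \bP_\theta^j$, where $d\bP_0/d\bP_\theta^j$ need not exist as an honest density, so that $\bp_\theta^j$ could vanish on a set of positive $\bP_0$-measure. In that case $\log\bp_\theta^j = -\infty$ on a positive-measure set, so $\bP_0|\log\bp_\theta^j| = +\infty$, which is consistent with $D_{KL}=+\infty$ — but one must phrase Assumption~\ref{assumption: regularity}(a) and condition (b) so that this degenerate case is handled uniformly by all three, i.e., the equivalence should read "all three fail together or all three hold together." A secondary subtlety is that $\bP_\theta^j$ may itself fail to be dominated by the same measure as $\bP_0$; the cleanest fix is to work with a common dominating measure $\mu$ (e.g.\ $\bP_0+\bP_\theta^j$) and densities against $\mu$ throughout, which the paper's setup on $(\mathcal X,\mathbb B)$ already implicitly permits. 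I would flag both points but keep the write-up short, since neither requires more than a sentence once the $L^1$-decomposition of the log-ratio is in hand.
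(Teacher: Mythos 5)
Your decomposition $D_{KL}(\bP_0\parallel\bP_\theta^j)=\bP_0\log\bp_0-\bP_0\log\bp_\theta^j$, the Jensen observation that the negative part of the log-ratio is always $\bP_0$-integrable, and the use of $\bP_0|\log\bp_0|<\infty$ to transfer integrability from the log-ratio to $\log\bp_\theta^j$ are exactly the ingredients of the paper's proof, which runs: absolute continuity gives $D_{KL}<\infty$, hence $\bP_0\log\bp_\theta^j>-\infty$; nonnegativity of $D_{KL}$ together with $\bP_0\log\bp_0<\infty$ gives the matching upper bound. You are in fact more explicit than the paper on two points it leaves implicit, namely the choice of a common dominating measure and the $L^1$ status of the negative part of $\log(d\bP_0/d\bP_\theta^j)$. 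One small correction: your parenthetical that the entropy term ``is common to all $\theta$ and so the equivalence does not actually depend on it'' is not right for the Assumption~\ref{assumption: regularity}(a) $\Leftrightarrow$ (b) direction; if $\bP_0|\log\bp_0|=\infty$ one can have $D_{KL}<\infty$ while $\bP_0|\log\bp_\theta^j|=\infty$, so the finite-entropy hypothesis is genuinely needed there (the paper simply asserts it without comment).

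The genuine gap is the direction (a) $\Rightarrow$ (b). Your treatment of it is circular: you write that under $\bP_0\ll\bP_\theta^j$ one ``must still rule out'' a divergent logarithmic integral and that ``(b) is precisely the assertion that this does not happen'' --- that is a restatement of the claim, not a proof. And no proof is possible: absolute continuity does not imply finite KL divergence (take $\bP_0$ Cauchy and $\bP_\theta^j$ Gaussian; the two measures are mutually absolutely continuous but $D_{KL}(\bP_0\parallel\bP_\theta^j)=\infty$). So condition (a) is strictly weaker than condition (b), and the claimed equivalence of (a) with Assumption~\ref{assumption: regularity}(a) cannot be established by your route or any other. You should know that the paper's own proof has the identical hole --- its first sentence asserts that $\bP_0\ll\bP_\theta^j$ implies $D_{KL}(\bP_0\parallel\bP_\theta^j)<\infty$ with no justification --- so your instinct to single this out as the ``main obstacle'' was sound; the honest resolution is that (a) must either be strengthened to (b) or dropped from the equivalence.
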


When $\bP_0$ has a density $\bp_0$, the absolute continuity assumption is equivalent to the assumption that if $\bp_\theta^j(x) > 0$, then $\bp_0(x) > 0$  for every $x$ \citep{cover1999elements}. The second assumption is the minimum assumption for the problem of distributed statistical estimation to be tractable. Both assumptions are easy to check, as illustrated by the following example. 

\begin{example}
Assume that $\bP_\theta^j = \{N(\theta, \sigma^2_j), \theta \in \Theta\}$ for known $\sigma^2_j > 0$ and $\bP_0 = N(\theta_0, \sigma_0^2)$ for unknown mean $\theta_0$ and variance $\sigma_0^2$. The Gaussian distribution with positive variance has support on the whole real line. Then $\bP_0$ is absolutely continuous with respect to $\bP_\theta^j$ for all $j$.  For the second assumption, the KL divergence between $\bP_\theta^j$ and $\bP_0$ has an explicit formula: 
\begin{equation*}
    D_{KL}(\bP_\theta^j \parallel \bP_0) = \frac{1}{2} \left( \frac{(\theta - \theta_0)^2}{\sigma_0^2} + \frac{\sigma_j^2}{\sigma_0^2} - 1 - \log \frac{\sigma_j^2}{\sigma_0^2} \right).
\end{equation*}
For finite $\sigma_j^2, \sigma_0^2, \theta_0$, the KL divergence is finite for every $\theta \in \R$. 
\end{example}

%posterior contraction? 
%%%%%%%%%%%%%%%%%%%%%%%%%%%%%%%%%%%%%%%%%%%%%%%%%
%%%%%%%%%%%%%%%%%%%%%%%%%%%%%%%%%%%%%%%%%%%%%%%%%
%%%%%%%%%%%%%%%%%%%%%%%%%%%%%%%%%%%%%%%%%%%%%%%%%
\section{Asymptotic Normality} \label{sect: bvm}
%%%%%%%%%%%%%%%%%%%%%%%%%%%%%%%%%%%%%%%%%%%%%%%%%
%%%%%%%%%%%%%%%%%%%%%%%%%%%%%%%%%%%%%%%%%%%%%%%%%
%%%%%%%%%%%%%%%%%%%%%%%%%%%%%%%%%%%%%%%%%%%%%%%%%
%efficiency

The Bernstein-Von Mises (BvM) theorem states that under certain conditions on the prior, the posterior distribution approximates a Gaussian distribution centered at a consistent estimator, such as the maximum likelihood estimator, as data increases. This theorem is pivotal in Bayesian statistics for at least two reasons. First, it provides a quantitative description of how the posterior contracts to the truth. Second,  this theorem justifies Bayesian credible sets as valid frequentist confidence sets, i.e., sets of posterior probability $1 - \alpha$ contain the true parameter at the confidence level $1 - \alpha$. 

In this section,  we establish Bernstein von - Mises theorems for the distributed Bayes posterior defined in Equation \eqref{def-db-posterior}. Our results address the asymptotic normality of the distributed Bayes posterior under both correct and incorrect model specifications. We consider scenarios where all agents accurately specify the model, and some agents gather observations from a 'true distribution' that does not belong to their statistical models.

Theorem~\ref{thm:bvm1} provides sufficient assumptions for the distributed posterior to converge to a normal distribution centered around a sequence of M-estimators $\hat \theta_t^j$. Theorem~\ref{thm: bvm2} generates an analogous result under classical assumptions, with the normal approximation centered around $\theta_0$ (defined in Equation \eqref{def:theta_0}). The supporting lemmas are provided preceding the main results. Given the abstract nature of the assumptions, Theorem~\ref{thm:bvm1} is supplemented by Corollary \ref{cor:bvm1} and Corollary \ref{cor:bvm2}, which outlines more user-friendly assumptions that guarantee the same Bernstein von-Mises (BvM) results.

The Bernstein von - Mises argument relies critically on the following sequence of M-estimators: 
\begin{equation} \label{eqn:m-estimator}
\hat \theta_t^j = \argmin_{\theta \in \Theta} f_t^j(\theta). 
\end{equation}
The existence and consistency of $\hat \theta_t^j$ is not always guaranteed; one set of sufficient assumptions is provided as follows.

\begin{lemma}\label{lemma:M-est-1}
    Let Assumptions  \ref{assumption: graph}, \ref{assumption: regularity}(a), \ref{assumption: regularity}(f), \ref{assumption: pmc}(b) hold.  Then the probability that the equation $\nabla f_t^j(\hat \theta_t^j) = 0$ has at least one solution converges to $1$ as $t \to \infty$, and there exists a sequence of solutions $\hat \theta_t^j$ such that $\hat \theta_t^j \pto \theta_0$. 
\end{lemma}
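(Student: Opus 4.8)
The plan is to run the classical Cram\'er / van~der~Vaart consistency argument for $M$-estimators (in the spirit of Theorem~5.41 of \cite{VanderVaart2000}) on the distributed surrogate loss $f_t^j$, feeding it the convergence $f_t^j \pto f$ from Lemma~\ref{lemma:ptwise_conv} together with the graph bound of Lemma~\ref{lemma:graph_conv}. First I would fix the deterministic, population-level picture. By the standing assumption that $\theta_0$ uniquely solves \eqref{def:theta_0}, $\theta_0$ is the unique minimizer of $f(\theta) = -\tfrac{1}{m}\sum_{i=1}^m \bP_0\log\bp_\theta^i$ and lies in the interior of $\Theta$; differentiating under the integral (licensed by the integrable third-derivative envelope of Assumption~\ref{assumption: regularity}(f) together with \ref{assumption: regularity}(a)) gives $\nabla f(\theta_0)=0$, while $\nabla^2 f(\theta_0)=\tfrac1m\sum_{i=1}^m V_{\theta_0}^i$ is positive definite by \ref{assumption: regularity}(f). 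A second-order Taylor expansion of $\nabla f$ about $\theta_0$ then yields a radius $r_0>0$ such that, for every $0<r\le r_0$,
\begin{equation*}
\inf_{\|\theta-\theta_0\|=r}\ \big\langle \nabla f(\theta),\ \theta-\theta_0\big\rangle\ \ge\ c(r)\ >\ 0,
\end{equation*}
the infimum being attained and strictly positive by compactness of the sphere and continuity of $\nabla f$; that is, the population gradient points strictly outward on every small sphere around $\theta_0$.

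The second step is to upgrade Lemma~\ref{lemma:ptwise_conv} from pointwise convergence to uniform convergence of the \emph{gradients} on the closed ball $\bar B(\theta_0,r_0)$, i.e.\ $\sup_{\|\theta-\theta_0\|\le r_0}\|\nabla f_t^j(\theta)-\nabla f(\theta)\| \pto 0$. Here $\nabla f_t^j(\theta)$ is a weighted average of the i.i.d.\ scores $\nabla\log\bp_\theta^i(X_k^i)$ with weights $[A^{t-k}]_{ij}$, whose deviations from $1/m$ are summable by Lemma~\ref{lemma:graph_conv}; the Lipschitz control of score differences with a square-integrable constant (Assumption~\ref{assumption: regularity}(d)) and of the Hessian with an integrable envelope (Assumption~\ref{assumption: regularity}(f)) then provide the equicontinuity in $\theta$ needed to turn the finite-dimensional distributed law of large numbers underlying Lemma~\ref{lemma:ptwise_conv} into a uniform one over the compact ball. (If convenient one upgrades $\nabla^2 f_t^j \pto \nabla^2 f$ uniformly as well, obtaining local strong convexity of $f_t^j$ and hence uniqueness of the solution, but this is not required for the statement.)

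With both ingredients in hand the existence step is short. Fix $0<r\le r_0$. On the event --- of $\bP_0$-probability tending to $1$ --- that $f_t^j$ is $C^1$ on $\bar B(\theta_0,r)$ (guaranteed almost surely $[\bP_0]$ by \ref{assumption: regularity}(f)) and that $\sup_{\bar B(\theta_0,r)}\|\nabla f_t^j-\nabla f\|<c(r)/2$, we get $\langle \nabla f_t^j(\theta),\ \theta-\theta_0\rangle>0$ for all $\|\theta-\theta_0\|=r$. The continuous function $f_t^j$ attains its minimum over the compact ball $\bar B(\theta_0,r)$; this minimum cannot lie on the boundary, since from any boundary point the value strictly decreases along the inward radial direction, so it is attained at an interior point $\hat\theta_t^j$, where necessarily $\nabla f_t^j(\hat\theta_t^j)=0$. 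Thus $\bP_0\big(\nabla f_t^j \text{ has a zero in } B(\theta_0,r)\big)\to 1$ for every $r\in(0,r_0]$, and choosing $\hat\theta_t^j$ by a measurable selection (for instance the zero nearest $\theta_0$) gives $\bP_0(\|\hat\theta_t^j-\theta_0\|<r)\to 1$ for each $r>0$, i.e.\ $\hat\theta_t^j \pto \theta_0$.

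The main obstacle is the uniform-convergence step: one must treat the time-indexed weights $[A^{t-k}]_{ij}$ carefully enough that the weighted score averages, and their moduli of continuity in $\theta$, obey a law of large numbers \emph{uniformly} over a neighborhood of $\theta_0$ rather than merely pointwise --- Lemma~\ref{lemma:graph_conv} and the envelopes of Assumptions~\ref{assumption: regularity}(d),(f) are exactly what make this possible, but assembling them into a genuine uniform law (e.g.\ via a bracketing argument on the compact ball) is the delicate part. The remaining pieces --- the Taylor expansion giving the outward-pointing bound and the ``minimum is interior'' argument --- are routine. The prior condition \ref{assumption: pmc}(b) is not used for the $M$-estimator itself; it appears because the same sequence $\hat\theta_t^j$ drives the Bernstein--von~Mises results of Section~\ref{sect: bvm}, where continuity and positivity of the prior at $\theta_0$ are essential.
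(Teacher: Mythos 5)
Your argument is correct and is essentially the proof of Theorem~5.42 of \cite{VanderVaart2000} written out explicitly for the distributed loss $f_t^j$, which is exactly the route the paper takes---it simply cites that theorem and omits the details. One small point: the equicontinuity of the weighted score averages should be drawn from Assumption~\ref{assumption: regularity}(f) (the integrable envelope on the third derivatives locally dominates the Hessian, hence controls the gradient's modulus of continuity) rather than from Assumption~\ref{assumption: regularity}(d), which is not among the lemma's hypotheses.
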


This is a direct consequence of Theorem 5.42 of \cite{VanderVaart2000}; thus, the proof is omitted. {Lemma \ref{lemma:M-est-1}} states that the sequence of M estimators exists under third-order smoothness assumptions on the log - log-likelihood.  From now on, we assume the existence of M estimators $\hat \theta_t^j$ that satisfies \eqref{eqn:m-estimator} for every $t, j$. 

The high-order smoothness assumptions on the private log-likelihoods (Assumption (f)) is a strong assumption for guaranteeing consistency. It can be replaced with a more relaxed and amenable convexity assumption (Assumption \ref{assumption: regularity}(b)). 
\begin{lemma}\label{lemma:M-est-2}
 Let Assumptions  \ref{assumption: graph}, \ref{assumption: regularity}(a), \ref{assumption: regularity}(b) hold. Then $\hat \theta_t^j \pto \theta_0$. 
\end{lemma}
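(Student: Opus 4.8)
The plan is to combine the pointwise convergence of the surrogate losses from Lemma~\ref{lemma:ptwise_conv} with a convexity argument in the spirit of Hjort--Pollard. Write $g_t(\theta) := f_t^j(\theta) - f_t^j(\theta_0)$ and $g(\theta) := f(\theta) - f(\theta_0)$, with $f_t^j, f$ as in \eqref{def:f_t^j} and \eqref{def:f}; centering is harmless since $\hat\theta_t^j = \argmin_{\theta \in \Theta} f_t^j(\theta) = \argmin_{\theta \in \Theta} g_t(\theta)$. First I would record that under Assumption~\ref{assumption: graph} the matrix $A$ is symmetric and row stochastic, hence doubly stochastic, so each power $A^{t-k}$ is doubly stochastic with $\sum_{i=1}^m [A^{t-k}]_{ij} = 1$; consequently, for every realization, $f_t^j$ is a convex combination of the maps $\theta \mapsto -\log \bp_\theta^i(\cdot)$ with weights $1/t$ and $[A^{t-k}]_{ij}/t$ summing to one. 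Assumption~\ref{assumption: regularity}(b) makes each such map (hence $f_t^j$, and also the limit $f$) convex on a fixed neighborhood $U$ of $\theta_0$, with $g_t(\theta_0) = g(\theta_0) = 0$; and Lemma~\ref{lemma:ptwise_conv} gives $g_t(\theta) \pto g(\theta)$ for every $\theta \in \Theta$.

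Next I would use convexity to localize the minimizer. Fix $\delta > 0$ with the closed ball $\bar B_\delta(\theta_0) \subset U$. The coordinatewise sign condition $\nabla f(\theta_0 - \epsilon) < 0 < \nabla f(\theta_0 + \epsilon)$ in Assumption~\ref{assumption: regularity}(b) makes $\theta_0$ the unique minimizer of $g$ with $g > 0$ on $U \setminus \{\theta_0\}$, so by compactness of the sphere $S_\delta = \{\theta : \|\theta - \theta_0\| = \delta\}$ and continuity, $\eta := \inf_{S_\delta} g > 0$. By the classical convexity lemma (pointwise convergence in probability of convex functions upgrades to uniform convergence in probability on compacta), $\sup_{S_\delta} |g_t - g| \pto 0$, so on an event of probability tending to one $\inf_{S_\delta} g_t \ge \eta/2 > 0 = g_t(\theta_0)$. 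On that event, any $\theta \in U$ with $r := \|\theta - \theta_0\| \ge \delta$ satisfies, by convexity of $g_t$ on $U$ applied along the segment $[\theta_0, \theta]$ (its sphere point $\theta' = \theta_0 + (\delta/r)(\theta - \theta_0)$ equals $(1 - \delta/r)\theta_0 + (\delta/r)\theta$), $\eta/2 \le g_t(\theta') \le (\delta/r)\, g_t(\theta)$, hence $g_t(\theta) \ge (r/\delta)(\eta/2) \ge \eta/2 > 0$. Thus no minimizer of $g_t$ over $U$ lies outside $B_\delta(\theta_0)$, and since $\delta$ is arbitrary this yields $\hat\theta_t^j \pto \theta_0$ --- once we know $\hat\theta_t^j \in U$ with probability tending to one.

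I expect this last confinement step to be the main obstacle, since Assumption~\ref{assumption: regularity}(b) only supplies convexity near $\theta_0$, whereas $\hat\theta_t^j$ is the argmin over all of $\Theta$. I would close the gap by reading the sign condition in Assumption~\ref{assumption: regularity}(b) globally: if $\nabla f$ is coordinatewise negative to the left and positive to the right of $\theta_0$ throughout $\Theta$, then $f$ is coordinatewise unimodal with global minimum at $\theta_0$, so $\inf_{\Theta \setminus U} g > 0$ and, by $g_t \pto g$, $\inf_{\Theta \setminus U} g_t$ stays bounded away from $0 = g_t(\theta_0)$ with probability tending to one, forcing $\hat\theta_t^j \in U$. (In the examples of Section~\ref{sect:examples}, such as exponential families and logistic regression, $\theta \mapsto -\log \bp_\theta^j$ is globally convex, so $U = \Theta$ and the issue disappears; alternatively one could track the near-$\theta_0$ root of $\nabla f_t^j = 0$ as in Lemma~\ref{lemma:M-est-1}.) The remaining ingredients --- double stochasticity of $A^{t-k}$, the convexity lemma, and the argmin bookkeeping --- are routine.
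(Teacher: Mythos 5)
Your argument is correct in substance, but it localizes the argmin by a different mechanism than the paper. The paper's proof works at the level of gradients: convexity of $f_t^j$ on the neighborhood $\U$ makes $\nabla f_t^j$ coordinatewise monotone there, so the events $\{\nabla f_t^j(\theta_0-\epsilon)<-\eta\}$ and $\{\nabla f_t^j(\theta_0+\epsilon)>\eta\}$ together force the stationary point $\hat\theta_t^j$ into the box $[\theta_0-\epsilon,\theta_0+\epsilon]$, and these events have probability tending to one because pointwise convergence of convex functions carries over to their derivatives and $\nabla f(\theta_0-\epsilon)<0<\nabla f(\theta_0+\epsilon)$ by Assumption~\ref{assumption: regularity}(b). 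You instead work with function values: the convexity lemma upgrades $f_t^j\pto f$ to uniform convergence on the sphere $S_\delta$, and the standard Hjort--Pollard segment inequality $g_t(\theta)\ge (r/\delta)\,g_t(\theta')$ excludes minimizers outside $B_\delta(\theta_0)$. Both are legitimate convex-M-estimation consistency arguments; yours buys uniformity on compacta explicitly (and avoids having to justify convergence of gradients), while the paper's is shorter because monotone gradients in each coordinate trap the root of $\nabla f_t^j=0$ directly.

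The confinement issue you flag --- Assumption~\ref{assumption: regularity}(b) gives convexity only on a neighborhood of $\theta_0$, while $\hat\theta_t^j$ is the argmin over all of $\Theta$ --- is real, and it is worth noting that the paper's own proof has exactly the same gap: the step ``$\hat\theta_t^j\le\theta_0-\epsilon$ implies $\nabla f_t^j(\hat\theta_t^j)<-\eta$'' invokes monotonicity of the gradient along the segment from $\hat\theta_t^j$ to $\theta_0-\epsilon$, which is only justified if $\hat\theta_t^j$ already lies in $\U$. So your proposal is, if anything, more careful than the paper on this point; your proposed fixes (reading the sign condition globally, or noting that in all the worked examples $-\log\bp_\theta^j$ is globally convex so $\U=\Theta$) are the natural ways to close it. One small caveat on bookkeeping: the weights in $f_t^j$ are $1/t$ on the newest term and $[\,\prod_\tau A_\tau]_{ij}/t$ on the rest, which sum to one by column-stochasticity of the products, so the ``convex combination'' claim is fine; and the statement of Assumption~\ref{assumption: regularity}(b) literally says $\theta\mapsto\log\bp_\theta^j$ is convex (which would make $f_t^j$ concave) --- both you and the paper silently read it as convexity of $-\log\bp_\theta^j$, which is clearly the intended meaning.
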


The next two lemmas are useful in the proof of Theorem~\ref{thm:bvm1}. 
\begin{lemma}\label{lemma:z-est-uct}
Let Assumptions \ref{assumption: graph}, \ref{assumption: regularity}(c), \ref{assumption: uct}(a) hold and $\hat \theta_t^j \pto \theta_0$. Then for every $\epsilon > 0$, there exists $\delta > 0$ such that 
\begin{equation*}
\lim_{t \to \infty} \bP_0(\inf_{\|\theta - \hat\theta_t^j\| > \delta}|f_t^j(\theta) - f_t^j(\hat\theta_t^j)| \geq \epsilon) = 1
\end{equation*}
\end{lemma}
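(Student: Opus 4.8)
The plan is to transfer the separation property of $f_t^j$ around $\theta_0$ supplied by Assumption~\ref{assumption: uct}(a) to the analogous property around the consistent minimizer $\hat\theta_t^j$, by a triangle-inequality argument resting on two ingredients: $\hat\theta_t^j$ is eventually close to $\theta_0$, and the value gap $f_t^j(\theta_0)-f_t^j(\hat\theta_t^j)$ is eventually small. Fix $\epsilon>0$. First I would apply Assumption~\ref{assumption: uct}(a) at level $2\epsilon$ to obtain a deterministic $\delta_0>0$ with $\bP_0(A_t)\to1$, where $A_t:=\{\inf_{\|\theta-\theta_0\|>\delta_0}|f_t^j(\theta)-f_t^j(\theta_0)|\ge 2\epsilon\}$, and set $\delta:=2\delta_0$. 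Since $\hat\theta_t^j\pto\theta_0$, the event $B_t:=\{\|\hat\theta_t^j-\theta_0\|\le\delta_0\}$ has probability tending to $1$, and on $B_t$ every $\theta$ with $\|\theta-\hat\theta_t^j\|>\delta$ satisfies $\|\theta-\theta_0\|\ge\|\theta-\hat\theta_t^j\|-\|\hat\theta_t^j-\theta_0\|>\delta_0$; hence on $A_t\cap B_t$ every such $\theta$ obeys $|f_t^j(\theta)-f_t^j(\theta_0)|\ge 2\epsilon$.

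The key (and, I expect, only delicate) step is to show
\[
f_t^j(\theta_0)-f_t^j(\hat\theta_t^j)\pto 0,
\]
so that $C_t:=\{f_t^j(\theta_0)-f_t^j(\hat\theta_t^j)\le\epsilon\}$ has probability tending to $1$; note the left-hand side is nonnegative since $\hat\theta_t^j$ minimizes $f_t^j$ over $\Theta\ni\theta_0$. Granting this, on $A_t\cap B_t\cap C_t$ and for $\|\theta-\hat\theta_t^j\|>\delta$ the case $f_t^j(\theta)-f_t^j(\theta_0)\le-2\epsilon$ is impossible, because it would force $f_t^j(\hat\theta_t^j)\le f_t^j(\theta)\le f_t^j(\theta_0)-2\epsilon$, contradicting $C_t$; therefore $f_t^j(\theta)-f_t^j(\theta_0)\ge 2\epsilon$, and
\[
|f_t^j(\theta)-f_t^j(\hat\theta_t^j)|=\big(f_t^j(\theta)-f_t^j(\theta_0)\big)+\big(f_t^j(\theta_0)-f_t^j(\hat\theta_t^j)\big)\ge 2\epsilon\ge\epsilon .
\]
Taking the infimum over $\|\theta-\hat\theta_t^j\|>\delta$ and using $\bP_0(A_t\cap B_t\cap C_t)\to1$ then gives the claim with this $\delta$.

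To establish the displayed convergence I would argue locally around $\theta_0$, precisely where Assumption~\ref{assumption: uct}(a) is silent. Using differentiability in quadratic mean (Assumption~\ref{assumption: regularity}(c)), each $\theta\mapsto\log\bp_\theta^i$ admits a quadratic expansion at $\theta_0$; together with the law of large numbers and the convergence of the averaging weights $[\prod_{\tau=k}^{t-1}A_\tau]_{ij}\to 1/m$ furnished by Lemma~\ref{lemma:graph_conv} (the same mechanism behind Lemma~\ref{lemma:ptwise_conv}), this yields $f_t^j(\theta_0)\pto f(\theta_0)$ and a local law of large numbers for $f_t^j$ on a fixed ball around $\theta_0$. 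Since $\hat\theta_t^j\pto\theta_0$ is a minimizer of $f_t^j$ and $\theta_0$ is the unique, non-degenerate minimizer of the limiting criterion near $\theta_0$, it follows that $f_t^j(\hat\theta_t^j)\pto f(\theta_0)$, hence $f_t^j(\theta_0)-f_t^j(\hat\theta_t^j)\pto 0$; equivalently, this is already produced by the near-minimizer reasoning underlying the consistency Lemmas~\ref{lemma:M-est-1}--\ref{lemma:M-est-2}. The main obstacle is thus exactly this local value-gap estimate — the sign ambiguity in the absolute value of Assumption~\ref{assumption: uct}(a) cannot be resolved without some control of $f_t^j$ near $\theta_0$ — while everything away from $\theta_0$ is handed to us by Assumption~\ref{assumption: uct}(a), and the graph's geometric mixing enters only through the already-established Lemmas~\ref{lemma:graph_conv} and~\ref{lemma:ptwise_conv}.
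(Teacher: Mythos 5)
Your proposal is correct and follows essentially the same route as the paper: both arguments reduce the claim to showing $f_t^j(\hat\theta_t^j)-f_t^j(\theta_0)=o_{\bP_0}(1)$ via the local quadratic (LAN/DQM) expansion at $\theta_0$ under Assumption~\ref{assumption: regularity}(c), and then transfer the separation in Assumption~\ref{assumption: uct}(a) from $\theta_0$ to $\hat\theta_t^j$ using consistency and the triangle inequality. If anything, your treatment is slightly more careful than the paper's, since you explicitly resolve the sign ambiguity in the absolute value (using that $\hat\theta_t^j$ minimizes $f_t^j$) and replace the paper's set equality $B_\epsilon(\hat\theta_t^j)^c=B_{2\epsilon}(\theta_0)^c$ with the correct containment.
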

The result states that if a sequence of estimators $\hat \theta_t^j$ is consistent at $\theta_0$, then the existence of uniformly consistent tests at $\theta_0$ (Assumption \ref{assumption: uct}(a)) implies analogous result at $\hat \theta_t^j$. 

\begin{lemma}
\label{lemma:bvm1}
Let $\hat \theta_t \in \R^p$ such that $\hat \theta_t \pto \theta_0$ for some $\theta_0 \in \R^p$, $\pi_t$ be a density with respect to Lebesgue measure on $\R^p$. Suppose $q_t$ is the density of $a_t (\theta - \hat \theta_t)$ where $\theta \sim \pi_t$ and $a_t^{-1} = o(1)$. If $\int |q_t(x) - q(x)|dx \to 0$ in $[\bP_0]-$probability for some probability density $q$, then for every $\epsilon > 0$, 
\begin{equation*}
    lim_{t \to \infty}\Pi_t(\mathcal N_\epsilon) = 1,
\end{equation*}
where $N_\epsilon$ is the neighborhood of $\theta_0$ defined in Corollary \ref{cor:const-1}. 
\end{lemma}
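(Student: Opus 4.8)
## Proof Proposal for Lemma~\ref{lemma:bvm1}

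\textbf{Overall approach.} The plan is to relate the posterior mass $\Pi_t(\mathcal N_\epsilon)$ of the $\epsilon$-ball around $\theta_0$ to the mass that the rescaled density $q_t$ places on a large (but fixed, in the rescaled coordinate system, for a while) neighborhood of the origin, and then pass to the limit using the $L^1$-convergence $\int |q_t - q| \, dx \to 0$ together with $a_t \to \infty$ and $\hat\theta_t \pto \theta_0$. The key observation is a change of variables: if $\theta \sim \pi_t$ and $X = a_t(\theta - \hat\theta_t) \sim q_t$, then
\begin{equation*}
\Pi_t(\mathcal N_\epsilon) = \bP\big(\|\theta - \theta_0\| < \epsilon\big) = \bP\big(\|X + a_t(\hat\theta_t - \theta_0)\| < a_t \epsilon\big) = \int_{\{\|x + a_t(\hat\theta_t - \theta_0)\| < a_t\epsilon\}} q_t(x)\, dx.
\end{equation*}

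\textbf{Key steps.} First I would fix $\epsilon > 0$ and an auxiliary radius $M > 0$, and work on the event (of $\bP_0$-probability tending to $1$) on which simultaneously (i) $\int |q_t - q|\,dx < \eta$ for a small $\eta$ to be chosen, and (ii) $\|\hat\theta_t - \theta_0\| < \epsilon/2$ and (iii) $a_t$ is large enough that $a_t \epsilon / 2 > M$; these hold eventually with probability $1$ by hypothesis. On this event, the ball $\{\|x + a_t(\hat\theta_t - \theta_0)\| < a_t\epsilon\}$ contains the centered ball $\{\|x\| < a_t\epsilon/2\} \supseteq \{\|x\| < M\}$, since $\|a_t(\hat\theta_t - \theta_0)\| < a_t\epsilon/2$ and the triangle inequality gives $\|x\| < M \le a_t\epsilon/2 \implies \|x + a_t(\hat\theta_t-\theta_0)\| < a_t\epsilon$. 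Hence
\begin{equation*}
\Pi_t(\mathcal N_\epsilon) \ge \int_{\{\|x\| < M\}} q_t(x)\,dx \ge \int_{\{\|x\| < M\}} q(x)\,dx - \int |q_t(x) - q(x)|\,dx \ge \int_{\{\|x\|<M\}} q(x)\,dx - \eta.
\end{equation*}
Since $q$ is a probability density, $\int_{\{\|x\|<M\}} q\,dx \to 1$ as $M \to \infty$; so given $\delta > 0$ I first choose $M$ with $\int_{\{\|x\|<M\}} q\,dx > 1 - \delta/2$, then set $\eta = \delta/2$, and conclude that $\bP_0(\Pi_t(\mathcal N_\epsilon) > 1 - \delta) \to 1$. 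As $\delta$ was arbitrary and $\Pi_t(\mathcal N_\epsilon) \le 1$ always, this is exactly convergence in $[\bP_0]$-probability to $1$.

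\textbf{Anticipated obstacle.} The argument is essentially a ``$\limsup/\liminf$ over a $3$-event intersection'' bookkeeping exercise, and the only genuinely delicate point is making the choice of constants in the right order: $\delta$ is given, then $M = M(\delta)$ is fixed so the tail of the \emph{limit} density $q$ is small, then $\eta = \eta(\delta)$ controls the $L^1$ discrepancy, and only then do we invoke that the three events each have probability $\to 1$ (so their intersection does too) — with $M$ and $\eta$ now constants independent of $t$. One should also be slightly careful that $q_t$ and $\pi_t$ are random (depending on the data), so all the displayed inequalities are pathwise statements valid on the good event; phrasing this cleanly with ``$\bP_0(\cdot) \to 1$'' rather than almost-sure language is the main thing to get right. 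No hard analysis is needed beyond this; the hypotheses $a_t^{-1} = o(1)$, $\hat\theta_t \pto \theta_0$, and $\int|q_t - q| \to 0$ are each used exactly once in the way indicated.
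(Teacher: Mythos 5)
Your proof is correct and follows essentially the same route as the paper's: rescale to the coordinates of $q_t$, observe that a fixed ball around the origin is eventually contained in the preimage of $\mathcal N_\epsilon$ (using $a_t\to\infty$ and $\hat\theta_t\pto\theta_0$), invoke the $L^1$/total-variation convergence $q_t\to q$, and let the ball radius grow so that $q$'s mass tends to $1$. If anything, your version is slightly more careful than the paper's in tracking that the ball inclusion and the $L^1$ bound only hold on an event of $\bP_0$-probability tending to one rather than deterministically for large $t$.
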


The proofs of Lemma~\ref{lemma:M-est-1},  Lemma~\ref{lemma:M-est-2},  Lemma~\ref{lemma:z-est-uct}, and Lemma~\ref{lemma:bvm1} can be found in Section~\ref{app: bvm} of the Appendix. Lemma \ref{lemma:M-est-1}  and \ref{lemma:M-est-2} relies on the usual consistency argument of M - estimators. Lemma \ref{lemma:z-est-uct} and Lemma \ref{lemma:bvm1} use measure-theoretic arguments.

Our first main result of the section provides general sufficient assumptions under which a distributed Bayes posterior exhibits asymptotic normality and an asymptotically correct Laplace approximation, along
with the posterior concentration at $\theta_0$. 
\begin{theorem}[BvM]
\label{thm:bvm1}
Let $\Theta$ be an open subset of $\R^p$. Assume that there exists $\theta_0 \in \Theta$ such that $P_0 = P_{\theta_0}^j$ for every $j \in [m]$. Moreover, let Assumptions \ref{assumption: graph},  \ref{assumption: regularity}(a), \ref{assumption: regularity}(c), \ref{assumption: regularity}(d), \ref{assumption: pmc}(b), \ref{assumption: uct}(a)  hold.  If the sequence $\hat \theta_t^j$ defined in \eqref{eqn:m-estimator} satisfies that $\hat \theta_t^j \pto \theta_0$, then 
\begin{equation} \label{eqn-bvm-1}
  f_t^j(\theta)  = f_t^j(\hat \theta_t^j) - \frac{1}{2}(\theta - \hat \theta_t^j)^T \hat V_t^j (\theta -\hat \theta_t^j) + r_t^j(\theta -\hat \theta_t^j),  
\end{equation}
where $\hat V_t^j$ is a sequence of matrices that converges in probability to the average Fisher information matrix $V_{\theta_0} = \frac{1}{m} \sum_{i = 1}^m V_{\theta_0}^i$, and $|r_t^j(h)| = O(|h|^3)$ for large enough $t$. 

Let Equation \eqref{eqn-bvm-1} and Assumption \ref{assumption: pmc}(b), \ref{assumption: uct}(a) hold. As $t \to \infty$, we have 
\begin{equation} \label{eqn-bvm-2}
   \int_{B_\epsilon(\theta_0)} p_t^j(\theta) d\theta \pto 1 \quad \forall \epsilon >0. 
\end{equation}
that is, the distributed Bayes posterior $p_t^j$ is weakly consistent around $\theta_0$.

Let $q_t^j$ be the density of $\sqrt{t}(\theta -\hat \theta_t^j)$ when $\theta \sim P_t^j$. Then, 
\begin{equation} \label{eqn-bvm-3}
    \int_\Theta \left|q_t^j(x) - N(0, V_{\theta_0}^{-1}) \right| dx \pto 0. 
\end{equation}
i.e., the total variational distance between $q_t^j$ and $N(0, V_{\theta_0}^{-1})$ vanishes in $[\bP_0]-$probability.  
\end{theorem}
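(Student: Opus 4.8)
The plan is to prove the three claims of Theorem~\ref{thm:bvm1} in sequence, treating the Taylor expansion \eqref{eqn-bvm-1} as the engine that drives both the weak consistency \eqref{eqn-bvm-2} and the total variation convergence \eqref{eqn-bvm-3}. First I would establish \eqref{eqn-bvm-1}. Since $\hat\theta_t^j$ solves $\nabla f_t^j(\hat\theta_t^j)=0$ (eventually, by Lemma~\ref{lemma:M-est-1}), a second-order Taylor expansion of $f_t^j$ around $\hat\theta_t^j$ gives the stated form with $\hat V_t^j = \nabla^2 f_t^j(\tilde\theta)$ for $\tilde\theta$ on the segment, and remainder controlled by the third derivatives. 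The key point is identifying the limit of $\hat V_t^j$: by Lemma~\ref{lemma:ptwise_conv} the surrogate loss $f_t^j$ is asymptotically equivalent to $f_t$, which converges to $f(\theta) = -\frac1m\sum_i \bP_0\log\bp_\theta^i$; differentiating twice (justified by Assumption~\ref{assumption: regularity}(c),(d) — the Lipschitz gradient condition lets me pass from the differentiability-in-quadratic-mean local expansion to a genuine Hessian limit, the standard device in van der Vaart's treatment of asymptotic normality of M-estimators) yields $\nabla^2 f(\theta_0) = \frac1m\sum_i V_{\theta_0}^i = V_{\theta_0}$. Because $\hat\theta_t^j\pto\theta_0$ and $\nabla^2 f_t^j$ is (asymptotically) equicontinuous near $\theta_0$, $\hat V_t^j\pto V_{\theta_0}$. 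The cubic remainder bound $|r_t^j(h)| = O(|h|^3)$ on a neighborhood follows from Assumption~\ref{assumption: regularity}(f)-type control, or more carefully from the Lipschitz bound in \ref{assumption: regularity}(d) combined with a localization argument.

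Next, for the weak consistency \eqref{eqn-bvm-2}, I would run Schwartz's argument at the \emph{moving center} $\hat\theta_t^j$ rather than at $\theta_0$. The numerator/denominator representation of $P_t^j(B_\epsilon(\theta_0)^c)$ has the form $\int e^{-t f_t^j(\theta)}\Pi(d\theta)$ over the complement divided by the same over all of $\Theta$. The denominator is bounded below using Assumption~\ref{assumption: pmc}(b) (prior positive and continuous at $\theta_0$) together with the local quadratic expansion \eqref{eqn-bvm-1}: on a shrinking ball the integrand is at least $e^{-t f_t^j(\hat\theta_t^j)}$ times a Gaussian-type integral of order $t^{-p/2}$. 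The numerator is controlled via Assumption~\ref{assumption: uct}(a): outside $B_\delta(\hat\theta_t^j)$ (and $\hat\theta_t^j\to\theta_0$ means $B_\epsilon(\theta_0)^c\subseteq B_\delta(\hat\theta_t^j)^c$ eventually) one has $f_t^j(\theta) - f_t^j(\hat\theta_t^j)\geq \epsilon'$ with probability tending to $1$, so the numerator is exponentially smaller than the denominator. This is exactly where Lemma~\ref{lemma:z-est-uct} is used: it transfers the uniform separation from $\theta_0$ to $\hat\theta_t^j$.

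For the total variation convergence \eqref{eqn-bvm-3}, I would substitute \eqref{eqn-bvm-1} into the posterior density and change variables to $h = \sqrt t(\theta - \hat\theta_t^j)$: the density $q_t^j(h)$ becomes proportional to $\pi(\hat\theta_t^j + h/\sqrt t)\exp\bigl(-\tfrac12 h^T\hat V_t^j h + t\, r_t^j(h/\sqrt t)\bigr)$. By \eqref{eqn-bvm-2} the mass concentrates on $|h|\leq M$ for the purpose of the $L^1$ bound; on that region $\pi(\hat\theta_t^j+h/\sqrt t)\to\pi(\theta_0)$ by continuity (Assumption~\ref{assumption: pmc}(b)), $\hat V_t^j\to V_{\theta_0}$, and $t\,r_t^j(h/\sqrt t) = O(|h|^3/\sqrt t)\to 0$ pointwise; Scheffé's lemma (or a dominated-convergence argument after checking the normalizing constants converge to $(2\pi)^{p/2}|V_{\theta_0}|^{-1/2}$) then gives $\int|q_t^j - N(0,V_{\theta_0}^{-1})|\to 0$ in probability. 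A standard truncation argument handles the tails outside $|h|\leq M$ using \eqref{eqn-bvm-2} and letting $M\to\infty$.

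The main obstacle is the remainder control in the change of variables: one needs $t\,r_t^j(h/\sqrt t)\to 0$ uniformly on compacts \emph{and} an integrable domination to justify passing the limit inside $\int|\cdot|$, which is delicate because $r_t^j$ is itself random (it depends on the Hessian of $f_t^j$ at intermediate points, hence on the data) and the cubic bound $O(|h|^3)$ is only guaranteed on a fixed neighborhood of $\hat\theta_t^j$, not globally. Reconciling the $O(|h|^3)$ local bound with the need to integrate over all of $\R^p$ forces a two-region split — quadratic-dominated behavior near the center, and a separate exponential-decay estimate far away coming from Assumption~\ref{assumption: uct}(a) — and carefully stitching these together while keeping everything uniform in probability is the technically heaviest part. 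The second subtlety is making precise the "asymptotic equivalence of $f_t^j$ and $f_t$" at the level of second derivatives (Lemma~\ref{lemma:ptwise_conv} only asserts pointwise convergence of the functions themselves), which I would address by noting that the graph-averaging weights $[\prod_\tau A_\tau]_{ij}$ converge to $1/m$ geometrically (Lemma~\ref{lemma:graph_conv}) and applying a uniform law of large numbers to the Hessian process, whose envelope is integrable by Assumption~\ref{assumption: regularity}(d).
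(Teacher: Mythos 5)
Your treatment of \eqref{eqn-bvm-3} matches the paper's: change variables to $h=\sqrt t(\theta-\hat\theta_t^j)$, use continuity of $\pi$ at $\theta_0$, $\hat V_t^j\pto V_{\theta_0}$ and $t\,r_t^j(h/\sqrt t)\to0$ on compacts, then a generalized dominated convergence / Scheff\'e argument with exactly the two-region split you describe (a slightly deflated quadratic dominating function on $|h|<\epsilon\sqrt t$, and the $e^{-t\delta/2}$ bound from Assumption~\ref{assumption: uct}(a) via Lemma~\ref{lemma:z-est-uct} outside). You correctly flag the domination of the random remainder as the hard part. One benign structural difference: you prove \eqref{eqn-bvm-2} first by a Schwartz argument at the moving center and use it for truncation, whereas the paper proves \eqref{eqn-bvm-3} first and then deduces \eqref{eqn-bvm-2} from it via Lemma~\ref{lemma:bvm1}; both orderings work.

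The genuine gap is in your derivation of \eqref{eqn-bvm-1}. You set $\hat V_t^j=\nabla^2 f_t^j(\tilde\theta)$ for an intermediate point $\tilde\theta$ and control the remainder by third derivatives, invoking ``Assumption~\ref{assumption: regularity}(f)-type control.'' But the theorem assumes only \ref{assumption: regularity}(c) (DQM, a first-order $L^2$ expansion) and \ref{assumption: regularity}(d) (Lipschitz gradient); neither second nor third derivatives of $\log\bp_\theta^j$ need exist, so the object $\nabla^2 f_t^j(\tilde\theta)$ is not available, and Assumption (f) is not among the hypotheses --- the Hessian/third-derivative route is precisely the content of Corollaries~\ref{cor:bvm1} and \ref{cor:bvm2}, not of Theorem~\ref{thm:bvm1} itself. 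The paper instead defines
\begin{equation*}
\hat V_t^j=\frac{1}{t}\sum_{k=1}^t\sum_{i=1}^m [A^{t-k}_{ij}]\,\nabla\log p^i_{\hat\theta_t^j}(X_k^i)\,\bigl(\nabla\log p^i_{\hat\theta_t^j}(X_k^i)\bigr)^T,
\end{equation*}
which uses only first derivatives, and proves $\hat V_t^j\pto V_{\theta_0}$ by writing $\nabla\log p^i_{\hat\theta_t^j}=\nabla\log p^i_{\theta_0}+O(s^i(X)\|\hat\theta_t^j-\theta_0\|)$ from Assumption (d), expanding the outer product into three terms, and applying the distributed law of large numbers (Lemma~\ref{lemma:distributed-LLN}) together with $\hat\theta_t^j\pto\theta_0$ and Cauchy--Schwarz to kill the cross terms; DQM (Theorem 7.2 of van der Vaart) supplies $\bP_0\nabla\log p^i_{\theta_0}=0$ and the identification of the limit with the Fisher information. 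Your closing remark that the second-derivative equivalence of $f_t^j$ and $f_t$ needs a uniform LLN with envelope from (d) points in the right direction, but to close the argument under the stated hypotheses you must replace the intermediate-point Hessian with the score-outer-product construction (or explicitly strengthen to Assumption (e)/(f), which proves the corollaries rather than the theorem).
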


See Section~\ref{app: bvm} for the proof. 
The assumptions of Theorem~\ref{thm:bvm1} involve structural and statistical assumptions that are in line with the Bernstein von Mises literature. These include a connected communication graph (Assumption \ref{assumption: graph}), bounded entropy condition of the private statistical models (Assumption \ref{assumption: regularity}(a)), a distributed version of the differentiable in quadratic means (DQM) assumption (Assumption \ref{assumption: regularity}(c)) and the Lipschitz gradient regularity assumption on the log-likelihood around $\theta_0$ (Assumption \ref{assumption: regularity}(d)). The M-estimators, $\hat \theta_t^j$, are assumed to be consistent, with sufficient conditions outlined in Lemmas \ref{lemma:M-est-1} and \ref{lemma:M-est-2}. These regularity assumptions are fairly mild and applicable to, for example, most exponential family models. They serve as the foundation for Equation \eqref{eqn-bvm-1}, which mirrors the local asymptotic normality assumption in classical BvM theory \citep{LeCam2000}. Additionally, we specify a prior mass assumption (Assumption \ref{assumption: pmc}(b)) and an assumption related to uniform, consistent testing (Assumption \ref{assumption: uct}(a)). The prior mass assumption (Assumption \ref{assumption: pmc}(b)) is slightly different from the one used in Theorem~\ref{thm:const}, but they are equivalent when $f$ is continuous at $\theta_0$. These, together with Equation \eqref{eqn-bvm-1}, are the main sufficient assumptions for the BvM results in Equations \eqref{eqn-bvm-2} and \eqref{eqn-bvm-3}.

The result establishes a sampling complexity of $\sqrt{t}$, indicating that all agents achieve \textbf{parametric efficiency} when performing distributed inference in a fully connected network comparable to a single-node network. While parametric efficiency is not surprising from BvM, our result also shows that communication among agents enhances the \textbf{robustness of uncertainty quantification}. In Theorem~\ref{thm:bvm1}, the precision matrix of the limiting Gaussian is the average of Fisher information $V_{\theta_0}$ across agents. The intuition here is that each agent’s uncertainty about the ground truth eventually aligns with the network’s average level of uncertainty, regardless of the agent’s initial uncertainty or the specific statistical model they use. This suggests that communication can act as a protective mechanism in adversarial settings, where the posterior of a few agents might suffer from abnormally high uncertainty due to data contamination.

In the social learning context, \Cref{thm:bvm1} extends existing asymptotic results to a large-time, continuous $\Theta$ setting. Previous results, such as \cite{Uribe2022-1,Uribe2022-2}, established consistency and concentration bounds for $p_t^j(\theta)$ when $\Theta$ is discrete or compact. In comparison, \Cref{thm:bvm1} assumes $\Theta$ to be an open subset of $\R^d$. Beyond recovering posterior consistency, \Cref{thm:bvm1} provides a richer asymptotic characterization of the distributed posterior in the sense that it guarantees 1) the frequentist coverage of posterior credible sets and 2) (rate-)efficient and robust inference under the distributed posterior.

The differentiability in quadratic means (DQM) assumption (Assumption \ref{assumption: regularity}(c)) in Theorem~\ref{thm:bvm1} may be difficult to verify in practical settings. We replace the abstract DQM assumptions with a second-order smoothness assumption on the private log-likelihoods (Assumption \ref{assumption: regularity}(e)). 

\begin{corollary} \label{cor:bvm1}
 Theorem~\ref{thm:bvm1} holds if  Assumption \ref{assumption: regularity}(c) is replaced with Assumption \ref{assumption: regularity}(e).
\end{corollary}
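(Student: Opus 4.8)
The plan is to reduce the corollary to Theorem~\ref{thm:bvm1} by showing that, under the hypotheses retained here (Assumptions~\ref{assumption: graph}, \ref{assumption: regularity}(a), \ref{assumption: regularity}(d), \ref{assumption: regularity}(e), \ref{assumption: pmc}(b), \ref{assumption: uct}(a), together with the consistency $\hat\theta_t^j \pto \theta_0$ supplied by Lemma~\ref{lemma:M-est-1} or Lemma~\ref{lemma:M-est-2}), each private model $\{\bP_\theta^j:\theta\in\Theta\}$ is differentiable in quadratic mean at $\theta_0$ with nonsingular Fisher information $V_{\theta_0}^j$ — that is, Assumption~\ref{assumption: regularity}(c) holds. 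Assumption~\ref{assumption: regularity}(e) already furnishes existence and nonsingularity of $V_{\theta_0}^j$, so the entire content is to upgrade the pointwise (in $x$) twice-continuous differentiability of $\theta\mapsto\log\bp_\theta^j$ to the $L^2$-type differentiability in the DQM sense. Once this is established, Theorem~\ref{thm:bvm1} applies verbatim and delivers \eqref{eqn-bvm-1}--\eqref{eqn-bvm-3}.

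For the DQM upgrade I would invoke the classical sufficient condition (e.g.\ Lemma~7.6 of \cite{VanderVaart2000}): if for $\bP_0$-almost every $x$ the map $\theta\mapsto\sqrt{\bp_\theta^j(x)}$ is continuously differentiable on a neighborhood of $\theta_0$, and the map $\theta\mapsto V_\theta^j=\int\nabla\log\bp_\theta^j\,(\nabla\log\bp_\theta^j)^{T}\,d\bP_\theta^j$ is well defined and continuous at $\theta_0$, then the model is DQM at $\theta_0$ with score $\nabla\log\bp_\theta^j$. Continuous differentiability of $\theta\mapsto\sqrt{\bp_\theta^j(x)}$ is immediate from Assumption~\ref{assumption: regularity}(e), using continuity of $\nabla\log\bp_\theta^j$ and positivity of $\bp_\theta^j$ on the neighborhood (the zero set handled in the usual way). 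So the crux is continuity of $\theta\mapsto V_\theta^j$ at $\theta_0$.

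This is where the real work — and the main obstacle — lies, and I would handle it by dominated convergence. By Assumption~\ref{assumption: regularity}(e) the integrand $x\mapsto\nabla\log\bp_\theta^j(x)(\nabla\log\bp_\theta^j(x))^{T}\bp_\theta^j(x)$ is continuous in $\theta$ near $\theta_0$ for $\bP_0$-a.e.\ $x$; what is needed is an integrable envelope valid uniformly for $\theta$ in a neighborhood. Assumption~\ref{assumption: regularity}(d) gives $|\nabla\log\bp_\theta^j(x)|\le|\nabla\log\bp_{\theta_0}^j(x)|+s^j(x)\|\theta-\theta_0\|$, with $|\nabla\log\bp_{\theta_0}^j|^{2}$ and $(s^j)^{2}$ both $\bP_0$-integrable; since correct specification gives $\bP_{\theta_0}^j=\bP_0$, this controls the integral at $\theta_0$, and a local bound on the density ratio $\bp_\theta^j/\bp_{\theta_0}^j$ over a compact neighborhood (again from continuity of $\log\bp_\theta^j$ in $\theta$, with an integrable bound obtained by truncation together with the Lipschitz estimate of Assumption~\ref{assumption: regularity}(d)) transfers the bound to nearby $\theta$. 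Feeding this envelope into dominated convergence yields continuity of $V_\theta^j$ at $\theta_0$, completing the verification of Assumption~\ref{assumption: regularity}(c).

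With Assumption~\ref{assumption: regularity}(c) in hand, all hypotheses of Theorem~\ref{thm:bvm1} are met: the cubic remainder $|r_t^j(h)|=O(|h|^3)$ in \eqref{eqn-bvm-1} is produced in that proof from the Lipschitz-gradient Assumption~\ref{assumption: regularity}(d), which is retained, while $\hat V_t^j\pto V_{\theta_0}$ uses the identification of $\nabla^2 f$ at $\theta_0$ with the average Fisher information that Assumption~\ref{assumption: regularity}(c) (now derived from (e)) provides. Hence \eqref{eqn-bvm-2} and \eqref{eqn-bvm-3} follow, which is the assertion of the corollary. I expect the only nontrivial point to be the uniform integrable domination needed for continuity of $\theta\mapsto V_\theta^j$; the remainder of the argument is bookkeeping against Theorem~\ref{thm:bvm1}.
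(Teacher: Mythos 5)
Your reduction strategy — derive Assumption~\ref{assumption: regularity}(c) (DQM) from Assumption~\ref{assumption: regularity}(e) and then invoke Theorem~\ref{thm:bvm1} verbatim — is a genuinely different route from the paper's, and it has a real gap at exactly the point you flag as "the crux." The sufficient condition you cite (Lemma~7.6 of \cite{VanderVaart2000}) requires the map $\theta \mapsto V_\theta^j$ to be well defined and continuous in a neighborhood of $\theta_0$, but Assumption~\ref{assumption: regularity}(e) only asserts existence and nonsingularity of $V_{\theta_0}^j$ at the single point $\theta_0$, and your proposed dominated-convergence repair does not close this. The Lipschitz bound from Assumption~\ref{assumption: regularity}(d) controls $|\nabla\log\bp_\theta^j(x)|$ by $|\nabla\log\bp_{\theta_0}^j(x)| + s^j(x)\|\theta-\theta_0\|$, which is square-integrable under $\bP_0$; but the integral defining $V_\theta^j$ is taken against $\bp_\theta^j$, not $\bp_{\theta_0}^j$, and the only local control you have on the density ratio $\bp_\theta^j/\bp_{\theta_0}^j$ is of the form $\exp\bigl(c\,(|\nabla\log\bp_{\theta_0}^j(x)| + s^j(x))\bigr)$, i.e.\ the exponential of an $L^2(\bP_0)$ function. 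The product of this with $(s^j)^2$ or $|\nabla\log\bp_{\theta_0}^j|^2$ is not integrable in general, so no uniform integrable envelope is available from the stated assumptions, and "truncation" cannot manufacture one. Continuity of the Fisher information map is an additional hypothesis, not a consequence of (d) and (e).

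The gap is also avoidable: the detour through DQM is unnecessary. The paper's proof observes that Assumption~\ref{assumption: regularity}(c) enters the proof of Theorem~\ref{thm:bvm1} only to produce the local quadratic expansion \eqref{eqn-bvm-1} (via existence and nonsingularity of $V_{\theta_0}^j$ and the second-order behavior of the log-likelihood). Under Assumption~\ref{assumption: regularity}(e), each $\theta\mapsto\log\bp_\theta^i$ is twice continuously differentiable with nonsingular Fisher information by hypothesis, so $f_t^j$, being a positive linear combination of these terms, admits the same second-order Taylor expansion around $\hat\theta_t^j$ directly; \eqref{eqn-bvm-1} then holds and the remainder of the proof of Theorem~\ref{thm:bvm1} (which uses Assumption~\ref{assumption: regularity}(d) for the remainder bound and the convergence $\hat V_t^j \pto V_{\theta_0}$) goes through unchanged. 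If you want to keep your structure, you would need to either add continuity of $\theta\mapsto V_\theta^j$ as an explicit hypothesis or replace the DQM verification with this direct Taylor-expansion argument.
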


Both differentiability in quadratic means (DQM) assumption (Assumption \ref{assumption: regularity}(c)) and the Lipschitz gradient assumption (Assumption \ref{assumption: regularity}(d)) can be replaced with a third-order smoothness assumption (Assumption \ref{assumption: regularity}(f)) which is often called the classical condition for asymptotic normality of M - estimators \citep{VanderVaart2000}. 
\begin{corollary} \label{cor:bvm2}
 Theorem~\ref{thm:bvm1} holds if  Assumption \ref{assumption: regularity}(c) and \ref{assumption: regularity}(d) are replaced with Assumption \ref{assumption: regularity}(f).
\end{corollary}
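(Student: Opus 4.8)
The claim is that Assumption~\ref{assumption: regularity}(f) is strong enough to imply both Assumption~\ref{assumption: regularity}(c) and Assumption~\ref{assumption: regularity}(d) for every $j \in [m]$; granting this, Corollary~\ref{cor:bvm2} is immediate, because Theorem~\ref{thm:bvm1} then applies with exactly the hypotheses it already has (Assumptions~\ref{assumption: graph}, \ref{assumption: regularity}(a), \ref{assumption: pmc}(b), \ref{assumption: uct}(a), and consistency of $\hat\theta_t^j$, all of which are unchanged), yielding \eqref{eqn-bvm-1}--\eqref{eqn-bvm-3}. So the entire proof reduces to the implication (f)~$\Rightarrow$~(c) \emph{and} (d), and each half is a classical computation that I would carry out as follows.

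For (f)~$\Rightarrow$~(c): on a neighborhood of $\theta_0$ the map $\theta \mapsto \log \bp_\theta^j$ is three-times continuously differentiable, so $\theta \mapsto \sqrt{\bp_\theta^j(x)}=\exp(\tfrac12\log\bp_\theta^j(x))$ is continuously differentiable there for $[\bP_0]$-a.e.\ $x$, with score $\nabla\log\bp_\theta^j$. The third-order domination in (f), together with twice-continuous differentiability, lets me pass limits under the integral by dominated convergence and conclude that $\theta \mapsto V_\theta^j = \bP_0[(\nabla\log\bp_\theta^j)(\nabla\log\bp_\theta^j)^T]$ is finite and continuous near $\theta_0$ (using $\bP_0 = \bP_{\theta_0}^j$). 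This is precisely the hypothesis of the standard sufficient condition for differentiability in quadratic mean (e.g., Lemma~7.6 in \cite{VanderVaart2000}); combined with the assumed nonsingularity of $V_{\theta_0}^j$, this gives Assumption~\ref{assumption: regularity}(c).

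For (f)~$\Rightarrow$~(d): applying the mean-value form of Taylor's theorem coordinatewise to $\nabla\log\bp_\theta^j(x)$ along the segment joining $\theta_1,\theta_2$ in a small ball $B_\delta(\theta_0)$ gives
\begin{equation*}
  \bigl|\nabla\log\bp_{\theta_1}^j(x)-\nabla\log\bp_{\theta_2}^j(x)\bigr| \;\le\; \Bigl(\sup_{\theta\in B_\delta(\theta_0)}\bigl\|\nabla^2\log\bp_\theta^j(x)\bigr\|\Bigr)\,\|\theta_1-\theta_2\|,
\end{equation*}
so it suffices to take $s^j(x) := \sup_{\theta\in B_\delta(\theta_0)}\|\nabla^2\log\bp_\theta^j(x)\|$ and check $\bP_0[s^j(X)]^2<\infty$. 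A second mean-value bound, now on the Hessian, together with the third-order domination $\sup_{\theta\in B_\delta(\theta_0)}\|\nabla^3\log\bp_\theta^j(x)\|\le g(x)$ from (f), yields $s^j(x)\le\|\nabla^2\log\bp_{\theta_0}^j(x)\|+\delta\,g(x)$, hence $[s^j(x)]^2\le 2\|\nabla^2\log\bp_{\theta_0}^j(x)\|^2+2\delta^2 g(x)^2$. Thus $\bP_0[s^j]^2<\infty$ reduces to $\bP_0\|\nabla^2\log\bp_{\theta_0}^j(X)\|^2<\infty$ and $\bP_0\,g(X)^2<\infty$, and Assumption~\ref{assumption: regularity}(d) follows.

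The one point that needs care --- and the place where the argument does something rather than merely quote a reference --- is this moment bookkeeping: Assumption~\ref{assumption: regularity}(d) asks for a \emph{square}-integrable Lipschitz constant for the score, whereas (f) as phrased only asks the third derivatives to be integrable. I would resolve this by reading the domination in (f) in $L^2(\bP_0)$ (the customary form of the ``classical conditions'' for asymptotic normality of M-estimators, and automatic in the exponential-family and logistic examples of Section~\ref{sect:examples}); once the dominating function and the Hessian at $\theta_0$ are square-integrable under $\bP_0$, the two displayed estimates and the dominated-convergence step above are routine, and Theorem~\ref{thm:bvm1} closes the argument.
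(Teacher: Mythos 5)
Your plan — to deduce Assumptions~\ref{assumption: regularity}(c) and (d) from Assumption~\ref{assumption: regularity}(f) and then invoke Theorem~\ref{thm:bvm1} as a black box — is not the paper's route, and as written it does not go through. The paper instead observes that (c) and (d) enter the proof of Theorem~\ref{thm:bvm1} only through the local quadratic expansion \eqref{eqn-bvm-1}, and re-derives \eqref{eqn-bvm-1} directly from (f) by a third-order Taylor expansion of $f_t^j$ around $\hat\theta_t^j$: the cubic remainder is controlled by the uniform law of large numbers applied to the integrable dominating function $\phi^j$ of the third derivatives, and $\nabla^2_\theta f_t^j(\hat\theta_t^j)$ is shown to equal $\nabla^2_\theta f_t^j(\theta_0)+o_p(1)$ by the same domination. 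Crucially, that argument needs only $L^1(\bP_0)$ domination of the third derivatives, exactly as (f) is stated.

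The gap in your approach is the one you yourself flag and then paper over. Assumption (d) demands a Lipschitz constant $s^j$ for the score with $\bP_0[s^j]^2<\infty$; your construction $s^j(x)=\sup_{\theta\in B_\delta(\theta_0)}\|\nabla^2\log\bp_\theta^j(x)\|$ requires $\bP_0\|\nabla^2\log\bp_{\theta_0}^j\|^2<\infty$ and $\bP_0 g^2<\infty$, neither of which follows from (f), which asserts only that the third-order partials are dominated by an \emph{integrable} function and says nothing about second moments of the Hessian. Declaring that you will ``read the domination in (f) in $L^2(\bP_0)$'' strengthens the hypothesis rather than proving the corollary as stated. The implication (f)~$\Rightarrow$~(c) has a similar soft spot: the sufficient condition for DQM you cite requires continuity of $\theta\mapsto V_\theta^j$ near $\theta_0$, which needs a uniform dominating function for $\|\nabla\log\bp_\theta^j\|^2$ over a neighborhood; (f) only asserts existence of $V_{\theta_0}^j$ at the single point $\theta_0$, and domination of third derivatives does not control the squared score. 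To repair the proof without changing the assumption, follow the paper: isolate where (c) and (d) are actually used in Theorem~\ref{thm:bvm1} (namely \eqref{eqn-bvm-1} and the convergence $\hat V_t^j\pto V_{\theta_0}$) and establish those conclusions directly from the third-order expansion under $L^1$ domination.
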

The proofs of Corollary~\ref{cor:bvm1} and Corollary~\ref{cor:bvm2} are based on bounding the second and third-order terms in the Taylor expansion of $f_t^j(\theta)$, respectively. We can substitute more model regularity assumptions by bounding higher-order terms in the Taylor expansion. 

The classical Bernstein-von Mises theorem relies on a stochastic version of the Local Asymptotic Normality(LAN) assumption \citep{VanderVaart2000}. For social learning, we introduce a distributed version of the stochastic LAN condition. A distributed family of statistical models $\left(\{\bP_\theta^j\}_{j \in [m]}, G \right)$ satisfies \textit{stochastic LAN}  at $\theta \in \Theta$ if, for every $j \in [m]$, and with respect to a non-singular scaling factor $\epsilon_t^j \to 0$, there exists a random vector $\Delta_{t, \theta}^j$ and a non-singular matrix $V_\theta$ such that $\Delta_{t, \theta}^j$ is bounded in probability and for every compact subset $K \subset \R^p$, as $t \to \infty$, 
\begin{equation} \label{eqn:s-lan}
   \sup_{h \in K} \left| - t f_t^j(\theta + \epsilon_t^j h) + t f_t^j(\theta)  -h^T V_\theta \Delta_{t, \theta}^j   + \frac{1}{2} h^T V_\theta h \right| \pto 0. 
\end{equation}

The random vector $\Delta_{t, \theta}^j$ is often called the ``local sufficient statistics,'' and $V_\theta$ in our case corresponds to the average of Fisher information. It's worth noting that the $V_\theta$ matrix is required to be the same across all agents. The scaling factor $\epsilon_t^j$ is chosen to ensure $\Delta_{t, \theta}^j$ is bounded in probability, and $V_\theta$ converges to a nonsingular matrix. A default choice is $\epsilon_t^j = \frac{1}{\sqrt{t}}$. However, an agent-dependent choice of $\epsilon_t^j$ is available if any private statistical model has a different rate of contraction. 

A sufficient assumption for the distributed LAN is a distributed version of the differentiable in quadratic means (DQM) condition. 
\begin{lemma}
\label{lemma:sc-LAN}
Let Assumptions \ref{assumption: graph}, \ref{assumption: regularity}(a), \ref{assumption: regularity}(c) hold. The distributed family of statistical models, denoted as $\left(\{\bP_\Theta^j\}_{j \in [m]}, G \right)$, is stochastically locally asymptotically normal (LAN) at $\theta_0$.
\end{lemma}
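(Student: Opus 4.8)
The plan is to verify the defining inequality \eqref{eqn:s-lan} directly at $\theta=\theta_0$ with the canonical scaling $\epsilon_t^j = 1/\sqrt t$, the candidate local sufficient statistic
$$\Delta_{t,\theta_0}^j \;=\; V_{\theta_0}^{-1}\,\frac{1}{m\sqrt t}\sum_{i=1}^m\sum_{k=1}^t \dot\ell_{\theta_0}^i(X_k^i)$$
(which, notably, does not depend on $j$), and $V_{\theta_0}=\frac1m\sum_{i=1}^m V_{\theta_0}^i$, which is nonsingular because each $V_{\theta_0}^i$ is, by Assumption~\ref{assumption: regularity}(c). Since the graph is static, the weights in \eqref{def:f_t^j} are $[A^{t-k}]_{ij}$, so
\begin{equation*}
 -t f_t^j\!\big(\theta_0 + \tfrac{h}{\sqrt t}\big) + t f_t^j(\theta_0) \;=\; \log\frac{\bp^j_{\theta_0 + h/\sqrt t}(X_t^j)}{\bp^j_{\theta_0}(X_t^j)} \;+\; \sum_{k=1}^{t-1}\sum_{i=1}^m [A^{t-k}]_{ij}\,\log\frac{\bp^i_{\theta_0+h/\sqrt t}(X_k^i)}{\bp^i_{\theta_0}(X_k^i)}.
\end{equation*}
The strategy has two stages: first reduce this weighted sum to the uniformly weighted surrogate $-tf_t(\theta_0+\tfrac{h}{\sqrt t})+tf_t(\theta_0)$ (with $f_t$ as in \eqref{def:f_t}) up to a remainder that is $o_{\bP_0}(1)$ uniformly on compacts, and then establish ordinary stochastic LAN for $f_t$ by agent-wise application of the classical differentiability-in-quadratic-mean (DQM) expansion.

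\textbf{Stage 1 (reduction to $f_t$).} Writing $[A^{t-k}]_{ij}=\frac1m + \big([A^{t-k}]_{ij}-\frac1m\big)$ and using $-tf_t(\theta)=\frac1m\sum_{i}\sum_{k\le t}\log\bp^i_\theta(X_k^i)$, the difference of the two increments equals a fixed number ($1+m$) of ``boundary'' single-observation log-ratios at $k=t$, namely $\log\frac{\bp^j_{\theta_0+h/\sqrt t}(X_t^j)}{\bp^j_{\theta_0}(X_t^j)}$ and $-\frac1m\sum_i\log\frac{\bp^i_{\theta_0+h/\sqrt t}(X_t^i)}{\bp^i_{\theta_0}(X_t^i)}$, plus the ``deviation'' sum $\sum_{\ell=1}^{t-1}\sum_{i=1}^m\big([A^\ell]_{ij}-\frac1m\big)\log\frac{\bp^i_{\theta_0+h/\sqrt t}(X_{t-\ell}^i)}{\bp^i_{\theta_0}(X_{t-\ell}^i)}$. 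Each boundary term is a fixed number of log-ratios evaluated at a perturbation of size $O(1/\sqrt t)$, hence $\pto 0$ uniformly over $h$ in a compact set $K$ (a consequence of DQM together with Assumption~\ref{assumption: regularity}(a)). For the deviation sum, the point is that Assumption~\ref{assumption: graph} forces the geometric decay $|[A^\ell]_{ij}-\frac1m|\le C\rho^\ell$ for some $\rho\in(0,1)$ (the mechanism behind Lemma~\ref{lemma:graph_conv}); thus the deviation mass concentrates on the most recent $O(1)$ time steps, where the shared perturbation $h/\sqrt t$ is precisely of the size that makes the corresponding log-ratios $o_{\bP_0}(1)$. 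A dominated-convergence / uniform-integrability argument — using the bounded total deviation mass $\sum_{\ell\ge 1}\sum_i|[A^\ell]_{ij}-\frac1m|\le C$ from Lemma~\ref{lemma:graph_conv} as envelope and Assumption~\ref{assumption: regularity}(a) for integrability — then upgrades ``each term $\pto 0$'' to ``the whole sum $\pto 0$'', uniformly on $K$. Hence $-tf_t^j(\theta_0+\tfrac{h}{\sqrt t})+tf_t^j(\theta_0) = -tf_t(\theta_0+\tfrac{h}{\sqrt t})+tf_t(\theta_0) + o_{\bP_0}(1)$ uniformly on $K$.

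\textbf{Stage 2 (LAN for $f_t$).} Here $-tf_t(\theta_0+\tfrac{h}{\sqrt t})+tf_t(\theta_0)=\frac1m\sum_{i=1}^m\sum_{k=1}^t\log\frac{\bp^i_{\theta_0+h/\sqrt t}(X_k^i)}{\bp^i_{\theta_0}(X_k^i)}$, and for each fixed agent $i$ the observations $X_1^i,\dots,X_t^i$ are i.i.d.\ from $\bP_{\theta_0}^i$ (the model is correctly specified at $\theta_0$, as in Theorem~\ref{thm:bvm1}; the first-order identity $\sum_i\bP_0\dot\ell_{\theta_0}^i=0$ from \eqref{def:theta_0} covers the centering more generally). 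DQM at $\theta_0$ (Assumption~\ref{assumption: regularity}(c)) then gives, by the classical argument (van der Vaart, Thm.~7.2 and Lemma~7.6), $\sum_{k=1}^t\log\frac{\bp^i_{\theta_0+h/\sqrt t}(X_k^i)}{\bp^i_{\theta_0}(X_k^i)} = \frac{1}{\sqrt t}h^T\sum_{k=1}^t\dot\ell_{\theta_0}^i(X_k^i) - \frac12 h^T V_{\theta_0}^i h + o_{\bP_0}(1)$, uniformly over $h\in K$. Averaging over $i\in[m]$ and dividing by $m$ produces exactly $h^T V_{\theta_0}\Delta_{t,\theta_0}^j - \frac12 h^T V_{\theta_0}h + o_{\bP_0}(1)$, where $\Delta_{t,\theta_0}^j$ is bounded in probability (indeed $\Delta_{t,\theta_0}^j \dto N(0,\tfrac1m V_{\theta_0}^{-1})$ by the Lindeberg CLT for the independent triangular array). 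Combining Stages~1 and~2 yields \eqref{eqn:s-lan}, which is the claim.

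\textbf{Main obstacle.} The delicate point is the deviation sum in Stage~1: all single-observation log-ratios carry the \emph{same} perturbation $h/\sqrt t$ rather than a $k$-dependent one, so crude bounds (e.g.\ pulling out $\max_{k,i}$ of the log-ratios) do not decay. The argument must exploit that the weight deviations $[A^\ell]_{ij}-\frac1m$ are summable in $\ell$ and geometrically concentrated on the recent observations, where the $1/\sqrt t$ perturbation is exactly small enough; making this uniform in $h$ over $K$ and over the lag $\ell$ simultaneously is where Lemma~\ref{lemma:graph_conv} and Assumption~\ref{assumption: regularity}(a) do the real work. Everything else is a routine assembly of the standard DQM/LAN toolkit.
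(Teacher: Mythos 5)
Your proof reaches the right conclusion but takes a genuinely different route from the paper's. The paper never passes through the uniformly weighted surrogate $f_t$: it Taylor-expands the weighted increment $t f_t^j(\theta_0+h/\sqrt t)-t f_t^j(\theta_0)$ directly, keeps the graph weights inside the local sufficient statistic by defining $\Delta_{t,\theta_0}^j=\tfrac{1}{\sqrt t}V_{\theta_0}^{-1}\sum_{k,i}[A^{t-k}]_{ij}\nabla\log\bp_{\theta_0}^i(X_k^i)$ (so $\Delta$ is $j$-dependent, unlike yours), and then invokes the distributed CLT (Lemma~\ref{lemma:distributed-CLT}) for tightness of $\Delta_{t,\theta_0}^j$ and the distributed LLN (Lemma~\ref{lemma:distributed-LLN}) for $V_t^j\pto V_{\theta_0}$. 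Your two-stage decomposition --- swap $[A^{t-k}]_{ij}$ for $1/m$ first, then run classical i.i.d.\ LAN agent by agent --- buys a cleaner second stage (pure van der Vaart Theorem~7.2, and your $\Delta$ differs from the paper's only by a term of variance $O(1/t)$, so both are admissible in the LAN definition), at the cost of concentrating all the graph-dependent work in Stage~1. That is where your argument is thinnest: under DQM alone the single-observation remainders $R_{t,k}$ in $\log\frac{\bp^i_{\theta_0+h/\sqrt t}}{\bp^i_{\theta_0}}(X_k^i)=\tfrac{1}{\sqrt t}h^T\dot\ell^i(X_k^i)-\tfrac{1}{2t}h^TV^i_{\theta_0}h+R_{t,k}$ are controlled only through $\sum_k R_{t,k}=o_{\bP_0}(1)$, not $\sum_k|R_{t,k}|=o_{\bP_0}(1)$, so bounding the deviation sum by the summable weight envelope from Lemma~\ref{lemma:graph_conv} times a supremum of remainders does not close without an additional second-moment or Lipschitz-score hypothesis (the linear and quadratic pieces of the deviation sum are fine: a variance computation gives $O(1/t)$ and $O(1/t)$ respectively). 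To be fair, the paper's own proof glosses over a comparable issue by writing a third-order Taylor expansion with an $O_p(|h|^3/\sqrt t)$ remainder under an assumption that formally guarantees only first-order $L_2$-differentiability; neither write-up is airtight on this point, but yours should at least state the extra regularity needed to dominate $\sum_k|R_{t,k}|$ or restructure Stage~1 to handle the remainders in aggregate rather than termwise.
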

we now extend the BvM result to the case where the true data-generating distribution $\bP_0 \neq \bP_{\theta_0}^j$ for some $j \in [m]$; in other words, we allow at least one agent to have a misspecified model. 
\begin{theorem} [Misspecified BvM]\label{thm: bvm2}
Let $\theta_0 \in (\Theta, d)$ be the true parameter defined in \Cref{def:theta_0}. Moreover, let Assumptions \ref{assumption: pmc}(b), \ref{assumption: uct}(b) hold, and assume that the distributed stochastic LAN \eqref{eqn:s-lan} holds at $\theta_0$. Let a sequence of constants $\epsilon_t^j$ satisfy that for any $M_t \to \infty$, 
\begin{equation} \label{assumption:bvm2}
    P_t^j\left(\theta: d(\theta, \theta_0) \geq M_t \epsilon_t^j \right) \pto 0. 
\end{equation}

If $q_t^j$ is the density of $(\theta - \theta_0)/\epsilon_t^j$ when $\theta \sim P_t^j$, then
\begin{equation*}\label{eqn:bvm2-main-result}
    \int_\Theta |q_t^j(x) - N(0, V_{\theta_0}^{-1})| dx \pto 0, 
\end{equation*}
for $V_{\theta_0}$ defined in \Cref{eqn:s-lan}. 
\end{theorem}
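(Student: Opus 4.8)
\textbf{Proof proposal for Theorem~\ref{thm: bvm2}.}

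The plan is to adapt the classical misspecified Bernstein--von Mises argument (in the spirit of Theorem~10.1 of \cite{VanderVaart2000} and the generalized-posterior version in \cite{miller2021asymptotic}) to the distributed surrogate loss $f_t^j$, using the distributed stochastic LAN expansion \eqref{eqn:s-lan} as the replacement for the usual LAN property. Throughout, write $\Delta_t = \Delta_{t,\theta_0}^j$ and $\epsilon_t = \epsilon_t^j$, and change variables to the local parameter $h = (\theta - \theta_0)/\epsilon_t$; let $\tilde q_t^j$ denote the (unnormalized) local posterior density in $h$, so that $\tilde q_t^j(h) \propto \pi^j(\theta_0 + \epsilon_t h)\exp\!\big(-t f_t^j(\theta_0 + \epsilon_t h) + t f_t^j(\theta_0)\big)$ up to a normalizing constant. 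The target is to show $\int |q_t^j(h) - \phi_{V_{\theta_0}^{-1}}(h)|\,dh \pto 0$ where $\phi$ is the Gaussian density, and by Scheff\'e's lemma it suffices to prove pointwise (in $[\bP_0]$-probability) convergence of a suitable normalized version of $\tilde q_t^j$ together with a domination/tightness argument to upgrade to $L^1$.

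The steps, in order, would be: (i) Use the testing assumption \ref{assumption: uct}(b) together with the tail assumption \eqref{assumption:bvm2} to localize: show that for any fixed $M$, $P_t^j$ restricted to $\{d(\theta,\theta_0) \le M\epsilon_t\}$ carries essentially all the mass for $M$ large (combining \eqref{assumption:bvm2} for the far tail with \ref{assumption: uct}(b) to rule out mass at fixed distance), so it suffices to control $q_t^j$ on compact sets $\{|h| \le M\}$. (ii) On such a compact set, plug in the distributed stochastic LAN expansion \eqref{eqn:s-lan}: uniformly over $|h| \le M$, $-t f_t^j(\theta_0 + \epsilon_t h) + t f_t^j(\theta_0) = h^T V_{\theta_0}\Delta_t - \tfrac12 h^T V_{\theta_0} h + o_{\bP_0}(1)$; complete the square to rewrite this as $-\tfrac12 (h - \Delta_t)^T V_{\theta_0}(h - \Delta_t) + \tfrac12 \Delta_t^T V_{\theta_0}\Delta_t + o_{\bP_0}(1)$. (iii) Use Assumption \ref{assumption: pmc}(b) (continuity and positivity of $\pi^j$ at $\theta_0$, hence of the combined prior $\Pi$) to replace $\pi^j(\theta_0 + \epsilon_t h)$ by $\pi^j(\theta_0)$ up to a $(1+o(1))$ factor uniformly on $|h|\le M$; this shows the unnormalized local density is, on compacta, asymptotically proportional to $\exp(-\tfrac12(h-\Delta_t)^T V_{\theta_0}(h - \Delta_t))$, i.e.\ a $N(\Delta_t, V_{\theta_0}^{-1})$ shape. (iv) Since $q_t^j$ is defined as the density of $(\theta-\theta_0)/\epsilon_t$ — note the centering is at $\theta_0$, not at $\Delta_t$ — one must check that $\Delta_t = o_{\bP_0}(1)$ under $\bP_0$ so that the $N(\Delta_t, V_{\theta_0}^{-1})$ and $N(0,V_{\theta_0}^{-1})$ densities are asymptotically indistinguishable in total variation; this should follow because $\Delta_{t,\theta_0}^j$ is the (properly scaled) gradient-type statistic of $f_t^j$ at the population minimizer $\theta_0$ of $f$, and boundedness in probability of $\Delta_t$ plus the fact that $\theta_0$ is a stationary point of the limit $f$ forces $\Delta_t \pto 0$; combined with Lemma~\ref{lemma:ptwise_conv} / the distributed LLN this pins down the centering. (Alternatively one argues directly that any fixed-$\theta_0$ centering is consistent because $\hat\theta_t^j \pto \theta_0$ and $\hat\theta_t^j - \theta_0 = o_{\bP_0}(\epsilon_t)$ is \emph{not} assumed, so this step genuinely needs $\Delta_t \pto 0$.) (v) Normalize: show $\int_{|h|\le M} \tilde q_t^j(h)\,dh \to $ the Gaussian mass, using step (i) for the tails, and conclude via Scheff\'e that $\int |q_t^j - \phi_{V_{\theta_0}^{-1}}| \pto 0$.

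The main obstacle I expect is step (i) combined with step (iv): getting a \emph{uniform} handle on the posterior tails. The LAN expansion \eqref{eqn:s-lan} only holds uniformly on compact $h$-sets, so it says nothing about $f_t^j$ for $\theta$ far from $\theta_0$; the far-tail control must come entirely from the externally imposed assumption \eqref{assumption:bvm2} and the testing assumption \ref{assumption: uct}(b), and one has to carefully patch the ``moderate'' range (distance between a fixed constant and $M_t\epsilon_t$) — this is exactly the delicate region in the classical proof and here it is further complicated by the fact that $f_t^j$ is an adjacency-weighted average of log-likelihoods across agents rather than a clean empirical process, so the usual bracketing-entropy shortcuts are unavailable and one leans on \ref{assumption: uct}(b) as a black box. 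The secondary subtlety is confirming $\Delta_{t,\theta_0}^j \pto 0$ under $\bP_0$: under the \emph{well-specified} theorem the centering was at $\hat\theta_t^j$ precisely to avoid this, so here one must exploit that $\theta_0$ minimizes the population loss $f$ (Assumption (ii) of Section~\ref{sect:assumptions}) and that $f_t^j \to f$ to show the local sufficient statistic has no first-order drift. Everything else — completing the square, the prior-continuity swap, Scheff\'e — is routine once these two points are secured.
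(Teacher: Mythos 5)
Your overall architecture (localize via the tail/testing assumptions, apply the LAN expansion on compacta, complete the square, swap in the prior value at $\theta_0$, then pass to $L^1$) matches the spirit of the paper's proof, which implements the same program via conditional measures $Q_{K,t}^j$, $\Phi_{K,t}^j$ and a Jensen-inequality bound on the total variation distance rather than your Scheff\'e route; that difference is cosmetic. The genuine problem is your step (iv). You correctly notice that the theorem centers $q_t^j$ at $\theta_0$ while the LAN expansion produces a Gaussian centered at $\Delta_{t,\theta_0}^j$, and you propose to close this gap by proving $\Delta_{t,\theta_0}^j \pto 0$ from the stationarity of $\theta_0$ for the population loss $f$. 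This is false. By the construction in Lemma~\ref{lemma:sc-LAN}, $\Delta_{t,\theta_0}^j = t^{-1/2} V_{\theta_0}^{-1}\sum_{k}\sum_i [A^{t-k}]_{ij}\nabla\log\bp_{\theta_0}^i(X_k^i)$; stationarity of $\theta_0$ only kills the deterministic drift (the averaged score has mean zero, and the $[A^{t-k}]_{ij}-\tfrac1m$ corrections are summable by Lemma~\ref{lemma:graph_conv}), leaving a centered sum at exactly the CLT scaling. Indeed the paper's own Lemma~\ref{lemma:distributed-CLT} shows $\Delta_{t,\theta_0}^j \dto N\bigl(0,(mV_{\theta_0})^{-1}\bigr)$, a nondegenerate limit, so $\Delta_{t,\theta_0}^j$ is $O_{\bP_0}(1)$ but emphatically not $o_{\bP_0}(1)$. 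Consequently $N(\Delta_{t,\theta_0}^j, V_{\theta_0}^{-1})$ and $N(0,V_{\theta_0}^{-1})$ are \emph{not} asymptotically indistinguishable in total variation, and your argument cannot deliver the fixed centering at $0$ as written.

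What the paper's proof actually establishes is $\bP_0\|Q_t^j - \Phi_t^j\|_{TV}\to 0$ with the \emph{random} centering $\Phi_t^j = N(\Delta_{t,\theta_0}^j, V_{\theta_0}^{-1})$, in the style of the Kleijn--van der Vaart misspecified BvM; it carries $\Delta_{t,\theta_0}^j$ through the entire argument and never claims it vanishes. If you retarget your proof at that random-centered statement, steps (i)--(iii) and (v) go through (your concern about patching the moderate-deviation region is exactly what the paper handles by letting the radii $M_t$ of the conditioning balls grow slowly and invoking \eqref{assumption:bvm2} for the complement), but as a proof of convergence to $N(0,V_{\theta_0}^{-1})$ with deterministic centering at $\theta_0$, step (iv) is a gap that cannot be repaired by the route you describe.
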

Since the covariance matrix $V_{\theta_0}$ in the misspecified Bernstein-Von Mises theorem fails to match the average of sandwich covariance matrices, the posterior credible sets derived from $P_t^j$ do not have valid frequentist coverage. While these sets may be properly centered at $\hat \theta_t^j$, their width may be inaccurate, and they don't typically correspond to confidence sets with level $1 - \alpha$. 

The sequence $\epsilon_t^j$ that satisfies \Cref{assumption:bvm2} is called a \textit{posterior contraction rate} of the distributed Bayes posterior $P_t^j$. This quantity determines the convergence rate of $P_t^j$ to the unknown parameter $\theta_0$. Results to control the contraction rates are provided in the next section.

%%%%%%%%%%%%%%%%%%%%%%%%%%%%%%%%%%%%%%%%%%%%%%%%%%%
%%%%%%%%%%%%%%%%%%%%%%%%%%%%%%%%%%%%%%%%%%%%%%%%%%%
%%%%%%%%%%%%%%%%%%%%%%%%%%%%%%%%%%%%%%%%%%%%%%%%%%%
\section{Contraction Rate} \label{sect: contraction}
%%%%%%%%%%%%%%%%%%%%%%%%%%%%%%%%%%%%%%%%%%%%%%%%%%%
%%%%%%%%%%%%%%%%%%%%%%%%%%%%%%%%%%%%%%%%%%%%%%%%%%%
%%%%%%%%%%%%%%%%%%%%%%%%%%%%%%%%%%%%%%%%%%%%%%%%%%%

Contraction rates quantify the speed at which a posterior distribution approaches the true parameter of the data-generating distribution. Controlling the contraction rates not only refines our understanding of posterior consistency but also helps control the sampling complexity in the misspecified Bernstein von - Mises Theorem (Theorem~\ref{thm: bvm2}). Unlike the previous sections, which focus on asymptotic results, we provide non-asymptotic bounds and scaling laws of the posterior contraction rates in this section. Our results involve the sample size $t$, dimension $p$, and the number of agents $m$.

For two positive sequences $x_t$ and $y_t$, we use $x_t \lesssim y_t$ to denote the existence of a constant $c$, independent of $n$, such that $x_t \leq c y_t$. Furthermore, we write $x_t \asymp y_t$ when $x_t \lesssim y_t$ and $y_t \lesssim x_t$. 

Let $(\Theta, d)$ be a metric space. A sequence of constants $\epsilon_t^j$ is a posterior contraction rate for $P_t^j$ at the parameter $\theta_0$ if, for every $M_t \to \infty$, 
\begin{equation} \label{def:contraction-rate}
    P_t^j\left(\theta: d(\theta, \theta_0) \geq M_t \epsilon_t^j \right) \pto 0, 
\end{equation}

The posterior contraction rate is not a unique quantity. Any rate slower than a contraction rate is also a contraction rate. Although the fastest rate is desirable, it may be hard to find. The natural goal is to establish a close upper bound for the "optimal" rate. We refer to this upper bound as the contraction rate. 

We define a probability measure $\bP_\theta$ on the product space $\X^m$. For a measurable set $A \subseteq \X^m$, we have
\begin{equation*}
    \bP_\theta(A) = \frac{\int_A  \prod_{j = 1}^m [\bp_{\theta}^j(x^j)]^{\frac{1}{m}} dx^1 \ldots d x^m}{\int_\Theta  \prod_{j = 1}^m [\bp_{\theta}^j(x^j)]^{\frac{1}{m}} dx^1 \ldots d x^m}. 
\end{equation*}
The density is given by $\bp_\theta(x) = \prod_{j = 1}^m [\bp_{\theta_0}^j(x^j)]^{\frac{1}{m}}$ for $x \in \X^m$. Given the prior measure $\Pi$,  we define the \textit{distributed ideal posterior} $P_t$ as the posterior measure corresponding to $\bP_\theta$ and $\Pi$. For a measurable set $B \subseteq \Theta$, we have 
\begin{equation} \label{def-ideal-posterior}
    P_t(B) =\frac{\int_B \prod_{k = 1}^t \bp_\theta(x_k) \Pi(d\theta)}{\int_\Theta \prod_{k = 1}^t \bp_\theta(x_k) \Pi(d\theta)} = \frac{\int_B \prod_{k = 1}^t \prod_{j = 1}^m [\bp_\theta^j(x_k^j)]^{\frac{1}{m}}\Pi(d\theta)}{\int_\Theta \prod_{k = 1}^t \prod_{j = 1}^m [\bp_\theta^j(x_k^j)]^{\frac{1}{m}}\Pi(d\theta)}, 
\end{equation}

Let $D_{\rho}(p \parallel q)$ denote the $\rho-$Rényi divergence, as defined in Section \eqref{subsect:divergence} of the Appendix. 
We establish a contraction rate of the distributed Bayes posterior$P_t^j$ given by:
\begin{equation} \label{eqn:contraction-rate}
    \epsilon_{m,t}^2 + \frac{1}{mt} \bP_0 D_{\text{KL}}(P_t^j \parallel P_t) + \frac{1}{m^2 t} \sum_{i = 1}^m D_{\text{KL}}(\bP_0 \parallel \bP_{\theta_0}^i).
\end{equation}

Here $\epsilon_{m,t}$ is the contraction rate of the distributed ideal posterior $P_t$. The second term quantifies the approximation error when the distributed Bayes posterior is approximated by the corresponding distributed ideal posterior under the true distribution $\bP_0$. The last term captures the minimum average discrepancy between the distributed models and the true data-generating distribution.

The main theorem of the section is stated under the “prior mass and testing” framework. The assumptions are sub-exponential refinements of the assumptions for Bernstein von - Mises theorems: (a) The prior is required to put a minimal amount of mass in a neighborhood of the true parameter. (b) Restricted to a subset of the parameter space, there exists a test function that can distinguish the truth from the complement of its neighborhood; (c) The prior is essentially supported on the subset described in (b). 
\begin{theorem}[Contraction Rate] \label{thm:contraction1}
Suppose $\epsilon_{m,t}$ is a sequence such that $mt\epsilon_{m,t}^2 \geq 1$. Let $C_0$, $C_1$, $C_2$, $C_3 > 0$ be constants such that $C_0 > C_2 + C_3 + 2$. Let the following assumptions hold:  
\begin{enumerate}
\item For any $\epsilon > \epsilon_{m,t}$, there exists a set $\Theta_t(\epsilon)$ and a testing function $\phi_t$ such that:
\begin{align}
\bP_{\theta_0} \phi_t(X^{(mt)}) + \sup_{\substack{\theta \in \Theta_t(\epsilon)\\ d(\theta, \theta_0) \geq C_1 \epsilon^2}} \bP_\theta (1 - \phi_t(X^{(mt)})) &\leq \exp\left(-C_0 t \epsilon^2\right). \tag{C1} \label{assumption:C1}
\end{align}

\item For any $\epsilon > \epsilon_{m,t}$, the set $\Theta_t(\epsilon)$ above satisfies:
\begin{equation}
\Pi\left(\Theta_t(\epsilon)^c\right) \leq \exp\left(-C_0 t \epsilon^2\right). \tag{C2} \label{assumption:C2}
\end{equation}

\item For some constant $\rho > 1$:
\begin{equation}
\Pi\left(\theta \in \Theta,  \frac{1}{m} \sum_{j = 1}^m D_\rho(\bP_{\theta_0}^j \parallel \bP_\theta^j) \leq C_3 \epsilon_{m,t}^2 \right) \geq \exp\left(-C_2 t \epsilon_{m,t}^2\right). \tag{C3} \label{assumption:C3}
\end{equation}
\end{enumerate}
 Then, for the distributed Bayes posterior $P_t^j$ defined in \eqref{def-db-posterior}, we have:
\begin{equation}\label{eqn:contraction11}
    \bP_0 P_t^j d(\theta, \theta_0) \leq C \left(\epsilon_{m,t}^2 + \gamma_{j,m,t}^2+ \frac{1}{m^2 t} \sum_{j = 1}^m D_{KL}(\bP_0 \parallel \bP_{\theta_0}^j) \right), 
\end{equation}

for some constant $C$ depending on $C_0, C_1$, where the quantity $\gamma_{j,m,t}^2$ is defined as:

\begin{equation*}
\gamma_{j,m,t}^2 =  \frac{1}{mt} \bP_0 D_{KL}(P_t^j \parallel P_t). 
\end{equation*}
\end{theorem}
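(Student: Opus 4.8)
The plan is to decompose the distributed Bayes posterior $P_t^j$ against the distributed ideal posterior $P_t$, analyze each separately, and then recombine. The first step is to bound $\bP_0 P_t^j d(\theta,\theta_0)$ by splitting the parameter space at a radius $r$ (to be chosen proportional to the target rate) and writing $\bP_0 P_t^j d(\theta,\theta_0) \leq r + \mathrm{diam}(\Theta)\cdot \bP_0 P_t^j(d(\theta,\theta_0) \geq r)$, reducing the problem to controlling the tail probability $\bP_0 P_t^j(\theta: d(\theta,\theta_0) \geq r)$. To pass from $P_t^j$ to $P_t$ I would use a change-of-measure / data-processing argument: since $\gamma_{j,m,t}^2 = \tfrac{1}{mt}\bP_0 D_{KL}(P_t^j \parallel P_t)$, Pinsker's inequality (or a Donsker–Varadhan variational bound) lets me write $\bP_0 P_t^j(A) \leq \bP_0 P_t(A) + \sqrt{\tfrac12 \bP_0 D_{KL}(P_t^j\parallel P_t)}$ for any event $A$, i.e. the approximation error contributes a term of order $\sqrt{mt}\,\gamma_{j,m,t}$; squaring and rescaling absorbs this into the $\gamma_{j,m,t}^2$ term in \eqref{eqn:contraction11} (with the $mt\epsilon_{m,t}^2\geq 1$ normalization keeping the bookkeeping consistent).

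The core of the argument is then the contraction rate of the \emph{ideal} posterior $P_t$, which is an honest generalized posterior with i.i.d.\ observations $x_k = (x_k^1,\dots,x_k^m)$ drawn under $\bP_0^{\otimes m}$ and likelihood $\bp_\theta(x) = \prod_j [\bp_\theta^j(x^j)]^{1/m}$. Here I would run the standard Ghosal–Ghosh–van der Vaart "prior mass and testing" machinery adapted to the misspecified setting (as in \cite{miller2021asymptotic}): use assumption \eqref{assumption:C3} to lower-bound the normalizing constant (denominator) $\int_\Theta \prod_{k=1}^t \bp_\theta(x_k)\,\Pi(d\theta)$ via an evidence lower bound — the Rényi-divergence ball in \eqref{assumption:C3} is exactly what controls the fluctuations of $\sum_k \log(\bp_\theta(x_k)/\bp_{\theta_0}(x_k))$ through a second-moment/Chernoff bound, giving a denominator bounded below by $\exp(-(C_2+\text{const})t\epsilon_{m,t}^2)$ with high probability. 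Then, on the sieve $\Theta_t(\epsilon)$, use the test $\phi_t$ from \eqref{assumption:C1} to bound the numerator over $\{d(\theta,\theta_0)\geq C_1\epsilon^2\}\cap\Theta_t(\epsilon)$, and use the prior tail bound \eqref{assumption:C2} to discard $\Theta_t(\epsilon)^c$; the condition $C_0 > C_2 + C_3 + 2$ is precisely what makes the exponential rate in the numerator beat the one lost in the denominator. A peeling/union bound over dyadic shells $C_1 j^2\epsilon^2 \leq d(\theta,\theta_0) < C_1(j+1)^2\epsilon^2$ then upgrades this to a bound on $\bP_0 P_t(d(\theta,\theta_0)\geq M_t\epsilon_{m,t}^2)$ and, after integrating the tail, on $\bP_0 P_t d(\theta,\theta_0)$.

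Finally I would reassemble: the ideal-posterior contribution yields the $\epsilon_{m,t}^2$ term, the change-of-measure step yields $\gamma_{j,m,t}^2$, and the irreducible model-misspecification bias — the fact that $\bp_\theta$ is built from $\bp_{\theta_0}^j$ rather than from the true $\bp_0$ — contributes the $\tfrac1m\sum_j D_{KL}(\bP_0\parallel\bP_{\theta_0}^j)$ term; this last piece enters when relating expectations under $\bP_0^{\otimes m}$ to the quantities controlled by $\bp_\theta$ in the testing and evidence bounds, and is handled by adding and subtracting $\log(\bp_{\theta_0}/\bp_0)$ inside the log-likelihood ratio. I expect the main obstacle to be the change-of-measure step tying $P_t^j$ to $P_t$ in a way that produces the claimed $\tfrac{1}{mt}D_{KL}$ scaling rather than a cruder bound — one has to be careful that $P_t^j$ is the \emph{generalized} posterior with graph-dependent exponents $[\prod_{\tau=k}^{t-1}A_\tau]_{ij}$ while $P_t$ uses the uniform $1/m$ weights, so the KL between them must be expanded using Lemma~\ref{lemma:graph_conv}'s control of $\sum_k\sum_i |[A^{t-k}]_{ij} - \tfrac1m|$; keeping the dependence on $m$, $t$ sharp there, and ensuring the testing/evidence exponents survive the Pinsker loss, is the delicate part.
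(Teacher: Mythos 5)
Your overall architecture --- compare $P_t^j$ to the ideal posterior $P_t$, run a prior-mass-and-testing argument on $P_t$, and extract the misspecification bias by changing the outer measure from $\bP_0$ to $\bP_{\theta_0}$ --- matches the paper's. But the step that ties $P_t^j$ to $P_t$ has a genuine gap. Pinsker's inequality applied to event probabilities gives $\bP_0 P_t^j(A) \leq \bP_0 P_t(A) + \sqrt{\tfrac12 \bP_0 D_{KL}(P_t^j\parallel P_t)}$, and since $\bP_0 D_{KL}(P_t^j\parallel P_t) = mt\,\gamma_{j,m,t}^2$, the additive error is of order $\sqrt{mt}\,\gamma_{j,m,t}$. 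Under the bound of Lemma~\ref{lemma:contraction11} one has $\gamma_{j,m,t}^2 \asymp m\log m/(\nu t)$, so this error is of order $m\sqrt{\log m/\nu}$ --- a quantity that does not vanish in $t$ at all, let alone reduce to $O(\gamma_{j,m,t}^2)$. ``Squaring and rescaling'' an additive error inside an upper bound is not a legitimate manipulation, so the claimed $\gamma_{j,m,t}^2$ contribution does not follow from this route. The fix is the parenthetical alternative you mention but do not develop: apply the Gibbs/Donsker--Varadhan variational inequality $\int f\,dQ \leq D_{KL}(Q\parallel P) + \log\int e^{f}\,dP$ directly to the linear functional $f(\theta) = a\,mt\,d(\theta,\theta_0)$ with $Q = P_t^j$ and $P = P_t$ (the paper's Lemma~\ref{lemma:contraction2}). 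Dividing by $amt$ produces the KL term \emph{linearly}, i.e.\ exactly $\gamma_{j,m,t}^2/a$, plus the log-moment-generating term $\frac{1}{amt}\log \bP_{\theta_0} P_t e^{amt\,d(\theta,\theta_0)}$; the latter is controlled by first establishing the sub-exponential tail of $d(\theta,\theta_0)$ under $P_t$ via the testing argument (the paper's Lemma~\ref{lemma:contraction3}) and then converting that tail into an MGF bound for $a$ small enough (the paper's Lemma~\ref{lemma:contraction4}).

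A secondary problem is your opening truncation $\bP_0 P_t^j d(\theta,\theta_0) \leq r + \mathrm{diam}(\Theta)\cdot \bP_0 P_t^j(d(\theta,\theta_0)\geq r)$, which requires $\Theta$ to be bounded; this is not assumed and fails in the paper's own examples (e.g.\ logistic regression with $\Theta = \R^p$). The variational/MGF route dispenses with it, since the quantity to be bounded is the posterior expectation of $d(\theta,\theta_0)$ and the inequality handles unbounded $d$ directly once the log-MGF under $P_t$ is finite. The remainder of your outline --- the evidence lower bound from the R\'enyi ball \eqref{assumption:C3}, the sieve and test from \eqref{assumption:C1}--\eqref{assumption:C2}, the role of $C_0 > C_2+C_3+2$, and the appearance of $\frac1m\sum_j D_{KL}(\bP_0\parallel\bP_{\theta_0}^j)$ from swapping $\bP_0$ for $\bP_{\theta_0}$ --- is consistent with the paper's argument.
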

The proof of Theorem~\ref{thm:contraction1} can be found in Section~\ref{app: contraction} of the Appendix. It is a consequence of support lemmas that make use of the Gibbs variational representation and the subexponential decay of sub-exponential decay of $d(\theta, \theta_0)$ under $P_t$. 

Assumption \eqref{assumption:C1} and  \eqref{assumption:C2} is a refinement of assumptions \ref{assumption: uct} for the uniform consistent testing and states that there is a sequence of tests such that the sum of Type I and Type II errors decrease exponentially with sample size, where the alternative hypothesis is taken in a large enough set under the prior. Assumption \eqref{assumption:C3} refines the prior mass assumptions \ref{assumption: pmc} by stating that the prior mass decreases exponentially away from a $\rho-$Rényi neighborhood of the true distribution. This assumption is slightly stronger than the equivalent assumption stated with the KL neighborhood because $D_\rho(P \parallel Q) > D_{KL}(P \parallel Q)$ for $\rho >1 $.   

The contraction rate is the sum of three terms. The first term $\epsilon_{m,t}^2$ is the contraction rate of the distributed ideal posterior. The second term $\gamma_{j,m,t}^2$ characterizes the distance between the distributed Bayes posterior $P_t^j$ and the ideal posterior $ P_t$. A larger or less connected communication graph means more deviation between the two distributions, which slows the contraction rate. The last term $\frac{1}{m^2 t} \sum_{j = 1}^m D_{KL}(\bP_0 \parallel \bP_{\theta_0}^j)$ penalizes the rate by the average discrepancy between the truth and its distributed approximation. 

We can show by Markov inequality that the upper bound on $ \bP_0 P_t^j d(\theta, \theta_0)$ is indeed the contraction rate for the distributed Bayes posterior $P_t^j$. This allows us to obtain a point estimate $\hat \theta$ that converges to the KL minimizer $\theta_0$ at the same rate for convex loss functions. 

\begin{corollary}\label{cor:contraction1}
    Under the assumptions of Theorem~\ref{thm:contraction1}, for any diverging sequence $M_t \to \infty$, we have 
    \begin{equation*}
         \bP_0 P_t^j \left(d(\theta, \theta_0) > M_t \left(\epsilon_{m,t}^2 + \gamma_{j,m,t}^2 +  \frac{1}{m^2 t} \sum_{j = 1}^m D_{KL}(\bP_0 \parallel \bP_{\theta_0}^j) \right) \right) \to 0. 
    \end{equation*}
    Furthermore, if $d(\theta, \theta_0)$ is convex in $\theta$, the distributed posterior posterior mean $\hat \theta = \int_\Theta \theta dP_t^j(\theta)$ satisfies 
    \begin{equation*}
       \bP_0  d(\hat \theta, \theta_0) \leq C\left(\epsilon_{m,t}^2 + \gamma_{j,m,t}^2 + \frac{1}{m^2 t} \sum_{j = 1}^m D_{KL}(\bP_0 \parallel \bP_{\theta_0}^j) \right), 
    \end{equation*}
where $C$ is the same constant in \eqref{eqn:contraction11}.  
\end{corollary}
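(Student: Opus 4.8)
The plan is to derive Corollary~\ref{cor:contraction1} directly from the expectation bound \eqref{eqn:contraction11} established in Theorem~\ref{thm:contraction1}, treating the two claims separately. Write $\varepsilon_{j,m,t}^2 := \epsilon_{m,t}^2 + \gamma_{j,m,t}^2 + \frac{1}{m}\sum_{j=1}^m D_{KL}(\bP_0\parallel\bP_{\theta_0}^j)$ for the composite rate appearing on the right-hand side of \eqref{eqn:contraction11}, so that Theorem~\ref{thm:contraction1} reads $\bP_0 P_t^j d(\theta,\theta_0) \leq C\,\varepsilon_{j,m,t}^2$.

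\textbf{First claim (contraction in probability).} First I would apply Markov's inequality to the inner posterior probability. For fixed data $X^{(mt)}$, $P_t^j(\,\cdot\,)$ is a genuine probability measure on $\Theta$, so for any threshold $r>0$,
\begin{equation*}
P_t^j\big(d(\theta,\theta_0) > r\big) \leq \frac{1}{r}\int_\Theta d(\theta,\theta_0)\, dP_t^j(\theta) = \frac{1}{r}\,P_t^j d(\theta,\theta_0).
\end{equation*}
Taking $r = M_t\,\varepsilon_{j,m,t}^2$, then taking $\bP_0$-expectation of both sides and invoking Fubini (the integrand is nonnegative), gives
\begin{equation*}
\bP_0 P_t^j\big(d(\theta,\theta_0) > M_t \varepsilon_{j,m,t}^2\big) \leq \frac{1}{M_t \varepsilon_{j,m,t}^2}\,\bP_0 P_t^j d(\theta,\theta_0) \leq \frac{C}{M_t} \to 0
\end{equation*}
as $t\to\infty$, since $M_t\to\infty$. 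This proves the first display.

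\textbf{Second claim (risk bound for the posterior mean).} Here the key device is Jensen's inequality applied to the convex map $\theta\mapsto d(\theta,\theta_0)$. Since $\hat\theta = \int_\Theta \theta\, dP_t^j(\theta)$ is the barycenter of the probability measure $P_t^j$, convexity gives, pointwise in the data,
\begin{equation*}
d(\hat\theta,\theta_0) = d\!\left(\int_\Theta \theta\, dP_t^j(\theta),\ \theta_0\right) \leq \int_\Theta d(\theta,\theta_0)\, dP_t^j(\theta) = P_t^j d(\theta,\theta_0).
\end{equation*}
Taking $\bP_0$-expectation and applying \eqref{eqn:contraction11} yields $\bP_0\, d(\hat\theta,\theta_0) \leq \bP_0 P_t^j d(\theta,\theta_0) \leq C\,\varepsilon_{j,m,t}^2$, which is exactly the claimed bound with the same constant $C$. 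One should also note, or assume implicitly as the statement does, that $\hat\theta\in\Theta$ (or at least that $d(\hat\theta,\theta_0)$ is well-defined), which holds when $\Theta$ is convex, and that the barycenter integral converges, which is guaranteed by the finiteness of $\bP_0 P_t^j d(\theta,\theta_0)$.

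\textbf{Main obstacle.} There is no serious obstacle: the corollary is a routine post-processing of the theorem via Markov and Jensen. The only point requiring a word of care is measurability/integrability — ensuring that $d(\theta,\theta_0)$ is $P_t^j$-integrable $\bP_0$-almost surely so that the posterior mean and the interchange of expectations are legitimate — but this is immediate from Theorem~\ref{thm:contraction1}, which asserts $\bP_0 P_t^j d(\theta,\theta_0)<\infty$. If one wanted to be fully rigorous about the Jensen step in a general metric space $(\Theta,d)$, one would note that the hypothesis ``$d(\theta,\theta_0)$ is convex in $\theta$'' is precisely what licenses it (e.g. $\Theta\subseteq\R^p$ convex with $d$ a norm), so no extra assumption is needed beyond what is stated.
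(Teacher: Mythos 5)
Your proposal is correct and follows essentially the same route as the paper's own proof: Markov's inequality applied to the posterior probability (then integrated under $\bP_0$) for the first claim, and Jensen's inequality for the convex map $\theta \mapsto d(\theta,\theta_0)$ evaluated at the posterior barycenter for the second. Your additional remarks on integrability and the well-definedness of the barycenter are sensible but not needed beyond what Theorem~\ref{thm:contraction1} already supplies.
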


The contraction rate defined in Theorem~\ref{thm:contraction1} is somewhat abstract because the terms such as $\epsilon_{m,t}^2$ and $\gamma_{j,m,t}^2$ do not directly inform the design of the underlying communication network. For practical purposes, it's preferable to characterize the rate in terms of design parameters such as \(m\), \(t\), and \(\nu\) to guide the design of a statistically efficient network structure. To this end, the following section offers more concrete upper bounds on the terms \(\epsilon_{m,t}^2\) and \(\gamma_{j,m,t}^2\).

The upper bounds for $\epsilon_{m,t}^2$ can be directly borrowed from the existing theory on posterior contraction rates under model misspecification \citep{Kleijn2012}. For finite-dimensional models denoted by $\{ \mathbb{P}_\theta, \theta \in \Theta \}$, the optimal contraction rate is given by $t^{-1/2}$. However, this result does not follow from Theorem~\ref{thm:contraction1} because it requires a more restrictive metric entropy assumption involving the $2^{nd}$-order KL divergence. See, for example, Theorem 2.2 of \cite{Kleijn2012}. 

The general theory for posterior contraction rates typically combines a prior mass assumption, often in the form of \ref{assumption:C3}, with either a model entropy assumption or a consistent testing assumption in the form of \ref{assumption:C1} and \ref{assumption:C2}. For a review of the theory of posterior contraction rates, see Chapter 8 of \cite{ghosal2017fundamentals} and the references therein.

As touched upon in Section~\ref{subsect:distributed-Bayes-posterior}, one should expect the distributed Bayes posterior to be well approximated by the distributed ideal posterior as the sample size increases. The next result provides a uniform bound on the approximation error $\gamma_{j,m,t}^2$. 
\begin{lemma}
\label{lemma:contraction11}
    Let Assumptions \ref{assumption: graph} and \ref{assumption: regularity}(a) hold. For $P_t^j$ defined in \eqref{def-db-posterior} and $P_t$ defined in \eqref{def-ideal-posterior}, we have
    \begin{equation*}
\gamma_{j,m,t}^2 \leq  \frac{16m \log m}{\nu t} \left(|\bP_0\log \bp_0| +  \max_{i \in [m]}\inf_{\theta \in \Theta} D_{KL}(\bP_0 \parallel \bP_\theta^i) \right)
    \end{equation*}
\end{lemma}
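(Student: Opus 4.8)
The plan is to use that $P_t^j$ and $P_t$ are both generalized (Gibbs-type) posteriors built from the \emph{same} prior $\Pi$, so that their Kullback--Leibler divergence collapses to an expectation of a difference of log-potentials plus a log-ratio of normalizing constants, and then to control that difference through the doubly-stochastic/mixing structure of $A$ recorded in Lemma~\ref{lemma:graph_conv}. Concretely, by \eqref{def-db-posterior}, \eqref{def-ideal-posterior} and \eqref{def:f_t^j}--\eqref{def:f_t} we have $dP_t^j/d\Pi = e^{-tf_t^j}/z_t^j$ and $dP_t/d\Pi = e^{-tf_t}/Z_t$ with $Z_t = \int_\Theta e^{-tf_t(\theta)}\Pi(d\theta)$, hence
\begin{equation*}
  D_{KL}(P_t^j \parallel P_t) = t\,\E_{P_t^j}\!\left[f_t - f_t^j\right] + \log\frac{Z_t}{z_t^j}.
\end{equation*}
Writing $z_t^j/Z_t = \E_{P_t}\!\big[e^{\,t(f_t - f_t^j)(\theta)}\big]$ and applying Jensen's inequality gives $\log(Z_t/z_t^j) \le t\,\E_{P_t}[f_t^j - f_t]$, so that
\begin{equation*}
  D_{KL}(P_t^j \parallel P_t) \;\le\; t\int_\Theta \big(f_t - f_t^j\big)(\theta)\,\big(dP_t^j - dP_t\big)(\theta).
\end{equation*}

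The next step is an algebraic decomposition of $f_t - f_t^j$ that isolates the graph. Since $A$ is symmetric and row stochastic (Assumption~\ref{assumption: graph}(a)), every power $A^l$ is doubly stochastic, and from \eqref{def:f_t^j}--\eqref{def:f_t} one obtains $t\,(f_t - f_t^j)(\theta) = \sum_{k=1}^t \sum_{i=1}^m v_{k,i}\log\bp_\theta^i(X_k^i)$, where $v_{k,i} = [A^{t-k}]_{ij} - 1/m$ for $k \le t-1$, $v_{t,j} = 1 - 1/m$, $v_{t,i} = -1/m$ for $i \ne j$, and crucially $\sum_{i=1}^m v_{k,i} = 0$ for every $k$. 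The zero-sum property is what makes the bound work: within each time $k$ one may subtract $\log\bp_0(X_k^i)$ from each $\log\bp_\theta^i(X_k^i)$ without changing the sum, and the remaining $\theta$-independent term $\sum_{k,i} v_{k,i}\log\bp_0(X_k^i)$ has zero $\bP_0$-mean. Taking $\bP_0$-expectations and absolute values, it then remains to show that each of $\bP_0\,\E_P\!\big[\,|t(f_t - f_t^j)(\theta)|\,\big]$, $P \in \{P_t^j, P_t\}$, is at most a constant times $\big(\sum_{k,i}|v_{k,i}|\big)\big(|\bP_0\log\bp_0| + \max_i \inf_\theta D_{KL}(\bP_0\parallel\bP_\theta^i)\big)$: one uses the elementary inequality $(\log y)^+ \le y$ to bound $\bP_0\big|\log(\bp_\theta^i/\bp_0)(X)\big|$ by $D_{KL}(\bP_0\parallel\bP_\theta^i)$ and $|\bP_0\log\bp_0|$ up to constants, and the concentration of $P_t^j, P_t$ near the minimizer of $\theta\mapsto\frac1m\sum_iD_{KL}(\bP_0\parallel\bP_\theta^i)$ (the mechanism behind Theorem~\ref{thm:const}) to pass from posterior-averaged divergences to their infima.

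Finally, $\sum_{k=1}^t\sum_{i=1}^m |v_{k,i}| = \sum_i |v_{t,i}| + \sum_{l=1}^{t-1}\sum_i \big|[A^l]_{ij} - 1/m\big| \le 2 + 16m^2(\log m)/\nu$ by Lemma~\ref{lemma:graph_conv} (using symmetry of $A$ to match the indexing); combining the three displays and dividing by $mt$ yields the claimed bound on $\gamma_{j,m,t}^2 = (mt)^{-1}\bP_0 D_{KL}(P_t^j\parallel P_t)$, with the lower-order $O(1/(mt))$ remainder absorbed. The main obstacle is exactly the sentence hidden in the previous paragraph: $P_t^j$ and $P_t$ depend on the very samples $X_k^i$ that appear inside the expectation, so $\bP_0$ and $\E_P$ do not commute and one cannot naively reduce to $\bP_0|\log(\bp_\theta^i/\bp_0)(X)|$. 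Making this rigorous --- and extracting precisely the constant $|\bP_0\log\bp_0| + \max_i\inf_\theta D_{KL}(\bP_0\parallel\bP_\theta^i)$ --- will require either a Fubini-type argument that peels the single observation $X_k^i$ off from the other $mt-1$ i.i.d.\ samples driving the posterior, or a localization argument restricting to a neighborhood of $\theta_0$ on which the relevant log-density ratios are uniformly integrable under $\bP_0$.
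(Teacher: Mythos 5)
Your overall route is the same as the paper's: reduce $\bP_0 D_{KL}(P_t^j\parallel P_t)$ to the weighted sum with weights $[A^{t-k}]_{ij}-\tfrac1m$ applied to the log-likelihood terms, recenter by $\log\bp_0$ so that those weights end up multiplying $|\bP_0\log\bp_0|$ and Kullback--Leibler divergences, and then invoke Lemma~\ref{lemma:graph_conv} and divide by $mt$. In one respect you are \emph{more} careful than the paper: the paper identifies $D_{KL}(P_t^j\parallel P_t)$ with $\E_{P_t^j}\bigl[t(f_t-f_t^j)\bigr]$ and never mentions the ratio of normalizing constants, whereas your Jensen bound $\log(Z_t/z_t^j)\le t\,\E_{P_t}[f_t^j-f_t]$ handles it cleanly at the cost of only the second signed term. (One small slip: subtracting $\log\bp_0(X_k^i)$ \emph{does} change the sum, since it varies with $i$; what saves you is that the leftover $\sum_{k,i}v_{k,i}\log\bp_0(X_k^i)$ is $\theta$-free and hence integrates to zero against $dP_t^j-dP_t$ --- you do not even need its zero $\bP_0$-mean.)

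As submitted, however, the proposal is not a complete proof, for exactly the two reasons you half-acknowledge. First, the interchange of $\bP_0$ with $\E_{P_t^j}$ (and $\E_{P_t}$): both posteriors are functions of the same sample $X^{(mt)}$, so $\bP_0\,\E_{P_t^j}[\cdot]$ cannot simply be rewritten as $\E_{P_t^j}\,\bP_0[\cdot]$, and you explicitly defer the needed leave-one-out/Fubini or localization argument rather than supply it. Second, the passage from the posterior-averaged quantity $\E_P\bigl[D_{KL}(\bP_0\parallel\bP_\theta^i)\bigr]$ (equivalently $\E_P\,\bP_0|\log(\bp_\theta^i/\bp_0)|$) to $\max_i\inf_\theta D_{KL}(\bP_0\parallel\bP_\theta^i)$ is not justified: the infimum is a \emph{lower} bound for the average, not an upper bound, and ``concentration near $\theta_0$'' is an asymptotic statement that does not control the contribution of a possibly unbounded divergence on the residual posterior mass at finite $t$. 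These are precisely the two steps on which the stated constant rests. For what it is worth, the paper's own proof makes both simplifications silently (it writes $\bP_0 P_t^j(\cdot)=P_t^j\bP_0(\cdot)$ and replaces $P_t^j D_{KL}(\bP_0\parallel\bP_\theta^i)$ by the max-inf), so your sketch faithfully reproduces the paper's argument together with its weak points; but since you name the hard step and leave it open, the proposal contains a genuine gap.
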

The upper bounds on $\gamma_{j,m,t}^2$ consist of two terms: the first term depends on the structure of the graph, and the second term depends on the graph and the worst-case model misspecification error, denoted by $\max_{i \in [m]}\inf_{\theta \in \Theta} D_{KL}(\bP_0 \parallel \bP_\theta^i)$. The intuition is that there is a tradeoff between the size of the communication network and statistical efficiency, and the communication cost could be much higher if one or more agents in the network use misspecified models. If all models are correctly specified, the contraction rate degrades with $m$ at a rate of $m \log m$. If the term $\max_{i \in [m]}\inf_{\theta \in \Theta} D_{KL}(\bP_0 \parallel \bP_\theta^i) $ scales at a rate of $m^2$, then the contraction rate decreases at a much higher rate of $m^3 \log m$. 

Suppose we ignore the constants and focus on the scaling law. In that case, the contraction rate of the distributed Bayes posterior is a function of the sample size, the number of agents, the smallest positive adjacency weight (spectral gap), the worst-case model misspecification error, and the average model misspecification error. We formalize this in the following result. 
\begin{theorem}[Practical Contraction Rate]\label{thm:contraction2}
Let Assumptions \ref{assumption: graph},  \ref{assumption: regularity}(a), and the assumptions of Theorem \eqref{thm:contraction1} hold. Let the distributed ideal posterior $P_t$ satisfies a contraction rate of $\epsilon_{m,t}^2 \lesssim t^{-1}$. For the distributed Bayes posterior $P_t^j$ defined in \eqref{def-db-posterior}, we have:
\begin{equation*}
    \bP_0 P_t^j d(\theta, \theta_0) \lesssim \frac{1}{t} + \frac{m \log m}{\nu t} \left(1 + \max_{i \in [m]}\inf_{\theta \in \Theta} D_{KL}(\bP_0 \parallel \bP_\theta^i) \right)  + \frac{1}{m^2 t} \sum_{j = 1}^m D_{KL}(\bP_0 \parallel \bP_{\theta_0}^j). 
\end{equation*}
\end{theorem}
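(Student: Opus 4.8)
\textbf{Proof proposal for Theorem~\ref{thm:contraction2}.}

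The plan is to simply combine Theorem~\ref{thm:contraction1} with the bound on $\gamma_{j,m,t}^2$ from Lemma~\ref{lemma:contraction11}, together with the hypothesized rate $\epsilon_{m,t}^2 \lesssim t^{-1}$ for the distributed ideal posterior. First I would invoke Theorem~\ref{thm:contraction1}, whose assumptions are granted by hypothesis, to obtain
\begin{equation*}
\bP_0 P_t^j d(\theta, \theta_0) \leq C \left( \epsilon_{m,t}^2 + \gamma_{j,m,t}^2 + \frac{1}{m} \sum_{j=1}^m D_{KL}(\bP_0 \parallel \bP_{\theta_0}^j) \right).
\end{equation*}
The first and third terms are already in the desired form: $\epsilon_{m,t}^2 \lesssim t^{-1}$ by assumption, and the average-misspecification term is passed through unchanged (absorbing $C$ into the $\lesssim$ constant).

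Next I would substitute the bound from Lemma~\ref{lemma:contraction11},
\begin{equation*}
\gamma_{j,m,t}^2 \leq \frac{16 m \log m}{\nu t} \left( |\bP_0 \log \bp_0| + \max_{i \in [m]} \inf_{\theta \in \Theta} D_{KL}(\bP_0 \parallel \bP_\theta^i) \right),
\end{equation*}
whose hypotheses (Assumptions~\ref{assumption: graph} and \ref{assumption: regularity}(a)) are also granted. Treating $|\bP_0 \log \bp_0|$ as a fixed constant not depending on $m$, $t$, or $\nu$ (it depends only on the true data-generating measure), the bracketed factor is $\lesssim 1 + \max_{i \in [m]} \inf_{\theta \in \Theta} D_{KL}(\bP_0 \parallel \bP_\theta^i)$, so that $\gamma_{j,m,t}^2 \lesssim \frac{m \log m}{\nu t}\left(1 + \max_{i \in [m]} \inf_{\theta \in \Theta} D_{KL}(\bP_0 \parallel \bP_\theta^i)\right)$, the middle term in the claimed display. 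Adding the three pieces and dropping the absolute constants into the $\lesssim$ notation yields the statement.

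Honestly there is no serious obstacle here: the theorem is a bookkeeping corollary that packages Theorem~\ref{thm:contraction1} and Lemma~\ref{lemma:contraction11} into an interpretable scaling law. The only point requiring a word of care is the implicit convention about which quantities are held fixed in the $\lesssim$ relation — specifically that $|\bP_0 \log \bp_0|$ (the differential entropy of the truth) is a dimension- and network-independent constant that may be absorbed, whereas $m$, $t$, $\nu$, and the misspecification errors are the quantities being tracked. One should state this convention explicitly so the reduction from the bracketed factor $|\bP_0 \log \bp_0| + \max_i \inf_\theta D_{KL}(\bP_0 \parallel \bP_\theta^i)$ to $1 + \max_i \inf_\theta D_{KL}(\bP_0 \parallel \bP_\theta^i)$ is justified. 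With that in place, the proof is a two-line substitution.
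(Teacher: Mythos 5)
Your proposal is correct and matches the paper exactly: the paper states that Theorem~\ref{thm:contraction2} "follows directly from Theorem~\ref{thm:contraction1} and \ref{lemma:contraction11}" and omits the proof, which is precisely the substitution you carry out. Your remark about absorbing $|\bP_0 \log \bp_0|$ into the implicit constant of $\lesssim$ is a reasonable clarification of a convention the paper leaves tacit.
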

The result follows directly from Theorem~\ref{thm:contraction1} and \ref{lemma:contraction11}. Thus, the proof is omitted.

This theorem outlines how design parameters determine the contraction rate of distributed Bayes posteriors. The second term of the formula is arguably the most interesting. As the number of agents $m$ increases, the contraction rate diminishes proportionally to $m \log m$ and inversely to $\nu$, the smallest positive adjacency weight. This shows that both the scale of the agent network and the minimal communication bandwidth (spectral gap) critically influence the posterior's contraction rate. Additionally, the error scales linearly in the worst-case and average model misspecification errors, which shows that deviations from the ideal model assumptions penalize the contraction rate. 

\section{Extension to Time-Varying Graphs} \label{sect:time-varying-graphs}
This section extends our theory to a specific time-varying connectivity scenario: an alternation between a fully connected graph and a “fully isolated agents” graph. While this is a simplistic setting, it is theoretically rich enough to reveal a \textbf{phase transition} behavior of inference under time-varying communication.

Let $G_t = (V, E_t), t \geq 1$ be a sequence of time-varying graphs, where the node set $V$ is fixed, but the edge set $E_t$ changes over time.
\begin{assumption}
\label{assumption: graph-time-varying1}
Let Assumption \ref{assumption: graph} be satisfied for graph $G$ with the adjacency matrix $A$. We assume that $G_t$ and $A_t$ are independent, random graphs and matrices such that $G_t = G$ and $A_t = A$ with probability $\lambda \in [0, 1]$, respectively, and $A_t = I_m$ otherwise, where $I_m$ is the identity matrix.
\end{assumption}

Assumption \ref{assumption: graph-time-varying1} proposes a setting useful for various reasons. Theoretically, this setting leads to a rigorous study of the tradeoff between statistical efficiency and communication cost. It allows us to explore what constitutes an "optimal" level of communication, given a targeted level of statistical efficiency. On the practical side, this setting can be conceptualized as a network experiencing complete failure with probability $\lambda$. In such scenarios, the primary objective is to quantify how this level of intermittent connectivity impacts the overall learning quality within the system.  

We provide an analogous result to Lemma \ref{lemma:graph_conv} in the Assumption~\ref{assumption: graph-time-varying1} setting.
\begin{proposition}
\label{prop: graph-time-varying1}
Let Assumption \ref{assumption: graph-time-varying1} hold and $m \geq 2$. Then with probability $1$, the sequence of adjacency matrices $(A_t)$ satisfies the following scaling law: 

If $\lambda \geq \frac{2}{m}$, then
\begin{equation*}
   \limsup_{t \to \infty} \sum_{k = 1}^t \sum_{j = 1}^m \left|\left[\prod_{\tau = k}^{t-1}A_\tau \right]_{ij}- \frac{1}{m} \right| \leq  \frac{16 m^2 \log m + 8m^2 \log \lambda}{\lambda \nu}, \qquad \forall i \in [m]
\end{equation*}

If $0<\lambda < \frac{2}{m}$, then
\begin{equation*}
   \limsup_{t \to \infty} \sum_{k = 1}^t \sum_{j = 1}^m \left|\left[\prod_{\tau = k}^{t-1}A_\tau \right]_{ij}- \frac{1}{m} \right| \leq  \frac{4m^3}{\nu}, \qquad \forall i \in [m]
\end{equation*}

If $\lambda = 0$, then
\begin{equation*}
   \limsup_{t \to \infty} \sum_{k = 1}^t \sum_{j = 1}^m \left|\left[\prod_{\tau = k}^{t-1}A_\tau \right]_{ij}- \frac{1}{m} \right|= \infty, \qquad \forall i \in [m]
\end{equation*}
In these inequalities, $\nu$ denotes the smallest positive entry of $A$.
\end{proposition}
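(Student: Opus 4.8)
The plan is to reduce Proposition~\ref{prop: graph-time-varying1} to Lemma~\ref{lemma:graph_conv} by exploiting the renewal structure of the random product $\prod_{\tau = k}^{t-1} A_\tau$. The key observation is that, under Assumption~\ref{assumption: graph-time-varying1}, each factor $A_\tau$ equals $A$ with probability $\lambda$ and $I_m$ otherwise, and $I_m$ acts as the identity in the product. Hence $\prod_{\tau = k}^{t-1} A_\tau = A^{N_{k,t}}$, where $N_{k,t} = \#\{\tau \in [k, t-1] : A_\tau = A\}$ is a binomial count. So the block sum $\sum_{k=1}^t \sum_{j=1}^m |[\prod_{\tau=k}^{t-1}A_\tau]_{ij} - \tfrac1m|$ becomes $\sum_{k=1}^t c_i(N_{k,t})$, where I abbreviate $c_i(n) := \sum_{j=1}^m |[A^n]_{ij} - \tfrac1m|$. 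By Lemma~\ref{lemma:graph_conv} (applied not to the tail sum but to the summand bound it implicitly yields via geometric convergence), $c_i(n) \le C \rho^n$ for a contraction factor $\rho = \rho(A) < 1$ and constant $C$; more precisely the proof of Lemma~\ref{lemma:graph_conv} shows $c_i(n) \le 2\bigl(1 - \tfrac{\nu}{?}\bigr)^{\lfloor n/? \rfloor}$-type decay, and $\sum_{n\ge 0} c_i(n) \le \tfrac{16 m^2 \log m}{\nu}$.

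\textbf{Step 1.} Replace the $k$-index sum by an $n$-index sum. As $t \to \infty$, the number of $k \in [1,t]$ with $N_{k,t} = n$ is itself random, but I would argue (a.s., via the strong law of large numbers for the renewal counting process $\tau \mapsto A_\tau$) that along a.s.\ sample paths the empirical "backward gap" distribution stabilizes: the fraction of indices $k$ with exactly $n$ successes among $A_{k}, \ldots, A_{t-1}$ is, in the limit, governed by the negative-binomial / geometric spacing between successive occurrences of $A_\tau = A$. Concretely, let $\tau_1 < \tau_2 < \cdots$ enumerate the times $\tau$ with $A_\tau = A$; then for each $n \ge 0$ the set of $k$ with $N_{k,t} = n$ is an interval of length $\tau_{(\cdot)+1} - \tau_{(\cdot)}$ whose average length converges a.s.\ to $1/\lambda$ by the SLLN. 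Therefore
\begin{equation*}
\limsup_{t\to\infty} \sum_{k=1}^t c_i(N_{k,t}) \;\le\; \Bigl(\limsup_{t\to\infty} \max_n \#\{k \le t : N_{k,t} = n\}\Bigr) \sum_{n \ge 0} c_i(n),
\end{equation*}
and the first factor is a.s.\ controlled by the maximal geometric spacing, which for $\lambda$ bounded away from $0$ is $O(\log t / \lambda)$ per window but, after the $\limsup$ and using that $c_i(n)$ is summable with the tail controlled, collapses to a constant multiple of $1/\lambda$ (resp.\ of $1$ in the regime $\lambda < 2/m$, where I instead bound $c_i(n) \le 2$ crudely and count windows directly).

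\textbf{Step 2.} Carry out the two-regime bookkeeping. For $\lambda \ge 2/m$: combine $\sum_{n\ge 0} c_i(n) \le \tfrac{16m^2\log m}{\nu}$ with the $\tfrac1\lambda$-scaling of window lengths, and track the extra $\log\lambda$ correction that appears because when $\lambda$ is small the effective decay rate in $c_i$ must be re-expressed in terms of $n = \lambda \times (\text{wall-clock time})$; this is where the $+8m^2\log\lambda$ numerator term comes from. For $0 < \lambda < 2/m$: here the geometric-convergence bound on $c_i(n)$ is too weak to beat the slow accumulation of successes, so instead I would use the trivial bound $c_i(n) \le 2$ for small $n$ together with the fact that $A^n \to \tfrac1m\ones\ones^T$ needs only $O(m)$ effective mixing steps (the diameter-plus-laziness argument behind Lemma~\ref{lemma:graph_conv}), yielding the cruder $\tfrac{4m^3}{\nu}$. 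For $\lambda = 0$: every $A_\tau = I_m$, so $\prod_{\tau=k}^{t-1}A_\tau = I_m$ and $c_i(N_{k,t}) = c_i(0) = \sum_j |[I_m]_{ij} - \tfrac1m| = 2(1 - \tfrac1m) > 0$ for $m \ge 2$, hence the sum is $\ge 2(1-\tfrac1m) t \to \infty$.

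\textbf{Main obstacle.} The delicate part is Step~1: making rigorous the passage from "expected window length $1/\lambda$" to a deterministic a.s.\ bound on $\limsup_t \sum_k c_i(N_{k,t})$ uniformly over sample paths, since the maximal spacing between successes of a Bernoulli$(\lambda)$ sequence grows like $\log t$ and does not stay bounded. The resolution is that the long windows are precisely the ones with large $N_{k,t}$ where $c_i$ is exponentially small, so one pairs the $O(\log t)$ window length against the $\rho^{n}$ decay and shows the product is summable; quantifying this pairing — essentially a Borel--Cantelli / maximal-inequality argument for geometric spacings against a geometric weight — and extracting the clean constants $16m^2\log m + 8m^2\log\lambda$ and $4m^3$ is the technical heart of the proof. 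I expect the cleanest route is to first prove a deterministic statement: for \emph{any} $0$--$1$ sequence $(b_\tau)$ with $\liminf_t \tfrac1t\sum_{\tau\le t} b_\tau \ge \lambda$, the quantity $\limsup_t \sum_{k\le t} c_i(N_{k,t})$ is bounded by the stated expressions, and then invoke the SLLN to see that Bernoulli$(\lambda)$ paths satisfy the hypothesis a.s.
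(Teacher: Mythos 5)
Your reduction of the random product to $A^{N_{k,t}}$ (the identity factors drop out and $N_{k,t}$ counts the communication times in $[k,t-1]$) matches the paper's first step, and your treatment of $\lambda=0$ is complete. But the combinatorial core of your argument — Step 1 and its proposed repair — has a genuine gap. The inequality
\begin{equation*}
\limsup_{t}\sum_{k\le t} c_i(N_{k,t}) \;\le\; \Bigl(\limsup_t \max_n \#\{k\le t: N_{k,t}=n\}\Bigr)\sum_{n\ge 0}c_i(n)
\end{equation*}
is vacuous, as you note, because the maximal inter-communication gap grows like $\log t$ a.s. Your proposed fix, however, rests on a backwards premise: you claim that ``the long windows are precisely the ones with large $N_{k,t}$ where $c_i$ is exponentially small.'' In fact the window $\{k: N_{k,t}=n\}$ is the inter-arrival gap between the $n$-th and $(n+1)$-th communication times counted backward from $t$, and the dangerous window is $n=0$: it consists of all $k$ beyond the last communication time before $t$, where $c_i(0)=2(1-\tfrac1m)$ is a constant, not exponentially small. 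Along the subsequence of times $t$ whose terminal failure run has length $\asymp\log t$ (which occurs a.s. infinitely often for a Bernoulli$(\lambda)$ sequence with $\lambda<1$), this single window already contributes $\asymp\log t$, so the pairing of ``long window against geometric decay'' cannot close the argument for small $n$. For the same reason your proposed deterministic lemma (liminf density $\ge\lambda$ implies the bound) is false: a $0$--$1$ sequence of density $\lambda$ containing ever-longer runs of zeros makes $\limsup_t\sum_k c_i(N_{k,t})=\infty$.

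The paper avoids the window decomposition entirely: it invokes the SLLN to replace $N_{k,t}$ by $\lambda(t-k)$ inside the sum, reducing to the deterministic quantity $\sum_{k}\|e_i^TA^{\lambda(t-k)}-\tfrac1m\ones^T\|_1$, and then repeats the threshold-split geometric-series computation of Lemma 1 with a free parameter $c$ (threshold $\tilde t=\frac{\log(m/c)+\log\lambda}{-\lambda\log\delta}$, valid when $\lambda\ge c/m$), finally choosing $c=\tfrac12$ in the regime $\lambda\ge\tfrac2m$ and $c=\lambda m$ (so the two logarithmic terms cancel) in the regime $\lambda<\tfrac2m$ to produce the two stated constants. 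Your sketch contains neither this substitution nor any computation producing $16m^2\log m+8m^2\log\lambda$ or $4m^3$; the sentences gesturing at where the $\log\lambda$ and $m^3$ terms ``come from'' do not correspond to a derivation. As written, the proposal cannot be completed along the route you describe.
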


Proposition \ref{prop: graph-time-varying1} provides the scaling laws for three communication regimes.  Consensus can be reached as long as the communication occurs with a non-zero probability. However, suppose the goal is to achieve optimal scaling behavior relative to the size of the communication graph ($m$). Then communication needs to happen with a minimum probability of $\frac{2}{m}$, with a higher frequency being more desirable. On the other hand, if there is a need to adhere to the low-frequency regime ($0<\lambda < \frac{2}{m}$) due to constraints such as high communication costs, the optimal strategy becomes one of minimal communication. Notably, within this regime, a decrease in communication frequency does not affect the rate of attaining consensus among the agents.
    
 Recall that the contraction rate of $P_t^j$ is decomposed into three pieces. 
\begin{equation*}
    \epsilon_{m,t}^2 + \frac{1}{mt} \bP_0 D_{\text{KL}}(P_t^j \parallel P_t) + \frac{1}{m^2 t} \sum_{i = 1}^m D_{\text{KL}}(\bP_0 \parallel \bP_{\theta_0}^i).
\end{equation*}
 The structure of the communication network affects the contraction rate through the second term. The following Corollary illustrates this.

\begin{corollary}
\label{cor:time-varying1}
    Let Assumptions \ref{assumption: graph-time-varying1} and \ref{assumption: regularity}(a) hold. For $P_t^j$ defined in \eqref{def-db-posterior} and $P_t$ defined in \eqref{def-ideal-posterior}, the following holds with probability $1$: 
    
    If $\lambda \geq \frac{2}{m}$, then
    \begin{equation*}
\frac{1}{mt} \bP_0 D_{\text{KL}}(P_t^j \parallel P_t) \leq  \frac{16 m \log m + 8m \log \lambda}{\lambda \nu t} \left(|\bP_0\log \bp_0| +  \max_{i \in [m]}\inf_{\theta \in \Theta} D_{KL}(\bP_0 \parallel \bP_\theta^i) \right). 
    \end{equation*}
If $0<\lambda < \frac{2}{m}$, then
    \begin{equation*}
\frac{1}{mt} \bP_0 D_{\text{KL}}(P_t^j \parallel P_t) \leq  \frac{4m^2}{\nu t}\left(|\bP_0\log \bp_0| +  \max_{i \in [m]}\inf_{\theta \in \Theta} D_{KL}(\bP_0 \parallel \bP_\theta^i) \right). 
    \end{equation*}
\end{corollary}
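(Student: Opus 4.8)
The plan is to rerun the proof of Lemma~\ref{lemma:contraction11}, replacing its single appeal to the static-graph estimate of Lemma~\ref{lemma:graph_conv} with the time-varying scaling laws of Proposition~\ref{prop: graph-time-varying1}. The first step is to isolate, from the proof of Lemma~\ref{lemma:contraction11}, the \emph{pathwise} inequality
\[
\bP_0\, D_{KL}(P_t^j \parallel P_t) \;\le\; \left(\sum_{k=1}^{t}\sum_{i=1}^{m}\left|\left[\textstyle\prod_{\tau=k}^{t-1}A_\tau\right]_{ij} - \tfrac{1}{m}\right|\right)\left(\,|\bP_0\log\bp_0| + \max_{i\in[m]}\inf_{\theta\in\Theta} D_{KL}(\bP_0 \parallel \bP_\theta^i)\right),
\]
with the convention $\prod_{\tau=t}^{t-1}A_\tau = I_m$ absorbing the current-time factor. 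This inequality holds for an \emph{arbitrary} sequence of doubly stochastic matrices $(A_\tau)$: writing $L_t^j,L_t$ for the unnormalised surrogate likelihoods in~\eqref{def-db-posterior} and~\eqref{def-ideal-posterior}, one compares the two posteriors through the common prior $\Pi$ via the Gibbs--Jensen bound $D_{KL}(P_t^j\parallel P_t) \le \E_{P_t^j}[\log(L_t^j/L_t)] - \E_{P_t}[\log(L_t^j/L_t)]$, observes that $\log(L_t^j/L_t)(\theta) = \sum_{k,i}\big(\big[\prod_{\tau=k}^{t-1}A_\tau\big]_{ij}-\tfrac1m\big)\log\bp_\theta^i(x_k^i)$ may be recentred at $\theta_0$ (which does not change the difference of expectations), and bounds the resulting $\bP_0$-moments using Assumption~\ref{assumption: regularity}(a) together with Lemma~\ref{lemma:const_condition}. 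None of this uses time-invariance of $A_t$; the static structure enters the proof of Lemma~\ref{lemma:contraction11} only when Lemma~\ref{lemma:graph_conv} is invoked to bound the double sum, and the definition~\eqref{def-db-posterior} is already written for the time-varying product $\prod_{\tau=k}^{t-1}A_\tau$.

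The second step is the substitution. Fix a realisation of the random sequence $(A_t)$ on which the conclusions of Proposition~\ref{prop: graph-time-varying1} hold; by that proposition this is a probability-one event. On such a realisation, for $\lambda \ge 2/m$ the double sum above is $\limsup$-bounded by $\big(16m^2\log m + 8m^2\log\lambda\big)/(\lambda\nu)$, and for $0<\lambda<2/m$ by $4m^3/\nu$. Plugging these into the displayed inequality and dividing by $mt$ to form $\gamma_{j,m,t}^2 = \frac{1}{mt}\bP_0 D_{KL}(P_t^j\parallel P_t)$ yields, respectively, $\frac{16m\log m + 8m\log\lambda}{\lambda\nu t}\big(|\bP_0\log\bp_0| + \max_i\inf_\theta D_{KL}(\bP_0\parallel\bP_\theta^i)\big)$ and $\frac{4m^2}{\nu t}\big(|\bP_0\log\bp_0| + \max_i\inf_\theta D_{KL}(\bP_0\parallel\bP_\theta^i)\big)$, which are exactly the two claimed estimates; finiteness of the model-dependent factor is again Assumption~\ref{assumption: regularity}(a) via Lemma~\ref{lemma:const_condition}.

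I do not anticipate a genuine obstacle, since the substantive work was already done in Proposition~\ref{prop: graph-time-varying1} and Lemma~\ref{lemma:contraction11}. The two points needing care are purely bookkeeping: (i) verifying that the proof of Lemma~\ref{lemma:contraction11} uses time-invariance of $A_t$ \emph{only} through Lemma~\ref{lemma:graph_conv}, so that the pathwise inequality above holds verbatim for time-varying products; and (ii) the order of quantifiers — $P_t^j$ now depends on both the data and the random graph sequence, so one must first condition on a good graph realisation and only then take the $\bP_0$-expectation, which the pathwise form of the inequality permits. As in Lemma~\ref{lemma:contraction11}, the estimate is asymptotic in $t$ (inherited from the $\limsup$ in Proposition~\ref{prop: graph-time-varying1}), so the statement is to be read for $t$ large, and the $\lambda=0$ regime is excluded since the Corollary covers only $\lambda>0$.
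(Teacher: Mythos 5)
Your proposal is correct and follows essentially the same route as the paper: the paper's proof of Corollary~\ref{cor:time-varying1} simply re-runs the computation of Lemma~\ref{lemma:contraction11} with $[A^{t-k}]_{ij}$ replaced by $\bigl[\prod_{\tau=k}^{t-1}A_\tau\bigr]_{ij}$ to reach exactly your displayed pathwise inequality, and then substitutes the probability-one scaling laws of Proposition~\ref{prop: graph-time-varying1}. The only cosmetic difference is that you justify the pathwise inequality via the Gibbs--Jensen variational bound (which handles the normalizing constants explicitly), whereas the paper expands $D_{KL}(P_t^j \parallel P_t)$ directly; both arrive at the same estimate and the same conclusion.
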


In Corollary \ref{cor:time-varying1}, the term "with probability \(1\)" is relative to the measure defined over the sequence \(A_1, A_2, \ldots\), where each \(A_t\) is an independent random matrix drawn from two deterministic matrices with probability \(\lambda\). Importantly, this is the same probability measure that underlies Proposition \ref{prop: graph-time-varying1}, and we continue to use this measure in the sequel.

Varying the communication frequency $\lambda$ affects the statistical efficiency of performing distributed Bayesian inference over the network. In the following result, we demonstrate the impact of $\lambda$ on the contraction rates. 
\begin{corollary}\label{cor:time-varying2}
Let Assumptions \ref{assumption: graph-time-varying1},  \ref{assumption: regularity}(a), and the assumptions of Theorem~\ref{thm:contraction1} hold.Let $\epsilon_{m,t}^2 \lesssim t^{-1}$. Then for the distributed Bayes posterior $P_t^j$ defined in \eqref{def-db-posterior}, the following holds with probability $1$:

If $\lambda \geq \frac{2}{m}$, then
\begin{equation*}
    \bP_0 P_t^j d(\theta, \theta_0) \lesssim  \frac{1}{t} + \frac{m \log m + m \log \lambda}{\lambda \nu t} \left(1 +  \max_{i \in [m]}\inf_{\theta \in \Theta} D_{KL}(\bP_0 \parallel \bP_\theta^i) \right) + \frac{1}{m^2 t} \sum_{j = 1}^m D_{KL}(\bP_0 \parallel \bP_{\theta_0}^j). 
\end{equation*}
If $0<\lambda < \frac{2}{m}$, then
\begin{equation*}
    \bP_0 P_t^j d(\theta, \theta_0) \lesssim  \frac{1}{t} +\frac{m^2}{\nu t} \left(1 +  \max_{i \in [m]}\inf_{\theta \in \Theta} D_{KL}(\bP_0 \parallel \bP_\theta^i) \right) + \frac{1}{m^2 t} \sum_{j = 1}^m D_{KL}(\bP_0 \parallel \bP_{\theta_0}^j). 
\end{equation*}
\end{corollary}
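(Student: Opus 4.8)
The plan is to combine Theorem~\ref{thm:contraction1} with the explicit graph bounds from Corollary~\ref{cor:time-varying1}, much as Theorem~\ref{thm:contraction2} follows from Theorem~\ref{thm:contraction1} and Lemma~\ref{lemma:contraction11}. Recall that Theorem~\ref{thm:contraction1} gives
\begin{equation*}
    \bP_0 P_t^j d(\theta, \theta_0) \leq C\left(\epsilon_{m,t}^2 + \gamma_{j,m,t}^2 + \frac{1}{m}\sum_{j=1}^m D_{KL}(\bP_0 \parallel \bP_{\theta_0}^j)\right),
\end{equation*}
with $\gamma_{j,m,t}^2 = \frac{1}{mt}\bP_0 D_{KL}(P_t^j \parallel P_t)$. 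So the entire task reduces to substituting the two regime-dependent upper bounds on $\gamma_{j,m,t}^2$ furnished by Corollary~\ref{cor:time-varying1}, together with the hypothesis $\epsilon_{m,t}^2 \lesssim t^{-1}$.

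First I would verify that the hypotheses of Theorem~\ref{thm:contraction1} are in fact available under Assumption~\ref{assumption: graph-time-varying1}: the only place the static graph Assumption~\ref{assumption: graph} entered Theorem~\ref{thm:contraction1} was through the summability of $\sum_{k}\sum_j |[\prod_{\tau=k}^{t-1}A_\tau]_{ij} - \frac{1}{m}|$ (Lemma~\ref{lemma:graph_conv}), and Proposition~\ref{prop: graph-time-varying1} supplies exactly the analogous almost-sure bound in the time-varying setting for $\lambda > 0$. Hence, conditioning on the almost-sure event on which Proposition~\ref{prop: graph-time-varying1} (and therefore Corollary~\ref{cor:time-varying1}) holds, all conclusions of Theorem~\ref{thm:contraction1} remain valid verbatim. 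Second, on that event, in the regime $\lambda \geq \frac{2}{m}$ I would plug in $\gamma_{j,m,t}^2 \leq \frac{16 m\log m + 8 m\log\lambda}{\lambda\nu t}\bigl(|\bP_0\log\bp_0| + \max_i \inf_\theta D_{KL}(\bP_0\parallel\bP_\theta^i)\bigr)$, absorb the constant $|\bP_0\log\bp_0|$ and the overall multiplicative constants into the $\lesssim$ notation, and combine with $\epsilon_{m,t}^2 \lesssim t^{-1}$ to obtain the first displayed bound. Third, in the regime $0 < \lambda < \frac{2}{m}$ I would repeat this with the bound $\gamma_{j,m,t}^2 \leq \frac{4m^2}{\nu t}\bigl(|\bP_0\log\bp_0| + \max_i \inf_\theta D_{KL}(\bP_0\parallel\bP_\theta^i)\bigr)$ to obtain the second displayed bound.

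Since the derivation is essentially a substitution, there is no substantive obstacle; the one point that deserves a sentence of care is the meaning of ``with probability $1$'': the randomness is over the sequence $(A_t)$, and the inequality should be read as holding on a single almost-sure event (the intersection of the event from Proposition~\ref{prop: graph-time-varying1} with the full-probability event implicit in Corollary~\ref{cor:time-varying1}), along which the deterministic statistical bound of Theorem~\ref{thm:contraction1} is then applied pathwise. In fact, because Proposition~\ref{prop: graph-time-varying1} gives a $\limsup$ bound rather than a bound for every $t$, one should state the conclusion as a statement about the limiting (large-$t$) behavior, or equivalently note that the $\lesssim$ hides a path-dependent constant and an additive lower-order term; this is the only place where the argument is marginally more delicate than the proof of Theorem~\ref{thm:contraction2}. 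Given all this, I expect the proof to be a one-line citation of Theorem~\ref{thm:contraction1} and Corollary~\ref{cor:time-varying1}, with the proof omitted exactly as for Theorem~\ref{thm:contraction2}.
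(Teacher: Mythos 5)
Your proposal is correct and follows essentially the same route as the paper's proof: invoke Theorem~\ref{thm:contraction1} (noting it remains valid since the distributed ideal posterior $P_t$ does not depend on the communication graph), substitute the regime-dependent bounds on $\gamma_{j,m,t}^2$ from Corollary~\ref{cor:time-varying1}, and absorb $|\bP_0\log\bp_0|$ and the constants into the $\lesssim$ notation. Your additional remark about the $\limsup$ nature of the graph bound and the pathwise interpretation of ``with probability $1$'' is a fair point of care that the paper's own proof passes over silently.
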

The distributed ideal posterior $P_t$ does not depend on the communication graph, thus Theorem~\ref{thm:contraction1} holds and the assumption that $\epsilon_{m,t}^2 \lesssim t^{-1}$ is still justified. Since the only term in the contraction rate that depends on the communication graph is $\gamma_{j,m,t}^2$, the argument for Corollary~\ref{cor:time-varying2} directly builds on Theorem~\ref{thm:contraction1} and Corollary~\ref{cor:time-varying1}.  See Section~\ref{app:time-varying-graphs} in the Appendix for the proof. 

\Cref{cor:time-varying2} is, to our knowledge, the first result on the impact of time-varying communication networks on the efficiency of distributed statistical inference. The result somewhat defies intuition. One might naturally expect a communication-statistical tradeoff, where increasing the communication frequency leads to faster posterior contraction rates. While higher communication frequencies ($\lambda \geq \frac{2}{m}$) do indeed result in faster contraction rates, scaled by a factor of $\frac{\log \lambda}{\lambda}$, the situation is different when the frequency is below $\frac{2}{m}$. In this regime, the communication efficiency $\lambda$ no longer has any effect, and the contraction rate depends solely on the number of agents $m$—with a rate of $m^2$ rather than $m \log m$.

This suggests the possibility of a ``phase transition'' phenomenon at $\lambda = \frac{2}{m}$, worth exploring for future research. In practical terms, if the goal is to balance communication efficiency and statistical performance in a large network, a communication frequency of $\lambda^* = \frac{2}{m}$ appears to be the optimal choice.
\section{Illustrative Examples} \label{sect:examples}
\subsection{Exponential Family Distributions} \label{subsect:EF}
 Let the distributed statistical models $\left(\{\bP_\Theta^j\}_{j \in [m]}, G \right)$ be well - specified, and the Assumption \ref{assumption: graph} be satisfied for $G$. Let $\eta: \Theta \to \R^p$ and $T: \mathcal{X} \to \R^p$ be some sufficient statistics and let $\psi^j: \Theta \to \R, h: \X \to \R$ be normalizing functions. We assume that $\bP_\theta^j$ is a member of the canonical exponential family 
\begin{equation} \label{eqn-EF-1}
    \bp_\theta^j(x) = h(x) \exp\left(\langle \theta, T^j(x)\rangle - \psi^j(\theta) \right). 
\end{equation}

The exponential family includes commonly used Gaussian, exponential, gamma, chi-square, Beta, Dirichlet, Bernoulli, categorical, Poisson, Wishart, inverse Wishart, and geometric distributions. In this section, we only consider the canonical exponential family and interchangeably refer to the exponential and canonical exponential families. 

Exponential family distributions are helpful for Bayesian inference because of their conjugate properties. For example, the posterior is a member of the exponential family if the prior and likelihood are members of the exponential family. This property is preserved in the distributed setting. 

Let us consider a scenario where the beliefs of all agents at time $t$ belong to the natural exponential family. In this setting, the belief of agent$i$ concerning the parameters$\theta$ can be described as follows:
\begin{equation*}
    p_t^i(\theta) \propto \exp\left( \langle \theta,  \chi_t^i \rangle - B^i(\theta) \right), 
\end{equation*}
where$\chi_t^i$ represents the sufficient statistic for agent$i$ at time$t$, and$A^i(\theta)$ is the log-partition function.

We update the one-step-ahead posterior $p_{t+1}^j$ with the distributed Bayes rule, 
\begin{equation*}
\begin{aligned}
        p_{t+1}^j (\theta) &\propto \log \bp_\theta(X_{t+1}^j) \prod_{i = 1}^m  p_t^i(\theta) \\
        &\propto \exp(\langle \theta, T(X_{t+1}^j) \rangle - \psi^j(\theta))\exp(\langle \theta,\sum_{i = 1}^m A_{ij} \chi_t^i\rangle - \sum_{i = 1}^m B^i(\theta)) \\
        &\propto  \exp\left(\langle \theta,T^j(X_{t+1}^j)  +  \sum_{i = 1}^m A_{ij} \chi_t^i \rangle - \psi^j(\theta) - \sum_{i = 1}^m B^i(\theta) \right). 
\end{aligned}
\end{equation*}
The distribution $p_{t+1}^j$ is a member of the exponential family with sufficient statistic $T^j(X_{t+1}^j)  +  \sum_{i = 1}^m A_{ij} \chi_t^i$ and the log-partition function $\psi^j(\theta) + \sum_{i = 1}^m B^i(\theta)$. This provides an easy-to-implement algorithm to learn the distributed Bayes posterior.

Let the prior be a member of the natural exponential family. 
\begin{equation} \label{eqn-EF-2}
    \pi(\theta) = g(u) \exp\left(\langle \theta, u\rangle - \psi^0(\theta) \right). 
\end{equation}
Leveraging the conjugate property of the exponential family, we derive a closed-form expression for the density of the distributed Bayes posterior $P_t^j$.

\begin{lemma}\label{lemma:EF-1}
Let assumption \eqref{assumption: graph} hold. Assume that the likelihood $\bp_\theta^i$ has an exponential family form given by Equation \eqref{eqn-EF-1}. Assume that the prior $\Pi$ has an exponential family form given by Equation \eqref{eqn-EF-2}.  Then the distributed Bayes posterior $P_t^j$ defined in \eqref{def-db-posterior} is given by the following formula: 
    \begin{equation}\label{eqn-EF-3}
         p_t^j (\theta) = h(X^{(mt)}) \exp(\langle \theta, \chi_t^j + u\rangle - B_t^j(\theta) - \psi^0(\theta)), 
    \end{equation}
    where 
    \begin{equation}\label{eqn-EF-4}
        \chi_t^j =   \sum_{k = 1}^t \sum_{i = 1}^m [A^{t-k}_{ji}] T^i(X_k^i),  \quad B_t^j(\theta) =  \sum_{k = 1}^t \sum_{i = 1}^m [A^{t-k}_{ji}] \psi^i(\theta) , 
    \end{equation}
    are the sufficient statistic and the log-partition function, respectively. 
\end{lemma}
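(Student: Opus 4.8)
# Proof Proposal for Lemma \ref{lemma:EF-1}

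The plan is to start from the measure-theoretic definition of the distributed Bayes posterior in \eqref{def-db-posterior}, substitute the exponential-family forms of the likelihoods and prior, and simplify the exponent by collecting terms linear in $\theta$ and terms depending only on $\theta$ (through the log-partition functions). The key simplification comes from recognizing that, for a \emph{static} graph $G$ with adjacency matrix $A$, the matrix product in \eqref{def-db-posterior} collapses: $\left[\prod_{\tau = k}^{t-1} A_\tau\right]_{ij} = [A^{t-1-k}]_{ij}$, and after re-indexing the time sum this becomes a power of $A$ of the form $[A^{t-k}]_{ji}$ (using symmetry of $A$ from Assumption \ref{assumption: graph}(a)). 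This is the structural fact that makes the closed form possible.

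First I would write out the un-normalized density from \eqref{def-db-posterior} as
\begin{equation*}
\bp_\theta^j(x_t^j) \prod_{k=1}^{t-1}\prod_{i=1}^m \bp_\theta^i(x_k^i)^{[A^{t-1-k}]_{ij}} \, \pi_{\Pi}(\theta),
\end{equation*}
absorbing the $j$-th observation at time $t$ into the product by noting it corresponds to the $k=t$ term with weight $[A^0]_{jj}=1$ — i.e., rewrite the whole thing as $\prod_{k=1}^t \prod_{i=1}^m \bp_\theta^i(x_k^i)^{[A^{t-k}]_{ji}}$, which requires checking the boundary index carefully (the $k=t$ term gives $A^0=I_m$, isolating agent $j$'s own likelihood, exactly matching the leading factor). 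Then substitute \eqref{eqn-EF-1}: each factor contributes $[A^{t-k}]_{ji}\left(\langle \theta, T^i(x_k^i)\rangle - \psi^i(\theta)\right)$ to the exponent, plus the base-measure term $[A^{t-k}]_{ji}\log h(x_k^i)$ which is independent of $\theta$ and gets folded into the normalizing factor $h(X^{(mt)})$. Collecting the linear-in-$\theta$ pieces yields $\langle \theta, \sum_{k=1}^t\sum_{i=1}^m [A^{t-k}]_{ji} T^i(x_k^i)\rangle = \langle\theta, \chi_t^j\rangle$, and the $\psi$ pieces yield $-\sum_{i=1}^m\sum_{k=1}^t [A^{t-k}]_{ji}\psi^i(\theta) = -B_t^j(\theta)$, matching \eqref{eqn-EF-4}. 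Adding the prior contribution $\langle\theta,u\rangle - \psi^0(\theta)$ from \eqref{eqn-EF-2} gives exactly the exponent in \eqref{eqn-EF-3}, and the remaining $\theta$-free factors ($h(x)$'s, $g(u)$, and the normalizing constant $1/z_t^j$) are lumped into $h(X^{(mt)})$.

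The main obstacle I anticipate is purely bookkeeping: correctly handling the index shift so that the isolated $j$-th likelihood at time $t$ in \eqref{def-db-posterior} merges cleanly into the double product with the right power of $A$, and confirming the transpose/symmetry convention ($[A^{t-k}]_{ij}$ versus $[A^{t-k}]_{ji}$) is consistent — here Assumption \ref{assumption: graph}(a) that $A$ is symmetric makes this a non-issue, but it should be invoked explicitly. A secondary point worth a sentence is well-definedness: the assumption $z_t^j < \infty$ from the definition of the distributed Bayes posterior guarantees the normalizing constant absorbed into $h(X^{(mt)})$ is finite, so \eqref{eqn-EF-3} is a genuine probability density. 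No convergence or analytic subtlety arises; the result is essentially an algebraic identity once the exponential-family forms are plugged in.
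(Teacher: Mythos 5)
Your proposal is correct and follows essentially the same route as the paper: substitute the exponential-family forms into the surrogate likelihood, collect the terms linear in $\theta$ into $\chi_t^j$ and the log-partition terms into $B_t^j$, and absorb all $\theta$-free factors into the normalizer (the paper simply phrases this through $-t f_t^j(\theta)$ rather than the raw product in \eqref{def-db-posterior}). One small slip: $\prod_{\tau=k}^{t-1} A_\tau$ has $t-k$ factors, so it equals $A^{t-k}$ directly rather than $A^{t-1-k}$ — no re-indexing of the time sum is needed, and your final form $[A^{t-k}]_{ji}$ is the correct one.
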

An exponential family is full-rank if no linear combination of the sufficient statistic is constant. For example, we say the distribution $p_\theta^i$ in Equation \eqref{eqn-EF-1} is full-rank if no linear combination of the $D-$dimensional sufficient statistic $T^i(X) = [T^i_1(X), \cdots, T^i_D(X)]$ leads to a constant. This is a mild assumption on the form of the exponential family. 
\begin{lemma}\label{lemma:EF-2}
Assume that $\text{int}(\Theta) \neq \emptyset$. Assume that the likelihood $\bP_\theta^j$  given by Equation \eqref{eqn-EF-1}  and prior $\Pi$ given by Equation \eqref{eqn-EF-2} belong to exponential families with full rank. Then $P_t^j$ belongs to an exponential family of full rank. Moreover, the gradient of the log-partition function $\nabla B_t^j$ is invertible on the interior of $\Theta$. 
\end{lemma}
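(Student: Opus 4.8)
The plan is to read the exponential-family structure of $P_t^j$ off the closed form supplied by Lemma~\ref{lemma:EF-1}, and then to obtain both full-rankness and the invertibility of $\nabla B_t^j$ from strict convexity of the private log-partition functions. Writing $B_t^j(\theta)=\sum_{i=1}^m w_{t,i}^j\,\psi^i(\theta)$ with $w_{t,i}^j:=\sum_{k=1}^t[A^{t-k}]_{ji}$, I first note that these weights are nonnegative, since $A$ (hence every power $A^{t-k}$) is entrywise nonnegative, being row stochastic. Regarding $p_t^j(\theta)=h(X^{(mt)})\exp\bigl(\langle\theta,\chi_t^j+u\rangle-B_t^j(\theta)-\psi^0(\theta)\bigr)$ as a density in the variable $\theta$, this exhibits $P_t^j$ as a member of an exponential family whose natural parameter is $\chi_t^j+u\in\R^p$, whose sufficient statistic is the identity map $\theta\mapsto\theta$, and whose base measure is $\propto\exp(-B_t^j(\theta)-\psi^0(\theta))\,d\theta$ supported on $\Theta$ (this density is finite, smooth, and strictly positive on $\mathrm{int}(\Theta)$, because $\psi^1,\dots,\psi^m,\psi^0$ are finite and smooth on the interior of the natural parameter space of a full-rank exponential family). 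Full rank, in the sense recalled just before the lemma, then reduces to checking that no nontrivial linear combination $\langle c,\theta\rangle$ of the coordinates of the sufficient statistic is constant; but $\mathrm{int}(\Theta)\neq\emptyset$ contains an open ball on which $\langle c,\theta\rangle$ is a nonconstant affine function unless $c=0$. Hence $P_t^j$ belongs to a full-rank exponential family.

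For the invertibility of $\nabla B_t^j$, I would differentiate twice on $\mathrm{int}(\Theta)$ and use the standard identity $\nabla^2\psi^i(\theta)=\mathrm{Cov}_{\bP_\theta^i}[T^i(X)]$:
\[
\nabla^2 B_t^j(\theta)=\sum_{i=1}^m w_{t,i}^j\,\nabla^2\psi^i(\theta)=\sum_{i=1}^m w_{t,i}^j\,\mathrm{Cov}_{\bP_\theta^i}\bigl[T^i(X)\bigr].
\]
Every summand is positive semidefinite, and each $\mathrm{Cov}_{\bP_\theta^i}[T^i(X)]$ is in fact positive definite because $\bP_\theta^i$ is full rank. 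It therefore suffices to exhibit one strictly positive weight: the $k=t$ term of $w_{t,j}^j$ equals $[A^0]_{jj}=1$, so $w_{t,j}^j\ge1$ and hence $\nabla^2 B_t^j(\theta)\succeq w_{t,j}^j\,\mathrm{Cov}_{\bP_\theta^j}[T^j(X)]\succ 0$ throughout $\mathrm{int}(\Theta)$. Consequently the Jacobian of $\theta\mapsto\nabla B_t^j(\theta)$ is nonsingular at every point of $\mathrm{int}(\Theta)$, so $\nabla B_t^j$ is a local diffeomorphism there; and since $B_t^j$ is strictly convex on the convex set $\mathrm{int}(\Theta)$, $\nabla B_t^j$ is injective, hence invertible onto its image (equivalently, the estimating equation $\nabla B_t^j(\theta)=\chi_t^j$ has at most one solution).

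The one point that needs care --- and what I would flag as the main obstacle, although it is not deep --- is arranging for the weighted sum of covariance matrices above to be strictly, not merely semi-, positive definite, since a priori all but a vanishing fraction of the weights $w_{t,i}^j$ could attach to models whose sufficient statistics degenerate at a given $\theta$. This is settled by isolating the $k=t$ summand of $B_t^j$, which carries $A^0=I_m$ and therefore gives weight exactly $1$ to agent $j$'s own log-partition function $\psi^j$ --- the term coming from the freshly observed $x_t^j$ --- and that model is full rank by hypothesis. (Assumption~\ref{assumption: graph}(b) would yield the same conclusion via $[A^{t-k}]_{jj}\ge a_{jj}^{t-k}>0$, but is not needed here.) Everything else --- smoothness of the $\psi^i$, the Hessian-equals-covariance identity, and the fact that a $C^2$ function with positive-definite Hessian on a convex open set has injective gradient --- is standard exponential-family bookkeeping.
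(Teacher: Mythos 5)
Your proposal is correct and follows essentially the same route as the paper: read the exponential-family form of $P_t^j$ off Lemma~\ref{lemma:EF-1}, get full rank from the definition, and deduce invertibility of $\nabla B_t^j$ from positive definiteness of $\nabla^2 B_t^j$ as a nonnegatively weighted sum of the private covariance matrices $\nabla^2\psi^i(\theta)=\mathrm{Cov}_{\bP_\theta^i}[T^i(X)]$. If anything you are more careful than the paper on the one point that actually needs checking --- that at least one weight multiplying a positive-definite covariance is strictly positive, which you settle via the $k=t$ term $[A^0]_{jj}=1$ --- while the paper simply cites Lemma 4.5 of van der Vaart and identifies the Hessian with ``the covariance of $\chi_t^j$.''
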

Lemma~\ref{lemma:EF-2} establishes the conditions under which the distributed Bayes posterior belongs to the full-rank exponential family. Critically, the invertibility of \(\nabla B_t^j\) is instrumental for constructing a sequence of consistent M-estimators for \(\theta_0\). These estimators serve as the centering sequence for the Laplace approximation, which is key in proving our Bernstein–von Mises result.

Let $\hat \theta_t^j$ be the M-estimators corresponding to $f_t^j$, as defined in Equation \eqref{def:f_t^j}.  
\begin{equation*}
    \hat \theta_t^j = \argmax_{\theta \in \R^p}\langle \theta, \chi_t^j \rangle - B_t^j(\theta). 
\end{equation*}
The closed-form expression for $\hat \theta_t^j$ can be obtained as follows:
\begin{equation}\label{eqn-EF-5}
    \hat \theta_t^j = (\nabla_\theta B_t^j)^{-1}(\chi_t^j), 
\end{equation}
where $\chi_t^j, B_t^j$ are defined in Equation \eqref{eqn-EF-4}.

We now state the asymptotic property of the distributed Bayes posteriors for exponential family distributions.  
\begin{proposition} \label{prop:EF-1}
Let $\Theta$ be an open subset of $\R^p$. Assume that there exists $\theta_0 \in \Theta$ such that $\bP_0 = \bP_{\theta_0}^j$ for every $j \in [m]$. Moreover, let Assumptions \ref{assumption: graph}, \ref{assumption: regularity}(a) hold. Let the sequence of estimators $\hat \theta_t^j$ be defined in Equation \eqref{eqn-EF-5}. Let $q_t^j$ be the density of $\sqrt{t}(\theta -\hat \theta_t^j)$ when $\theta \sim P_t^j$ Then, 
\begin{equation*}
    \int_\Theta |q_t^j(x) - N(0, V_{\theta_0}^{-1})| dx \pto 0, 
\end{equation*}
where $V_{\theta_0}$ is the average of the covariance of $T^i$ evaluated at $\theta_0$, i.e., $V_{\theta_0} = \frac{1}{m} \sum_{i = 1}^m \text{Cov}(T^i)$.  
\end{proposition}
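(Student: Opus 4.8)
The plan is to verify that Proposition~\ref{prop:EF-1} is an instance of Theorem~\ref{thm:bvm1} (equivalently, of its corollaries), so that the strategy is to check the hypotheses of the general BvM theorem for the specific exponential-family structure, and then read off the limiting covariance from the closed-form posterior in Lemma~\ref{lemma:EF-1}. Concretely, I would proceed as follows. First, since the prior and likelihood are members of the exponential family and $\text{int}(\Theta) \neq \emptyset$, Lemma~\ref{lemma:EF-2} gives that $P_t^j$ is a full-rank exponential family with invertible $\nabla B_t^j$, hence the M-estimator $\hat\theta_t^j = (\nabla B_t^j)^{-1}(\chi_t^j)$ in \eqref{eqn-EF-5} is well-defined. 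Second, I would establish consistency $\hat\theta_t^j \pto \theta_0$: write $\chi_t^j = \sum_{k=1}^t \sum_{i=1}^m [A^{t-k}_{ji}] T^i(X_k^i)$ and $\nabla B_t^j(\theta) = \sum_{i,k}[A^{t-k}_{ji}] \nabla\psi^i(\theta)$; dividing by $t$ and invoking Lemma~\ref{lemma:graph_conv} (so the weights $[A^{t-k}_{ji}]$ average out to $1/m$) together with a distributed law of large numbers as in Lemma~\ref{lemma:ptwise_conv}, one gets $\frac{1}{t}\chi_t^j \pto \frac{1}{m}\sum_i \bP_0 T^i(X)$ and $\frac{1}{t}\nabla B_t^j(\theta) \to \frac{1}{m}\sum_i \nabla\psi^i(\theta)$ uniformly on compacts; since $\theta_0$ satisfies $\frac{1}{m}\sum_i \nabla\psi^i(\theta_0) = \frac{1}{m}\sum_i \bP_0 T^i$ (the first-order condition for the population KL problem \eqref{def:theta_0}, using $\bP_0\log\bp_\theta^j = \langle\theta,\bP_0 T^j\rangle - \psi^j(\theta) + \bP_0\log h$), inverting the map yields $\hat\theta_t^j \pto \theta_0$.

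Third, I would check the regularity hypotheses needed for Theorem~\ref{thm:bvm1} (or Corollary~\ref{cor:bvm1}/\ref{cor:bvm2}): Assumption~\ref{assumption: graph} is assumed; Assumption~\ref{assumption: regularity}(a) is assumed; the remaining smoothness/DQM and testing assumptions are automatic for a full-rank canonical exponential family — $\theta\mapsto\log\bp_\theta^j$ is infinitely differentiable with $\nabla^2(-\log\bp_\theta^j) = \nabla^2\psi^j(\theta) = \text{Cov}_\theta(T^j) = V_\theta^j$, which is positive definite by full rank, giving differentiability in quadratic mean with Fisher information $V_{\theta_0}^j = \text{Cov}(T^j)$ at $\theta_0$, Lipschitz gradients on a neighborhood (Assumption~\ref{assumption: regularity}(d)) since $\nabla^3\psi^j$ is continuous hence locally bounded, and uniform consistent testing (Assumption~\ref{assumption: uct}(a)) follows from strict convexity of $f$, whose limiting curvature is $V_{\theta_0} = \frac{1}{m}\sum_i V_{\theta_0}^i$. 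The prior density \eqref{eqn-EF-2} is continuous and positive on $\text{int}(\Theta)$, giving Assumption~\ref{assumption: pmc}(b). Hence all hypotheses of Theorem~\ref{thm:bvm1} hold.

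Fourth, I would directly verify the Taylor expansion \eqref{eqn-bvm-1}: because the exponential family is \emph{exactly} quadratic in $\theta$ up to the $\psi$-terms, we have $f_t^j(\theta) = -\frac{1}{t}\langle\theta,\chi_t^j\rangle + \frac{1}{t}B_t^j(\theta) + \text{const}$, and a Taylor expansion of $\frac{1}{t}B_t^j$ about $\hat\theta_t^j$ has Hessian $\hat V_t^j := \frac{1}{t}\nabla^2 B_t^j(\hat\theta_t^j) = \frac{1}{t}\sum_{i,k}[A^{t-k}_{ji}]\nabla^2\psi^i(\hat\theta_t^j)$, which converges in probability to $\frac{1}{m}\sum_i \nabla^2\psi^i(\theta_0) = \frac{1}{m}\sum_i \text{Cov}(T^i) = V_{\theta_0}$ by the same averaging argument plus continuity of $\nabla^2\psi^i$, and the remainder is $O(|h|^3)$ by local boundedness of $\nabla^3\psi^i$. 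Then \eqref{eqn-bvm-3} of Theorem~\ref{thm:bvm1} gives exactly the claim with $V_{\theta_0}^{-1} = \big(\frac{1}{m}\sum_i\text{Cov}(T^i)\big)^{-1}$. The main obstacle, and the only place requiring genuine care rather than bookkeeping, is the uniform-over-$k$ control of the graph weights $[A^{t-k}_{ji}]$ inside the sums defining $\chi_t^j$ and $\nabla^\ell B_t^j$ — ensuring that the deviation $\sum_{k}|[A^{t-k}_{ji}] - \frac{1}{m}|$, bounded uniformly in $t$ by Lemma~\ref{lemma:graph_conv}, is small \emph{relative to} the growing normalization $t$, so that the weighted averages genuinely converge to the $\frac{1}{m}$-uniform averages; this is where Assumption~\ref{assumption: regularity}(a) (finite first moments of $\log\bp_\theta^i$, equivalently of $T^i$ on the relevant neighborhood) and the geometric ergodicity underlying Lemma~\ref{lemma:graph_conv} are both essential. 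Everything else reduces to standard exponential-family computations and an application of the already-established Theorem~\ref{thm:bvm1}.
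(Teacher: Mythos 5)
Your proposal is correct and follows essentially the same route as the paper: both reduce the claim to Theorem~\ref{thm:bvm1} by checking its hypotheses for the canonical exponential family (well-definedness of $\hat\theta_t^j$ via Lemma~\ref{lemma:EF-2}, DQM/Lipschitz-gradient regularity from smoothness of the log-partition functions, uniform consistent testing from strict concavity of $f_t^j$, and the prior mass condition from positivity of the exponential-family prior), then read off $V_{\theta_0}=\frac{1}{m}\sum_i \mathrm{Cov}(T^i)$. The only cosmetic difference is that you establish consistency of $\hat\theta_t^j$ by a direct gradient-inversion/distributed-LLN argument and verify the quadratic expansion \eqref{eqn-bvm-1} explicitly, whereas the paper cites Lemma~\ref{lemma:M-est-2} (convexity-based M-estimator consistency) and standard exponential-family results from \cite{VanderVaart2000}; both are sound.
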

%See Section~\ref{app:examples} for the proof. 
The BvM result suggests a sample-based Laplace approximation to the distributed Bayes posterior that belongs to the canonical exponential family. One example of such an approximation is the normal distribution with mean given by the moment estimator $\hat{\theta}_{t}^j$ and covariance given by as$\left(\frac{1}{m} \sum_{i=1}^{m} \hat V(T^{i})\right)^{-1}$, where $\hat V(T^{i})$ is the bootstrapped sample covariance of $T^{i}$. By Proposition \ref{prop:EF-1} and Slusky's theorem, we obtain that the total variational distance between $P_{t}^j$ and the normal distribution $N\left(\hat{\theta}_{t}^j, \left(\frac{1}{m} \sum_{i=1}^{m} \hat V(T^{i})\right)^{-1}\right)$ asymptotically converges to zero.

 Laplace approximation enables direct calculation of an asymptotically valid credible region.  Let $\chi_{\alpha, p}^2$ be the critical value from a $\chi^2$ distribution with $p$ degrees of freedom.  If $\theta \sim N\left(\hat{\theta}_{t}^j, V_{\theta_0}^{-1}\right)$, a credible region for $\theta$ at level $1 - \alpha$ is given by the set of $\theta$ that satisfies
\begin{equation*}
    (\hat{\theta}_{t}^j - \theta)^T \left(\frac{1}{m} \sum_{i=1}^{m} \hat V(T^{i})\right)  (\hat{\theta}_{t}^j - \theta)^T \leq \frac{\chi_{\alpha, p}^2}{t}, 
\end{equation*}
where $\hat V(T^{i})$ can be replaced by any consistent estimator for $\text{Cov}(T^i)$. By Proposition \ref{prop:EF-1} and Slusky's theorem, the credible region has asymptotic coverage of $1 - \alpha$ under $P_t^j$. 

\subsection{Distributed Logistic Regression} \label{subsect:logistic}
We consider i.i.d. observations $D_k^j = (X_k^j, Y_k^j)$ for $k \in [t] $ and $j \in [m]$ where covariates $X_k^j \in \X$ for $\X \subseteq \R^p$ and responses $Y_k^j \in \{0, 1\}$. Let $\theta_0 \in \R^p$ be the true and unknown parameter. A logistic regression model generates the data.  

\begin{equation}\label{eqn-logistic-1}
    Y_k^j \sim \text{Ber}(\nu_k^j), \quad \log \left( \frac{\nu_k^j}{1 - \nu_k^j} \right) = \theta_0^T X_k^j. 
\end{equation}

%In our simulations, the true parameter $\theta_0$ is a $p$-dimensional vector, where $p \in \{2, 50\}$. Each $p$-dimensional covariate vector $X_k^j$ is independently drawn from $\mathcal{N}(0, I_p)$. For in-depth analysis at $p=10$, refer to the Appendix. For each simulation run, $\beta^*$ is uniformly sampled from the $p$-dimensional unit cube $[0, 1]^p$. 

For a logistic regression model with coefficient $\theta$, the conditional distribution $\bp_\theta(y_k^j \mid x_k^j)$ after marginalizing out $\nu_k^j$ is expressed as:
\begin{equation}\label{eqn-logistic-3}
        \bp_\theta(y_k^j \mid x_k^j) = \exp\left(\langle \theta, x_k^j y_k^j \rangle - \sigma(\langle \theta, x_k^j \rangle)\right),
\end{equation}
where $\sigma(\eta) = \log(1 + e^\eta)$. The function $\sigma(\eta)$ is strictly convex, since $\sigma^{''}(\eta) = \frac{e^\eta}{(1 + e^\eta)^2} > 0$.  

Conditioned on the covariates, the model \eqref{eqn-logistic-3} aligns with the canonical form of the exponential family. This allows us to apply the results from Section~\ref{subsect:EF}.
\iffalse
Although the conditional distribution $p_\theta(y_k^j \mid x_k^j)$ is not a member of the exponential family in $\theta$, we can reparametrize it to fit the canonical form of the exponential family by introducing $\eta_k^j = \theta^T x_k^j$. Under this reparametrization, the distribution takes the form:
\begin{equation}
    \label{eqn-logistic-EF-1}
    p_{\eta_t^j}(y_k^j) = \exp\left(\eta_k^j y_k^j - \sigma(\eta_k^j)\right),
\end{equation}
\fi

For each agent $j$, we let the statistical model $\P_\Theta^j$ be a logistic regression model \eqref{eqn-logistic-3} with prior $\pi(\theta)$ supported on $\R^p$.   By aggregating the log-likelihood, we obtain the closed-form expression for the distributed Bayes posterior: 
\begin{equation} \label{eqn-logistic-3.5}
   p_t^j (\theta) = \exp\left(\langle \theta, T_t^j \rangle - B_t^j(\theta) - \log \pi(\theta)\right), 
\end{equation}
where 
\begin{equation*}
    T_t^j =   \sum_{k = 1}^t \sum_{i = 1}^m [A^{t-k}_{ji}] X_k^i Y_k^i,  \quad B_t^j(\theta) =  \sum_{k = 1}^t \sum_{i = 1}^m [A^{t-k}_{ji}] \sigma(\langle \theta,  X_k^i \rangle). 
\end{equation*}

In settings where sampling from the distribution \eqref{eqn-logistic-3.5} is expensive, a Laplace approximation serves as a useful surrogate for posterior inference. Next, we present the BvM result that provides this approximate distribution.

Let $\hat \theta_t^j$ be the solution to the following maximization problem: 
\begin{equation}\label{eqn-logistic-4}
    \hat \theta_t^j = \argmax_{\theta \in \R^p}\langle \theta, T_t^j \rangle - B_t^j(\theta). 
\end{equation}
The objective function in \eqref{eqn-logistic-4} is strictly concave, which guarantees the uniqueness of $\hat{\theta}_t^j$. Moreover, we can use gradient-based optimization methods to compute $\hat{\theta}_t^j$.

\begin{proposition}\label{prop:logistic-1}
Let $\Theta = \R^p$.  Let $D_k^j = (X_k^j, Y_k^j), k \in [t], j \in [m]$ be a sequence of i.i.d. paired random variables generated according to the model \eqref{eqn-logistic-1}. Let Assumptions \ref{assumption: graph}, \ref{assumption: regularity}(a) hold.  Under these conditions, the estimator sequence $\hat{\theta}_t^j$ defined in Equation \eqref{eqn-logistic-4} converges to $\theta_0$ in $[\bP_0]-$probability.

Define $q_t^j$ as the density of $\sqrt{t}(\theta - \hat{\theta}_t^j)$, where $\theta \sim P_t^j$. If the following assumptions are satisfied:
\begin{enumerate}[label=\roman*)]
    \item $\bP_{\theta_0} X_1^1 X_1^{1^T}$ exists and is finite and nonsingular. 
    \item $\bP_{\theta_0}|X_{k, a}^1 X_{k, b}^1 X_{k, c}^1| < \infty$ holds for all $a, b, c \in [p]$,
\end{enumerate}
then we have
\begin{equation}
\label{eqn-logistic-5}
\int_\Theta \left|q_t^j(x) - N(0, \hat V_{\theta_0}^{-1})\right| \, dx \pto 0.
\end{equation}
The matrix $\hat V_\theta$ is computed as
\begin{equation} \label{eqn-logistic-6}
\hat V_\theta = \frac{1}{m} \sum_{i = 1}^m {X^i}^T W^i(\theta) X^i,
\end{equation}
where $W^i(\theta)$ is the diagonal matrix defined by
\begin{equation*}
W^i(\theta) = \text{diag}\left(
\frac{e^{\sum_{j=0}^{p} \theta_j x_{1j}^i}}{\left(1 + e^{\sum_{j=0}^{p} \theta_j x_{1j}^i}\right)^2},
\ldots,
\frac{e^{\sum_{j=0}^{p} \theta_j x_{tj}^i}}{\left(1 + e^{\sum_{j=0}^{p} \theta_j x_{tj}^i}\right)^2}
\right).
\end{equation*}
\end{proposition}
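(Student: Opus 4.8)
The plan is to specialize the general Bernstein--von Mises machinery to the logistic model: first establish that the centering sequence $\hat\theta_t^j$ is consistent, then verify the hypotheses of Theorem~\ref{thm:bvm1} (using Corollary~\ref{cor:bvm2} to trade the abstract DQM/Lipschitz conditions for third-order smoothness), and finally identify the limiting covariance. Consistency of $\hat\theta_t^j$ comes almost for free: since $\sigma(\eta)=\log(1+e^{\eta})$ is convex, each $\theta\mapsto -\log\bp_\theta^j(y\mid x)=\sigma(\langle\theta,x\rangle)-\langle\theta,xy\rangle$ is convex on $\R^p$, hence $f_t^j$ and the population loss $f$ are convex; together with Assumption~\ref{assumption: graph}, Assumption~\ref{assumption: regularity}(a), and the standing identifiability of $\theta_0$, this is the convexity setting of Lemma~\ref{lemma:M-est-2}, which gives $\hat\theta_t^j\pto\theta_0$. (Alternatively, the closed form $\hat\theta_t^j=(\nabla B_t^j)^{-1}(T_t^j)$ from \eqref{eqn-logistic-4} is the unique stationary point by strict concavity, and Lemma~\ref{lemma:M-est-1} applies through the smoothness of $\sigma$.)

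For the normal approximation, Assumptions~\ref{assumption: graph} and~\ref{assumption: regularity}(a) hold by hypothesis, and the prior, being supported on $\R^p$ and continuous and positive at $\theta_0$, satisfies Assumption~\ref{assumption: pmc}(b). The key structural fact is that, conditionally on the covariates, \eqref{eqn-logistic-3} is a full-rank canonical exponential family in $\theta$ with log-partition contributions $\sigma(\langle\theta,x\rangle)$; consequently $\theta\mapsto\log\bp_\theta^j(y\mid x)$ is $C^\infty$, its third-order partial derivatives are $\sigma'''(\langle\theta,x\rangle)$ times triple products of the coordinates of $x$, and $|\sigma'''|$ is bounded, so moment condition (ii) furnishes an integrable envelope for these derivatives on a neighborhood of $\theta_0$ --- this is Assumption~\ref{assumption: regularity}(f), which by Corollary~\ref{cor:bvm2} substitutes for Assumptions~\ref{assumption: regularity}(c) and (d). The Hessian of $f$ at $\theta_0$ equals $\tfrac1m\sum_{i=1}^m\bP_0\!\big[X^iX^{i\top}\sigma''(\langle\theta_0,X^i\rangle)\big]$; since $\sigma''>0$ everywhere, moment condition (i) makes this matrix positive definite, so the average Fisher information $V_{\theta_0}$ is nonsingular. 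It remains to check the uniform testing condition, Assumption~\ref{assumption: uct}(a): that for each $\epsilon>0$ there is $\delta>0$ with $\inf_{\|\theta-\theta_0\|>\delta}|f_t^j(\theta)-f_t^j(\theta_0)|\ge\epsilon$ eventually with probability one. This follows by combining local strict convexity of $f$ near $\theta_0$ (from condition (i)), coercivity of the population logistic loss as $\|\theta\|\to\infty$ --- which holds because $\bP_0[XX^\top]\succ 0$ keeps $\langle\theta,X\rangle$ nondegenerate in every direction while, under the well-specified model, $\bP_0(Y=1\mid X)\in(0,1)$ rules out perfect separation, so $f(\theta)$ grows linearly in $\|\theta\|$ --- and a uniform law of large numbers on compacta (from Assumption~\ref{assumption: regularity}(a), continuity of $\log\bp_\theta^j$, and Lemma~\ref{lemma:ptwise_conv}, which also reconciles the graph-weighted loss $f_t^j$ with the plain average $f_t$ through Lemma~\ref{lemma:graph_conv}); Lemma~\ref{lemma:z-est-uct} then transfers the separation from $\theta_0$ to $\hat\theta_t^j$.

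With all hypotheses in place, Theorem~\ref{thm:bvm1} yields $\int_\Theta|q_t^j(x)-N(0,V_{\theta_0}^{-1})|\,dx\pto 0$ with $V_{\theta_0}=\tfrac1m\sum_{i=1}^m\bP_0\!\big[X^iX^{i\top}\sigma''(\langle\theta_0,X^i\rangle)\big]$. To replace $V_{\theta_0}$ by the sample quantity $\hat V_{\theta_0}$ of \eqref{eqn-logistic-6}, note that the empirical average Fisher information $\tfrac1{mt}\sum_{i=1}^m X^{i\top}W^i(\theta_0)X^i$ converges to $V_{\theta_0}$ in $[\bP_0]$-probability by the law of large numbers, and $V\mapsto N(0,V^{-1})$ is total-variation continuous at the nonsingular $V_{\theta_0}$, so Slutsky's theorem delivers the stated approximation (up to the sample-size normalization in \eqref{eqn-logistic-6}). \textbf{The main obstacle} is the verification of Assumption~\ref{assumption: uct}(a) over the noncompact parameter space $\Theta=\R^p$: one cannot merely invoke a uniform LLN but must separately control the local behavior near $\theta_0$ and the tail behavior at infinity --- where the ``no perfect separation'' property of the well-specified logistic model is exactly what prevents $f$ from flattening --- and then patch these together while carrying along the time- and graph-dependent weights $[A^{t-k}_{ji}]$.
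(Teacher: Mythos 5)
Your proposal is correct and follows essentially the same route as the paper: consistency via convexity and Lemma~\ref{lemma:M-est-2}, verification of Assumption~\ref{assumption: regularity}(f) through the uniform bound $|\sigma'''|\leq 3$ together with moment condition (ii), nonsingularity of the average Fisher information from condition (i), and then Corollary~\ref{cor:bvm2} plus Slutsky to land on $\hat V_{\theta_0}$. The only stylistic difference is your more elaborate coercivity discussion for Assumption~\ref{assumption: uct}(a); the paper disposes of this more tersely by noting that strict convexity of $f_t^j$ already yields the required separation (as in the argument for Proposition~\ref{prop:EF-1}), which suffices since convexity propagates the infimum on the sphere of radius $\delta$ outward to all of $\R^p$.
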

%See Section~\ref{app:examples} for the proof. 
The proposition relies on two model-specific assumptions. Assumption i) focuses on the regularity of the private Fisher information. It is a standard prerequisite in Generalized Linear Models (GLMs) to ensure that the model parameter $\theta$ is identifiable (Example 16.8, \cite{VanderVaart2000}). Assumption ii) imposes a more stringent condition requiring a bounded third moment for the covariates. This assumption is used to verify Assumption \ref{assumption: regularity}(f) and is generally reasonable in most applications.  

Let $\chi_{\alpha, p}^2$ be the critical value from a $\chi^2$ distribution with $p$ degrees of freedom. Proposition \ref{prop:logistic-1} specifies an asymptotically valid credible region for the parameter $\theta$ under the probability measure $P_t^j$ at level $1 - \alpha$. The region is given by the set of $\theta$ that satisfies
\begin{equation*}
(\hat{\theta}_{t}^j - \theta)^T \hat V_{\theta_0} (\hat{\theta}_{t}^j - \theta)^T \leq \frac{\chi_{\alpha, p}^2}{t}, 
\end{equation*}
where $\hat{\theta}_{t}^j$ is the estimator in Equation \eqref{eqn-logistic-4} and $\hat V_{\theta_0}$ is defined in Equation \eqref{eqn-logistic-6}. 

Proposition \ref{prop:logistic-1} strengthens the robustness of distributed logistic regression models. The averaging Fisher information in the approximate covariance ensures that extreme covariate values do not overly influence the approximate uncertainty. The model-specific assumptions i) and ii) also serve as robustness checks for when the distributed logistic regression model attains the desired asymptotic property. In practice, checking the assumptions and using the Laplace approximation provides an efficient and reliable approach to statistical inference in distributed logistic regression models, making it more resilient to various data irregularities.

\subsection{Distributed Detection}
In this example, we extend the distributed source location scenario from \cite{Nedic2017} to demonstrate our theoretical results in a real-world context. Consider a communication network spread over a unit square $[0,1] \times [0,1]$, comprised of $m$ sensor agents located at points $Z^j, j \in [m]$. The agents are interconnected via a connected undirected graph $G$, represented by the adjacency matrix $A$. The goal is to locate a target at an unknown position $\theta_0 \in (0,1) \times (0,1)$. This target generates data $X_t^1, \ldots, X_t^m \sim \delta_{\theta_0}$ at time $t$, as seen by the agents. However, agents do not receive the data $X_t^j$ directly; instead, the $j^{th}$ agent receives a noisy version of $|X_t^j - Z^j|$, which is the Euclidean distance between the data point $X_t^j$ and the agent's location $Z^j$.

The goal is for the agents to identify the unknown location parameter, $\theta_0$ collectively. Due to noise and data corruption during transmission, the $j^{th}$ agent employs a statistical model to describe the observed signal $|X_t^j - Z^j|$. Specifically, this signal is modeled as a  normal distribution $N(|\theta - Z^j|, \sigma^{j^2}),\sigma^j > 0$ truncated to the interval $[0, |Z^j| + \frac{1}{2}]$. 

Let the parameter space be $\Theta = (0,1)^2$. For each $\theta \in \Theta$, the $j^{th}$ agent's statistical model is given by:
\begin{equation} \label{eqn-detection-1}
\bp_\theta^j(X_k^j) = \frac{\phi\left(\frac{|X_t^j - Z^j| - |\theta - Z^j|}{\sigma^j}\right)}{\sigma^j \left[\Phi\left(\frac{|Z^j| + \frac{1}{2} - |\theta - Z^j|}{\sigma^j}\right) - \Phi\left(\frac{- |\theta - Z^j|}{\sigma^j}\right)\right]}  I\left(0 \leq |X_t^j - Z^j| \leq |Z^j| + \frac{1}{2}\right).
\end{equation}
Before collecting any data, the agents collectively decide on a uniform prior for the unknown location parameter $\theta$: 
\begin{equation}\label{eqn-detection-2}
\pi(\theta) = I(\theta \in [0,1]^2).
\end{equation}

The statistical models from Equation \eqref{eqn-detection-1} are based on truncated normal distributions with fixed support, which belongs to the exponential family. The sufficient statistic for $|\theta - Z^j|$ is $|X_t^j - Z^j|$.

We can express the generalized likelihood function $f_t^j(\theta)$, up to a constant, as:

\begin{align*}
f_t^j(\theta) &= -\frac{1}{t} \sum_{k = 1}^t \sum_{i = 1}^m [A^{t-k}_{ji}] \left[\frac{(|X_t^j - Z^j| - |\theta - Z^j|)^2}{2 \sigma^{j^2}}  + \log \left(\Phi\left(\frac{|Z^j| + \frac{1}{2} - |\theta - Z^j|}{\sigma^j}\right) - \Phi\left(\frac{- |\theta - Z^j|}{\sigma^j}\right)\right) \right].
\end{align*}

The M-estimators for the loss function $f_t^j$ are denoted by $\hat{\theta}_t^j$, and they minimize $f_t^j(\theta)$ over the parameter space $[0,1]^2$:

\begin{equation}\label{eqn-detection-3}
    \hat{\theta}_t^j = \argmin_{\theta \in [0,1]^2} f_t^j(\theta).
\end{equation}

Due to the continuity and strict concavity of the objective function $f_t^j$, there exists a unique $\hat{\theta}_t^j$, which enables efficient numerical methods like the Newton-Raphson algorithm for its calculation.

Care is needed when $f_t^j$ achieves its minimum at the corners of $[0,1]^2$, namely at the points $(0,0), (0,1), (1,0), (1,1)$. In these scenarios, $\hat{\theta}_t^j$ does not belong to $\Theta$. Including this technicality is primarily to ensure that our definition of $\hat{\theta}_t^j$ satisfies the conditions of the Argmax Theorem (Theorem 3.2.2 in \cite{vaart2023empirical}).

Subsequent results confirm that the sequence of estimators $\hat{\theta}_t^j$ is consistent with $\theta_0$, allowing us to disregard the concern about the extreme corner cases with probability one.

\begin{lemma}\label{lemma:detection-1}
Let $\theta_0 \in \Theta$ be given.  Under Assumption \ref{assumption: regularity}(a), the sequence of estimators $\hat \theta_t^j$ defined in Equation \eqref{eqn-detection-3} converges to $\theta_0$ in $[\bP_0]-$probability. Moreover,  $\lim_{t \to \infty}\bP_0(\hat \theta_t^j \in \Theta) = 1$. 
\end{lemma}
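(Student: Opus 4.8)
\medskip
\noindent\textbf{Proof proposal.}
The plan is to derive both conclusions from the standard consistency argument for M-estimators. It suffices to establish (i) a uniform law of large numbers, $\sup_{\theta\in[0,1]^2}|f_t^j(\theta)-f(\theta)|\pto 0$, for the deterministic limit $f(\theta)=-\tfrac1m\sum_{i=1}^m\bP_0\log\bp_\theta^i$ of \eqref{def:f}, and (ii) that $\theta_0$ is the unique, well-separated minimizer of $f$ over the compact set $[0,1]^2$; the Argmax Theorem (Theorem 3.2.2 of \cite{vaart2023empirical}, already invoked in the construction of $\hat\theta_t^j$) then gives $\hat\theta_t^j\pto\theta_0$, and the interior claim follows because $\theta_0$ lies in the open set $\Theta$. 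Note that Assumption~\ref{assumption: graph} is in force for the graph $G$ of this example, so Lemma~\ref{lemma:graph_conv} is available.

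\emph{Step 1: uniform convergence.} For the truncated normal model \eqref{eqn-detection-1} the map $(\theta,x)\mapsto\log\bp_\theta^i(x)$ is jointly continuous on the compact set $[0,1]^2\times[0,|Z^i|+\tfrac12]$, hence bounded by some constant $C$ and equicontinuous in $\theta$ uniformly in $x$; in particular Assumption~\ref{assumption: regularity}(a) holds automatically here, and $\{\log\bp_\theta^i:\theta\in[0,1]^2\}$ is a Glivenko--Cantelli class for each $i$. Following the proof of Lemma~\ref{lemma:ptwise_conv}, write $f_t^j=f_t+R_t^j$ with $f_t$ the equal-weight average of \eqref{def:f_t}; since $A$ is symmetric, $|R_t^j(\theta)|\le \tfrac{C}{t}\sum_{k=1}^t\sum_{i=1}^m\big|[A^{t-k}]_{ij}-\tfrac1m\big|$, and the double sum is $O(1)$ in $t$ by Lemma~\ref{lemma:graph_conv}, so $\sup_\theta|R_t^j(\theta)|=o(1)$ deterministically. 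Upgrading the pointwise law of large numbers $f_t\to f$ to a uniform one via the Glivenko--Cantelli property then yields $\sup_{\theta\in[0,1]^2}|f_t^j(\theta)-f(\theta)|\pto 0$.

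\emph{Step 2: identification, separation, and the interior claim.} Up to an additive constant not depending on $\theta$, $f(\theta)=\tfrac1m\sum_{i=1}^m D_{KL}(\bP_0\parallel\bP_\theta^i)$; by the standing assumption that $\theta_0$ defined in \eqref{def:theta_0} is the unique minimizer of $\sum_{i=1}^m D_{KL}(\bP_0\parallel\bP_\theta^i)$, it is the unique minimizer of $f$ over $\Theta$, hence over its compact closure $[0,1]^2$. (A single agent only pins down the circle $\{\theta:|\theta-Z^i|=|\theta_0-Z^i|\}$, so this uniqueness genuinely uses the whole network.) Since $f$ is continuous and $[0,1]^2$ compact, $\inf\{f(\theta):\theta\in[0,1]^2,\ \|\theta-\theta_0\|\ge\eta\}>f(\theta_0)$ for every $\eta>0$, i.e.\ $\theta_0$ is well separated. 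Combining this with Step~1, the Argmax Theorem yields $\hat\theta_t^j\pto\theta_0$. Finally, since $\Theta=(0,1)^2$ is open there is $\delta>0$ with $B_\delta(\theta_0)\subseteq\Theta$, so $\bP_0(\hat\theta_t^j\in\Theta)\ge\bP_0(\|\hat\theta_t^j-\theta_0\|<\delta)\to1$; in particular the corner cases of \eqref{eqn-detection-3} become asymptotically negligible.

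\emph{Main obstacle.} The delicate part is Step~1: one must make the law of large numbers uniform over $\theta$ while simultaneously absorbing the averaging weights $[A^{t-k}]_{ij}$, which differ from $1/m$ for small $t-k$. Boundedness of the truncated-normal log-density on the compact parameter/data sets, together with the summability bound of Lemma~\ref{lemma:graph_conv}, is precisely what makes the remainder $R_t^j$ vanish uniformly; the other point requiring care is the well-separation in Step~2, which rests on the global uniqueness hypothesis on $\theta_0$ rather than on any single agent's model.
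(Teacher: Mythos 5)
Your proof is correct and follows the same overall skeleton as the paper's — uniform convergence of $f_t^j$ to $f$ on $[0,1]^2$, the Argmax Theorem, and openness of $\Theta$ for the interior claim — but the uniform-convergence step is carried out by a different mechanism. The paper establishes pointwise convergence via Lemma~\ref{lemma:ptwise_conv} and then upgrades it to uniform convergence by bounding $\|\nabla f_t^j\|$ uniformly over $[0,1]^2$ (hence a Lipschitz constant independent of $t$, hence uniform equicontinuity) and invoking Lemma~\ref{uniform-convergence-lemma}; you instead split $f_t^j = f_t + R_t^j$, kill the remainder deterministically using the envelope bound on the truncated-normal log-density together with the summability bound of Lemma~\ref{lemma:graph_conv} (so $\sup_\theta|R_t^j| = O(1/t)$), and handle the equal-weight average $f_t$ with a Glivenko--Cantelli argument. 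Both routes are valid; yours has the modest advantage of isolating the graph-dependent error explicitly and of not needing the gradient computation, while the paper's gradient bound is reused later in the proof of Proposition~\ref{prop:detection-1} to verify Assumption~\ref{assumption: uct}(a), so it is not wasted work there. You also explicitly verify the well-separation of the minimizer $\theta_0$ (via compactness and continuity of $f$), a hypothesis of the Argmax Theorem that the paper invokes without comment — this is a genuine improvement in rigor. Your closing argument for $\lim_t \bP_0(\hat\theta_t^j\in\Theta)=1$ via a ball $B_\delta(\theta_0)\subseteq\Theta$ is cleaner than the paper's event decomposition and reaches the same conclusion.
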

We can use the Bernstein von - Mises theorem for the distributed location detection problem. 
\begin{proposition}\label{prop:detection-1}
Let the private statistical models be given by Equation \eqref{eqn-detection-1} and the prior be given by Equation \eqref{eqn-detection-2}. Let Assumption \ref{assumption: graph}, \ref{assumption: regularity}(a) hold.  For the sequence of estimators $\hat \theta_t^j$ defined in Equation \eqref{eqn-detection-3}, define $q_t^j$ as the density of $\sqrt{t}(\theta - \hat{\theta}_t^j)$, where $\theta \sim P_t^j$. Then we have
\begin{equation}
\int_\Theta \left|q_t^j(x) - N(0, V_{\theta_0}^{-1})\right| \, dx \pto 0, 
\end{equation}
where $V_{\theta_0}$ is given by
\begin{equation} \label{eqn-detection-4}
V_{\theta_0} = \frac{1}{m} \sum_{j = 1}^m \frac{(\theta_0 - Z^j) (\theta_0 - Z^j)^T}{\sigma^{j^4} |\theta_0 - Z^j|^2} \left[ \frac{\phi(\frac{|Z^j|+ \frac{1}{2} - |\theta_0 - Z^j|}{\sigma^j}) - \phi(\frac{- |\theta_0 - Z^j|}{\sigma^j})}{ \Phi(\frac{|Z^j|+ \frac{1}{2} - |\theta_0 - Z^j|}{\sigma^j}) - \Phi(\frac{- |\theta_0 - Z^j|}{\sigma^j}) } \right]^2. 
\end{equation}

\end{proposition}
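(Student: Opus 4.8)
The model \eqref{eqn-detection-1} is a curved exponential family: for each agent $j$ it is the one–parameter truncated–normal location family composed with the nonlinear link $\theta \mapsto \mu^j(\theta):=|\theta-Z^j|$. One would like to quote Corollary~\ref{cor:bvm1} or Corollary~\ref{cor:bvm2} directly, but this is not possible as stated, because a single agent's Fisher information $V_{\theta_0}^j$ is rank one in $\R^{2\times 2}$ --- agent $j$ only ever observes a noisy version of the distance to $Z^j$, so its log-likelihood is constant on circles around $Z^j$ --- hence singular, violating the nonsingularity clauses of Assumption~\ref{assumption: regularity}(c),(e),(f). The plan is therefore to retrace the proof of Theorem~\ref{thm:bvm1}, using only that the \emph{averaged} information $V_{\theta_0}=\frac1m\sum_{i=1}^m V_{\theta_0}^i$ is nonsingular. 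For this to hold one needs two conditions that are implicit in the statement and that I would make explicit: $\theta_0\neq Z^j$ for every $j$ (otherwise $|\theta-Z^j|$ is not differentiable at $\theta_0$ and the matrix in \eqref{eqn-detection-4} is ill-defined), and the unit vectors $u_j:=(\theta_0-Z^j)/|\theta_0-Z^j|$ are not all parallel --- a generic condition on the sensor layout.

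First I would collect the local regularity. On a small ball around $\theta_0$ the map $\theta\mapsto|\theta-Z^j|$ is $C^\infty$ (it is away from its unique singularity $Z^j$); composing with the smooth functions $\phi,\Phi$ and using that the truncated density is bounded above and below on the fixed interval $[0,|Z^j|+\tfrac12]$, it follows that $\theta\mapsto\log p_\theta^j(x)$ is three times continuously differentiable near $\theta_0$ with third-order derivatives bounded by a constant, uniformly over the (bounded) data; that is, Assumption~\ref{assumption: regularity}(f) holds for each agent. Together with Assumption~\ref{assumption: regularity}(a), which is granted, and the consistency $\hat\theta_t^j\pto\theta_0$ with $\hat\theta_t^j\in\Theta$ eventually (Lemma~\ref{lemma:detection-1}), this yields the quadratic expansion in the style of \eqref{eqn-bvm-1}, namely $f_t^j(\theta)=f_t^j(\hat\theta_t^j)-\tfrac12(\theta-\hat\theta_t^j)^{T}\hat V_t^j(\theta-\hat\theta_t^j)+r_t^j(\theta-\hat\theta_t^j)$ with $|r_t^j(h)|=O(|h|^3)$, the linear term vanishing because $\hat\theta_t^j$ is eventually an interior minimizer, and $\hat V_t^j\pto\frac1m\sum_i V_{\theta_0}^i$ by the distributed law of large numbers and the graph estimate of Lemma~\ref{lemma:graph_conv} that already underlie Lemma~\ref{lemma:ptwise_conv}, applied here to the second derivatives of $\log p_\theta^j$.

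Next I would assemble the remaining ingredients of the Laplace/BvM argument exactly as in the proof of Theorem~\ref{thm:bvm1}. The uniform prior $\pi\equiv 1$ on $[0,1]^2$ from \eqref{eqn-detection-2} is continuous and strictly positive at $\theta_0\in(0,1)^2$, so Assumption~\ref{assumption: pmc}(b) holds. The identifiability that drives Lemma~\ref{lemma:detection-1} --- $f$ is continuous on the compact closure $[0,1]^2$ and, because the circles $\{\theta:|\theta-Z^i|=|\theta_0-Z^i|\}$ over a non-collinear sensor configuration intersect only at $\theta_0$, has a well-separated minimum there --- gives the uniform-testing Assumption~\ref{assumption: uct}(a), and hence the escape-of-mass bound $\int_{B_\epsilon(\theta_0)}p_t^j\,d\theta\pto 1$ for every $\epsilon>0$. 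Feeding the quadratic expansion, the prior continuity, and the escape-of-mass estimate into the argument of Theorem~\ref{thm:bvm1} produces $\int_\Theta|q_t^j(x)-N(0,V_{\theta_0}^{-1})|\,dx\pto 0$ with $V_{\theta_0}=\frac1m\sum_i V_{\theta_0}^i$.

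It then remains to identify $V_{\theta_0}^i$ with the per-agent summands in \eqref{eqn-detection-4}. By the chain rule $\nabla_\theta\log p_\theta^i=(\partial_\mu\log g^i_\mu)(x)\,\nabla_\theta\mu^i(\theta)$ with $\nabla_\theta\mu^i(\theta_0)=u_i$, so $V_{\theta_0}^i=I_i\!\big(\mu^i(\theta_0)\big)\,u_iu_i^{T}$, where $I_i$ is the Fisher information of the one-parameter truncated-normal location family (the curvature term $(\partial_\mu\log g^i_\mu)\nabla^2_\theta\mu^i$ drops out because the score has mean zero at $\theta_0$); writing $I_i$ via the variance of its sufficient statistic, equivalently $-\partial_\mu^2\,\bP_0\log g^i_\mu$, and matching it against the closed form displayed in \eqref{eqn-detection-4} is a direct, if lengthy, truncated-Gaussian moment computation. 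The main obstacle is precisely the rank deficiency: because each agent's model is degenerate in $\R^2$, no existing corollary applies off the shelf, and both the nonsingularity of the limiting Hessian and the identifiability used in the escape-of-mass step only emerge after averaging over a geometrically non-degenerate sensor configuration; the delicate part is threading this through the Taylor-plus-Laplace argument and pinning down exactly which non-collinearity and $\theta_0\neq Z^j$ hypotheses must be imposed, while the closed-form Fisher-information bookkeeping in \eqref{eqn-detection-4} is routine by comparison but has to be carried out carefully.
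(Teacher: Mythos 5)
Your proposal follows the same overall skeleton as the paper's proof --- verify the regularity, prior-mass, testing, and consistency ingredients for the detection model and then run the Laplace/BvM machinery, finishing with the Fisher-information computation that yields \eqref{eqn-detection-4} --- but it diverges at one substantive point, and the divergence is to your credit. The paper computes the per-agent information $V_{\theta_0}^j$ as a positive scalar times the outer product $(\theta_0 - Z^j)(\theta_0 - Z^j)^T$, asserts that this matrix ``is nonsingular,'' and then invokes Corollary~\ref{cor:bvm2} directly. That assertion is false: a rank-one $2\times 2$ matrix is singular, reflecting exactly the geometric fact you identify, namely that agent $j$'s log-likelihood depends on $\theta$ only through $|\theta - Z^j|$ and so is flat along circles centered at $Z^j$. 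Consequently the nonsingularity clauses of Assumption~\ref{assumption: regularity}(c),(e),(f) fail agent-by-agent and the corollaries cannot be quoted off the shelf. Your repair --- retracing the proof of Theorem~\ref{thm:bvm1} using only nonsingularity of the \emph{averaged} information $V_{\theta_0}=\frac{1}{m}\sum_i V_{\theta_0}^i$, which the argument in fact only ever uses, under the explicit hypotheses that $\theta_0 \neq Z^j$ for all $j$ and that the unit vectors $(\theta_0 - Z^j)/|\theta_0 - Z^j|$ are not all parallel --- is the correct fix, and your companion observation that identifiability (hence the testing assumption and the well-separated minimum underlying Lemma~\ref{lemma:detection-1}) likewise requires the circles $\{|\theta - Z^i| = |\theta_0 - Z^i|\}$ to meet only at $\theta_0$ is a second condition the paper leaves implicit. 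In short: the paper's route is shorter but rests on an incorrect nonsingularity claim; your route costs a re-derivation of Theorem~\ref{thm:bvm1} under weaker per-agent hypotheses but makes the result actually hold, at the price of stating the sensor-geometry conditions explicitly. The remaining pieces of your plan (smoothness of the truncated-normal log-likelihood away from $Z^j$, positivity of the uniform prior at $\theta_0$, the chain-rule identification $V_{\theta_0}^i = I_i(\mu^i(\theta_0))\,u_i u_i^T$ matching \eqref{eqn-detection-4}) coincide with the paper's and are sound.
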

Let $\chi_{\alpha, p}^2$ denote the critical value from a chi-squared distribution with $p$ degrees of freedom. Proposition \ref{prop:detection-1} delineates an asymptotically valid credible region for the parameter $\theta$ under the probability measure $P_t^j$ at a confidence level of $1 - \alpha$. The credible region is defined as all values of $\theta$ that satisfy the following inequality:
\begin{equation*}
    (\hat{\theta}_{t}^j - \theta)^\top \hat{V}_{\theta_0} (\hat{\theta}_{t}^j - \theta) \leq \frac{\chi_{\alpha, p}^2}{t},
\end{equation*}
where $\hat{\theta}_{t}^j$ represents the estimator defined in Equation \eqref{eqn-logistic-4}. Furthermore, $\hat{V}_{\theta_0}$ serves as the empirical analogue of the covariance matrix $V_{\theta_0}$, as defined in Equation \eqref{eqn-detection-4}, with the true parameter $\theta_0$ replaced by its estimate $\hat{\theta}_{t}^j$.

\section{Discussion and Future Directions } \label{sect:discussion}
We have studied the frequentist statistical properties of distributed (non-Bayesian) Bayesian inference in terms of ``posterior'' consistency, Bernstein von—Mises theorems, and ``posterior'' contraction rates. Our results provide the first rigorous insights into the statistical efficiency of distributed Bayesian inference and its dependence on the design parameters of the underlying communication network, such as the number of agents and the network topology. The promising results offer several avenues for future research.

Future work should investigate the frequentist statistical properties of the distributed Bayes posterior under more complex time-varying network structures, such as networks with single link failures \citep{Shahrampour2015} or under other random graph structures such as Erdős–Rényi graphs \citep{erdHos1960evolution}. Understanding how distributed Bayesian inference adapts to such scenarios will be crucial for applying theoretical insights in unpredictable or adversarial settings.

Another compelling direction is to explore the frequentist coverage properties of the distributed Bayes posterior. This could include analyses of confidence intervals or hypothesis tests when the underlying model is well-specified and misspecified. Understanding how the distributed Ba dyes posterior aligns with frequentist criteria can offer additional validation and robustness checks for the model.

There are various settings to extend our results within the context of social learning. For example, \cite{Bordignon2021,Hu2023} introduce a step size in the distributed update rule~\eqref{eqn:dSMD} and establish an asymptotic normality result under a vanishing step size with fixed $n$. One could combine our results with theirs to study limiting distributions under joint asymptotic regimes of time and step size. Another extension could involve proving a Bernstein-von Mises theorem for a discrete, finite parameter space $\Theta$, as explored in \cite{Bordignon2021,Hu2023}. Additionally, agent-specific weights could be introduced in the likelihood term during the posterior update, similar to \cite{Bordignon2023}, to explore how such an updating rule affects the limiting distribution and contraction rate compared to using agent-independent weights.

One could potentially connect our work with the microeconomics theory of social learning, such as the results in \cite{Molavi2018}. Insights from non-Bayesian frameworks like variants of the DeGroot model  \citep{degroot1974reaching,acemoglu2011bayesian} could shed light on the convergence properties of distributed Bayesian methods, especially in settings where agents have heterogeneous prior beliefs or are influenced by external signals.

Our work adds to in the literature on frequentist distributed inference that focuses on communication efficiency. Future studies could integrate insights from this body of work, including communication-efficient methods \citep{jordan2018communication} and high-dimensional distributed statistical inference \citep{battey2015distributed}, to design and analyze distributed Bayesian methods.

Lastly, our theoretical findings could inform the design of communication networks in practical applications. Specifically, an analytical understanding of how the number of agents and communication costs interact could guide the construction of more efficient and robust distributed Bayesian systems.

\bibliography{reference}
\appendix
\section{Review of concepts}
\subsection{Distances and Divergences}\label{subsect:divergence}
This section reviews a class of divergence functions. 
\begin{definition}[Rényi divergence]
Let $\rho > 0$ and $\rho \neq 1$. The $\rho$-Rényi divergence between two probability measures $P_1$ and $P_2$ is defined as
\begin{equation*}
D_\rho(P_1 \parallel P_2) =
\begin{cases}
\frac{1}{\rho - 1} \log \int \left(\frac{dP_1}{dP_2}\right)^{\rho -1} dP_1 & \text{if } P_1 \ll P_2, \\
+\infty & \text{otherwise.}
\end{cases}
\end{equation*}
The relations between the Rényi divergence and other divergence functions are summarized below:
\begin{enumerate}
\item When $\rho \to 1$, the Rényi divergence converges to the Kullback--Leibler divergence, defined as
\begin{equation*}
D_{KL}(P_1 \parallel P_2) =
\begin{cases}
\int \log \left(\frac{dP_1}{dP_2}\right) dP_1 & \text{if } P_1 \ll P_2, \\
+\infty & \text{otherwise.}
\end{cases}
\end{equation*}
\item When $\rho = 1/2$, the Rényi divergence is related to the Hellinger distance by
\begin{equation*}
D_{1/2}(P_1 \parallel P_2) = -2 \log \left(1 - H(P_1,P_2)\right),
\end{equation*}
and the Hellinger distance is defined as
\begin{equation*}
H(P_1,P_2) = \sqrt{\frac{1}{2} \int \left(\sqrt{dP_1} - \sqrt{dP_2}\right)^2}.
\end{equation*}
\item When $\rho = 2$, the Rényi divergence is related to the $\chi^2$-divergence by
\begin{equation*}
D_2(P_1 \parallel P_2) = \log \left(1 + \chi^2(P_1 \parallel P_2)\right),
\end{equation*}
and the $\chi^2$-divergence is defined as
\begin{equation*}
\chi^2(P_1 \parallel P_2) = \int \left(\frac{(dP_1)^2}{dP_2} - 1\right).
\end{equation*}
\end{enumerate}
\end{definition}

\begin{definition}[Total variation]
The total variation distance between two probability measures $P_1$ and $P_2$ is defined as
\begin{equation*}
TV(P_1, P_2) = \frac{1}{2} \int |dP_1 - dP_2|.
\end{equation*}
\end{definition}

\subsection{An (incomplete) Summary of Previous BvM Results} \label{subsect:bvm-history}
The history of the Bernstein von Mises (BvM) theorem is marked by steady evolution, expanding its reach to wider contexts with weaker assumptions.  The origins of the Berstein von Mises (BvM) theorem trace back to \cite{Laplace1809}.  Earlier works on the BvM theorem are restricted to well-specified, i.i.d statistical models with fixed, finite-dimensional parameters and use assumptions that involve up to fourth-order derivatives of the log-likelihood \citep{LeCam1953, Bickel1969, LeCam2000}). 

This classical Bernstein von - Mises theorem for i.i.d., finite-dimensional data was brilliantly summarized in~\cite[Chapter 10]{VanderVaart2000}. The Theorem relies on three assumptions: the existence of a sequence of consistent estimators, local asymptotic normality (LAN), and uniform consistent testing. One typical sufficient assumption for LAN is differentiable in quadratic means (DQM), which involves only first-order derivatives of the log-likelihood to achieve a desirable quadratic expansion. In addition to the LAN assumption, Schwartz \cite{Schwartz1965} proposed the concept of uniformly consistent test assumptions around the true parameter $\theta_0$. 

The contemporary wave of research has extended BvM theorems beyond the canonical settings. These studies have broached areas such as model misspecification \citep{Kleijn2012}, semiparametric models \cite{Shen2002, Bickel2012, panov2015, Castillo2015}, and parameters placed at the boundary of the parameter space \citep{bochkina2014bernstein}. Among these explorations, a particularly relevant line of inquiry has focused on BvM for non-standard Bayes procedures \citep{knoblauch2022optimization}, including generalized posteriors \citep{miller2021asymptotic}, Bayes rules derived from optimization perspectives, and variational Bayes posteriors \citep{wang2019Frequentist, medina2022robustness}. Another emerging thread of work establishes BvM theorems for high-dimensional models, with most recent results allowing the dimension to grow at the order $d^2 \lesssim n$ \citep{panov2015,katsevich2023improved}.  

\section{Simulating distributed Bayesian logistic regression}
We illustrate the practical implication of our theory through an exponential family model, where the results are explicitly proven in \Cref{subsect:EF}. 

We posit an improper prior $\pi(\theta) \propto 1$. 
Then the distributed posterior is given by
\begin{equation}
   p_t^j (\theta) = \exp\left(\langle \theta, T_t^j \rangle - B_t^j(\theta)\right), 
\end{equation}
where 
\begin{equation*}
    T_t^j =   \sum_{k = 1}^t \sum_{i = 1}^m [A^{t-k}_{ji}] X_k^i Y_k^i,  \quad B_t^j(\theta) =  \sum_{k = 1}^t \sum_{i = 1}^m [A^{t-k}_{ji}] \sigma(\langle \theta,  X_k^j \rangle). 
\end{equation*}
The gradient of the energy function is given by
\begin{equation*}
   - \nabla_\theta \log p_t^j(\theta) = - \sum_{k = 1}^t \sum_{i = 1}^m [A^{t-k}_{ji}] X_k^i Y_k^i +  \sum_{k = 1}^t \sum_{i = 1}^m [A^{t-k}_{ji}] \frac{\exp\left( \langle \theta,  X_k^j \rangle\right)}{1 + \exp\left( \langle \theta,  X_k^j \rangle\right)} X_k^j , 
\end{equation*}
where $A^0_{jj} = 1, A^0_{ji} = 0$ for $i \neq j$. 

With $ - \nabla_\theta \log p_t^j(\theta)$ in hand, we implement the Langevin Monte Carlo (LMC) algorithm to sample from the distributed posterior $\log p_t^j(\theta)$. Since $\log p_t^j(\theta)$ is strictly concave, the samples generated by LMC are guaranteed to converge to the stationary distribution $p_t^j(\theta)$. The final histograms were generated using 1,000 samples from LMC. \Cref{fig:BvM-hist} shows the 10 marginals for the exact posterior sampled under LMC and the BvM approximation derived in \Cref{prop:logistic-1} for one agent, which are closely aligned.

For the problem setting, we simulate a static connected graph with 10 nodes, generating 500 data points under a logistic regression model with the ground truth $\theta_0 \in \R^5$ sampled from a standard normal distribution.

\begin{figure}[th]
\centering
\begin{minipage}{0.95\linewidth}
    \centering
    \includegraphics[width=1\linewidth]{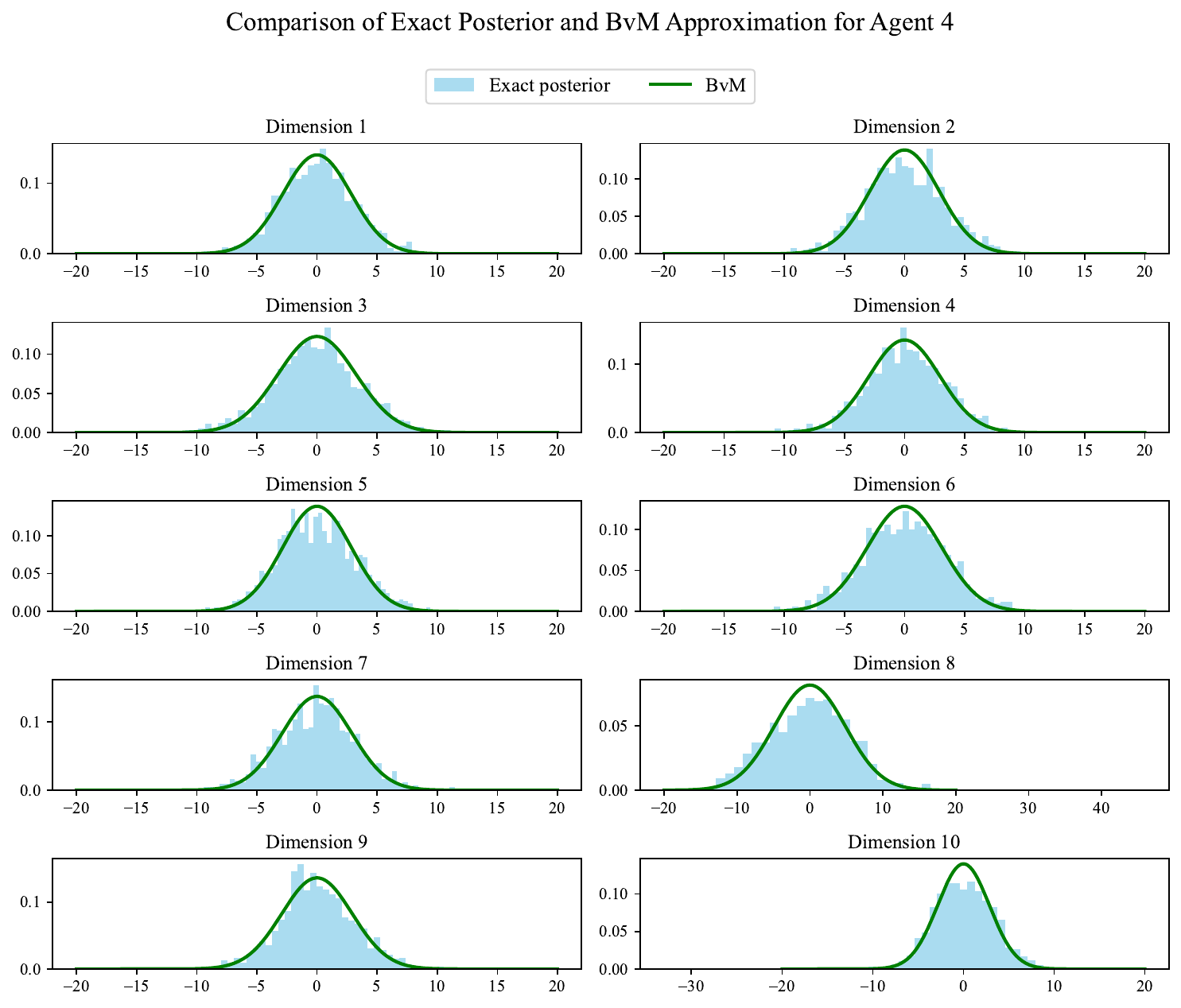}
    \caption{Histograms of the ten marginals computed via Langevin Monte Carlo vs. BvM approximation for a 10-dimensional Bayesian logistic regression example.}
    \label{fig:BvM-hist}
\end{minipage}%
\hfill\begin{minipage}{0.5\linewidth}
    \centering
    \includegraphics[width=\linewidth]{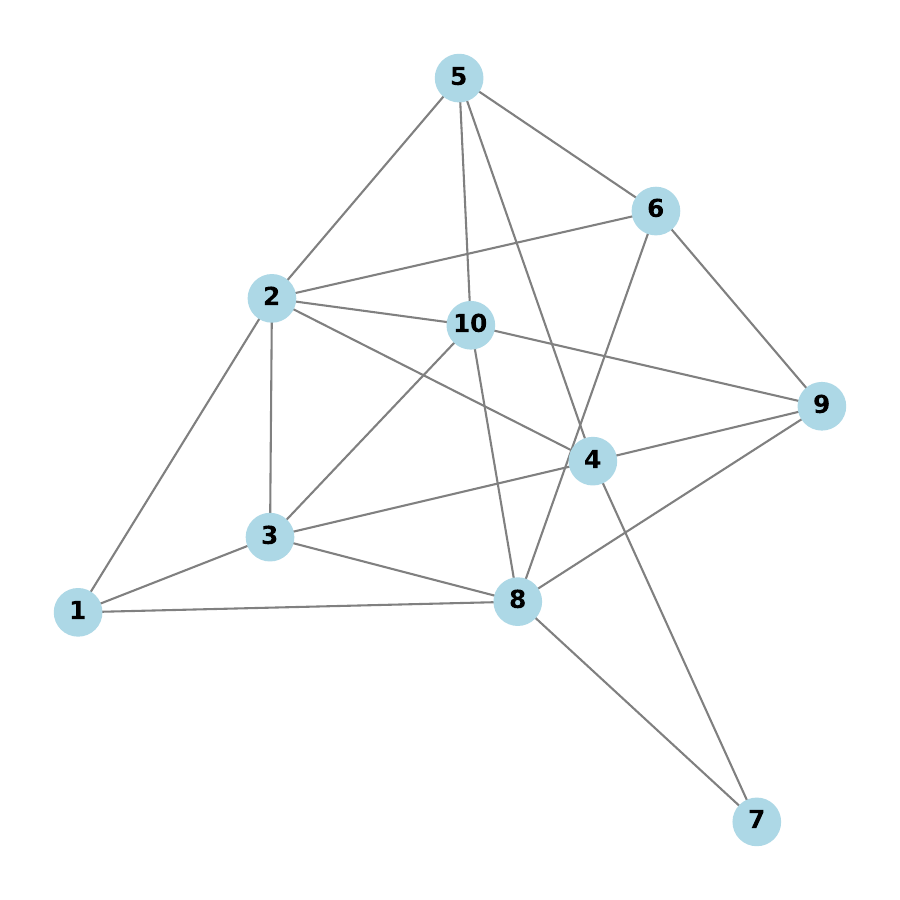}
    \caption{Communication graph $A$.}
    \label{fig:graph}
\end{minipage}
\end{figure}

\FloatBarrier
\section{Proofs}
\label{app:proofs}
\subsection{Proofs of Results in Section~\ref{sect: consistency}} \label{app:consistency}

\begin{proof}[Lemma ~\ref{lemma:ptwise_conv}]
Define vector - valued functions $\phi_t(\theta) = \left[f_t^1(\theta), \cdots, f_t^m(\theta) \right]^T$ and $\log \bp_\theta(X_t) = \left[\log \bP_\theta^1(X_t^1), \cdots, \log \bP_\theta^m (X_t^m)\right]^T$.   \\
Since $\bP_0 |\log \bP_\theta^i| = \bP_0 (\log \bP_\theta^i)_{+} + \bP_0 (\log \bP_\theta^i)_{-} < \infty$, we have
\begin{align*}
f_t^j(\theta) &= -\frac{1}{t} \sum_{k = 1}^t \sum_{i = 1}^m [A^{t-k}_{ji}] \log \bp_\theta^i(X_t^i) = \frac{1}{t} \sum_{k = 1}^t \left< [A^{t-k}_{j,.}] ,\log \bP_\theta\right>. \\
\mbox{ and } \phi_t(\theta) &= \frac{1}{t} \sum_{k = 1}^t A^{t-k} \log \bp_\theta(X_k) 
\end{align*}
By Lemma \ref{lemma:distributed-LLN} [Distributed law of large numbers], 
\begin{equation*}
    \phi_t(\theta) \pto \frac{1}{m}  \sum_{i = 1}^m \bP_0 \log \bP_\theta^i =  f(\theta) \ones. 
\end{equation*}
which implies the coordinate-wise convergence $f_t^j(\theta) \pto f(\theta)$ for every $j \in [m]$. 
\end{proof}

\begin{proof}[Theorem ~\ref{thm:const}]
Let $\epsilon > 0$ be given. If we define $\mu_t^j(B) = \int_B \exp(-t f_t^j(\theta)) \Pi(d\theta) $, then $P_t^j(B) = \frac{\mu_t^j(B)}{\mu_t^j(\Theta)}$ and $\mu_t^j(\Theta)  = z_t^j < \infty$. 

In the proof, we consider the $\Theta = \Theta \backslash U_{null}$ where $\pi(\theta) = 0$ for every $\theta \in U_{null}$. This null set should be of little importance because the set has zero weight under $P_t^j$. 

We now show that there exists $\alpha > 0$ such that $\inf_{\theta \in \U_\epsilon^c} f_t^j (\theta) \geq  f(\theta_0) + \alpha$ for all $t$ large enough.  By lemma \ref{lemma:ptwise_conv}, we get that there exists $T$ such that $f_t^j (\theta) - f (\theta) < \frac{\epsilon}{2}$ for $t > T$. Given that $\inf_{\theta \in \U_\epsilon^c} f_t^j (\theta) -f(\theta_0) = \inf_{\theta \in \U_\epsilon^c} [f_t^j (\theta) - f (\theta)] + [f(\theta) -f(\theta_0)]$, $\inf_{\theta \in \U_\epsilon^c} f_t^j (\theta) -f(\theta_0) > \frac{\epsilon}{2}$ for $t > T$ because for $\theta \in \U_\epsilon^c, f(\theta) -f(\theta_0) > \epsilon$. 

The result shows that for $t > T$, for all $\theta \in \U_\epsilon^c$, $exp(-t(f_t^j(\theta) - f(\theta_0) - \alpha)) \leq 1$. Therefore, for $t > T$, 
\begin{equation*}
    \exp(t(f(\theta_0) + \alpha)) \mu_t^j (\U_\epsilon^c) = \int_{\U_\epsilon^c} \exp(-t(f_t^j(\theta) - f(\theta_0) - \alpha)) \Pi(d\theta) \leq \int_{\U_\epsilon^c} \Pi(d\theta) \leq 1
\end{equation*}
For any $\theta \in A_{\alpha/2}$, $\lim_{t \to \infty} f_t^j(\theta) - f(\theta_0) - \alpha = f(\theta) - f(\theta_0) - \alpha < -\alpha /2 < 0$. Thus, $exp(-t(f_t^j(\theta) - f(\theta_0) - \alpha)) \to \infty$ as $t \to \infty$. By Fatou's lemma, since $\Pi(A_{\alpha/2}) > 0$ by assumption 2, 
\begin{equation*}
    \liminf_{t} e^{t(f(\theta_0) + \alpha)} \mu_t^j(A_{\alpha/2}) =  \liminf_{t} \int_{A_{\alpha/2}} \exp(-t(f_t^j(\theta) - f(\theta_0) - \alpha)) \Pi(d\theta) = \infty. 
\end{equation*}
Since $\mu_t^j(\Theta) \geq \mu_t^j(A_{\alpha/2})$, we obtain $e^{t(f(\theta_0) + \epsilon)} \mu_t^j(\Theta) \to \infty$. 

Combining the two results, 
\begin{equation*}
 1 - P_t^j(\U_\epsilon) = P_t^j(\U_\epsilon^c) =  \frac{\mu_t^j(\U_\epsilon^c)}{\mu_t^j(\Theta)} = \frac{\exp(t(f(\theta_0) + \alpha))\mu_t^j(\U_\epsilon^c)}{\exp(t(f(\theta_0) + \alpha))\mu_t^j(\Theta)} \to 0, 
\end{equation*}
as $t \to \infty$. 
\end{proof}
\begin{proof}[Lemma~\ref{lemma:const_condition}]
Let $j$ and $\theta$ be fixed. If $\bP_0 \ll \bP_\theta^j$, then $D_{KL}(\bP_0 \parallel \bP_\theta^j) < \infty$. This implies that $\bP_0 \log \bp_\theta^j > - \infty$. Likewise, the expectation of $\log \bP_0$ under $\bP_0$ is also finite, i.e., $\bP_0 \log \bP_0 < \infty$.

Since the KL divergence is non-negative, $D_{KL}(\bP_0 \parallel \bP_\theta^j) \geq 0$, it follows that the expectation of $\log \bp_\theta^j$ under $\bP_0$ is less than or equal to the expectation of $\log \bP_0$ under $\bP_0$. Therefore, $\bP_0 \log \bp_\theta^j \leq \bP_0 \log \bp_0 < \infty$, and this completes the proof. 
\end{proof}
\subsection{Proofs of Results in Section~\ref{sect: bvm}} \label{app: bvm}
\begin{proof}[Lemma ~\ref{lemma:M-est-2}]
Suppose $\U$ is a neighborhood of $\theta_0$, within which $\theta \mapsto \log \bP_\theta^i$ is convex for all $i \in [m]$. Being a linear combination of convex mappings, $f_t^j(\theta)$ retains convexity in $\U$. As a convex function, $f_t^j$ is Lebesgue - almost surely differentiable on $\U$. According to Lemma \ref{lemma:ptwise_conv}, $f_t^j \pto f$, and since $f$ is also convex, it remains differentiable a.s. on $\U$.  Given that $\nabla f_t^j(\theta)$ is non-decreasing a.s. on $\U$ for every coordinate, $\nabla f_t^j(\theta_0 - \epsilon) < - \eta$ and $\hat \theta_t^j \leq \theta_0 - \epsilon$ implies $\nabla f_t^j(\hat \theta_t^j) < - \eta$, which has probability tending to $0$ for every $\eta > 0$ if $f_t^j(\hat \theta_t^j) = o_d(1)$. This shows that for every $\epsilon, \eta > 0$, 
\begin{equation*}
    P( \|\hat \theta_t^j - \theta_0 \| > \epsilon) + o(1) \geq  P( \nabla f_t^j(\theta_0 - \epsilon) < -\eta,\nabla f_t^j(\theta_0 + \epsilon) > \eta )
\end{equation*}
Since $\nabla f(\theta_0 - \epsilon) < 0 < \nabla f(\theta_0 + \epsilon)$, taking $2 \eta$ to be the smallest coordinates of $\nabla f(\theta_0 - \epsilon)$ and $\nabla f(\theta_0 + \epsilon)$ makes the right hand side converge to $1$. 

\end{proof}
\begin{proof}[Lemma ~\ref{lemma:z-est-uct}]
Using the consistency of $\hat \theta_t^j$ along with Lemma \ref{lemma:sc-LAN}, given the scaling factor $\epsilon_t^j = \frac{1}{\sqrt{t}} $, we have
\begin{equation*}
f_t^j(\hat\theta_t^j) = f_t^j(\theta_0) + \frac{1}{\sqrt{t}} (\hat \theta_t^j - \theta_0)^T V_{\theta_0} \Delta_{t, \theta}^j + \frac{1}{2} (\hat \theta_t^j - \theta_0)^T V_{\theta_0} (\hat \theta_t^j - \theta_0) + o_d(1) = f_t^j(\theta_0) + o_d(1),
\end{equation*}
by applications of Slusky's theorem.

By Lemma \ref{lemma:ptwise_conv}, $f_t^j(\theta_0)$ converges to $f(\theta_0)$ in probability so $f_t^j(\hat\theta_t^j) = f(\theta_0) + o_d(1)$. 

Assumption \ref{assumption: uct}(a) assures the existence of $\delta > 0$ such that:
\begin{align*}
    \inf_{\theta \in B_\epsilon(\hat\theta_t^j)^c} (f_t^j(\theta) - f_t^j(\hat\theta_t^j)) &= \inf_{\theta \in B_{2\epsilon}(\theta_0)^c} (f_t^j(\theta) - f_t^j(\hat\theta_t^j)) \\
    &=  \inf_{\theta \in B_{2\epsilon}(\theta_0)^c} (f_t^j(\theta) - f(\theta_0)) + o_d(1) \\
    & > 2\delta + o_d(1) \overset{\text{$t$ large}}{>} \delta. 
\end{align*}
hence $\inf_{\theta \in B_\epsilon(\hat\theta_t^j)^c} (f_t^j(\theta) - f_t^j(\hat\theta_t^j)) > \delta$ with probability tending to $1$ under $[\bP_0]$.
\end{proof}

\begin{proof} [Lemma ~\ref{lemma:bvm1}]
For any $\epsilon >0$ and $\delta > 0$, 
\begin{equation*}
    Q_t(B_\delta(0)) = \Pi_t (B_{\delta/a_t}(\theta_t)) \leq \Pi_t(B_\epsilon(\theta_0)), 
\end{equation*}
for all $t$ sufficiently large. Therefore, since $Q_t \to Q$ in total variational distance, we obtain 
\begin{equation*}
      Q(B_\delta(0)) = \lim_{t \to \infty}   Q_t(B_\delta(0)) \leq \liminf_t \Pi_t(B_\epsilon(\theta_0)). 
\end{equation*}
Take $\delta$ arbitrarily large shows that $\lim_{t \to \infty} \Pi_t(B_\epsilon(\theta_0)) = 1$. 
\end{proof}
\begin{proof}[Theorem ~\ref{thm:bvm1}]
For each $j \in [m]$,  the sequence of estimators $\hat \theta_t^j$ are are chosen to satisfy the equation 
\begin{equation*}
    \sum_{k = 1}^t \sum_{i = 1}^m [A^{t - k}_{ij}] \nabla \log p_{\hat \theta_t^j}^i (X_k^i) = 0. 
\end{equation*}
The function $f_t^j(\theta)$ admits the following quadratic expansion.
\begin{align*}
&f_t^j(\theta) =  -\frac{1}{t} \sum_{k = 1}^t \sum_{i = 1}^m [A^{t - k}_{ij}]\log \bp_\theta^i(X_k^i)  \\
&=  -\frac{1}{t} \sum_{k = 1}^t \sum_{i = 1}^m [A^{t - k}_{ij}] \{\log p_{\hat \theta_t^j}^i(X_k^i)  + \nabla \log p_{\hat \theta_t^j}^i(X_k^i)(\theta -\hat \theta_t^j) \\
&- \frac{1}{2}(\theta -\hat \theta_t^j)  \nabla \log p_{\hat \theta_t^j}^i(X_k^i) \left(\nabla \log p_{\hat \theta_t^j}^i(X_k^i)\right)^T (\theta -\hat \theta_t^j)^T  + O_p(|\theta -\hat \theta_t^j|^3)\}\\
&= f_t^j(\hat\theta_t^j) + \frac{1}{2}(\theta -\hat \theta_t^j) \left\{\underbrace{\frac{1}{t} \sum_{k = 1}^t \sum_{i = 1}^m [A^{t - k}_{ij}]     \nabla \log p_{\hat \theta_t^j}^i(X_k^i)   \left( \nabla\log p_{\hat \theta_t^j}^i(X_k^i)\right)^T}_{\hat V_t^j} \right\}(\theta -\hat \theta_t^j)^T - \underbrace{O_p(|\theta -\hat \theta_t^j|^3)}_{r_t^j(\theta -\hat \theta_t^j)}. 
\end{align*}

The sequence of local estimates $\hat \theta_t^j \in \Theta$ satisfies the consistency requirement.  By denoting the remainder term as $r_t^j(\theta -\hat \theta_t^j)$, it immediately holds that
\begin{equation*}
    r_t^j(h) = O_p(|h|^3). 
\end{equation*}
By Theorem 7.2 of \cite{VanderVaart2000},  Assumption \ref{assumption: regularity}(c) implies the existence of the nonsingular Fisher information matrix $V_{\theta_0}^i$ for any $i \in [m]$.  The average of Fisher information matrix, defined as $V_{\theta_0} = \frac{1}{m} \sum_{i = 1}^m V_{\theta_0}^i$, is also nonsingular and positive definite. It remains to show that $\hat V_t^j \pto V_{\theta_0}$.

 We represent the sequence of matrices $\hat V_t^j$ as:
\begin{align*}
    \hat V_t^j &= \frac{1}{t} \sum_{k = 1}^t \sum_{i = 1}^m [A^{t - k}_{ij}]     \nabla \log p_{\hat \theta_t^j}^i(X_k^i)   \left(\nabla \log p_{\hat \theta_t^j}^i(X_k^i)\right)^T .
\end{align*}
Using Lipschitz continuity of the gradient (Assumption \ref{assumption: regularity}(d)), we can decompose $\hat V_t^j$ as:
\begin{align*}
    \hat V_t^j   &=  \frac{1}{t} \sum_{k = 1}^t \sum_{i = 1}^m [A^{t - k}_{ij}]     \left( \nabla \log \bp_{\theta_0}^i(X_k^i)  +s^i(X_k^i)(\hat \theta_t^j - \theta_0) \right) \left( \nabla \log \bp_{\theta_0}^i(X_k^i)  +s^i(X_k^i)(\hat \theta_t^j - \theta_0) \right)^T . 
\end{align*}
Applying the distributive property, we can decompose this into three terms:
\begin{align*}
    \hat V_t^j &=\frac{1}{t} \sum_{k = 1}^t \sum_{i = 1}^m [A^{t - k}_{ij}]  \nabla \log \bp_{\theta_0}^i(X_k^i) \left(\nabla \log \bp_{\theta_0}^i(X_k^i) \right)^T \\
    &+ (\hat \theta_t^j - \theta_0) \frac{1}{t} \sum_{k = 1}^t \sum_{i = 1}^m [A^{t - k}_{ij}]  s^i(X_k^i) \nabla \log \bp_{\theta_0}^i(X_k^i)^T  \\
    &+ (\hat \theta_t^j - \theta_0)  (\hat \theta_t^j - \theta_0)^T \frac{1}{t} \sum_{k = 1}^t \sum_{i = 1}^m [A^{t - k}_{ij}]  [s^i(X_k^i)]^2. 
\end{align*}
By Lemma \ref{lemma:distributed-LLN}, we have
\begin{equation*}
    \frac{1}{t} \sum_{k = 1}^t \sum_{i = 1}^m [A^{t - k}_{ij}]  \nabla \log \bp_{\theta_0}^i(X_k^i) \left(\nabla \log \bp_{\theta_0}^i(X_k^i) \right)^T  \pto V_{\theta_0}, 
\end{equation*}
and 
\begin{equation*}
    \frac{1}{t} \sum_{k = 1}^t \sum_{i = 1}^m [A^{t - k}_{ij}]  s^i(X_k^i) \nabla \log \bp_{\theta_0}^i(X_k^i)^T  \pto \bP_0[s^i \nabla \log \bp_{\theta_0}^i], 
\end{equation*}
and 
By the Cauchy-Schwarz inequality, we have:
\begin{equation*}
\bP_0[s^i \nabla \log \bp_{\theta_0}^i] \leq \sqrt{\bP_0 [s^i]^2} \sqrt{V_{\theta_0}^i} < \infty,
\end{equation*}
thus the second term is $o(1)$. 
\begin{equation*}
    \frac{1}{t} \sum_{k = 1}^t \sum_{i = 1}^m [A^{t - k}_{ij}]  [s^i(X_k^i)]^2 \pto \bP_0 [s^i(X_k^i)]^2. 
\end{equation*}
Finally, since $\hat \theta_t^j - \theta_0 = o_p(1)$, the second and the third term are $o_p(1)$. Combining all these results, we have
\begin{equation*}
\hat V_t^j \pto V_{\theta_0}.
\end{equation*}

Note that $q_t^j(x) = p_t^j(\hat\theta_t^j + x/\sqrt{t}) t^{-p/2}$. Define
\begin{align*}
    g_t^j(x) &= \exp(-t[f_t^j(\hat\theta_t^j+ x/\sqrt{t}) - f_t^j(\hat\theta_t^j)]) \pi(\hat\theta_t^j + x/\sqrt{t}) \\
    &= \exp(t f_t^j(\hat\theta_t^j)) q_t^j (x) z_t^j t^{p/2}. 
    \end{align*}
recalling that $z_t^j < \infty$ by assumption and define
\begin{equation*}
    g_0(x) = \exp(-\frac{1}{2} x^T V_{\theta_0} x) \pi(\theta_0)
\end{equation*}
Let $\alpha \in (0, \lambda)$, where $\lambda$ is the smallest eigenvalue of $V_{\theta_0}$. Let $\epsilon >0$ small enough that $\epsilon < \alpha/ (2 c_0), \epsilon < \epsilon_0$ and $\pi(\theta) \leq 2 \pi(\theta_0)$ for all $\theta \in B_{2 \epsilon}(\theta_0)$. Let $\alpha = \liminf_t \inf_{\theta \in B_\epsilon(\hat\theta_t^j)^c} (f_t^j(\theta) -f_t^j(\hat\theta_t^j))$, noting that $\delta > 0$ with asymptotic probability $1$ by lemma \ref{lemma:z-est-uct}. Let $A_t^j = V_t^j - \alpha I$ and $A_0 = V_{\theta_0} - \alpha I$, define
\begin{align*}
    h_t^j(x) &= \begin{cases} \exp(-\frac{1}{2} x^T A_t^j x) 2 \pi(\theta_0) & \mbox{if }|x| < \epsilon \sqrt{t} \\
\exp(-\frac{1}{2} t \delta) 2\pi(\hat\theta_t^j + x/\sqrt{t}) & \mbox{if }|x| \geq \epsilon \sqrt{t} \end{cases} \\
h_0(x) &= \exp(-\frac{1}{2}x^T A_0 x) 2\pi(\theta_0)
\end{align*}
We show that 
\begin{enumerate}
    \item $g_t^j \to g_0$ and $h_t^j \to h_0$ almost surely
    \item $\int h_t^j$ converges to $\int h_0$
    \item $g_t^j = |g_t^j| \leq h_t^j$ with asymptotic probability $1$, and 
    \item $g_t^j, g_0, h_t^j, h_0 \in L^1(dx)$ with asymptotic probability $1$. 
\end{enumerate}
By the generalized dominated convergence theorem, these results imply that $\int g_t^j \pto \int g_0$ and $\int |(g_t^j){1/m} - g_0| dx \pto 0$ with $[\bP_0]-$probability. Assuming these results are true, we want to show how asymptotic normality follows. Since $\int q_t^j = 1$, the definition of $g_t^j$ implies that 
\begin{equation*}
    \exp(t f_t^j(\hat\theta_t^j)) t^{p/2} z_t^j = \int g_t^j \to \int g_0 = \pi(\theta_0) \frac{(2\pi)^{p/2}}{|m V_{\theta_0}|^{1/2}}. 
\end{equation*}
where $|V_{\theta_0}| = |det(V_{\theta_0})|$ and therefore
\begin{equation*}
    z_t^j \approx \frac{exp(-t f_t^j(\hat\theta_t^j)) \pi(\theta_0)}{|m V_{\theta_0}|^{1/2}} (\frac{2\pi}{t})^{p/2}. 
\end{equation*}
as $t \to \infty$. 
For any $a_t^j \to a \in \R$, we have that $\int |a_t^j  g_t^j - ag_0| \to 0$ since 
\begin{equation*}
    \int |a_t^j  g_t^j - a g_0| \leq \int |a_t^j  g_t^j - a_t^j  g_0| + \int |a_t^j  g_0 - a g_0| = |a_t^j | \int |g_t^j - g_0| + |a_t^j  - a| \int |g_0| \to 0
\end{equation*}
Therefore, if we let $a_t^j  = (exp(tf_t^j(\hat\theta_t^j)) t^{p/2} z_t^j)^{-1}$ and $a = (\pi(\theta_0) \frac{(2\pi)^{p/2}}{|V_{\theta_0}|^{1/2}})^{-1}$, we have shown that $a_t^j \to a$ and thus 
\begin{equation*}
    \int |q_t^j(x) - \frac{|V_{\theta_0}|^{1/2}}{(2\pi)^{p/2}} \exp(-\frac{1}{2}x^T V_{\theta_0} x)| dx \pto 0. 
\end{equation*}
This shows asymptotic normality. Finally, the posterior consistency at $\theta_0$ follows from Lemma \ref{lemma:bvm1}. It remains to show statements 1 to 4 above. 

1) Fix $x \in \R^p$. For $t$ sufficiently large, $|x| \leq \epsilon \sqrt{t}$. It follows that 
\begin{equation*}
    h_t^j(x) = \exp(-\frac{1}{2}x^T A_t^j x) 2\pi(\theta) \to \exp(-\frac{1}{2}x^T V_{\theta_0} x) 2 \pi(\theta_0) = h_0(x). 
\end{equation*}
since $A_t^j \to A_0$ almost surely. For $g_t^j$, first note that $\pi(\hat\theta_t^j+ x/\sqrt{t}) \to \pi(\theta_0)$ since $\pi$ is continuous at $\theta_0$ and $\theta_t^j\to \theta_0$ and $x/\sqrt{t} \to 0$. By the uniform convergence of $f_t^j$ to $f_t$, 
\begin{align*}
    t(f_t^j(\hat\theta_t^j+ x/\sqrt{t}) - f_t^j(\hat\theta_t^j)) 
    &= -\frac{1}{2} x^T V_t^j x +  t r_t^j(x/\sqrt{t}) \to -\frac{1}{2} x^T V_{\theta_0} x. 
\end{align*}
because $V_t^j \to V_{\theta_0}$ and $|x/\sqrt{t}| < \epsilon_0$ for all $t$ sufficiently large. We then infer a bound on $r_t^j$, 
\begin{equation*}
    |t r_t^j(x/\sqrt{t})| \leq t c_0 |x/\sqrt{t}|^3 = c_0 |x|^3 / \sqrt{t} \to 0. 
\end{equation*}
as $t \to \infty$. Therefore, $g_t^j(x) \to g_0(x)$. 

2) By the definition of $h_t^j$, letting $B_t = B_{\epsilon \sqrt{t}}(0)$, 
\begin{equation*}
    \int h_t^j = \int_{B_t} \exp(-\frac{1}{2} x^T A_t^j x) 2 \pi(\theta_0) dx + \int_{B_t^c}exp(-\frac{1}{2} n \delta) 2\pi(\hat\theta_t^j + x/\sqrt{t}) dx. 
\end{equation*}
Since $A_t^j \to A_0$ almost surely and $A_0$ is positive definite, for all $t$ sufficiently large, $A_t^j$ is also positive definite and the first term equals 
\begin{equation*}
    2\pi(\theta_0) \frac{(2\pi)^{p/2}}{|A_t^j|^{1/2}} P(|(A_t^j)^{-1/2}Z | < \epsilon \sqrt{t}) \asto   2\pi(\theta_0) \frac{(2\pi)^{p/2}}{|A_0|^{1/2}} = \int h_0 . 
\end{equation*}
where $Z \sim N(0,1)$. The second integral converges to $0$, since it is nonnegative and has an upper bound as 
\begin{equation*}
  \int_{\R^p}\exp(-\frac{1}{2} t \delta) 2\pi(\hat\theta_t^j + x/\sqrt{t}) dx = \exp^{-\frac{1}{2} n \delta} n^{p/2} \to 0. 
\end{equation*}
using the fact that $\pi(\hat\theta_t^j+ x/\sqrt{t}) t^{-p/2}$ is the density of $\sqrt{t}(\theta -\hat \theta_t^j)$ where $\theta \sim \pi$. 
 %By lemma \ref{lemma: z-est-uct},  for every $\epsilon > 0$, there exists $\delta > 0$ such that $\inf_{\theta \in B_\epsilon(\hat\theta_t^j)^c} (f_t^j(\theta) - f_t^j(\hat\theta_t^j)) > \delta$  as $t \to \infty$ in $[\bP_0]-$probability. This result will be useful later on. 
 
3) With $\bP_0-$probability going to $1$,  $|\theta_t^j - \theta_0| < \epsilon$ thus the bound on $r_t^j$ applies, $\inf_{\theta \in B_\epsilon(\hat\theta_t^j)^c} (f_t^j(\theta) - f_t^j(\hat\theta_t^j)) > \delta$. Let $x \in \R^p$ be given again. If $|x| > \epsilon \sqrt{t}$, then $f_t^j(\hat\theta_t^j+ x/\sqrt{t}) - f_t^j(\hat\theta_t^j) > \delta/2$, and thus, 
\begin{equation*}
    g_t^j(x) \leq \exp(-\frac{1}{2} t \delta) \pi(\hat\theta_t^j+ x/\sqrt{t}) = h_t^j(x). 
\end{equation*}
In the meantime, if $|x| < \epsilon \sqrt{t}$, then $\pi(\hat\theta_t^j+ x/\sqrt{t}) \leq 2 \pi(\theta_0)$ by our choice of $\epsilon$, and 
\begin{align*}
     t(f_t^j(\hat\theta_t^j + x/\sqrt{t}) - f_t^j(\hat\theta_t^j)) 
    &\geq \frac{1}{2} x^T V_t^j x + t r_t^j(x/\sqrt{t}) \\
    &\geq  \frac{1}{2} x^T V_t^j x - \frac{1}{2}\alpha xT x = \frac{1}{2}x^T A_t^j x. 
\end{align*}
since $|t r_t^j (x/\sqrt{t})| \leq 1/2 \alpha |x|^2$, by the fact that $|x/\sqrt{t}| < \epsilon < \epsilon_0$ and $\epsilon < \alpha/(2c_0)$. Therefore, 
\begin{equation*}
    \lim_{t \to \infty}\bP_0\left(g_t^j (x) \leq h_t^j(x)\right) = 1. 
\end{equation*}
4) Since $V_{\theta_0}$ and $A_0$ are positive definite, $\int g_0$ and $\int h_0$ are finite. Since $\int h_t^j \to \int h_0$, we have $\lim_{t \to \infty} \bP_0(\int g_t^j \leq \int h_t^j < \infty)  = 1$. The measurability of $g_t^j, h_t^j$ with respect to $\bP_0$ follows from the measurability of $f_t^j$ and $\pi$. 
\end{proof}

\begin{proof}[Corollary~\ref{cor:bvm1}]
In the proof of Theorem~\ref{thm:bvm1}, the differential in quadratic means assumption (Assumption \ref{assumption: regularity}(c)) is only used to establish Equation \eqref{eqn-bvm-1}. Assumption \ref{assumption: regularity}(e) states that each $\log \bp_\theta^i$ is twice continuously differentiable with nonsingular Hessian matrices. Since $f_t^j(\theta)$ is a positive linear combination of $\log \bp_\theta^i$, $f_t^j(\theta)$ is also twice continuously differentiable with a nonsingular Hessian matrix.  Therefore, Equation \eqref{eqn-bvm-1} holds when we replace Assumption \ref{assumption: regularity}(c) with Assumption \ref{assumption: regularity}(e). 
\end{proof}

\begin{proof}[Corollary~\ref{cor:bvm2}]
Denote $\langle \cdot, \cdot \rangle$ as the tensor inner product, and $\otimes$ as the tensor product. Let Assumption \ref{assumption: regularity}(f) hold. 

We have a third - order Taylor expansion of $f_t^j(\theta)$ around $\hat \theta_t^j$ with nonsingular quadratic terms:  
\begin{equation*}
    f_t^j(\theta) = f_t^j(\hat \theta_t^j) + \frac{1}{2} (\theta - \theta_t^j)^T \nabla_\theta^2 f_t^j(\hat \theta_t^j) (\theta - \theta_t^j) + \frac{1}{6} \langle \nabla_\theta^3 f_t^j(\tilde \theta), (\theta - \theta_t^j) \otimes (\theta - \theta_t^j) \otimes (\theta - \theta_t^j) \rangle, 
\end{equation*}
for some $\tilde \theta$ between $\theta_0$ and $\hat \theta_t^j$. 

Under Assumption \ref{assumption: regularity}(f), we have $|\nabla^3_\theta \log \bp_\theta^j(x)| \leq \phi^j(x)$ for integrable $\phi$ in a neighborhood $\mathcal N^j$ of $\theta_0$.  Define $\mathcal N = \cap_{j = 1}^m \mathcal N^j$. The consistency assumption implies that $\bP_0(\hat \theta_t^j \in \mathcal N) \to 1$. For all $\theta \in \mathcal N$, we have
\begin{equation*}
\bP_0 |\nabla_\theta^3 f_t^j(\tilde \theta)| \leq \bP_0 \frac{1}{t} \sum_{k = 1}^t \sum_{i = 1}^m [A^{t-k}_{ji}] |\nabla^3 \log \bp_\theta^j(X_k^j)| \leq \bP_0 \frac{1}{t} \sum_{k = 1}^t \sum_{i = 1}^m [A^{t-k}_{ji}] \phi^j(X_k^j) < \infty. 
\end{equation*}
By the uniform large of large number (Theorem 1.3.3, \cite{ghoshramamoorthi}) and Lemma \ref{lemma:distributed-LLN},  we have 
\begin{equation*}
    \sup_{\theta \in \mathcal N} \left\|\nabla_\theta^3 f_t^j(\tilde \theta) - \frac{1}{m}  \sum_{i = 1}^m \bP_0 \nabla^3 \log \bp_\theta^i \right\| \to 0. 
\end{equation*}
In the limit, we have $\frac{1}{m} \sum_{i = 1}^m \bP_0 |\nabla^3 \log \bp_\theta^i| \leq \frac{1}{m} \sum_{i = 1}^m \bP_0 \phi^i$, hence $\nabla_\theta^3 f_t^j(\tilde \theta)$ is asymptotically tight and the cubic term is $o_p(1)$. 

We also have that
\begin{equation*}
    \nabla_\theta^2 f_t^j(\hat \theta_t^j) \leq  \nabla_\theta^2 f_t^j(\theta_0) + \frac{1}{t} \sum_{k = 1}^t \sum_{i = 1}^m [A^{t-k}_{ji}] |\langle \phi^j(X_k^j) ,  \hat \theta_t^j - \theta_0 \rangle|, 
\end{equation*}
hence $\nabla_\theta^2 f_t^j(\hat \theta_t^j) = \nabla_\theta^2 f_t^j(\theta_0) + o_p(1)$.  

Finally, we have $f_t^j(\hat \theta_t^j) = f_t^j(\theta_0) + o_p(1)$ by continuous mapping theorem (Theorem 18.11, \cite{VanderVaart2000}).  

Putting the terms together, we conclude that
\begin{equation*}
        f_t^j(\theta) = f_t^j(\theta_0) + \frac{1}{2} (\theta - \theta_t^j)^T \nabla_\theta^2 f_t^j(\theta_0) (\theta - \theta_t^j) + o_p(1). 
\end{equation*}
Thus, Equation \eqref{eqn-bvm-1} holds we substitute Assumption \ref{assumption: regularity}(c) and \ref{assumption: regularity}(d). 
\end{proof}
\begin{proof}[Lemma ~\ref{lemma:sc-LAN}]
By Theorem 7.2 from \cite{VanderVaart2000}, Differentiability in quadratic means implies that $\bP_0 \nabla \log \bp_{\theta_0}^j = 0$ and the Fisher information matrix $V_{\theta_0}^j = \bP_0 \nabla \log \bp_{\theta_0}^j (\nabla \log \bp_{\theta_0}^j)^T$ exists and is non-singular. 

By the central limit theorem, we have 
\begin{equation*}
    \frac{1}{\sqrt{t}} \sum_{k = 1}^t \nabla \log \bp_{\theta_0}^j(X_k^j) \dto N(0, V_{\theta_0}^j)
\end{equation*}
Let's define $V_{\theta_0}$ as the average Fisher information matrix, i.e., $V_{\theta_0} = \frac{1}{m} \sum_{j = 1}^m V_{\theta_0}^j$.

If we choose $\epsilon_t^j = \frac{1}{\sqrt{t}}$, then by Taylor expansion, 
\begin{align*}
& t f_t^j(\theta_0 + \epsilon_t^j h) \\
&=  - \sum_{k = 1}^t \sum_{i = 1}^m [A^{t - k}_{ij}]\log p_{\theta_0 +  \epsilon_t^j h}^i(X_k^i)  \\
&=  - \sum_{k = 1}^t \sum_{i = 1}^m [A^{t - k}_{ij}] \left\{ \log \bp_{\theta_0}^i (X_k^i) +   h^T \frac{ \nabla \log \bp_{\theta_0}^i(X_k^i)}{\sqrt{t}} - \frac{1}{2}h^T \frac{\nabla \log \bp_{\theta_0}^i(X_k^i) {\nabla \log \bp_{\theta_0}^i(X_k^i)}^T}{t} h   + O_p(|\frac{h}{\sqrt{t}}|^3)\right\}\\
&=  t  f_t^j(\theta_0) -h^T \frac{\sum_{k = 1}^t \sum_{i = 1}^m [A^{t - k}_{ij}]\nabla \log \bp_{\theta_0}^i(X_k^i)}{\sqrt{t}}+ \frac{1}{2} h^T \frac{\sum_{k = 1}^t \sum_{i = 1}^m [A^{t - k}_{ij}]\nabla \log \bp_{\theta_0}^i(X_k^i) {\nabla \log \bp_{\theta_0}^i(X_k^i)}^T}{t} h \\
&- O_p(\frac{|h|^3}{\sqrt{t}}). 
\end{align*}
Define $\Delta_{t, \theta_0}^j$ and $V_t$ as:
\begin{align*}
    &\Delta_{t, \theta_0}^j =\frac{1}{\sqrt{t}} V_{\theta_0}^{-1} \left(\sum_{k = 1}^t \sum_{i = 1}^m [A^{t - k}_{ij}]\nabla \log \bp_{\theta_0}^i(X_k^i)\right), \quad \text{and} \\
   & V_t^j = \frac{1}{t}\left( \sum_{k = 1}^t \sum_{i = 1}^m [A^{t - k}_{ij}]\nabla \log \bp_{\theta_0}^i(X_k^i) {\nabla \log \bp_{\theta_0}^i(X_k^i)}^T \right). 
\end{align*}
Substituting these terms in the Taylor expansion, we have:
\begin{equation*}
   t f_t^j(\theta_0 + \epsilon_t^j h)  =  t  f_t^j(\theta_0) - h^T V_{\theta_0} \Delta_{t, \theta_0}^j  + \frac{1}{2} h^T V_t^j h + o_p(|h^2|). 
\end{equation*}
By Lemma \ref{lemma:distributed-CLT} (distributed central limit theorem), we have
\begin{equation*}
    \Delta_{t, \theta_0}^j \dto N(0, (m V_{\theta_0})^{-1}), 
\end{equation*}
thus $\Delta_{t, \theta_0}^j$ is bounded in $[\bP_0]$-probability by Prokhorov's theorem (Theorem 2.4, \cite{VanderVaart2000}). 

By Lemma \ref{lemma:distributed-LLN} (distributed law of large number), 
\begin{equation*}
    V_t^j \pto V_{\theta_0}. 
\end{equation*}
This establishes the stochastic LAN:
\begin{equation*}
\sup_{h \in K} \left| - t f_t^j(\theta_0 + \epsilon_t^j h) + t f_t^j(\theta_0) -h^T V_{\theta_0} \Delta_{t, \theta_0}^j + \frac{1}{2} h^T V_{\theta_0} h \right| = \sup_{h \in K}o_p(|h|) \to 0. 
\end{equation*}
\end{proof}
\begin{proof}[Theorem~\ref{thm: bvm2}]
Since $P_t^j\left(A \right)$ is uniformly bounded by $1$ in $L_\infty$, the set $\{P_t^j(A) \}_{t \in \N}$ is uniformly integrable. Then assumption \eqref{assumption:bvm2} implies that 
\begin{equation*} 
    \bP_0 P_t^j\left(\theta: d(\theta,  \theta_0) \geq M_t \epsilon_t^j \right) \to 0. 
\end{equation*}
The rest of the proof is split into two parts: in the first part, we prove the assertion conditional on an arbitrary compact set $K \subset \R^p$, and in the second part, we use this to prove the asymptotic normality convergence. Throughout the proof, we denote the posterior for $h_t^j = (\theta - \theta_0)/\epsilon_t^j$ when $\theta \sim P_t^j$ by $Q_t^j$. For all Borel sets $B$, the posterior for $h_t^j$ follows from that for $\theta$ by 
\begin{equation*}
    Q_t^j(h_t^j \in B) = P_t^j\left(\theta - \theta_0 \in \epsilon_t^j B \right). 
\end{equation*} 
The assumption that \eqref{eqn:s-lan} holds at $\theta_0$ means that for all $j \in [m]$, there exists non-singular $V_{\theta_0}^j$ and asymptotically tight sequence $\Delta_{t, \theta_0}^j$ such that for every compact set $K \subset \R^p$, as $t \to \infty$, 
\begin{equation} \label{eqn:s-lan-proof}
   \sup_{h \in K} \left| - t f_t^j(\theta_0 + \epsilon_t^j h) + t f_t^j(\theta_0)  -h^T V^j_{\theta_0} \Delta_{t, \theta_0}^j   + \frac{1}{2} h^T V_{\theta_0}^j h \right| \pto 0. 
\end{equation}

We denote the normal distribution $N(\Delta_{t, \theta}^j, V_{\theta_0}^{-1})$ by $\Phi_t^j$. For a compact subset $K \subset \R^p$ such that $Q_t^j(h_t^j \in K) > 0$, we define a conditional measure $Q_{K,t}^j$ of $Q_t^j$ by $Q_{K,t}^j(B) = Q_t^j(B \cap K)/Q_t^j(K)$. Similarly, we define a conditional measure $\Phi_{K,t}^j$ corresponding to $\Phi_t^j$.

Let $K \subset \R^p$ be a compact subset of $\R^p$. For every open neighborhood $U \subset \Theta$ of $\theta_0$, $\theta_0 + K \epsilon_t^j \subset U$ for large enough $t$. Since $\theta_0$ is an interior point of $\Theta$, for large enough $t$, the random function $\psi_t^j: K \times K \mapsto \R$
\begin{equation*}
\psi_t^j(h, h') = \left\lvert 1 - \frac{\phi_t(h')}{\phi_t(h)} 
\frac{q_t^j(h')}{q_t^j(h)} \exp\left(t f_t^j(\theta_0 + \epsilon_t^j h) - t f_t^j(\theta_0 + \epsilon_t^j h')\right)\right\rvert_+
\end{equation*}
is well-defined,  where $\phi_t: K \rightarrow \R$ is the Lebesgue density of the distribution $N(\Delta_{t, \theta_0}^j, V_{\theta_0}^{-1})$, $q_t^j: K \rightarrow \R$ is the Lebesgue density of the prior for the centered and rescaled parameter $h_t^j$, and $f_t^j: K \rightarrow \R$ is the random functions defined in \eqref{def:f_t^j}. 

By the distributed LAN assumption \eqref{eqn:s-lan-proof}, we have for every $h \in K$, 
\begin{equation*}
t f_t^j(\theta_0 + \epsilon_t^j h) - t f_t^j(\theta_0)= h^T V_{\theta_0} \Delta_{t, \theta_0}^j - \frac{1}{2} h^T V_{\theta_0} h + o_{\bP_0}(1).
\end{equation*}
For any two sequences $h_t, h'_t \in K$, $\lim_{t \to \infty}q_t^j(h_t)/q_t^j(h'_t) =  1$ implies that 
\begin{align*}
&\log \left(\frac{\phi_t(h'_t)}{\phi_t(h_t)}  \frac{q_t^j(h_t)}{q_t^j(h'_t)} \right) + t f_t^j(\theta_0 + \epsilon_t^j h_t) - t f_t^j(\theta_0 + \epsilon_t^j h'_t) \\
&= (h'_t - h_t)^T V_{\theta_0} \Delta_{t, \theta_0}^j - \frac{1}{2} h_t^T V_{\theta_0} h_t + \frac{1}{2} {h'_t}^T V_{\theta_0} h'_t + o_{\bP_0}(1)  \\
& + \frac{1}{2} (h_t' - \Delta_{t, \theta_0}^j )^T V_{\theta_0}(h_t' - \Delta_{t, \theta_0}^j )- \frac{1}{2} (h_t - \Delta_{t, \theta_0}^j )^T V_{\theta_0}(h_t - \Delta_{t, \theta_0}^j )
\\
&= o_{\bP_0}(1),
\end{align*}
as $t \to \infty$. 

Since all functions $\psi_t^j$ depend continuously on $(h, h')$ and $K \times K$ is compact, we conclude that,
\begin{equation} \label{eqn:psi-uniform-convergence}
\sup_{h,h' \in K} \psi_t^j(h, h') \pto 0, \text{ as } t \to \infty,
\end{equation}
where the convergence is in outer probability.

Assume that $K$ contains a neighborhood of $0$ (to guarantee that $\Phi_t^j(K) > 0$) and let $\Xi_t^j$ denote the event that $Q_t^j(K) > 0$. Let $\eta > 0$ be given and define the events:
\begin{equation*}
\Omega_t^j = \left\{ \sup_{h,h' \in K} \psi_t^j(h, h') \leq \eta \right\}.
\end{equation*}
Consider the inequality (recall that the total-variation norm $||\cdot||_{TV}$ is bounded by $2$):
\begin{align}
\bP_0 \| Q_{K,t}^j - \Phi_{K,t}^j \|_{TV} 1_{\Xi_t^j}
&\leq \bP_0 \| Q_{K,t}^j - \Phi_{K,t}^j \|_{TV} 1_{\Omega_t^j \cap \Xi_t^j} + 2\bP_0(\Xi_t^j \setminus \Omega_t^j). \label{eqn:inequality}
\end{align}

As a result of Equation \eqref{eqn:psi-uniform-convergence}, the second term is $o(1)$ as $t \to \infty$. The first term on the r.h.s. is calculated as follows:
\begin{align*}
\frac{1}{2}\bP_0 \| Q_{K,t}^j - \Phi_{K,t}^j \|_{TV} 1_{\Omega_t^j \cap \Xi_t^j} 
&= \bP_0 \int \left(1 - \frac{d\Phi_{K,t}^j}{dQ_{K,t}^j}\right)_+ dQ_{K,t}^j 1_{\Omega_t^j \cap \Xi_t^j} \\
&= \bP_0 \int_K \left(1 - \int_K\frac{ \phi_{K,t}^j(h) q_t^j(h') s_t^j(h')}{\phi_{K,t}^j(h') q_t^j(h)s_t^j(h)} d\Phi_{K,t}^j(h') \right)_+  dQ_{K,t}^j(h) 1_{\Omega_t^j \cap \Xi_t^j},
\end{align*}
where $s_t^j: K \rightarrow \R$ is the function defined as $s_t^j(h) = \exp(t f_t^j(\theta_0 + \epsilon_t^j h))$, and $q_t^j: K \rightarrow \R$ is the Lebesgue density of the prior for the centered and rescaled parameter $h_t^j$.

Note that for all $h, h' \in K$, $\phi_{K,t}^j(h)/\phi_{K,t}^j(h') = \phi_t(h)/\phi_t(h')$, since on $K$, $\phi_{K,t}^j$ differs from $\phi_t$ only by a normalization factor. Using Jensen's inequality (with respect to the $\Phi_{K,t}^j$-expectation) for the convex function $x \mapsto (1 - x)_+$, we derive:

\begin{align*}
&\frac{1}{2} \bP_0 \| Q_{K,t}^j - \Phi_{K,t}^j \|_{TV} 1_{\Omega_t^j \cap \Xi_t^j} \\
&\leq \bP_0 \int \left(1 - \frac{s_t^j(h') q_t^j(h') \phi_t(h')}{s_t^j(h') q_t^j(h') \phi_t(h')}\right)_+ d\Phi_{K,t}^j(h') dQ_{K,t}^j(h') 1_{\Omega_t^j \cap \Xi_t^j} \\
&\leq \bP_0 \int_{\Omega_t^j \cap \Xi_t^j}\sup_{h,h' \in K} \psi_t^j(h, h')  d\Phi_{K,t}^j(h') dQ_{K,t}^j(h') \\
&\leq \eta.
\end{align*}

Combination with \eqref{eqn:inequality} shows that for all compact $K \subset \R^p$ containing a neighborhood of $0$, 
\begin{equation} \label{eqn:tv-l1-convergence}
    \bP_0 \| Q_{K,t}^j - \Phi_{K,t}^j \|_{TV} 1_{\Xi_t^j} \to 0. 
\end{equation}
Now, let $(K_n)$ be a sequence of balls centered at $0$ with radii $M_n \to \infty$. For each $n \geq 1$, the Equation \eqref{eqn:tv-l1-convergence} holds. Hence, the intuition is that if we choose a sequence of balls $(K_t)$ that traverses the sequence $K_n$ slowly enough, the convergence of the expected total variational distance to zero can still be guaranteed. Moreover, the corresponding events $\Xi_t^j = \{Q_t^j(K_t) > 0\}$ satisfy $\bP_0(\Xi_t^j) \to 1$ as a result of assumption \eqref{assumption:bvm2}.

We conclude that there exists a sequence of radii $(M_t)$ such that $M_t \to \infty$ and $\bP_0 \| Q_{K_t,t}^j - \Phi_{K_t,t}^j \|_{TV} \pto 0$ (where the conditional probabilities on the l.h.s. are well-defined on sets of probability growing to one). The total variation distance between a measure and its conditional version on a set $K$ satisfies $\|Q - Q_K\|_{TV} \leq 2Q(K^c)$. Combining this with assumption \eqref{assumption:bvm2} and the Hellinger integral test, we conclude that 
\begin{equation*}
    \bP_0 \| Q_t^j - \Phi_t^j \|_{TV} \to 0, 
\end{equation*}
which implies the main result.
\end{proof}

\subsection*{Proofs of Results in Section~\ref{sect: contraction}}\label{app: contraction}
\begin{lemma}\label{lemma:contraction1}
    For any function $f \geq 0$ and two probability measures $P, Q$, we have 
    \begin{equation*}
        \int f(x) dQ(x) \leq D_{KL}(Q \parallel P) + \log \int \exp(f(x)) dP(x). 
    \end{equation*}
\end{lemma}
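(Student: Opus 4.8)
The plan is to reduce immediately to the nontrivial case and then invoke a single convexity inequality. First, if $Q \not\ll P$ then $D_{KL}(Q \parallel P) = +\infty$ and the bound is vacuous; likewise, if $Z := \int \exp(f(x))\, dP(x) = +\infty$ the right-hand side is $+\infty$ and there is nothing to prove. So I would assume $Q \ll P$ and $Z \in (0,\infty)$, and write $q = dQ/dP$, noting that $\{q > 0\}$ carries full $Q$-mass.

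The key step is one application of Jensen's inequality to the concave function $\log$ against the probability measure $Q$. Concretely, on $\{q>0\}$ one has the identity
\begin{equation*}
\int f\, dQ - D_{KL}(Q \parallel P) = \int \Big( f(x) - \log q(x) \Big)\, dQ(x) = \int \log \frac{\exp(f(x))}{q(x)}\, dQ(x),
\end{equation*}
and then Jensen gives
\begin{equation*}
\int \log \frac{\exp(f(x))}{q(x)}\, dQ(x) \leq \log \int \frac{\exp(f(x))}{q(x)}\, dQ(x) = \log \int \exp(f(x))\, dP(x),
\end{equation*}
the last equality using $dQ = q\, dP$ to cancel the density. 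Rearranging yields the claim. An equivalent and perhaps cleaner bookkeeping route is to introduce the tilted probability measure $R$ with $dR/dP = \exp(f)/Z$, observe the decomposition $D_{KL}(Q \parallel R) = D_{KL}(Q \parallel P) - \int f\, dQ + \log Z$, and conclude from $D_{KL}(Q \parallel R) \geq 0$.

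The one place that needs care — and the part I expect to be the main (still routine) obstacle — is making sure no indeterminate $\infty - \infty$ arises when both $\int f\, dQ$ and $\int \log q\, dQ$ may be infinite. Since $f \geq 0$, $\int f\, dQ$ is well-defined in $[0,\infty]$; if it equals $+\infty$, I would run the argument with the bounded truncations $f \wedge n$ in place of $f$, giving $\int (f\wedge n)\, dQ \leq D_{KL}(Q\parallel P) + \log\int \exp(f\wedge n)\, dP$, and then let $n \to \infty$: the left side increases to $\int f\, dQ$ by monotone convergence, and $\log\int \exp(f\wedge n)\, dP \leq \log\int \exp(f)\, dP$ for every $n$, so the limiting inequality holds. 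This disposes of all edge cases and completes the proof.
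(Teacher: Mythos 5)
Your proposal is correct, and the ``cleaner bookkeeping route'' you mention at the end --- introducing the tilted measure $R$ with $dR/dP = \exp(f)/Z$ and concluding from $D_{KL}(Q \parallel R) \geq 0$ --- is exactly the paper's proof, while your primary Jensen argument is the same inequality unwound (nonnegativity of KL is itself Jensen applied to $\log$). The extra care you take with absolute continuity, the case $Z = \infty$, and the $\infty - \infty$ issue via truncation goes beyond what the paper records and is a genuine improvement in rigor; the only cosmetic slip is that $\int (\exp(f)/q)\, dQ$ equals $\int_{\{q>0\}} \exp(f)\, dP$, which is only $\leq \int \exp(f)\, dP$ in general, but this preserves the direction of the inequality you need.
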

\begin{proof}[Lemma~\ref{lemma:contraction1}]
    By the definition of KL divergence, 
    \begin{align*}
        D_{KL}(Q \parallel P) + \log \exp(f(x)) d P(x) &= \int \log\left(\frac{dQ(x) \int \exp(f(y)) dP(y)}{dP(x)} dQ(x) \right) \\
        &= \int \log\left(\frac{dQ(x) \int \exp(f(y)) dP(y)}{\exp(f(x)) dP(x)} dQ(x) \right)  + \int f(x) dQ(x) \\
        &= D(Q \parallel P') + \int f(x) dQ(x) \geq \int f(x) dQ(x)
    \end{align*}
where $P'$ is given by
\begin{equation*}
    dP'(x) = \frac{\exp(f(x)) dP(x)}{\int \exp(f(y) dP(y)}. 
\end{equation*}
\end{proof}
\begin{lemma} \label{lemma:contraction2}
For $P_t^j$ defined in \eqref{def-db-posterior} and $P_t$ defined in \eqref{def-ideal-posterior}, we have the following upper bound on the expected loss under $P_t^j$ and $P_0$. 
 \begin{equation*}
     \bP_0 P_t^j d(\theta, \theta_0) \leq  \inf_{a > 0} \frac{1}{amt} \left[ \bP_0 D_{KL}(P_t^j \parallel P_t) + \log \bP_{\theta_0} P_te^{a mt d(\theta, \theta_0) } + \frac{1}{m}\sum_{j = 1}^m D_{KL}(\bP_0 \parallel \bP_{\theta_0}^j) \right]. 
 \end{equation*}
\end{lemma}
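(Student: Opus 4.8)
The plan is to derive the bound from one application of the Donsker--Varadhan / Gibbs inequality (Lemma~\ref{lemma:contraction1}) on an enlarged space, followed by the chain rule and the tensorization property of relative entropy. First I would form two joint probability measures on $\X^{mt}\times\Theta$. Let $\bP_0\otimes P_t^j$ denote the law of $(X^{(mt)},\theta)$ in which the data $X^{(mt)}=(X_k^i)_{k\in[t],\,i\in[m]}$ are drawn from $\bP_0^{\otimes mt}$ and, conditionally on the data, $\theta$ is drawn from the distributed Bayes posterior $P_t^j$; this is well defined because $z_t^j<\infty$ makes $P_t^j$ a genuine probability measure depending measurably on $X^{(mt)}$. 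Let $\bP_{\theta_0}\otimes P_t$ denote the analogous law in which the data are drawn instead from $\bP_{\theta_0}:=\bigotimes_{k\in[t],\,i\in[m]}\bP_{\theta_0}^i$, the product of the agents' well-specified marginals, and then $\theta\mid X^{(mt)}$ is drawn from the distributed ideal posterior $P_t$ of \eqref{def-ideal-posterior}.

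Next I would apply Lemma~\ref{lemma:contraction1} to this pair with the function $f(X^{(mt)},\theta)=a\,mt\,d(\theta,\theta_0)$, which is nonnegative for every $a>0$ since $d$ is a metric. This yields
\[
a\,mt\,\bP_0 P_t^j d(\theta,\theta_0)\ \le\ D_{KL}\!\left(\bP_0\otimes P_t^j \parallel \bP_{\theta_0}\otimes P_t\right)\ +\ \log\bP_{\theta_0}\!\left[P_t\,e^{a\,mt\,d(\theta,\theta_0)}\right],
\]
because $\int f\,d(\bP_0\otimes P_t^j)=a\,mt\,\bP_0 P_t^j d(\theta,\theta_0)$ and $\int e^{f}\,d(\bP_{\theta_0}\otimes P_t)=\bP_{\theta_0}[P_t\,e^{a\,mt\,d(\theta,\theta_0)}]$. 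By the chain rule for relative entropy, $D_{KL}(\bP_0\otimes P_t^j\parallel\bP_{\theta_0}\otimes P_t)=D_{KL}(\bP_0^{\otimes mt}\parallel\bP_{\theta_0})+\bP_0\,D_{KL}(P_t^j\parallel P_t)$, the last summand being the averaged posterior-to-ideal-posterior divergence in the statement. Since $\bP_0^{\otimes mt}$ and $\bP_{\theta_0}$ are both product measures over the $mt$ coordinates indexed by $(k,i)$, additivity of $D_{KL}$ over products gives $D_{KL}(\bP_0^{\otimes mt}\parallel\bP_{\theta_0})=\sum_{k=1}^t\sum_{i=1}^m D_{KL}(\bP_0\parallel\bP_{\theta_0}^i)=t\sum_{i=1}^m D_{KL}(\bP_0\parallel\bP_{\theta_0}^i)$. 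Substituting both identities, dividing through by $a\,mt$, and grouping terms reproduces the bracketed quantity in the statement; taking the infimum over $a>0$ completes the proof. If $D_{KL}(P_t^j\parallel P_t)$, some $D_{KL}(\bP_0\parallel\bP_{\theta_0}^i)$, or the exponential moment is infinite for a given $a$, the corresponding bound is vacuous, so one may restrict attention to those $a>0$ for which the right-hand side is finite.

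The computations are routine once the setup is fixed; the two points needing care are the absolute-continuity bookkeeping and the choice of comparison measure. For the relative entropies to be the intended objects one needs $\bP_0\ll\bP_{\theta_0}^i$ for every $i$, which holds under Assumption~\ref{assumption: regularity}(a) by Lemma~\ref{lemma:const_condition}, together with $P_t^j\ll P_t$ $\bP_0$-almost surely, which is immediate from the explicit densities in \eqref{def-db-posterior} and \eqref{def-ideal-posterior} (both dominated by $\Pi$). The genuinely substantive step --- and the one I expect to be the main obstacle to get exactly right --- is that the change of measure must be carried out against $\bP_{\theta_0}=\bigotimes_{k,i}\bP_{\theta_0}^i$, the product of the agents' own well-specified models, rather than against the geometric-average measure on $\X^m$ used to define $P_t$; it is precisely this choice that makes the leading relative entropy tensorize into $t\sum_i D_{KL}(\bP_0\parallel\bP_{\theta_0}^i)$ and thereby surfaces the average misspecification term $\tfrac1m\sum_i D_{KL}(\bP_0\parallel\bP_{\theta_0}^i)$ in the final bound.
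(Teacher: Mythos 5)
Your proof is correct and is essentially the paper's argument: the paper applies Lemma~\ref{lemma:contraction1} twice—once on $\Theta$ conditionally on the data (producing $\bP_0 D_{KL}(P_t^j\parallel P_t)$ plus the log-moment under $P_t$) and once on the data space $\X^{mt}$ against the product measure $\bigotimes_{k,i}\bP_{\theta_0}^i$ (whose KL to $\bP_0^{\otimes mt}$ tensorizes into $t\sum_i D_{KL}(\bP_0\parallel\bP_{\theta_0}^i)$)—whereas you make a single application on the joint space $\X^{mt}\times\Theta$ and recover the same two terms via the chain rule for relative entropy, which is the same computation repackaged. The point you flag as substantive, namely that the data-space change of measure must be carried out against the product of the agents' well-specified marginals rather than the geometric-average measure used to define $P_t$, is precisely the (implicit) choice the paper makes in its second application of the lemma.
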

\begin{proof}[Lemma~\ref{lemma:contraction2}]
    By Lemma \ref{lemma:contraction1}, we have 
    \begin{equation*}
        amtP_t^jd(\theta, \theta_0)  \leq D_{KL}(P_t^j \parallel P_t) + \log P_t\exp(amt d(\theta, \theta_0)),  
    \end{equation*}
    for all $a > 0$. 
    Taking expectations on both sides, we have
 \begin{equation*}
     amt\bP_0 P_t d(\theta, \theta_0)  \leq \bP_0 D_{KL}(P_t^j \parallel P_t) + \bP_0 \log P_t\exp(amt d(\theta, \theta_0)),  
 \end{equation*}
 By Lemma \ref{lemma:contraction1} and the tenderization property of KL divergence, we have
 \begin{align*}
    \bP_0 \log P_t\exp(amtd(\theta, \theta_0)) &\leq \left( \frac{1}{m} \sum_{j = 1}^m D_{KL}(\bP_0 \parallel \bP_{\theta_0}^j) \right) +  \log \bP_{\theta_0} P_t\exp(amt d(\theta, \theta_0)). 
 \end{align*}
 
 Putting it all together, we have
 \begin{equation*}
    \bP_0 P_t^jd(\theta, \theta_0) \leq  \inf_{a > 0} \frac{1}{a mt} \left(\bP_0 D_{KL}(P_t^j \parallel P_t) +  \frac{1}{m} \sum_{j = 1}^m D_{KL}(\bP_0 \parallel \bP_{\theta_0}^j) +  \log \bP_{\theta_0} P_t\exp\left(amt d(\theta, \theta_0) \right) \right) . 
 \end{equation*}
\end{proof}
\begin{lemma}\label{lemma:contraction3}
   Let the assumptions of Theorem~\ref{thm:contraction1} hold. We have
   \begin{equation*}
    \bP_{\theta_0} P_t(d(\theta, \theta_0) > C_1 \epsilon^2) \leq  \exp(-C_0 t \epsilon^2) + \exp(-\lambda t \epsilon^2) + 2 \exp(-t \epsilon^2), 
   \end{equation*}
   for all $\epsilon \geq \epsilon_{m,t}^2$, where $\lambda = \rho -1$ in assumption \eqref{assumption:C3}. 
\end{lemma}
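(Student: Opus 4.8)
The plan is to run the classical prior-mass-and-testing argument for posterior contraction, in the R\'enyi-ball refinement that replaces the usual KL neighborhood by a $\rho$-R\'enyi one, applied to the distributed ideal posterior $P_t$. By \eqref{def-ideal-posterior}, $P_t$ is an ordinary generalized posterior for i.i.d.\ observations $x_1,\dots,x_t$ on $\X^m$ drawn from $\bP_{\theta_0}$, with single-observation likelihood $\bp_\theta(x)=\prod_{j=1}^m[\bp_\theta^j(x^j)]^{1/m}$, so that $\log\frac{\bp_\theta(x)}{\bp_{\theta_0}(x)}=\frac1m\sum_{j=1}^m\log\frac{\bp_\theta^j(x^j)}{\bp_{\theta_0}^j(x^j)}$. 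Writing $A_\epsilon=\{\theta:\,d(\theta,\theta_0)>C_1\epsilon^2\}$ I would decompose
\[
P_t(A_\epsilon)=\frac{N_t}{D_t},\qquad
N_t=\int_{A_\epsilon}\prod_{k=1}^t\frac{\bp_\theta(x_k)}{\bp_{\theta_0}(x_k)}\Pi(d\theta),\qquad
D_t=\int_{\Theta}\prod_{k=1}^t\frac{\bp_\theta(x_k)}{\bp_{\theta_0}(x_k)}\Pi(d\theta),
\]
and control numerator and denominator separately on a good event.

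For the denominator, put $B=\{\theta:\,\frac1m\sum_{j=1}^m D_\rho(\bP_{\theta_0}^j\parallel\bP_\theta^j)\le C_3\epsilon_{m,t}^2\}$, so $\Pi(B)\ge e^{-C_2 t\epsilon_{m,t}^2}$ by \eqref{assumption:C3}. Using the geometric-mixture form of $\bp_\theta$ I would first verify the subadditivity bound $D_\rho(\bP_{\theta_0}\parallel\bP_\theta)\le\frac1m\sum_j D_\rho(\bP_{\theta_0}^j\parallel\bP_\theta^j)$, so that for $\theta\in B$ and $\lambda:=\rho-1$,
\[
\bP_{\theta_0}\Bigl[\bigl(\bp_{\theta_0}(X)/\bp_\theta(X)\bigr)^{\lambda}\Bigr]
=\exp\bigl(\lambda D_\rho(\bP_{\theta_0}\parallel\bP_\theta)\bigr)\le e^{\lambda C_3\epsilon_{m,t}^2}.
\]
Jensen's inequality in $\theta$ with respect to $\Pi(\cdot\cap B)/\Pi(B)$ gives $D_t\ge\Pi(B)e^{-Y}$ with $Y:=\frac1{\Pi(B)}\int_B\sum_k\log\frac{\bp_{\theta_0}(x_k)}{\bp_\theta(x_k)}\Pi(d\theta)$; a second use of Jensen on $e^{\lambda Y}$ together with Fubini and the display above yields $\bP_{\theta_0}[e^{\lambda Y}]\le e^{\lambda C_3 t\epsilon_{m,t}^2}$, and Markov then gives $\bP_{\theta_0}(Y\ge(1+C_3)t\epsilon_{m,t}^2)\le e^{-\lambda t\epsilon_{m,t}^2}$. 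Hence on an event $\Omega_t$ with $\bP_{\theta_0}(\Omega_t^c)\le e^{-\lambda t\epsilon_{m,t}^2}$ one has $D_t\ge e^{-(1+C_2+C_3)t\epsilon_{m,t}^2}$.

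For the numerator, split $A_\epsilon=(A_\epsilon\cap\Theta_t(\epsilon))\cup(A_\epsilon\cap\Theta_t(\epsilon)^c)$ and use the likelihood-ratio change of measure $\bP_{\theta_0}[\prod_k\frac{\bp_\theta(x_k)}{\bp_{\theta_0}(x_k)}h]=\bP_\theta[h]$ (with $\bP_\theta$ the $t$-fold product) together with Fubini to get
\[
\bP_{\theta_0}\bigl[(1-\phi_t)N_t\bigr]\le
\sup_{\substack{\theta\in\Theta_t(\epsilon)\\ d(\theta,\theta_0)\ge C_1\epsilon^2}}\bP_\theta(1-\phi_t)+\Pi\bigl(\Theta_t(\epsilon)^c\bigr)
\le 2e^{-C_0 t\epsilon^2},
\]
the first term controlled by \eqref{assumption:C1} and the second by \eqref{assumption:C2}. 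On $\Omega_t$, $(1-\phi_t)P_t(A_\epsilon)\le e^{(1+C_2+C_3)t\epsilon_{m,t}^2}(1-\phi_t)N_t$, so taking $\bP_{\theta_0}$-expectations and using $\epsilon\ge\epsilon_{m,t}$ with $C_0>C_2+C_3+2$ gives $\bP_{\theta_0}[(1-\phi_t)P_t(A_\epsilon)\mathbf 1_{\Omega_t}]\le 2e^{-(C_0-1-C_2-C_3)t\epsilon^2}\le 2e^{-t\epsilon^2}$. Combining with $P_t(A_\epsilon)\le\phi_t+(1-\phi_t)P_t(A_\epsilon)\mathbf 1_{\Omega_t}+\mathbf 1_{\Omega_t^c}$ and $\bP_{\theta_0}\phi_t\le e^{-C_0 t\epsilon^2}$ (again \eqref{assumption:C1}) assembles the stated three-term bound (the denominator contributes $e^{-\lambda t\epsilon_{m,t}^2}$, which is the $e^{-\lambda t\epsilon^2}$ term at the base scale $\epsilon=\epsilon_{m,t}$; more generally one re-runs the denominator step with the prior-mass condition imposed at scale $\epsilon$).

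The main obstacle is the denominator step: establishing the subadditivity estimate $D_\rho(\bP_{\theta_0}\parallel\bP_\theta)\le\frac1m\sum_j D_\rho(\bP_{\theta_0}^j\parallel\bP_\theta^j)$ for the geometric-mixture model and, relatedly, tracking the normalizing constants of $\bP_\theta$ so that $\bp_\theta/\bp_{\theta_0}$ is a genuine likelihood ratio under $\bP_{\theta_0}$, and then turning the R\'enyi moment bound into a high-probability lower bound on $D_t$ with the clean exponential rate $e^{-\lambda t\epsilon_{m,t}^2}$, $\lambda=\rho-1$. This is exactly why \eqref{assumption:C3} is stated with a $\rho$-R\'enyi ($\rho>1$) neighborhood rather than a KL one: a KL version would only produce the weaker polynomial (Chebyshev/second-moment) decay. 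The numerator estimate is routine once the change of measure is in place, and the arithmetic of the exponents is precisely what the hypothesis $C_0>C_2+C_3+2$ is designed to absorb.
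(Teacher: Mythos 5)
Your argument is essentially the paper's: the same three-way decomposition into the test error, a bad-denominator event, and the restricted numerator, with the denominator event controlled through the subadditivity of the $\rho$-R\'enyi divergence for the geometric mixture, a Fubini/moment computation, and Markov's inequality (you package the lower bound on $D_t$ via Jensen on the log, the paper applies Markov directly to the $-\lambda$ power of $\int \frac{\bp_\theta}{\bp_{\theta_0}}\,d\tilde\Pi$, but the underlying $\lambda$-moment bound is identical), and the numerator controlled by the change of measure together with \eqref{assumption:C1} and \eqref{assumption:C2}. The one detail you flag---that the Markov threshold must be set at scale $\epsilon$ while the prior-mass ball stays at scale $\epsilon_{m,t}$ so that the bad event has probability $e^{-\lambda t\epsilon^2}$ rather than $e^{-\lambda t\epsilon_{m,t}^2}$---is exactly how the paper's event $A_t$ is defined, so your proposed fix is the right one.
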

\begin{proof}[Lemma~\ref{lemma:contraction3}]
    We define the sets
\begin{equation*}
    U = \{\theta: d(\theta, \theta_0)  > C_1 \epsilon^2\}, \qquad K_t = \{\theta: \frac{1}{m}\sum_{j = 1}^m D_{1 + \lambda}(\bP_{\theta_0}^j \parallel \bP_\theta^j) \leq C_3 \epsilon_{m,t}^2\}. 
\end{equation*}
Let $\tilde \Pi$ be a probability measure defined as $\tilde \Pi(B) = \frac{\Pi(B \cap K_t)}{\Pi(K_t)}$.  Define the event
\begin{equation*}
    A_t = \{X^{(mt)}: \int_\Theta \frac{\bp_\theta}{\bp_{\theta_0}}(X^{(mt)}) d \tilde \Pi(\theta) \leq \exp(-(C_3 + 1) t \epsilon^2)\}, 
\end{equation*}
Let $\Pi_t^j$ and $\phi_t$ be the set and testing function in \eqref{assumption:C1}. We bound $\bP_{\theta_0}  P_t (U)$ by 
\begin{align*}
     \bP_{\theta_0}  P_t (U)& \leq \bP_{\theta_0} \phi_t(X^{(mt)}) + \bP_{\theta_0}(A_t) + \bP_{\theta_0} (1 - \phi_t(X^{(mt)})) P_t(U) I_{(A_t)^c} \\
     &= \bP_{\theta_0} \phi_t(X^{(mt)}) 
     + \bP_{\theta_0}(A_t) 
     + \bP_{\theta_0} (1 - \phi_t(X^{(mt)})) I_{(A_t)^c} \frac{\int_{U} \frac{\bp_\theta}{\bp_{\theta_0}}(X^{(mt)}) d \Pi(\theta)}{\int_{\Theta} \frac{\bp_\theta}{\bp_{\theta_0}}(X^{(mt)}) d \Pi(\theta)}. 
\end{align*}
By \eqref{assumption:C1}, we bound the first term by
\begin{equation*}
    \bP_{\theta_0} \phi_t(X^{(mt)})  \leq \exp(-C_0 t \epsilon^2). 
\end{equation*}
By Jensen's inequality, we have
\begin{align*}
   D_\rho(\bP_{\theta_0} \parallel \bP_\theta) &= \sum_{i = 1}^m \frac{1}{\rho - 1} \log \int [dP_{\theta_0}^i]^{\frac{\rho}{m}} [dP_{\theta_0}^i]^{\frac{1}{m} - \frac{\rho}{m}} d \mu \\
   &\leq \frac{1}{m} \sum_{i = 1}^m \frac{1}{\rho - 1} \log \int [dP_{\theta_0}^i]^\rho [dP_{\theta_0}^i]^{1-\rho} d \mu  = \frac{1}{m} \sum_{i = 1}^m  D_\rho(\bP_{\theta_0}^i \parallel \bP_\theta^i) 
\end{align*}
Using the definitions of event $A_t$ and Markov inequality, we have
\begin{align*}
    \bP_{\theta_0} (A_t) &=  \bP_{\theta_0}\left(\left(\int_\Theta \frac{\bp_\theta}{\bp_{\theta_0}}(X^{(mt)}) d \tilde \Pi(\theta) \right)^{-\lambda } < \exp(\lambda(C_3 + 1) t \epsilon^2 \right)  \\
    &\leq \exp(-\lambda (C_3 + 1) t \epsilon^2)  \bP_{\theta_0}\left(\int_\Theta \frac{\bp_\theta}{\bp_{\theta_0}}(X^{(mt)}) d \tilde \Pi(\theta) \right)^{-\lambda } \\
    &\leq \exp(-\lambda(C_3 + 1) t \epsilon^2) \int_\Theta \left( \int_{\X^{mt}}\frac{(\bp_{\theta_0}(x^{(mt)}))^{\lambda }}{(\bp_\theta(x^{(mt)}))^{\lambda } }  d \bP_{\theta_0}(x^{(mt)})\right) d \tilde \Pi(\theta) \\
    &= \exp(-\lambda(C_3 + 1) t \epsilon^2) \int_\Theta \exp(\lambda  D_{1 + \lambda}(\prod_{k = 1}^t  \bP_{\theta_0} \parallel \prod_{k = 1}^t\bP_\theta)) d \tilde \Pi(\theta) \\
    &= \exp(-\lambda(C_3 + 1) t \epsilon^2) \int_{K_t} \exp(\lambda  t  D_{1 + \lambda}(  \bP_{\theta_0} \parallel \bP_\theta)) d \tilde \Pi(\theta) \\
    &\leq \exp(-\lambda(C_3 + 1) t \epsilon^2) \int_{K_t} \exp(\lambda t \frac{1}{m} \sum_{j = 1}^m D_{1 + \lambda}(\bP_{\theta_0}^j \parallel \bP_\theta^j)) d \tilde \Pi(\theta) \\
    &\leq \exp(- \lambda(C_3 + 1) t \epsilon^2 + \lambda C_3 t \epsilon_{m,t}^2) \\
    &\leq \exp(-\lambda t \epsilon^2). 
\end{align*}
Let's analyze the third term. Conditioned on $(A_t)^c$, we have
\begin{equation*}
\int_{\Theta} \frac{\bp_\theta}{\bp_{\theta_0}}(X^{(mt)}) d \Pi(\theta) \geq \Pi(K_t)  \int_{\Theta} \frac{\bp_\theta}{\bp_{\theta_0}}(X^{(mt)}) d \tilde \Pi(\theta)  \geq \exp(-(C_2 + C_3 + 1) t \epsilon^2), 
\end{equation*}
where the last inequality follows from \ref{assumption:C3}. It follows that
\begin{align*}
  &\bP_{\theta_0} (1 - \phi_t(X^{(mt)})) I_{(A_t)^c} \frac{\int_{U} \frac{\bp_\theta}{\bp_{\theta_0}}(X^{(mt)}) d \Pi(\theta)}{\int_{\Theta} \frac{\bp_\theta}{\bp_{\theta_0}}(X^{(mt)}) d \Pi(\theta)} \\
  &\leq \exp((C_2 + C_3 + 1) t \epsilon^2)\bP_{\theta_0} \int_{U} \frac{\bp_\theta}{\bp_{\theta_0}}(X^{(mt)}) d \Pi(\theta) \\
  &\leq \exp((C_2 + C_3 + 1) t \epsilon^2)\left[\int_{U \cap \Theta_t(\epsilon)} \bP_\theta (1- \phi_t (X^{(mt)})) d \Pi(\theta) + \Pi\left(\Theta_t(\epsilon)^c\right) \right] \\
 & \leq \exp((C_2 + C_3 + 1) t \epsilon^2)(\exp(-C t \epsilon^2) + \exp(-C t \epsilon^2)), 
\end{align*}
where the last inequality follows from \ref{assumption:C1} and \ref{assumption:C2}. Since $C_0 > C_3 + C_2 + 2$, we obtain the upper bound for the third term. 
\begin{equation*}
    \bP_{\theta_0} (1 - \phi_t(X^{(mt)})) I_{(A_t)^c} \frac{\int_{U} \frac{\bp_\theta}{\bp_{\theta_0}}(X^{(mt)}) d \Pi(\theta)}{\int_{\Theta} \frac{\bp_\theta}{\bp_{\theta_0}}(X^{(mt)}) d \Pi(\theta)}  \leq 2 \exp(- t \epsilon^2). 
\end{equation*}
Putting the bounds all together, we have
\begin{equation*}
    \bP_{\theta_0}  P_t (U) \leq \exp(-C_0 t \epsilon^2) + \exp(-\lambda t \epsilon^2) + 2 \exp(-t \epsilon^2)
\end{equation*}
\end{proof}
\begin{lemma}[Sub-exponential Bound]\label{lemma:contraction4}
    Let the random variable $X$ satisfies
    \begin{equation*}
        \bP(X \geq \delta) \leq c_1 \exp(-c_2 \delta), 
    \end{equation*}
    for all $\delta \geq \delta_0 > 0$. Then, for any $0 < a \leq \frac{1}{2} c_2$, we have
    \begin{equation*}
        \E \exp(a X) \leq \exp(a \delta_0) + c_1, 
    \end{equation*}
\end{lemma}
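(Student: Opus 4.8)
The plan is to pass to the layer-cake (tail-integral) representation of $\E\exp(aX)$, change variables to rewrite it as an exponentially weighted integral of the tail $\bP(X>\delta)$, and then split the integral at the threshold $\delta_0$.

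First I would note that, since $\exp(aX)$ is nonnegative,
\begin{equation*}
\E \exp(aX) = \int_0^\infty \bP\!\left(\exp(aX) > s\right) ds .
\end{equation*}
Because $a>0$, the substitution $s=\exp(a\delta)$ (so $ds = a\exp(a\delta)\,d\delta$, with $\delta$ ranging over all of $\R$) turns this into
\begin{equation*}
\E \exp(aX) = \int_{-\infty}^\infty a\exp(a\delta)\, \bP(X > \delta)\, d\delta .
\end{equation*}
All integrands here are nonnegative, so these identities hold in $[0,\infty]$ without any a priori integrability assumption on $\exp(aX)$.

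Next I would split the last integral at $\delta_0$. On $(-\infty,\delta_0)$ the crude bound $\bP(X>\delta)\le 1$ gives $\int_{-\infty}^{\delta_0} a\exp(a\delta)\,d\delta = \exp(a\delta_0)$. On $[\delta_0,\infty)$ the hypothesis yields $\bP(X>\delta)\le \bP(X\ge\delta)\le c_1\exp(-c_2\delta)$, hence
\begin{equation*}
\int_{\delta_0}^\infty a\exp(a\delta)\,\bP(X>\delta)\,d\delta \;\le\; a c_1 \int_{\delta_0}^\infty \exp\!\big((a-c_2)\delta\big)\,d\delta \;=\; \frac{a c_1}{c_2-a}\,\exp\!\big((a-c_2)\delta_0\big),
\end{equation*}
the integral converging because $a\le c_2/2 < c_2$. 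Finally $a\le c_2/2$ forces $c_2-a\ge c_2/2\ge a$, so $a/(c_2-a)\le 1$, while $\exp((a-c_2)\delta_0)\le 1$ since $\delta_0>0$ and $a<c_2$; thus the second piece is at most $c_1$. Adding the two estimates gives $\E\exp(aX)\le \exp(a\delta_0)+c_1$, which in particular shows the moment generating function is finite.

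There is no substantial obstacle here; the only points requiring care are that $X$ need not be nonnegative, so the change of variables must run over all of $\R$ (one should not write $\int_0^\infty$), and that the hypothesis $a\le\tfrac12 c_2$ is used twice — once to make the tail integral converge and once to force the constant $a/(c_2-a)$ to be at most $1$. If one prefers to avoid the improper substitution, the same bound follows by splitting $\E\exp(aX)=\E[\exp(aX);\,X<\delta_0]+\E[\exp(aX);\,X\ge\delta_0]$, estimating the first term by $\exp(a\delta_0)$ and the second by the same tail-integral computation restricted to $[\delta_0,\infty)$.
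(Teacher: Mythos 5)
Your proof is correct and takes essentially the same route as the paper: both rest on the tail-integral (layer-cake) representation of $\E\exp(aX)$, split at the threshold corresponding to $\delta_0$, bound the sub-threshold piece trivially by $\exp(a\delta_0)$, and use the sub-exponential tail together with $a\le\tfrac12 c_2$ to bound the remaining piece by $c_1$. The only cosmetic difference is that you change variables to work in $\delta$ while the paper integrates directly in $y=\exp(a\delta)$; your remark that the substitution must range over all of $\R$ because $X$ need not be nonnegative is a valid point of care.
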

\begin{proof}[Lemma~\ref{lemma:contraction4}]
    Define $Y = \exp(aX)$ for some $0 < a \leq \frac{1}{2}c_2$. For any $C > 0$, 
    \begin{align*}
        \E Y \leq C + \int_C^\infty \bP(Y \geq y) dy = C + \int_C^\infty \bP(X \geq \frac{1}{a} \log y) dy \leq C + c_1 \int_C^\infty y^{-c_2/a} dy. 
    \end{align*}
    Take $C = \exp(a \delta_0)$. SInce $a \leq \frac{1}{2} c_2$, we have 
    \begin{equation*}
        \E Y \leq \exp(a_t) + c_1 \exp(-a\delta_0) \leq \exp(a\delta_0) + c_1. 
    \end{equation*}
\end{proof}
\begin{proof}[Proof of Theorem~\ref{thm:contraction1}]
By Lemma \ref{lemma:contraction3}, for all $\delta \geq \delta_0$, we have
\begin{equation*}
    \bP_{\theta_0} P_t(d(\theta, \theta_0) > \delta) \leq c_1 \exp(-c_2 \delta).  
\end{equation*}
Take $c_1 = 4, c_2 = mt\min(\lambda, 1)/C_1$ as $C_0 > C_1 + C_2 + 2 > 1$ and $\delta_0 = C_1 m \epsilon_{m,t}^2$. By Lemma \ref{lemma:contraction4}, we have
\begin{equation*}
    \bP_{\theta_0} P_t\exp(a mtd(\theta, \theta_0))  \leq \exp(a mt C_1\epsilon_{m,t}^2) + 4,  
\end{equation*}
for all $a \leq \min(\lambda, 1)/(2 C_1)$. 

Take $a = \min(\lambda, 1)/(2 C_1)$. By Lemma \ref{lemma:contraction2}, we have
\begin{align*}
    \bP_0  P_t^j d(\theta, \theta_0)  &\leq \frac{mt \gamma_{j,m,t}^2 + \log(4 + \exp(a C_1 mt \epsilon_{m,t}^2)) +  \frac{1}{m}\sum_{j = 1}^m D_{KL}(\bP_0 \parallel \bP_{\theta_0}^j)}{a mt}  \\
    &\leq \frac{\gamma_{j,m,t}^2}{a} + C_1 \epsilon_{m,t}^2 + \frac{4 \exp(-a C_1 mt \epsilon_{m,t}^2)}{amt} + \frac{ \sum_{j = 1}^m D_{KL}(\bP_0 \parallel \bP_{\theta_0}^j)}{a m^2 t}\\
    &\leq C \left(\epsilon_{m,t}^2 + \gamma_{j,m,t}^2 + \frac{1}{m^2 t} \sum_{j = 1}^m D_{KL}(\bP_0 \parallel \bP_{\theta_0}^j) \right) , 
\end{align*}
for some $C$ that depends only on $C_0, C_1, \lambda$. 
\end{proof}
\begin{proof}[Corollary~\ref{cor:contraction1}]
The first result is a consequence of Markov's inequality. 
\begin{align*}
  &\bP_0 P_t^j \left( d(\theta, \theta_0) > M_t (\epsilon_{m,t}^2 + \gamma_{j,m,t}^2 + \frac{1}{m^2 t} \sum_{j = 1}^m D_{KL}(\bP_0 \parallel \bP_{\theta_0}^j)) \right) \\
  &\leq \frac{\bP_0 P_t^j  d(\theta, \theta_0)}{M_t (\epsilon_{m,t}^2 + \gamma_{j,m,t}^2 +  \frac{1}{m^2 t} \sum_{j = 1}^m D_{KL}(\bP_0 \parallel \bP_{\theta_0}^j))} \leq \frac{C}{M_t}  \to 0
\end{align*}
The second result follows from Jensen's inequality
\begin{equation*}
    \bP_0 d(P_t^j \theta, \theta_0) \leq \bP_0 P_t d(\theta, \theta_0) \leq C(\epsilon_{m,t}^2 + \gamma_{j,m,t}^2 +  \frac{1}{m^2 t} \sum_{j = 1}^m D_{KL}(\bP_0 \parallel \bP_{\theta_0}^j))
\end{equation*}
\end{proof}
\begin{proof}[Lemma~\ref{lemma:contraction11}]
We have
    \begin{align*}
\bP_0 D_{KL}(P_t^j \parallel P_t) &=  \bP_0 P_t^j \left( \sum_{k = 1}^t \sum_{i = 1}^m [A^{t - k}_{ij}] \log \bP_\theta^i (X_k^i) -  \frac{1}{m}\sum_{k = 1}^t \sum_{i = 1}^m  \log \bP_\theta^i (X_k^i) \right) \\
     &=P_t^j \bP_0 \left( \sum_{k = 1}^t \sum_{i = 1}^m [A^{t - k}_{ij}] \log \bP_\theta^i (X_k^i) -  \frac{1}{m}\sum_{k = 1}^t \sum_{i = 1}^m  \log \bP_\theta^i (X_k^i) \right) \\
     &= P_t^j  \left( \sum_{k = 1}^t \sum_{i = 1}^m [A^{t - k}_{ij}] \bP_0 \log \bP_\theta^i -  \frac{1}{m}\sum_{k = 1}^t \sum_{i = 1}^m  \bP_0 \log \bP_\theta^i \right) \\
     &= \sum_{k = 1}^t \sum_{i = 1}^m 
 \left\{  [A^{t - k}_{ij}] -  \frac{1}{m} \right\} P_t^j  \bP_0 \log \bP_\theta^i \\
 &= \sum_{k = 1}^t \sum_{i = 1}^m 
 \left\{  [A^{t - k}_{ij}] -  \frac{1}{m} \right\} \left\{\bP_0\log \bp_0  - P_t^j D_{KL}(\bP_0 \parallel \bP_\theta^i)\right\}.
 \end{align*}

For each $j$, we have
\begin{align*}
\bP_0 D_{KL}(P_t^j \parallel P_t) &\leq  \sum_{k = 1}^t \sum_{i = 1}^m | [A^{t - k}_{ij}] -  \frac{1}{m}| \left|\bP_0\log \bp_0 - \max_{i \in [m]}\inf_{\theta \in \Theta} D_{KL}(\bP_0 \parallel \bP_\theta^i) \right| \\
 & \leq  \sum_{k = 1}^t \sum_{i = 1}^m | [A^{t - k}_{ij}] -  \frac{1}{m}| \left\{|\bP_0\log \bp_0| + \max_{i \in [m]}\inf_{\theta \in \Theta} D_{KL}(\bP_0 \parallel \bP_\theta^i) \right\}. 
\end{align*}
By Lemma \ref{lemma:graph_conv}, we get
\begin{equation*}
\bP_0 D_{KL}(P_t^j \parallel P_t)  \leq \frac{16 m^2 \log m}{\nu} \left(|\bP_0\log \bp_0| + \frac{1}{m}\sum_{i = 1}^m D_{KL}(\bP_0 \parallel \bP_{\theta_0}^i)\right). 
\end{equation*}
By Assumption \ref{assumption: regularity}(a), we have
\begin{equation*}
    \gamma_{j,m,t}^2 \leq \frac{16m \log m}{\nu t} \left(|\bP_0\log \bp_0| +  \max_{i \in [m]}\inf_{\theta \in \Theta} D_{KL}(\bP_0 \parallel \bP_\theta^i) \right). 
\end{equation*}
\end{proof}

\subsection{Proofs of Results in Section~\ref{sect:time-varying-graphs}} \label{app:time-varying-graphs}
\begin{proof} [Proposition ~\ref{prop: graph-time-varying1}]

Define $\mathcal T \subseteq [t]$ as the set of $\tau$ where $G_\tau = G$.  This allows us to break the left-hand side of the inequality into two parts: 
    \begin{align*}
        \sum_{k = 1}^t \sum_{j = 1}^m \left|\left[\prod_{\tau = k}^{t-1}A_\tau \right]_{ij}- \frac{\lambda}{m} \right|  &=  
         \sum_{k = 1}^t \sum_{j = 1}^m \left|\left[\prod_{\tau \in [k, t-1]\cap \mathcal T} A \right]_{ij}- \frac{\lambda}{m} \right| =  \sum_{k = 1}^t \sum_{j = 1}^m \left|\left[ A^{|[k, t-1]\cap \mathcal T|} \right]_{ij}- \frac{\lambda}{m} \right|. 
    \end{align*}
As $t \to \infty$, $|\mathcal T|/t \asto \lambda$ and $|[k, t-1]\cap \mathcal T| \asto \lambda (t - k)$.  Then with probability $1$, we have
    \begin{align*} 
       \limsup_{t \to \infty} \sum_{k = 1}^t \sum_{j = 1}^m \left|\left[\prod_{\tau = k}^{t-1}A_\tau \right]_{ij}- \frac{\lambda}{m} \right|  
         &=  \limsup_{t \to \infty}  \sum_{k = 1}^t \sum_{j = 1}^m \left|\left[ A^{\lambda(t - k)} \right]_{ij}- \frac{\lambda}{m} \right|. 
    \end{align*}
Define $\delta = \max(|\lambda_2(A)|, |\lambda_m(A)|)$. The spectral radius of $A$ is $1$, thus $\delta <1$ by Perron-Frobenius theorem.  Under the assumptions \ref{assumption: graph}, by the standard property of stochastic matrices (see e.g. \cite{rosenthal1995convergence}), the diagonalizable matrix $A$ satisfies 
    \begin{equation} \label{eqn:graph_conv-1-1}
        \|e_i^T A^{\lambda t} - \frac{1}{m} \ones \|_1 \leq m \delta^{\lambda t}. 
    \end{equation}
    for any $i \in [m]$, using the fact that $\frac{1}{m} \ones$ is the stationary distribution of the Markov chain with transition matrix $A$.

   Assume that $\lambda \geq \frac{c}{m}$.  For any $t - k \geq \tilde t = \frac{\log \frac{m}{c} + \log \lambda}{- \lambda \log \delta}$, 
    \begin{equation*}
        m \delta^{\lambda(t-k)} \leq  m \delta^{\lambda \tilde t} \leq \frac{m}{\frac{m}{c} \lambda} \leq \frac{c}{\lambda}. 
    \end{equation*}
    Since $ \|e_i^T A^{\lambda t} - \frac{1}{m} \ones \|_1 \leq 2$ by the double stochasticity of $A$, we use \eqref{eqn:graph_conv-1-1} to break the quantity of interest $\sum_{k = 1}^t \sum_{j = 1}^m |[A^{\lambda(t-k)}_{ij}] - \frac{1}{m}|$ into two parts. For any $i \in [m]$, 
    \begin{align*}
        \sum_{k = 1}^t \sum_{j = 1}^m |[A^{\lambda(t-k)}_{ij}] - \frac{1}{m}|  &=  \sum_{k = 1}^t \| e_i^T A^{\lambda(t-k)}- \frac{1}{m} \ones \|_1  \\
        &=  \sum_{k = 1}^{t - \tilde t} \| e_i^T A^{\lambda(t-k)} - \frac{1}{m} \ones \|_1 + \sum_{k > t - \tilde t}^{t} \| e_i^T A^{\lambda(t-k)} - \frac{1}{m} \ones \|_1 \\
        &\leq \sum_{k = 1}^{t - \tilde t}  m \delta^{\lambda(t-k)} + 2 \tilde t \leq \frac{m \delta^{\lambda \tilde t}}{1 - \delta} + 2 \tilde t \leq \frac{c}{\lambda(1 - \delta)} + \frac{2 \log \frac{m}{c}+2 \log \lambda}{-\lambda \log \delta}. 
    \end{align*}
    Noting that $1 - \delta \leq - \log \delta$ and $\delta = 1 - \frac{\nu}{4 m^2}$ for any doubly stochastic matrix $A$,  we get 
    \begin{align*}
         \sum_{k = 1}^t \sum_{j = 1}^m |\left[ A^{\lambda(t - k)} \right]_{ij} - \frac{1}{m}| & \leq  \frac{c + 2 \log \frac{m}{c} + \log \lambda}{\lambda(1 - \delta)}  \leq  \frac{4m^2 c + 8 m^2 \log \frac{m}{c} + 8 m^2\log \lambda}{\lambda \nu}. 
    \end{align*}
When $\lambda \geq \frac{2}{m}$, the optimal bound is achieved when $c = \frac{1}{2}$, 
\begin{align*}
    \limsup_{t \to \infty}  \sum_{k = 1}^t \sum_{j = 1}^m |\left[ A^{\lambda(t - k)} \right]_{ij} - \frac{1}{m}| &\leq   \frac{8m^2 (1 + \log \frac{m}{2}) + 8 m^2\log \lambda}{\lambda \nu} \leq \frac{16m^2 \log m + 8 m^2\log \lambda}{\lambda \nu}. 
\end{align*}
When $\lambda < \frac{2}{m}$,  the optimal bound is achieved when $c = \lambda $, 
\begin{align*}
     \limsup_{t \to \infty}  \sum_{k = 1}^t \sum_{j = 1}^m |\left[ A^{\lambda(t - k)} \right]_{ij} - \frac{1}{m}| &\leq  \frac{4m^3}{\nu}. 
\end{align*}
Finally, when $\lambda = 0$, 
\begin{equation*}
     \limsup_{t \to \infty} \sum_{k = 1}^t \sum_{j = 1}^m \left|\left[\prod_{\tau = k}^{t-1}A_\tau \right]_{ij}- \frac{\lambda}{m} \right|  
       =  \limsup_{t \to \infty} \sum_{k = 1}^t  \frac{m-1}{m} = \infty. 
\end{equation*}
\end{proof}
\begin{proof}[Corollary~\ref{cor:time-varying1}]
We have
    \begin{align*}
\bP_0 D_{KL}(P_t^j \parallel P_t) &=  \bP_0 P_t^j \left( \sum_{k = 1}^t \sum_{i = 1}^m \left[\prod_{\tau = k}^{t-1}A_\tau \right]_{ij} \log \bP_\theta^i (X_k^i) -  \frac{1}{m}\sum_{k = 1}^t \sum_{i = 1}^m  \log \bP_\theta^i (X_k^i) \right) \\
     &=P_t^j \bP_0 \left( \sum_{k = 1}^t \sum_{i = 1}^m \left[\prod_{\tau = k}^{t-1}A_\tau \right]_{ij} \log \bP_\theta^i (X_k^i) -  \frac{1}{m}\sum_{k = 1}^t \sum_{i = 1}^m  \log \bP_\theta^i (X_k^i) \right) \\
     &= P_t^j  \left( \sum_{k = 1}^t \sum_{i = 1}^m \left[\prod_{\tau = k}^{t-1}A_\tau \right]_{ij} \bP_0 \log \bP_\theta^i -  \frac{1}{m}\sum_{k = 1}^t \sum_{i = 1}^m  \bP_0 \log \bP_\theta^i \right) \\
     &= \sum_{k = 1}^t \sum_{i = 1}^m 
 \left\{  \left[\prod_{\tau = k}^{t-1}A_\tau \right]_{ij} -  \frac{1}{m} \right\} P_t^j  \bP_0 \log \bP_\theta^i \\
 &= \sum_{k = 1}^t \sum_{i = 1}^m 
 \left\{  \left[\prod_{\tau = k}^{t-1}A_\tau \right]_{ij} -  \frac{1}{m} \right\} \left\{\bP_0\log \bp_0  - P_t^j D_{KL}(\bP_0 \parallel \bP_\theta^i)\right\}.
 \end{align*}

For each $j$, we have
\begin{align*}
\bP_0 D_{KL}(P_t^j \parallel P_t) &\leq  \sum_{k = 1}^t \sum_{i = 1}^m | \left[\prod_{\tau = k}^{t-1}A_\tau \right]_{ij} -  \frac{1}{m}| \left|\bP_0\log \bp_0 - \max_{i \in [m]}\inf_{\theta \in \Theta} D_{KL}(\bP_0 \parallel \bP_\theta^i) \right| \\
 & \leq  \sum_{k = 1}^t \sum_{i = 1}^m | \left[\prod_{\tau = k}^{t-1}A_\tau \right]_{ij} -  \frac{1}{m}| \left\{|\bP_0\log \bp_0| + \max_{i \in [m]}\inf_{\theta \in \Theta} D_{KL}(\bP_0 \parallel \bP_\theta^i) \right\}. 
\end{align*}
This implies that 
\begin{equation*}
\bP_0 D_{KL}(P_t^j \parallel P_t)  \leq    \limsup_{t \to \infty} \sum_{k = 1}^t \sum_{j = 1}^m \left|\left[\prod_{\tau = k}^{t-1}A_\tau \right]_{ij}- \frac{1}{m} \right| \left(|\bP_0\log \bp_0| + \frac{1}{m}\sum_{i = 1}^m D_{KL}(\bP_0 \parallel \bP_{\theta_0}^i)\right). 
\end{equation*}

Applying Proposition~\ref{prop: graph-time-varying1} gives us the desired results. 
\end{proof}
\begin{proof}[Corollary~\ref{cor:time-varying2}]
    Since $P_t$ does not depend on the communication graph, Theorem~\ref{thm:contraction1} still holds.  This implies the following upper bound on the contraction rate: 
    \begin{equation} \label{eqn-proofcor-time-varying2:1}
    \bP_0 P_t^j d(\theta, \theta_0) \lesssim \epsilon_{m,t}^2 + \gamma_{j,m,t}^2+ \frac{1}{m^2 t} \sum_{j = 1}^m D_{KL}(\bP_0 \parallel \bP_{\theta_0}^j). 
\end{equation}
The only term depending on the communication graph is $\gamma_{j,m,t}^2$. By Corollary~\ref{cor:time-varying2}, we have the following upper bounds with probability $1$ (with respect to the probability measure of $A_1, A_2, \cdots$). 

If $\lambda \geq \frac{2}{m}$, then
    \begin{equation*}
\gamma_{j,m,t}^2 \leq  \frac{16 m \log m + 8m \log \lambda}{\lambda \nu t} \left(|\bP_0\log \bp_0| +  \max_{i \in [m]}\inf_{\theta \in \Theta} D_{KL}(\bP_0 \parallel \bP_\theta^i) \right). 
    \end{equation*}
If $0<\lambda < \frac{2}{m}$, then
    \begin{equation*}
\gamma_{j,m,t}^2 \leq  \frac{4m^2}{\nu t}\left(|\bP_0\log \bp_0| +  \max_{i \in [m]}\inf_{\theta \in \Theta} D_{KL}(\bP_0 \parallel \bP_\theta^i) \right). 
    \end{equation*}
Combining the upper bounds with Equation~\eqref{eqn-proofcor-time-varying2:1}, we have: 

If $\lambda \geq \frac{2}{m}$, then
\begin{equation*}
    \bP_0 P_t^j d(\theta, \theta_0) \lesssim  \frac{1}{t} + \frac{16 m \log m + 8m \log \lambda}{\lambda \nu t} \left(|\bP_0\log \bp_0| +  \max_{i \in [m]}\inf_{\theta \in \Theta} D_{KL}(\bP_0 \parallel \bP_\theta^i) \right) + \frac{1}{m^2 t} \sum_{j = 1}^m D_{KL}(\bP_0 \parallel \bP_{\theta_0}^j). 
\end{equation*}
If $0<\lambda < \frac{2}{m}$, then
\begin{equation*}
    \bP_0 P_t^j d(\theta, \theta_0) \lesssim  \frac{1}{t} + \frac{4m^2}{\nu t}\left(|\bP_0\log \bp_0| +  \max_{i \in [m]}\inf_{\theta \in \Theta} D_{KL}(\bP_0 \parallel \bP_\theta^i) \right)+ \frac{1}{m^2 t} \sum_{j = 1}^m D_{KL}(\bP_0 \parallel \bP_{\theta_0}^j). 
\end{equation*}
This is the desired result after we remove the constants. 
\end{proof}
\subsection{Proofs of Results in Section~\ref{sect:examples}} \label{app:examples}
\begin{proof}[Lemma~\ref{lemma:EF-1}]
   By Definition \eqref{def-db-posterior}, 
   \begin{equation*}
    \log p_t^j(\theta) = t f_t^j(\theta)+ \pi(\theta) + const. 
   \end{equation*}
   for $f_t^j$ defined in Equation \eqref{def:f_t^j}. 
   
   We can compute $f_t^j(\theta)$ explicitly , 
   \begin{align*}
       f_t^j(\theta) &= -\frac{1}{t} \sum_{k = 1}^t \sum_{i = 1}^m [A^{t-k}_{ji}] \log \bp_\theta^i(X_t^i) \\
       &=  -\frac{1}{t} \sum_{k = 1}^t \sum_{i = 1}^m [A^{t-k}_{ji}] \left[ \langle \theta, T^i(X_k^i) \rangle - \psi^i(\theta)  \right] + const  \\
       &= -\frac{1}{t}\left[ \langle \theta,  \sum_{k = 1}^t \sum_{i = 1}^m [A^{t-k}_{ji}]  T^i(X_k^i) \rangle -  \sum_{k = 1}^t \sum_{i = 1}^m [A^{t-k}_{ji}]  \psi^i(\theta)  \right] + const. 
   \end{align*}
   This implies that
   \begin{align*}
        \log p_t^j(\theta) &= -t f_t^j(\theta)+ \pi(\theta)  + const \\
        &=  \langle \theta,   \sum_{k = 1}^t \sum_{i = 1}^m [A^{t-k}_{ji}]  T^i(X_k^i) \rangle - \sum_{k = 1}^t \sum_{i = 1}^m [A^{t-k}_{ji}]  \psi^i(\theta)   + \langle \theta, u\rangle - \psi^0(\theta) + const\\
        &=  \langle \theta,   \sum_{k = 1}^t \sum_{i = 1}^m [A^{t-k}_{ji}]  T^i(X_k^i) + u\rangle -  \sum_{k = 1}^t \sum_{i = 1}^m [A^{t-k}_{ji}]  \psi^i(\theta) - \psi^0(\theta) + const. 
   \end{align*}
   This shows that $p_t^j$ is a member of the exponential family of the form \eqref{eqn-EF-3} with sufficient statistic $\chi_t^j + u$ and log-partition function $ B_t^j(\theta)  + \psi^0(\theta)$ as defined in Equation \eqref{eqn-EF-4}. 
\end{proof}
\begin{proof}[Lemma~\ref{lemma:EF-2}]
By Lemma \ref{lemma:EF-1}, $P_t^j$ is a member of the exponential family given by Equation \eqref{eqn-EF-3}. 
    The full rankness of $P_t^j$ follows directly from the definition of full-rank exponential family and the compact representation \eqref{eqn-EF-3}. By Lemma 4.5 of \cite{VanderVaart2000}, the log-partition function $B_t^j: \Theta \to R$ is infinitely times differentiable with the gradient being the mean parameter of $\chi_t^j$ and the Hessian is the covariance matrix of $\chi_t^j$. Since $P_t^j$ is full-rank, it is equivalent to the nonsingularity of the covariance matrix for $\chi_t^j$ \cite{VanderVaart2000}. This implies that the gradient $\nabla_\theta B_t^j$ is one-to-one in the interior of $\Theta$, thus, $\nabla_\theta B_t^j$ is invertible. 
\end{proof}
\begin{proof}[Proposition~\ref{prop:EF-1}]
By Lemma \eqref{lemma:EF-2}, the sequence of estimators $\hat \theta_t^j$ is well-defined with $[\bP_0]-$probability tending to $1$. By Theorem 4.6 of \cite{VanderVaart2000}, the sequence of estimators $\hat \theta_t^j$ has limiting distribution
\begin{equation*}
    \sqrt{t} (\hat \theta_t^j - \theta_0) \dto N(0, I_{\theta_0}^{j^{-1}}), 
\end{equation*}
where $I_{\theta_0}^j$ is the Fisher information for $\bP_\theta^j$ at $\theta_0$.  This corresponds to the Sandwich covariance matrix. 
\begin{equation*}
    I_{\theta_0}^j = \left[ \phi^{'^{-1}}_{\theta_0} \text{Cov}_{\theta_0}(\chi_t^j) \phi^{-1}_{\theta_0} \right]^{-1}  = \text{Cov}_{\theta_0}(\chi_t^j). 
\end{equation*}
Since $B_t^j$ is infinitely times differentiable in a neighborhood of $\theta_0$, the Fisher information $I_{\theta}^j$ must be uniformly bounded. Thus, $\nabla \log \bp_\theta^j$ is Lipchistz around $\theta_0$ and Assumption \ref{assumption: regularity}(d) is satisfied. 

When $\text{int}(\Theta) \neq \emptyset$, the exponential family $\bP_\Theta^i$ satisfies the differential in the quadratic mean Assumption \ref{assumption: regularity}(c) by Example 7.7 of \cite{VanderVaart2000}. Given that $\theta_0 \in \text{int}(\Theta)$ and $\pi$ have support on $\Theta$, the prior $\pi$ puts positive mass on every neighborhood of $\theta_0$, thus Assumption \ref{assumption: pmc}(b) is satisfied.

Under the full-rank assumptions on $\bP_\theta^j$,  the Hessian $\nabla^2_\theta \log \bp_\theta^j$ is negative definite for all $\theta$. Thus, the Hessian of $f_t^j$ is negative definite as a positive linear combination of $\nabla^2_\theta \log \bp_\theta^j$. We have that $f_t^j$ is strictly concave on $\Theta$ with a unique maximizer at $\tilde \theta_t^j$. The strict concavity implies that 
\begin{equation*}
\lim_{t \to \infty} \bP_0(\inf_{\|\theta - \hat\theta_t^j\| > \delta}|f_t^j(\theta) - f_t^j(\hat\theta_t^j)| \geq \epsilon) = 1
\end{equation*}
By Lemma \ref{lemma:M-est-2}, $\tilde \theta_t^j$ is consistent at $\theta_0$. Then Assumption \ref{assumption: uct}(a) is satisfied as a consequence of  Lemma \ref{lemma:z-est-uct}.

The assumptions of Theorem~\ref{thm:bvm1} are all satisfied. Considering the sequence of random variables $\theta \sim P_t^j$ centered at the moment estimator $\hat \theta_t^j$, we have 
\begin{equation*}
    \int_\Theta |q_t^j(x) - N(0, V_{\theta_0}^{-1})| dx \pto 0, 
\end{equation*}
where $q_t^j$ is the density of $\sqrt{t}(\theta -\hat \theta_t^j)$, and $V_{\theta_0} = \frac{1}{m} \sum_{i = 1}^m \text{Cov}(T^i)$.  
\end{proof}

\begin{proof}[Proposition~\ref{prop:logistic-1}]
The prior $\pi$ is assumed to have full support over $\R^p$, which trivially satisfies Assumption \ref{assumption: pmc}(a).

Define functions $f_t^j$, $f_t$, and $f$ on $\Theta$ as per Equations \eqref{def:f_t^j}--\eqref{def:f}. By Assumption \eqref{assumption: regularity}(a), the function $f$ exists and we note that $f_t^j(\theta) \pto f(\theta)$.

The estimators $\hat \theta_t^j$ and the true parameter $\theta_0$ are respectively given by:
\begin{equation*}
    \hat \theta_t^j = \argmin_{\theta \in \Theta} f_t^j(\theta), \quad \theta_0 = \argmin_{\theta \in \Theta} f(\theta).
\end{equation*}

The gradient of $f_t^j$ is given by
\begin{equation*}
    %\label{eqn:gradient-ftj}
    \nabla  f_t^j(\theta) = -\frac{1}{t} T_t^j + \frac{1}{t}  \sum_{k = 1}^t \sum_{i = 1}^m \frac{ [A^{t-k}_{ji}] X_k^i e^{\langle \theta,  X_k^i \rangle}}{1 + e^{\langle \theta,  X_k^i \rangle}},
\end{equation*}

and the Hessian is
\begin{equation*}
  %  \label{eqn:hessian-ftj}
    \nabla^2 f_t^j(\theta) = \frac{1}{t}  \sum_{k = 1}^t \sum_{i = 1}^m \frac{ [A^{t-k}_{ji}] X_k^i X_k^{i^T} e^{\langle \theta,  X_k^i \rangle}}{\left(1 + e^{\langle \theta,  X_k^i \rangle} \right)^2}.
\end{equation*}
Given that $\nabla^2 f_t^j(\theta) > 0$, the function $f_t^j$ is strictly convex in $\theta$. This implies that the minimizer $\hat \theta_t^j$ is unique, and we have $\hat \theta_t^j \in \text{int}(\Theta)$. 

The gradient of $f$ is given by
\begin{equation*}
    %\label{eqn:gradient-ftj}
    \nabla  f(\theta) = \bP_{\theta_0}\left[\frac{X_1^1 e^{\langle \theta,  X_1^1 \rangle}}{1 + e^{\langle \theta,  X_1^1 \rangle}} - X_1^1 Y_1^1 \right] =  \bP_{\theta_0}\left[X_1^1 \left( \frac{e^{\langle \theta,  X_1^1 \rangle}}{1 + e^{\langle \theta,  X_1^1 \rangle}} - \frac{e^{\langle \theta_0,  X_1^1 \rangle}}{1 + e^{\langle \theta_0,  X_1^1 \rangle}}\right) \right],
\end{equation*}
The differentiation and expectation are interchanged by the Lebesgue-dominated convergence theorem since $ |\nabla f(\theta)| \leq |X_1^1|$ and $\bP_0 |X_1^1| < \infty$ by Assumption ii).

Since $\nabla f(\theta)$ is strictly increasing in $\theta$, we have $\nabla f(\theta - \epsilon) < 0 < \nabla f(\theta + \epsilon)$ in each coordinate. Hence, Assumption \ref{assumption: regularity}(b) is satisfied. By Lemma \ref{lemma:M-est-2}, we have $\hat \theta_t^j \pto \theta_0 $.

%Since $f$ is also convex, then $\tilde f$ and $f$ are continuous functions (Theorem 2.35, \citep{rockafellar2009variational}) that agree on a dense subset of $\Theta$, so they must be equal.
 Let $\mathcal{E} = \{\eta \in \mathbb{R} : |\sigma(\eta)| < \infty \}$. The set $\mathcal{E}$ is open and nonempty. Additionally, $\eta$ is identifiable, as $\sigma(\eta)$ is a one-to-one function. Trivially, the set $\Theta$ is open and convex, and $\theta^T x \in \mathcal{E}$ for all $\theta \in \Theta$ and $x \in \X$.

 For all $\eta \in \mathcal{E}$, we have $0 < \sigma(\eta) < 1$ and
\begin{equation*}
    |\sigma'''(\eta)| = |\frac{\sigma(1 - \sigma(\eta))(1 - 2\sigma(\eta))^2 - 2\sigma^2(1-\sigma(\eta))^2}{(1 - \sigma(\eta))^2}| \leq 3. 
\end{equation*}

After algebraic manipulations, we have
\begin{equation*}
    \nabla_\theta^3 \log\bp_\theta^j(. \mid x_k^j)_{a,b, c} = \sigma^{'''}(\langle \theta, x_k^j \rangle) x_{k,a}^j x_{k,b}^j x_{k,c}^j \leq 3 x_{k,a}^j x_{k,b}^j x_{k,c}^j, 
\end{equation*}
for all $\theta \in \Theta, x_k^j \in \X$ and $a, b,c \in [p]$.  Thus, $\nabla_\theta^3 \log \bp_\theta^j$ is uniformly bounded by an integrable function. 

The Fisher information for agent $j$ is given by
\begin{equation*}
    V_{\theta_0}^j = - \bP_0 \nabla_\theta^2 \log \bp_\theta^j(. \mid X_k^j) = \bP_0  \left[\frac{X_k^{j^T} e^{\langle \theta,  X_k^j \rangle} X_k^j}{\left(1 + e^{\langle \theta,  X_k^j \rangle} \right)^2} \right].  
\end{equation*}
By Assumption i), $V_{\theta_0}^j$ exists and is nonsingular. Hence, Assumption \ref{assumption: regularity}(f) is satisfied. 

Assumption \ref{assumption: regularity}(c), \ref{assumption: uct}(a) are satisfied with the same argument as Proposition \ref{prop:EF-1}.  By Corollary \ref{cor:bvm2} and Slusky's theorem, we have
\begin{equation*}
    \int_\Theta \left|q_t^j(x) - N(0, \hat V_{\theta_0}^{-1})\right| \, dx \pto 0.
\end{equation*}
where $\hat V_{\theta_0}$ is the finite-sample version of $\frac{1}{m} \sum_{j = 1}^m V_{\theta_0}^j$. Specifically,
\begin{equation*}
   \hat V_\theta = \frac{1}{m t} \sum_{i = 1}^m {X^i}^T  \text{diag}\left(
\frac{e^{\sum_{j=0}^{p} \theta_j x_{1j}^i}}{\left(1 + e^{\sum_{j=0}^{p} \theta_j x_{1j}^i}\right)^2},
\ldots,
\frac{e^{\sum_{j=0}^{p} \theta_j x_{tj}^i}}{\left(1 + e^{\sum_{j=0}^{p} \theta_j x_{tj}^i}\right)^2}
\right) X^i.
\end{equation*}
\end{proof}
\begin{proof}[Lemma~\ref{lemma:detection-1}]
    By Lemma \ref{lemma:ptwise_conv}, $f_t^j(\theta) \pto f(\theta)$ for each $\theta \in \Theta$.  For all $\theta \in [0,1]^2$, $f_t^j$ is uniformly bounded. 
\begin{align*}
|f_t^j(\theta)| &\leq \frac{1}{t} \sum_{k = 1}^t \sum_{i = 1}^m [A^{t-k}_{ji}] \left[\frac{(|X_t^j - \theta|)^2}{2 \sigma^{j^2}} + \log \left(\Phi(\frac{|Z^j|+ \frac{1}{2}}{\sigma^j}) - \Phi(\frac{- \sqrt{2} - |Z^j|}{\sigma^j}) \right) \right] \\
&\leq \frac{1}{t} \sum_{k = 1}^t \sum_{i = 1}^m [A^{t-k}_{ji}] \left[\frac{1}{\sigma^{j^2}}   +  \log \left(\Phi(\frac{|Z^j|+ \frac{1}{2}}{\sigma^j}) - \Phi(\frac{- \sqrt{2} - |Z^j|}{\sigma^j}) \right) \right] \\
&\leq \left[\frac{1}{\sigma^{j^2}}   +  \log \left(\Phi(\frac{|Z^j|+ \frac{1}{2}}{\sigma^j}) -    `{\Phi(\frac{- \sqrt{2} - |Z^j|}{\sigma^j}) } \right) \right] \left(1 + \frac{16 m^2 \log m}{\nu} \right). 
\end{align*}

The gradient $\nabla f_t^j(\theta)$ is uniformly bounded over $[0,1]^2$. To see this, consider
\begin{align*}
\nabla f_t^j(\theta) = A + B, 
\end{align*}
where $A, B$ are given by 
\begin{align*}
    A &=  \frac{1}{t} \sum_{k = 1}^t \sum_{i = 1}^m [A^{t-k}_{ji}] \left[\frac{\theta - Z^j}{\sigma^{j^2}} - \frac{|X_t^j - Z^j| (\theta - Z^j)}{|\theta - Z^j|\sigma^{j^2}}\right], \\
    B &=  \frac{1}{t} \sum_{k = 1}^t \sum_{i = 1}^m [A^{t-k}_{ji}] \frac{(\theta - Z^j)\left[\phi\left(\frac{|Z^j| + \frac{1}{2} - |\theta - Z^j|}{\sigma^j}\right) - \phi\left(\frac{-|\theta - Z^j|}{\sigma^j}\right)\right]}{\sigma^j |\theta - Z^j|\left[\Phi\left(\frac{|Z^j| + \frac{1}{2} - |\theta - Z^j|}{\sigma^j}\right) - \Phi\left(\frac{-|\theta - Z^j|}{\sigma^j}\right)\right]}.
\end{align*}
Applying the following upper bounds on $A$ and $B$,
\begin{align*}
    A &\leq \frac{1}{t} \sum_{k = 1}^t \sum_{i = 1}^m [A^{t-k}_{ji}] \frac{\sqrt{2}}{\sigma^{j^2}}, \\
    B &\leq \frac{1}{t} \sum_{k = 1}^t \sum_{i = 1}^m [A^{t-k}_{ji}] \frac{\phi\left(\frac{|Z^j| + \frac{1}{2}}{\sigma^j}\right) - \phi\left(\frac{-\sqrt{2}}{\sigma^j}\right)}{\sigma^j\left[\Phi\left(\frac{|Z^j| + \frac{1}{2} - \sqrt{2}}{\sigma^j}\right) - \Phi\left(\frac{-\sqrt{2}}{\sigma^j}\right)\right]},
\end{align*}
we obtain
\begin{equation*}
    \|\nabla f_t^j(\theta)\| = \|A + B\| \leq \left[ \frac{\sqrt{2}}{ \sigma^{j^2}} + \frac{\phi(\frac{|Z^j|+ \frac{1}{2}}{\sigma^j}) - \phi(\frac{- \sqrt{2}}{\sigma^j}) }{\sigma^j\left[\Phi(\frac{|Z^j|+ \frac{1}{2} - \sqrt{2}}{\sigma^j}) - \Phi(\frac{- \sqrt{2}}{\sigma^j}) \right]} \right]\left(1 + \frac{16 m^2 \log m}{\nu} \right). 
\end{equation*}
Then $f_t^j$ is uniformly equicontinuous in $t$, as $f_t^j$ is Lipschitz.  By Lemma \ref{uniform-convergence-lemma}, we have $\|f_t^j - f\|_\infty \pto 0$.

Note that $\hat{\theta}_t^j = \argmin_{\theta \in [0,1]^2} f_t^j(\theta)$ and $\theta_0 = \argmin_{\theta \in [0,1]^2} f(\theta)$. By the Argmax theorem (Theorem 3.2.2, \cite{vaart2023empirical}), we have $\hat{\theta}_t^j \pto \theta_0$. 

Since $\theta_0 \in \Theta$, for small enough $\epsilon$, we have
\begin{align*}
    \bP_0(|\hat \theta_t^j - \theta_0| > \epsilon) &= \bP_0(|\hat \theta_t^j  - \theta_0| > \epsilon, \hat \theta_t^j  \notin \Theta) + \bP_0(|\hat \theta_t^j - \theta_0| > \epsilon, \hat \theta_t^j \in \Theta)  \\
    &= \bP_0(\hat \theta_t^j \notin \Theta) + \bP_0(|\hat \theta_t^j - \theta_0| > \epsilon, \hat \theta_t^j \in \Theta) \to 0, 
\end{align*}
thus $\bP_0(\hat \theta_t^j \notin \Theta) \to 0$. 
\end{proof}

\begin{proof}[Proposition~\ref{prop:detection-1}]
The prior $\pi$ is assumed to have full support over $\Theta$, which trivially satisfies Assumption \ref{assumption: pmc}(a).

From the proof of Lemma \ref{lemma:detection-1}, we proved that $\nabla f_t^j$ is uniformly bounded for all $t$, thus $f_t^j$ is uniformly equicontinuous. By definition, there exists $\delta > 0$ such that $\|\theta - \hat \theta_t^j\| > \delta$ implies that $|f_t^j(\theta) - f_t^j(\hat \theta_t^j)| > \epsilon$. This verifies Assumption \ref{assumption: uct}(a). 

The gradient of $\log \bp_\theta^j$ is given by
\begin{align*}
    \nabla \log \bp_\theta^j(X_t^j) = \frac{\theta - Z^j}{|\theta - Z^j| \sigma^{j^2}} \left[ |\theta - Z^j| - |X_t^j - Z^j| - \frac{\phi\left(\frac{|Z^j| + \frac{1}{2} - |\theta - Z^j|}{\sigma^j}\right) - \phi\left(\frac{- |\theta - Z^j|}{\sigma^j}\right)}{\Phi\left(\frac{|Z^j| + \frac{1}{2} - |\theta - Z^j|}{\sigma^j}\right) - \Phi\left(\frac{- |\theta - Z^j|}{\sigma^j}\right)} \right].
\end{align*}
This allows us to derive the Fisher information of $p_\theta^j$. 
\begin{align*}
   V_\theta^j &=  \bP_0[\nabla_\theta \log \bp_\theta^j \nabla_\theta \log \bp_\theta^{j^T}] \\
   &=  \frac{(\theta - Z^j) (\theta - Z^j)^T}{\sigma^{j^4} |\theta - Z^j|^2} \left[ |\theta - Z^j| - |\theta_0 - Z^j| - \frac{\phi(\frac{|Z^j|+ \frac{1}{2} - |\theta - Z^j|}{\sigma^j}) - \phi(\frac{- |\theta - Z^j|}{\sigma^j})}{ \Phi(\frac{|Z^j|+ \frac{1}{2} - |\theta - Z^j|}{\sigma^j}) - \Phi(\frac{- |\theta - Z^j|}{\sigma^j}) } \right]^2. 
\end{align*}

At $\theta_0$, the Hessian simplifies to $V_{\theta_0}^j = \frac{(\theta_0 - Z^j) (\theta_0 - Z^j)^T}{\sigma^{j^4} |\theta_0 - Z^j|^2} \left[ \frac{\phi(\frac{|Z^j|+ \frac{1}{2} - |\theta_0 - Z^j|}{\sigma^j}) - \phi(\frac{- |\theta_0 - Z^j|}{\sigma^j})}{ \Phi(\frac{|Z^j|+ \frac{1}{2} - |\theta_0 - Z^j|}{\sigma^j}) - \Phi(\frac{- |\theta_0 - Z^j|}{\sigma^j}) } \right]^2$ which is nonsingular. Thus, Assumption \ref{assumption: regularity}(e) is satisfied. 

Reusing the argument in the proof of Lemma \ref{lemma:detection-1}, we have
\begin{align*}
   \sup_{\theta \in [0,1]^2} \|\nabla \log p_{\theta}^j\| \leq \frac{\sqrt{2}}{\sigma^{j^2}} + \frac{\phi(\frac{|Z^j|+ \frac{1}{2}}{\sigma^j}) - \phi(\frac{- \sqrt{2}}{\sigma^j}) }{\sigma^j\left[\Phi(\frac{|Z^j|+ \frac{1}{2} - \sqrt{2}}{\sigma^j}) - \Phi(\frac{- \sqrt{2}}{\sigma^j}) \right]}. 
\end{align*}

This implies that $\nabla \log \bp_\theta^j$ is a continuously differentiable function over a compact domain $[0,1]^2$. Thus, it is Lipschitz continuous. Assumption \ref{assumption: regularity}(d) is satisfied. 
 
The Hessian for $f_t^j$ is simply the average of agent Fisher information: 
\begin{equation*}
    \nabla^2 f_t^j(\theta) = \frac{1}{t} \sum_{k = 1}^t \sum_{i = 1}^m [A^{t-k}_{ji}] V_\theta^i. 
\end{equation*}
Recall that in the proof of 
Since $f_t^j$ is uniformly equicontinuous, 

We define the event $E_t$ as 
\begin{equation*}
    E = \{X^{(mt)}:\hat \theta_t^j \in \Theta\}. 
\end{equation*}
Conditioning on $E_t$, by Corollary \ref{cor:bvm2} and Slusky's theorem, we have
\begin{equation*}
    \int_\Theta \left|q_t^j(x) - N(0,  V_{\theta_0}^{-1})\right| \, dx \pto 0, 
\end{equation*}
for  $q_t^j$ defined as the density of $\sqrt{t}(\theta - \hat{\theta}_t^j)$ and $V_{\theta_0}$ defined in Equation \eqref{eqn-detection-4}. 

By Lemma \ref{lemma:detection-1}, $\lim_{t \to \infty}\bP_0(E_t) = 1$. This completes our proof. 
\end{proof}

\section*{Appendix C. Supporting Results}
\label{app:support}
\begin{proof} [Lemma ~\ref{lemma:graph_conv}]
Define $\delta = \max(|\lambda_2(A)|, |\lambda_m(A)|)$. The spectral radius of $A$ is $1$, thus $\delta <1$ by Perron-Frobenius theorem.  Under the assumptions \ref{assumption: graph}, by the standard property of stochastic matrices (see e.g. \cite{rosenthal1995convergence}), the diagonalizable matrix $A$ satisfies 
    \begin{equation} \label{eqn:graph_conv-1}
        \|e_i^T A^t - \frac{1}{m} \ones \|_1 \leq m \delta^t
    \end{equation}
    for any $i \in [m]$, using the fact that $\frac{1}{m} \ones$ is the stationary distribution of the Markov chain with transition matrix $A$. 

 For any $t - k \geq \tilde t = \frac{\log \frac{m}{2}}{- \log \delta}$, 
    \begin{equation*}
        m \delta^{t-k} \leq  m \delta^{\tilde t} \leq \frac{m}{\frac{m}{2}} \leq 2 
    \end{equation*}
    Since $ \|e_i^T A^t - \frac{1}{m} \ones \|_1 \leq 2$ by the double stochasticity of $A$, we use \eqref{eqn:graph_conv-1} to break the quantity of interest $\sum_{k = 1}^t \sum_{j = 1}^m |[A^{t - k}_{ij}] - \frac{1}{m}|$ into two parts, that is, for any $i \in [m]$, 
    \begin{align*}
        \sum_{k = 1}^t \sum_{j = 1}^m |[A^{t - k}_{ij}] - \frac{1}{m}|  &=  \sum_{k = 1}^t \| e_i^T A^{t - k} - \frac{1}{m} \ones \|_1  \\
        &=  \sum_{k = 1}^{t - \tilde t} \| e_i^T A^{t - k} - \frac{1}{m} \ones \|_1 + \sum_{k > t - \tilde t}^{t} \| e_i^T A^{t - k} - \frac{1}{m} \ones \|_1 \\
        &\leq \sum_{k = 1}^{t - \tilde t}  m \delta^{t-k} + 2 \tilde t \leq \frac{m \delta^{\tilde t}}{1 - \delta} + 2 \tilde t \leq \frac{2}{1 - \delta} + \frac{2 \log \frac{m}{2}}{- \log \delta}
    \end{align*}

    Noting that $1 - \delta \leq - \log \delta$ and $\delta = 1 - \frac{\nu}{4 m^2}$ for any doubly stochastic matrix $A$,  we get 
    \begin{equation*}
         \sum_{k = 1}^t \sum_{j = 1}^m |[A^{t - k}_{ij}] - \frac{1}{m}|  \leq  \frac{2 + 2 \log \frac{m}{2}}{1 - \delta} \leq  \frac{4 \log m}{1 - \delta} \leq  \frac{16 m^2 \log m}{\nu}
    \end{equation*}
\end{proof}

\begin{lemma}[Distributed Law of Large Numbers]\label{lemma:distributed-LLN}
     Assume that $S_t^i, t \geq 1$ are i.i.d. random variables with $\E[|S_t^i|]$ exists and are finite, for all $ i \in [m]$. Under Assumption \ref{assumption: graph}, for any $j \in [m]$, the random variables
        \begin{equation*}
        Z_t^j = \frac{1}{t} \sum_{i = 1}^m\sum_{k = 1}^t [A_{ij}^{t-k}] S_t^i
    \end{equation*}
        converge in probability to $\frac{1}{m} \sum_{i = 1}^m \E[S_1^i]$ as $t \to \infty$. 
\end{lemma}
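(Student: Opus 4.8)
The plan is to reduce the statement to two ingredients: the ordinary weak law of large numbers applied coordinate-wise, and the geometric mixing of the adjacency matrix powers established in Lemma~\ref{lemma:graph_conv} (more precisely, the bound \eqref{eqn:graph_conv-1}, $\|e_i^T A^{t} - \frac{1}{m}\ones^T\|_1 \leq m\delta^t$ with $\delta < 1$). First I would write $Z_t^j = \frac1t\sum_{k=1}^t \sum_{i=1}^m [A^{t-k}_{ij}] S_k^i$ and compare it with the ``idealized'' average $W_t = \frac1t\sum_{k=1}^t \frac1m\sum_{i=1}^m S_k^i$, whose convergence in $[\bP_0]$-probability to $\frac1m\sum_{i=1}^m \E[S_1^i]$ is immediate from the classical weak law (the $S_k^i$ being i.i.d.\ across $k$ with finite first moment, and a finite sum over $i$). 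It then suffices to show $Z_t^j - W_t \pto 0$.

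For the difference, the key bound is
\begin{equation*}
  |Z_t^j - W_t| \;=\; \left| \frac1t \sum_{k=1}^t \sum_{i=1}^m \Big( [A^{t-k}_{ij}] - \tfrac1m \Big) S_k^i \right| \;\leq\; \frac1t \sum_{k=1}^t \sum_{i=1}^m \Big| [A^{t-k}_{ij}] - \tfrac1m \Big| \, |S_k^i|.
\end{equation*}
I would split the sum over $k$ at a cutoff $t - \tilde t$ for a fixed $\tilde t$: for the ``recent'' block $k > t - \tilde t$ there are only $\tilde t$ terms, each bounded (after dividing by $t$) by $\frac{2}{t}\sum_i |S_k^i|$ because $|[A^{\ell}_{ij}] - \frac1m| \leq 2$; by the weak law $\frac1t\sum_{k=t-\tilde t+1}^t \sum_i |S_k^i| \to 0$ in probability once $\tilde t$ is fixed. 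For the ``old'' block $k \leq t - \tilde t$, the weight $|[A^{t-k}_{ij}] - \frac1m| \leq \delta^{t-k}\cdot(\text{const})$ decays geometrically by \eqref{eqn:graph_conv-1}, so $\frac1t\sum_{k\leq t-\tilde t}\sum_i \delta^{t-k}|S_k^i|$ is, in expectation, at most $\frac{m\,\delta^{\tilde t}}{1-\delta}\cdot \frac1t\sum_k \frac1t\cdots$; more carefully, $\E\big|\frac1t\sum_{k\leq t-\tilde t}\sum_i C\delta^{t-k}|S_k^i|\big| \leq \frac{C}{t}\cdot\frac{\delta^{\tilde t}}{1-\delta}\sum_i \E|S_1^i|$, which can be made arbitrarily small by choosing $\tilde t$ large and then $t$ large. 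Combining the two blocks via a union bound over $\varepsilon/2$-events gives $Z_t^j - W_t \pto 0$, and Slutsky's theorem finishes the argument.

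The main obstacle, and the only place requiring care, is handling the ``recent'' block: there the matrix weights are \emph{not} small, so one cannot use mixing, and instead must exploit that the block contains only $O(\tilde t)$ summands while the normalization is $1/t$, i.e., this block contributes $O(\tilde t/t)\to 0$ uniformly (using boundedness of the weights and the weak law for the finitely-many $S_k^i$ in the block, or simply Markov's inequality with the finite first moment). The interplay of the two limits --- first fix $\tilde t$, send $t\to\infty$, then send $\tilde t\to\infty$ --- is the standard $\varepsilon/2$ argument, but it is worth stating explicitly so that the geometric factor $\delta^{\tilde t}$ and the count $\tilde t/t$ are both controlled simultaneously. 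I would also note that no independence across agents $i$ is needed; only i.i.d.\ across time $k$ and finiteness of $\E|S_1^i|$, together with Assumption~\ref{assumption: graph} ensuring $\delta<1$.
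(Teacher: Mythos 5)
Your proposal is correct and follows essentially the same route as the paper: compare $Z_t^j$ to the uniform-weight average, invoke the geometric bound $\|e_i^TA^t-\tfrac{1}{m}\ones\|_1\le m\delta^t$, and split the time sum into a recent block (killed by the $1/t$ normalization over $O(\tilde t)$ terms) and an old block (killed by geometric decay), exactly as in the paper's two-block argument with cutoff $c$. Your observation that the old block is already $O(1/t)$ in expectation via the summable geometric series is a slightly cleaner bookkeeping of the same estimate, not a different method.
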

\begin{proof}
Let $Z_t= [Z_t^1, \cdots, Z_t^m]^T$ and $S_t= [S_t^1, \cdots, S_t^m]^T$. 
Since $\E[|S_t^i|] < \infty$ exists and is finite, we have 
\begin{equation*}
    Z_t = \frac{1}{t} \sum_{k = 1}^t A^{t-k} S_k. 
\end{equation*}
Using the property of stochastic matrices \citep{rosenthal1995convergence}, the diagonalizable matrix $A$ satisfies 
\begin{equation*}
    \|e_i^T A^t - \frac{1}{m} \ones \|_1 \leq m \delta^t. 
\end{equation*}
for all $i \in [m]$ and some $\delta < 1$. 

This implies hat
\begin{align*}
&\left\|Z_t  - \sum_{k = 1}^t \frac{1}{m} \ones \ones^T S_k \right\| = \left\|\frac{1}{t} \sum_{k = 1}^t A^{t-k} S_k - \frac{1}{m} \ones \ones^T S_k \right\| \\
&\leq \frac{1}{t} \sum_{k = 1}^t \|(A^{t-k} -\frac{1}{m} \ones \ones^T) S_k \| \\
&\leq \frac{1}{t} \sum_{k = 1}^t \max_{i \in [m]} \left\|e_i^T A^{t-k} - \frac{1}{m} \ones \right\| \max_{i \in [m]}|S_k^i|, \quad \text{by Cauchy - Schwarz inequality} \\
&\leq \frac{1}{t} \sum_{k = 1}^t  m \delta^{t-k} \max_{i \in [m]}|S_k^i| \\
&\leq \frac{m}{t} \sum_{k = 1}^t  \sum_{i = 1}^m  \delta^{t-k} |S_k^i| \\
&\leq \frac{m}{t} \sum_{k = t - c + 1}^t  \sum_{i = 1}^m  \delta^{t-k} |S_k^i|  +  \frac{m}{t} \delta^c \sum_{k = 1}^{t - c}  \sum_{i = 1}^m |S_k^i|, 
\end{align*}
for some fixed constant $c$. 

By the strong law of large number,  $\left\|\sum_{k = 1}^t \frac{1}{m} \ones \ones^T S_k - \frac{1}{m}  \sum_{i = 1}^m\E[S_1^i]\right\| = o_d(1)$.  Then
\begin{align*}
\left\|Z_t  - \frac{1}{m}  \sum_{i = 1}^m\E[S_1^i]\right\|  &\leq  \left\|Z_t  - \sum_{k = 1}^t \frac{1}{m} \ones \ones^T S_k \right\|  + \left\|\sum_{k = 1}^t \frac{1}{m} \ones \ones^T S_k  - \frac{1}{m}  \sum_{i = 1}^m\E[S_1^i]\right\| \\
 &=   \frac{m}{t} \delta^c \sum_{k = 1}^{t - c}  \sum_{i = 1}^m   |S_k^i| +\frac{m}{t} \sum_{k = t - c + 1}^t  \sum_{i = 1}^m  \delta^{t-k} |S_k^i| +o_d(1) \\
  &\leq   \frac{m}{t} \delta^c \sum_{k = 1}^{t - c}  \sum_{i = 1}^m   |S_k^i| +o_d(1) \\
&\leq   m \delta^c  \sum_{i = 1}^m\E[S_1^i]+o_d(1). 
\end{align*}
Since this for arbitrarily large $c$, we conclude that $ \left|Z_t  - \frac{1}{m}  \sum_{i = 1}^m\E[S_1^i]\right| = o_d(1)$, that is
\begin{equation*}
    Z_t \to \frac{1}{m}  \sum_{i = 1}^m\E[S_1^i] \quad \text{in probability}. 
\end{equation*}

\end{proof}
\begin{lemma}[Distributed Central Limit Theorem] \label{lemma:distributed-CLT}
    Assume that $S_t^i, t \geq 1$ are i.i.d. random variables with $\E[S_t^i] = \mu^i$ and $cov[S_t^i] = \Sigma^i$ exists and are finite, for all $ i \in [m]$. Under Assumption \ref{assumption: graph}, for any $j \in [m]$, the random variables
    \begin{equation*}
        Z_t^j = \sqrt{\frac{m}{t}}\left(\frac{1}{m}\sum_{i = 1}^m \Sigma^i \right)^{-1/2} \sum_{i = 1}^m\sum_{k = 1}^t [A_{ij}^{t-k}](S_t^i - \mu^i)
    \end{equation*}
    converge in distribution to a standard normal distribution as $t \to \infty$. 
\end{lemma}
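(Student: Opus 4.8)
The plan is to separate $Z_t^j$ into an ``idealized'' term in which the network weights $[A^{t-k}_{ij}]$ are replaced by the uniform weight $1/m$, plus a remainder that converges to zero in probability; the idealized term is then handled by the ordinary multivariate central limit theorem. Writing $\Sigma := \frac1m\sum_{i=1}^m\Sigma^i$ and $Y_k := \sum_{i=1}^m(S_k^i-\mu^i)$, and using that $A$ (hence $A^{t-k}$) is symmetric by Assumption~\ref{assumption: graph}, we have
\[
Z_t^j \;=\; \underbrace{\frac{1}{\sqrt{mt}}\,\Sigma^{-1/2}\sum_{k=1}^t Y_k}_{=:\ \tilde Z_t^j} \;+\; \sqrt{\frac{m}{t}}\,\Sigma^{-1/2}R_t^j, \qquad R_t^j := \sum_{i=1}^m\sum_{k=1}^t\Big([A^{t-k}_{ij}]-\tfrac1m\Big)(S_k^i-\mu^i).
\]
(Here I read the inner sum in the statement as running over $S_k^i$, matching Lemma~\ref{lemma:distributed-LLN}, and I use that $\Sigma$ is nonsingular, which the expression defining $Z_t^j$ presupposes.)

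First I would show $\sqrt{1/t}\,R_t^j\pto 0$. Since the data streams of distinct agents are independent and each stream is i.i.d.\ in time, the terms of $R_t^j$ are pairwise uncorrelated and mean zero, so $\text{Var}(R_t^j)=\sum_{i=1}^m\big(\sum_{k=1}^t([A^{t-k}_{ij}]-\tfrac1m)^2\big)\Sigma^i$. The key input is the geometric estimate $\|e_i^\top A^{\ell}-\tfrac1m\ones\|_1\le m\,\delta^{\ell}$ with $\delta:=\max(|\lambda_2(A)|,|\lambda_m(A)|)<1$ — the estimate behind Lemma~\ref{lemma:graph_conv} and used in the proof of Lemma~\ref{lemma:distributed-LLN} — which gives $|[A^{t-k}_{ij}]-\tfrac1m|\le m\,\delta^{t-k}$ and hence $\sum_{k=1}^t([A^{t-k}_{ij}]-\tfrac1m)^2\le m^2\sum_{\ell\ge0}\delta^{2\ell}=\frac{m^2}{1-\delta^2}$, uniformly in $t$. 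Thus $\text{Var}(R_t^j)=O(1)$, so $\text{Var}(\sqrt{1/t}\,R_t^j)=O(1/t)\to0$ and Chebyshev's inequality yields $\sqrt{1/t}\,R_t^j\pto0$. This is exactly the geometric-truncation argument already used for the distributed law of large numbers.

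For the idealized term, note that $Y_k$, $k\ge1$, are i.i.d., mean zero, with covariance $\text{Var}(Y_1)=\sum_{i=1}^m\Sigma^i=m\Sigma$ (using independence across agents). The multivariate Lindeberg--L\'evy CLT gives $\frac1{\sqrt t}\sum_{k=1}^tY_k\dto N(0,m\Sigma)$, and therefore $\tilde Z_t^j=\frac1{\sqrt m}\Sigma^{-1/2}\cdot\frac1{\sqrt t}\sum_{k=1}^tY_k\dto N\big(0,\tfrac1m\Sigma^{-1/2}(m\Sigma)\Sigma^{-1/2}\big)=N(0,I)$. Combining this with $\sqrt{1/t}\,R_t^j\pto0$ through Slutsky's theorem gives $Z_t^j\dto N(0,I)$, which is the claim.

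The step I expect to need the most care is the bookkeeping around the remainder rather than any single estimate: checking that $\text{Var}(R_t^j)$ stays bounded as $t\to\infty$ despite being a genuine triangular array (handled by the geometric decay of $A^{t-k}$), and being explicit that the limiting covariance collapses to the identity after the $\Sigma^{-1/2}$ normalization precisely because the $S_k^i$ are independent across agents — without that, $\text{Var}(Y_1)$ need not equal $m\Sigma$. An alternative route would apply a Lindeberg--Feller CLT directly to the triangular array $\{[A^{t-k}_{ij}](S_k^i-\mu^i)\}_{i\in[m],\,k\le t}$ and compute its asymptotic covariance through the same geometric sums, but the reduction above is shorter and reuses machinery already in place.
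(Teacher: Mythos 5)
Your proof is correct, but it takes a genuinely different route from the paper's. The paper applies a Lindeberg--Feller CLT directly to the triangular array $\{[A^{t-k}_{ij}]S_k^i\}_{i\in[m],\,k\le t}$: it computes the exact normalizing covariance $\Sigma_{m,t}=\sum_{i,k}[A^{t-k}_{ij}]^2\Sigma^i$, shows via Ces\`aro convergence of $[A^{\ell}_{ij}]^2\to 1/m^2$ that $\Sigma_{m,t}$ is asymptotically equivalent to $\frac{t}{m^2}\sum_i\Sigma^i$ (which is exactly the normalization in the statement), and then verifies the Lindeberg condition by dominated convergence — this is precisely the "alternative route" you mention at the end. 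Your decomposition into the uniform-weight average plus a remainder instead reduces the problem to the classical i.i.d.\ Lindeberg--L\'evy CLT for $Y_k=\sum_i(S_k^i-\mu^i)$ together with a Chebyshev bound on the remainder, where the only network-dependent input is the geometric estimate $|[A^{t-k}_{ij}]-\tfrac1m|\le m\delta^{t-k}$ already used for Lemma~\ref{lemma:graph_conv} and Lemma~\ref{lemma:distributed-LLN}. What your approach buys is that no triangular-array CLT and no Lindeberg verification are needed at all — the remainder is dispatched by a second-moment bound and Slutsky — and the limiting identity covariance drops out transparently from $\mathrm{Var}(Y_1)=m\Sigma$; what the paper's approach buys is that it works with the exact finite-$t$ covariance $\Sigma_{m,t}$ rather than its limit, which in principle is the more robust template if the weights did not converge to $1/m$ geometrically. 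Both arguments rely on the same two unstated hypotheses, which you correctly flag: independence of the data streams across agents (without which neither $\mathrm{Var}(Y_1)=m\Sigma$ nor the paper's additive formula for $\Sigma_{m,t}$ holds) and nonsingularity of $\frac1m\sum_i\Sigma^i$.
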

\begin{proof}
    The proof applies the Linderberg central limit theorem. Without loss of generality, assume that $\mu^i = 0$. Let $\Sigma_{m,t}$ be the sum of covariance. We have
    \begin{align*}
        \Sigma_{m,t} &=  \sum_{k = 1}^t \sum_{i = 1}^m \text{Cov}([A_{ij}^{t-k}]S_t^i) =   \sum_{k = 1}^t \sum_{i = 1}^m[A_{ij}^{t-k}]^2 \Sigma^i . 
    \end{align*}
The matrix $\Sigma_{m,t}$ is asymptotically equivalent to the scaling factor $\Sigma_{m,t}^* = \frac{t}{m^2} \sum_{i = 1}^m  \Sigma^i$. 
    \begin{align*}
        \left(\Sigma_{m,t}^* \right)^{-1}\Sigma_{m,t} &= \frac{m}{t}  \left(\frac{1}{m} \sum_{i = 1}^m  \Sigma^i\right)^{-1}  \sum_{k = 1}^t \sum_{i = 1}^m[A_{ij}^{t-k}]^2 \Sigma^i  \\
        &=m  \left(\frac{1}{m} \sum_{i = 1}^m  \Sigma^i\right)^{-1} \sum_{i = 1}^m \Sigma^i \frac{\sum_{k = 1}^t[A_{ij}^{t-k}]^2 }{t} \\
        &= m  \left(\frac{1}{m} \sum_{i = 1}^m  \Sigma^i\right)^{-1} \sum_{i = 1}^m \Sigma^i \frac{\sum_{k = 1}^t[A_{ij}^k]^2 }{t}
    \end{align*}
    By Assumption \ref{assumption: graph}, $\lim_{t \to \infty}A_{ij}^k \to \frac{1}{m}$, thus $\lim_{t \to \infty}[A_{ij}^k]^2 \to \frac{1}{m^2}$.Then we have
    \begin{align*}
        \lim_{t \to \infty}   \left(\Sigma_{m,t}^* \right)^{-1}\Sigma_{m,t} &= m  \left(\frac{1}{m} \sum_{i = 1}^m  \Sigma^i\right)^{-1} \sum_{i = 1}^m \Sigma^i  \lim_{t \to \infty} \frac{\sum_{k = 1}^t[A_{ij}^k]^2 }{t} \\
        &=  m  \left(\frac{1}{m} \sum_{i = 1}^m  \Sigma^i\right)^{-1} \sum_{i = 1}^m \Sigma^i  \frac{1}{m^2} = I_m. 
    \end{align*}
It remains to verify the Linderberg condition
\begin{equation*}
    \lim_{t \to \infty}  \sum_{k = 1}^t \sum_{i = 1}^m \E\left[\|\Sigma_{m,t}^{-1/2} [A_{ij}^{t-k}] S_t^i \|^2 I_{\|\Sigma_{m,t}^{-1/2} [A_{ij}^{t-k}] S_t^i\| > \epsilon}\right]  = 0, \quad \forall \epsilon > 0. 
\end{equation*}
The condition is equivalent to 
\begin{equation*}
            \lim_{t \to \infty}  \sum_{k = 1}^t \sum_{i = 1}^m \E\left[\|\Sigma_{m,t}^{*^{-1/2}} [A_{ij}^{t-k}] S_t^i\|^2 I_{\|\Sigma_{m,t}^{-1/2} [A_{ij}^{t-k}] S_t^i\| > \epsilon}\right]  = 0, \quad \forall \epsilon > 0. 
\end{equation*}    
Since $\lim_{t \to \infty}\Sigma^*_{m,t} =  \infty$, $\lim_{t \to \infty}\Sigma_{m,t} = \infty$ and $ I_{\|\Sigma_{m,t}^{-1/2} [A_{ij}^{t-k}] S_t^i\| > \epsilon}$ converges to $0$ almost surely. 

Because $\max_{i \in [m]} \sup_t [A_{ij}^t]^2 < \infty$, we have
\begin{align*}
    \max_{i \in [m]}\sup_{t} t\E\left[ \|\Sigma_{m,t}^{*^{-1/2}} [A_{ij}^{t-k}] S_t^i\|^2\right] &= \max_{i \in [m]} \sup_{t} \E\left[\|m(\frac{1}{m}\sum_{i = 1}^m  \Sigma^i)^{-1} [A_{ij}^{t-k}] S_t^i\|^2\right]  \\
    &\leq m (\max_{i \in [m]} \sup_t [A_{ij}^t]^2 ) \|(\frac{1}{m}\sum_{i = 1}^m  \Sigma^i)^{-1}\|_{op} \max_{i \in [m]} \|\Sigma^i \|_{op}
\end{align*}
By Lebesgue dominated convergence theorem, we have
\begin{equation*}
    \lim_{t \to \infty} t\E\left[\|\Sigma_{m,t}^{*^{-1/2}} [A_{ij}^{t-k}] S_t^i\|^2 I_{\|\Sigma_{m,t}^{-1/2} [A_{ij}^{t-k}] S_t^i\| > \epsilon}\right]  \to 0. 
\end{equation*}
The Cesaro sums also converge to $0$: 
\begin{equation*}
    \lim_{t \to \infty} \sum_{i = 1}^m \frac{1}{t}\sum_{k = 1}^t t \E\left[\|\Sigma_{m,t}^{*^{-1/2}} [A_{ij}^{t-k}] S_t^i\|^2 I_{\|\Sigma_{m,t}^{-1/2} [A_{ij}^{t-k}] S_t^i\| > \epsilon}\right]  = 0, \quad \forall \epsilon > 0. 
\end{equation*}
This completes the proof
\end{proof}

\begin{lemma}[\cite{miller2021asymptotic}] 
\label{uniform-convergence-lemma}
Suppose that $h_n: E \to F$ for $n \in \N$, where $E$ is a totally bounded space and $F$ is a normed space. If $h_n$ converges pointwise and is equicontinuous, then it converges uniformly. 
\end{lemma}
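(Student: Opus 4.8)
The plan is to run the classical ``$\varepsilon/3$'' argument: use equicontinuity to replace the behaviour of each $h_n$ on all of $E$ by its behaviour on a finite set, then invoke pointwise convergence on that finite set. Total boundedness of $E$ is exactly what supplies the finite set.

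First I would fix $\varepsilon>0$ and use the equicontinuity hypothesis — read in its uniform form, i.e.\ a single modulus valid simultaneously for all $n$ — to produce $\delta>0$ such that $d_E(x,y)<\delta$ implies $\|h_n(x)-h_n(y)\|_F<\varepsilon/3$ for every $n\in\N$. (In the way this lemma is applied in the paper, $E$ is compact and the family is uniformly Lipschitz, so such a uniform modulus is available; on a general totally bounded space one should read ``equicontinuous'' as ``uniformly equicontinuous'', since pointwise equicontinuity alone does not force uniform convergence.) Next, because $E$ is totally bounded, I would fix a finite $\delta$-net $x_1,\dots,x_N\in E$, so that every $x\in E$ lies within $\delta$ of some $x_i$.

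Then I would use pointwise convergence at the finitely many points $x_1,\dots,x_N$: since $h_n(x_i)\to h(x_i)$ for each $i$, there is $M$ such that $n\ge M$ forces $\|h_n(x_i)-h(x_i)\|_F<\varepsilon/3$ for all $i=1,\dots,N$ at once. For an arbitrary $x\in E$ and $n\ge M$, choosing a net point $x_i$ with $d_E(x,x_i)<\delta$ and applying the triangle inequality gives
\[
\|h_n(x)-h(x)\|_F \le \|h_n(x)-h_n(x_i)\|_F + \|h_n(x_i)-h(x_i)\|_F + \|h(x_i)-h(x)\|_F .
\]
The first term is $<\varepsilon/3$ by the choice of $\delta$, the second is $<\varepsilon/3$ by the choice of $M$, and the third is $\le\varepsilon/3$ because $\|h(x_i)-h(x)\|_F=\lim_{n\to\infty}\|h_n(x_i)-h_n(x)\|_F\le\varepsilon/3$, again by the choice of $\delta$. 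Hence $\sup_{x\in E}\|h_n(x)-h(x)\|_F\le\varepsilon$ for all $n\ge M$, which is the desired uniform convergence.

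The argument is essentially bookkeeping, so there is no deep obstacle; the one point that must be handled with care is that the modulus $\delta$ extracted from equicontinuity be uniform in both $n$ and $x$ — so that the net size $N$ is finite and independent of $n$ — and that the resulting estimate transfers to the limit $h$, which it does automatically by passing to the limit in the pointwise bound on $\|h_n(x_i)-h_n(x)\|_F$ rather than assuming any regularity of $h$ in advance.
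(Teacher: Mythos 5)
Your proof is correct and is the standard $\varepsilon/3$-net argument; the paper itself gives no proof of this lemma, deferring to \cite{miller2021asymptotic}, whose argument is the same one you give (total boundedness supplies the finite net, equicontinuity the uniform modulus, pointwise convergence the control at the net points). Your caveat that ``equicontinuous'' must be read as \emph{uniformly} equicontinuous on a merely totally bounded space is well taken — for instance $h_n(x)=x^n$ on $(0,1)$ is equicontinuous at every point and converges pointwise but not uniformly — and in the paper's applications the families are uniformly Lipschitz on a bounded set, so the uniform modulus is indeed available.
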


\iffalse
\begin{lemma}
[Proposition 6.7. \cite{ghosal2017fundamentals} ]\label{prop:gv-6.7}
Suppose that the posterior distribution $\Pi_n(\cdot \mid X(n))$ is consistent (or strongly consistent) at $\theta_0$ relative to the metric $d$ on $\Theta$. Then $\hat{\theta}_n$, defined as the center of a (nearly) smallest ball that contains posterior mass at least $\delta$ for $\delta > 0$, satisfies
\begin{equation*}
    d(\hat{\theta}_n, \theta_0) \rightarrow 0, 
\end{equation*}    
in $[P_{\theta_0}]-$probability or $[P_{\theta_0}]-$almost surely, respectively. 
\end{lemma}
\fi

\end{document}